\newtheorem{thm}{Theorem}[section]
\newtheorem*{thm*}{Theorem}
\newtheorem{prop}[thm]{Proposition}
\newtheorem{lem}[thm]{Lemma}
\newtheorem{cor}[thm]{Corollary}
\newtheorem{introthm}{Theorem}
\newtheorem{introprop}{Proposition}
\theoremstyle{definition}
\newtheorem{defin}[thm]{Definition}
\newtheorem{rem}[thm]{Remark}
\newtheorem{ex}[thm]{Example}
\newtheorem{fact}[thm]{Observation}
\DeclareMathOperator{\dist}{dist}
\DeclareMathOperator{\diam}{diam}
\DeclareMathOperator{\opspan}{span}
\DeclareMathOperator{\opint}{int}
\DeclareMathOperator{\oplk}{lk_2}
\DeclareMathOperator{\opang}{\sphericalangle}
\DeclareMathOperator{\opid}{id}
\DeclareMathOperator{\aff}{aff}
\DeclareMathOperator{\simp}{\bigtriangleup}
\DeclareMathOperator{\perm}{Perm}
\DeclareMathOperator{\opim}{im}
\DeclareMathOperator{\BAP}{BAP_m}
\DeclareMathOperator{\dgras}{\varangle}
\newcommand{\A}{\mathcal{A}}
\newcommand{\F}{\mathcal{F}}
\newcommand{\E}{\mathcal{E}}
\newcommand{\G}{\mathscr{G}}
\newcommand{\Dxy}{\mathcal{D}}
\newcommand{\CDxy}{\overline{\mathcal{D}}}
\newcommand{\Dom}{\mathcal{D}}
\newcommand{\Reg}{\mathcal{V}}
\newcommand{\Rn}{\mathbb{R}^n}
\newcommand{\N}{\mathbb{N}}
\newcommand{\Z}{\mathbb{Z}}
\newcommand{\Sphere}{\mathbb{S}}
\newcommand{\Cone}{\mathbb{C}}
\newcommand{\Ball}{\mathbb{B}}
\newcommand{\CBall}{\overline{\mathbb{B}}}
\newcommand{\Disc}{\mathbb{D}}
\newcommand{\CDisc}{\overline{\mathbb{D}}}
\newcommand{\Shell}{\mathbb{A}}
\newcommand{\R}{\mathbb{R}}
\newcommand{\HM}{\mathscr{H}}
\newcommand{\HD}{d_{\mathcal{H}}}
\newcommand{\K}{\mathcal{K}}
\newcommand{\Kphi}{\mathcal{K}_{\varphi}}
\newcommand{\face}{\mathfrak{fc}}
\newcommand{\height}{\mathfrak{h}}
\newcommand{\hmin}{\height_{\min}}
\newcommand{\Slk}{\mathscr{S}}
\newcommand{\red}{\rho\varepsilon\delta}
\newcommand{\nbeta}{\bar{\beta}_m}
\newcommand{\ntheta}{\bar{\theta}_m}
\title[Integral Menger curvature]
{Integral Menger curvature for sets of arbitrary dimension and codimension}
\author{S{\l}awomir Kolasi{\'n}ski}
\address{Institute of Mathematics\\
  University of~Warsaw\\
  Banacha~2, 02-097 Warsaw\\
  Poland}
\email{s.kolasinski@mimuw.edu.pl}
\urladdr{http://www.mimuw.edu.pl/~skola}
\thanks{The author wishes to thank P. Strzelecki for his advice and many
  inspiring conversations.}
\keywords{Menger curvature, Ahlfors regularity, repulsive potentials, regularity theory}
\subjclass{Primary: 49Q10; Secondary: 28A75, 49Q20, 49Q15}
\date{\today}
\newcommand{\mysection}[1]{}
\newcommand{\mysubsection}[1]{}
\newcommand{\mysubsubsection}[1]{}
\let\mysection=\section
\let\mysubsection=\subsection
\let\mysubsubsection=\subsubsection
\begin{document}

\begin{abstract}
  We propose a notion of integral Menger curvature for compact, $m$-dimensional
  sets in $n$-dimensional Euclidean space and prove that finiteness of this
  quantity implies that the set is $C^{1,\alpha}$ embedded manifold with the
  H{\"o}lder norm and the size of maps depending only on the curvature. We
  develop the ideas introduced by Strzelecki and von der Mosel
  [Adv. Math. 226(2011)] and use a similar strategy to prove our results.
\end{abstract}


\maketitle
\tableofcontents

\mysection*{Introduction}

Menger curvature is a notion defined for triples of points in an Euclidean
space. Let $R(x,y,z)$ be the radius of the smallest circle passing through $x$,
$y$ and $z$. Then the \emph{Menger curvature} is just the inverse of
$R(x,y,z)$. This notion can be used to define many different types of curvatures
for $1$-dimensional sets in $\Rn$ and there are several contexts in which
curvatures of this kind occur.

First, there are works motivated by natural sciences and the search for good
models of DNA molecules, protein structures or polymer chains; see for example
the paper by Banavar et al.~\cite{MR1966322} or the book by Sutton and
Balluffi~\cite{SB97}. Long, entangled objects are usually modeled as
$1$-dimensional curves embedded in $\R^3$. The goal is to find analytical tools
catching their physical properties like thickness and lack of
self-intersections. There are several approaches towards this problem. One can
impose a lower bound on the \emph{global radius of curvature} defined as the
infimum of $R(x,y,z)$ over all points $x$, $y$ and $z$ lying on a curve. Such
constraints were studied e.g. by Gonzalez, Maddocks, Schuricht and von der
Mosel~\cite{MR1883599}, by Cantarella, Kusner and Sullivan~\cite{MR1933586} or
by Gonzalez and de la Llave~\cite{MR1953628}. The existence of minimizers of
curvature in a given isotopy class has been proven as well as the existence of
so called \emph{ideal knots}, i.e. knots which minimize the ratio of the length
to the thickness. There are also results considering the shape and regularity of
ideal knots; see Cantarella, Kusner and Sullivan~\cite{MR1933586}, Cantarella et
al.~\cite{MR2284052}, Durumeric~\cite{MR2355512} or Schuricht and von der
Mosel~\cite{MR2033143}. This list of publications is, of course, not complete.
For more information on these topics we refer the reader to the cited articles.

Quite different approach was suggested by Strzelecki, Szuma{\'n}ska and von der
Mosel in~\cite{MR2489022} and~\cite{MR2668877}, where the authors studied
''soft'' knot energies defined as the integral of Menger curvature in some
power. They proved self-avoidance effects and $C^{1,\alpha}$ regularity of knots
with finite energy. Furthermore they showed some analogues of the Sobolev
imbedding theorem, which suggests that Menger curvature is a good replacement
for the second derivatives in a non-smooth setting. Strzelecki and von der Mosel
in~\cite{MR2197957} and~\cite{MR2214619} were also able to apply their ''soft''
potentials to prove the existence of minimizers of some constrained variational
problems in a given isotopy class.

Yet another context, mathematically probably the deepest one, in which
curvatures of non-smooth objects occur is harmonic analysis. Independently of
physical motivations, the research on removability of singularities of bounded
analytical functions led to the study of integral curvatures. Surveys of
Mattila~\cite{MR1648114} and Tolsa~\cite{MR2275656} explain the connection
between these subjects. L{\'e}ger~\cite{MR1709304} proved that $1$-dimensional
sets with finite integral Menger curvature are $1$-rectifiable, which was a
crucial step in the proof of Vitushkin's conjecture.

Intensive research is being done on generalizations of Menger curvature for sets
of higher dimension. It occurs that one cannot define $k$-dimensional Menger
curvature using integrals of the radius of a circumsphere of
$(k+2)$-points. This ''obvious'' generalization fails because of examples
(see~\cite[Appendix B]{0911.2095}) of very smooth embedded manifolds for which
this kind of curvature would be unbounded.

Lerman and Whitehouse in~\cite{0805.1425} and in~\cite{MR2558685} suggested a
whole class of different high dimensional Menger-type curvatures basing on so
called polar sine function. They proved \cite[Theorems 1.2 and 1.3]{MR2558685}
that their integral curvatures can be used to characterize $d$-dimensional
rectifiable measures. This established a connection between the theory of
non-smooth curvatures and uniform rectifiablility in the sense of David and
Semmes~\cite{MR1251061}.

Similar but different notion of integral Menger-type curvature for surfaces in
$\R^3$ was introduced by Strzelecki and von der Mosel~\cite{0911.2095}. They
proved that finiteness of their functional implies H{\"o}lder regularity of the
normal vector. They also applied their own results to prove existence of area
minimizing surfaces in a given isotopy class under the constraint of bounded
curvature. Our work is focused on generalizing these results to sets of
arbitrary dimension and codimension.

For any set of $m+2$ points $\{ x_0, x_1, \ldots, x_{m+1} \} \subseteq \Rn$ we
define the discrete curvature
\begin{displaymath}
  \K(x_0,\ldots,x_{m+1})
  := \frac{\HM^{m+1}(\simp(x_0,\ldots,x_{m+1}))}{\diam(\{ x_0, x_1, \ldots, x_{m+1} \})^{m+2}} \,,
\end{displaymath}
where $\simp(x_0,\ldots,x_{m+1})$ denotes the convex hull of the set $\{ x_0,
\ldots, x_{m+1} \}$, which in a typical case will be an $(m+1)$-dimensional
simplex. For $m = 2$ one can easily prove that the above discrete curvature $\K$
is always smaller than the one defined in~\cite{0911.2095} but for tetrahedrons
which are roughly regular both quantities are comparable. This comes from the
fact that the area of a tetrahedron is always bounded from above by $4\pi$
times the square of the diameter.

Let $\Sigma \subseteq \Rn$ be any $m$-dimensional, compact set and let $p > 0$.
We introduce the \emph{$p$-integral Menger-type curvature} (abbreviated as the
\emph{$p$-energy}) of $\Sigma$
\begin{displaymath}
  \E_p(\Sigma) 
  := \int_{\Sigma^{m+2}} \K(x_0,\ldots,x_{m+1})^p\ d\HM^m_{x_0} \cdots d\HM^m_{x_{m+1}} \,,
  \quad
  \Sigma^{m+2} = \underbrace{\Sigma \times \cdots \times \Sigma}_{(m+2) \text{ times}} \,.
\end{displaymath}
This kind of energy is finite if $\Sigma \subseteq \Rn$ is a compact $C^2$
manifold (cf. Proposition~\ref{prop:beta-curv} and
Corollary~\ref{cor:C2mani}). In a forthcoming, joint paper with Marta
Szuma{'n}ska~\cite{SKMS}, we prove that graphs of a $C^{1,\nu}$ functions also
have finite integral Menger curvature whenever $\nu > \nu_0 = 1 -
\frac{m(m+1)}{p}$ and we construct examples of $C^{1,\nu_0}$ functions with
graphs of infinite $p$-energy.

In~\cite{0911.2095} the authors define a similar energy functional
$\mathcal{M}_p$, which satisfies $\E_p(\Sigma) \le \mathcal{M}_p(\Sigma)$ when
$m=2$ and $n=3$. Next, they prove that whenever $\mathcal{M}_p(\Sigma)$ is
finite for some $p > 8$, then there is a fixed scale $R > 0$ which depends only
on the energy $\mathcal{M}_p$ such that for any $r < R$ and any $x \in \Sigma$
we have
\begin{displaymath}
  \HM^2(\Sigma \cap \Ball(x,r)) \ge \frac{\pi}2 r^2 \,.
\end{displaymath}
What is significant in this theorem, is that the scale $R$ below which we have
the above inequality depends only on the energy bounds of $\Sigma$. This result
is crucial for the rest of the proofs. After establishing this uniform Ahlfors
regularity, the authors prove the existence of tangent planes and estimate their
oscillation. This gives $C^{1,\alpha}$ regularity for $\Sigma$, with $\alpha = 1
- \frac 8p$ and with H{\"o}lder constant depending only on the energy bounds.

This paper is devoted to proving analogues of above theorems in the case of sets
of arbitrary dimension and codimension. It is a part of an ongoing research
aimed establishing properties of Menger-type curvatures, their regularizing
effects and applications in variational and geometric problems.

Our results consider two classes of sets: the class $\A(\delta,m)$ of
\emph{$(\delta,m)$-admissible} sets and the class $\F(m)$ of \emph{$m$-fine}
sets. These classes contain compact, $m$-dimensional subsets of $\Rn$ satisfying
some mild and quite general conditions (see Definition~\ref{def:adm} and
Definition~\ref{def:fine}). The definition of $\A(\delta,m)$ is more topological
and uses the notion of the \emph{linking number} while the definition of $\F(m)$
is purely metric. Examples of sets that fall into one of these classes include
e.g. compact, smooth manifolds immersed in $\Rn$ and all finite sums of such
immersions and even their bilipschitz images. For any set $\Sigma$ in one of the
classes $\A(\delta,m)$ or $\F(m)$ such that $\E_p(\Sigma)$ is finite for some $p
> m(m+2)$ we prove that $\Sigma$ is locally a graph of a $C^{1,\alpha}$ function
with $\alpha = 1 - \frac{m(m+2)}{p}$. Our first meaningful result is
\begin{introthm}[cf. Theorem~\ref{thm:uahlreg}]
  \label{intro:thm:uahlreg}
  Let $E < \infty$ be some positive constant and let $\Sigma \in \A(\delta,m)$
  be an admissible set, such that $\E_p(\Sigma) \le E$ for some $p > m(m+2)$.
  There exist a radius $R = R(E,m,p,\delta)$, such that for each $\rho \le R$
  and each $x \in \Sigma$ we have
  \begin{displaymath}
    \HM^m(\Sigma \cap \Ball(x,\rho)) \ge (1 - \delta^2)^{\frac m2} \omega_m \rho^m \,.
  \end{displaymath}
\end{introthm}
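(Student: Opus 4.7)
The plan is to argue by contradiction, following the blueprint laid out by Strzelecki and von der Mosel in \cite{0911.2095} for the case $m=2$, $n=3$, and adapted here to arbitrary dimension and codimension through the linking-number formulation of $(\delta,m)$-admissibility. Suppose the conclusion fails: there exist $\Sigma \in \A(\delta,m)$ with $\E_p(\Sigma) \le E$, a point $x \in \Sigma$, and a radius $\rho$ arbitrarily small with $\HM^m(\Sigma \cap \Ball(x,\rho)) < (1-\delta^2)^{m/2}\omega_m \rho^m$. From this single quantitative defect we aim to construct a positive-measure set of configurations in $\Sigma^{m+2}$ on which the discrete curvature $\K$ is of order $\rho^{-1}$, and to show that integrating $\K^p$ over this set forces $\E_p(\Sigma)$ to exceed any prescribed bound once $\rho$ is small enough.

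First, I would fix a near-best $m$-plane $P$ through $x$ and project. Since $\pi_P$ is $1$-Lipschitz, the image $\pi_P(\Sigma \cap \Ball(x,\rho))$ has $\HM^m$-measure strictly less than the flat disc $P\cap\Ball(x,\rho)$, in fact by at least $\bigl(1 - (1-\delta^2)^{m/2}\bigr)\omega_m\rho^m$. A standard covering/Chebyshev-type argument then yields a point $y \in P\cap \Ball(x,\rho)$ at controlled distance from $x$ together with a ball $\Ball(y, c\delta\rho)\cap P$ disjoint from the projection.

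Next, I would invoke admissibility. The whole point of the linking-number definition of $\A(\delta,m)$ is to convert such a projection hole into hard geometric information about $\Sigma$ at the original scale: nontrivial linking of $\Sigma$ with a dual $(n-m-1)$-sphere sitting over $y$ forces the existence of points of $\Sigma$ lying \emph{far from the plane $P$}, at normal distance comparable to $\delta\rho$. Pairing such a vertex $x_{m+1}$ with $m+1$ further points of $\Sigma$ whose projections form a non-degenerate $m$-simplex in the part of $P\cap\Ball(x,\rho)$ that \emph{is} covered by the projection (this piece still carries most of the projected mass), I obtain a simplex $\simp(x_0,\ldots,x_{m+1})$ with diameter $\lesssim \rho$ and $\HM^{m+1}$-volume at least $c(\delta,m)\rho^{m+1}$, so that $\K(x_0,\ldots,x_{m+1}) \ge c(\delta,m)/\rho$. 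The construction should be carried out so that each vertex independently varies over a piece of $\Sigma$ of $\HM^m$-measure $\sim \rho^m$, producing a product-structured set $U \subset \Sigma^{m+2}$ with $\HM^{m(m+2)}(U)\gtrsim \rho^{m(m+2)}$ on which the lower bound on $\K$ persists.

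The contradiction then reads
\[
  E \;\ge\; \E_p(\Sigma) \;\ge\; \int_{U} \K^p\,d\HM^{m(m+2)} \;\ge\; c(\delta,m)^{p}\,\rho^{m(m+2)-p},
\]
and since $p > m(m+2)$ the exponent is negative, so the right-hand side blows up as $\rho \to 0$. This fixes the threshold $R = R(E,m,p,\delta)$. I expect the main obstacle to be the middle step: extracting from admissibility not just a single bad simplex but a genuinely $m(m+2)$-dimensional family of them, which requires choosing the plane $P$, the hole $\Ball(y,c\delta\rho)$, and the perturbation parameters in a coordinated way, and controlling how the linking argument degrades when one slides each vertex over a disc in $\Sigma$. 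The exponent matching in the final estimate — $p > m(m+2)$ being exactly the threshold where $\rho^{m(m+2)-p}$ diverges — is what pins down the hypothesis on $p$ and should be regarded as the quantitative target the covering and perturbation steps have to meet.
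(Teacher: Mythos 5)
Your final balance estimate (a family of $(\eta,\rho)$-voluminous simplices of product measure $\gtrsim\rho^{m(m+2)}$ with $\K\gtrsim\rho^{-1}$ forces $E\gtrsim\rho^{m(m+2)-p}$) is exactly the mechanism the paper uses in Propositions~\ref{prop:eta-d-balance} and~\ref{prop:dSigma-lower}, and the threshold $p>m(m+2)$ enters the same way. The genuine gap is the middle step, where you ``invoke admissibility'' to turn a projection hole at the arbitrary scale $\rho$ into points of $\Sigma$ far from the near-best plane $P$. This inverts how the linking hypothesis actually works: by Lemma~\ref{lem:intpoint}, linking of a dual sphere over a point $y$ with $\Sigma$ produces a point of $\Sigma$ \emph{inside} the dual disc over $y$ --- i.e.\ it rules the hole out, it does not produce a vertex at normal distance $\sim\delta\rho$ from $P$. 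Worse, the definition of $\A(\delta,m)$ only guarantees linking for the specific sphere $\Slk_x$ of radius $\tfrac12 r_0(x)$ in $x+H_x^{\perp}$, where $r_0(x)$ is an uncontrolled, possibly tiny, point-dependent radius. There is no a priori linking at scale $\rho$, around centers $y\ne x$, or relative to your plane $P$. Transporting the linking from scale $r_0(x)$ up to a definite scale, and producing the far vertex, is precisely the content of the paper's multi-scale stopping-time construction (Proposition~\ref{prop:big-proj-fat-simp}): one iterates the dichotomy ``either there is a point at distance $\gtrsim\delta\rho_I$ from the plane spanned by the $m+1$ chosen points (stop: voluminous simplex), or $\Sigma$ is $h_0\delta\rho_I$-flat and one passes to the next scale,'' gluing conical caps and deforming spheres inside them (Propositions~\ref{prop:sph-trans},~\ref{prop:two-cones}, Corollary~\ref{cor:sph-in-cone}) to preserve the linking. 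A single-scale contradiction argument cannot reach this information, and note also that the far vertex must be far from the affine span of the actually chosen base points, not from a reference plane $P$; your base points need not lie near $P$, so distance to $P$ does not bound the height of the simplex.

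A second, related gap: to get each vertex varying over a set of measure $\sim\rho^m$ you implicitly need a uniform Ahlfors lower bound at scale $\rho$, which is exactly what is being denied at $x$ in your contradiction scenario (the hypothesis only bounds the measure by a constant, it does not keep it comparable to $\rho^m$, and at the other vertices the admissibility constants $A_{\Sigma},R_{\Sigma}$ depend on $\Sigma$, which would make $R$ depend on $\Sigma$ as well). The paper removes this dependence by a bootstrap in Proposition~\ref{prop:dSigma-lower}: the big-projection property of Proposition~\ref{prop:big-proj-fat-simp} holds below each vertex's own stopping distance and yields the $\Sigma$-independent constant $(1-\delta^2)^{m/2}\omega_m$, which is then fed back into the balance computation. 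Without some substitute for both the stopping-time construction and this bootstrap, the proposal does not close.
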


The backbone of the proof of Theorem~\ref{intro:thm:uahlreg} is
Proposition~\ref{prop:big-proj-fat-simp}, which states that at almost every
point $x \in \Sigma$ and for all radii $r > 0$ less then some positive stopping
distance $d(x)$, one can find an $m$-plane $H$ such that the projection of
$\Sigma \cap \Ball(x,r)$ onto $x + H$ contains the ball
$\Ball(x,\sqrt{1-\delta^2}r) \cap (x + H)$. It also ensures the existence of a
''quite regular'' (see Definition~\ref{def:regsimp}) simplex with $x$ as one of
its vertices and dimensions comparable to $d(x)$. The proof of
Proposition~\ref{prop:big-proj-fat-simp} is based on an algorithmic procedure
similar to that presented in~\cite{0911.2095} but is more general and
simpler. It catches the essential difficulty encountered by Strzelecki and von
der Mosel and deals with it considering only two cases instead of their
five. The essence of this algorithm can be summarized as follows. We look at
$\Sigma$ in increasingly larger scales. If $\Sigma$ is almost flat at some
scale, then we have to increase the scale. Otherwise, we find a point $y \in
\Sigma$ which is far from some affine $m$-plane spanned by $m+1$ points of
$\Sigma$ and this way we construct a ''quite regular'' simplex.

Next we show that any $(\delta,m)$-admissible set $\Sigma$ with finite
$p$-energy is also $m$-fine (cf. Theorem~\ref{thm:adm-fine}). The proof is
rather technical. It uses the following
\begin{introprop}[cf. Corollary~\ref{cor:beta-est}]
  \label{intro:prop:beta-est}
  Let $\Sigma \subseteq \Rn$ be some $m$-Ahlfors regular set such that
  $\E_p(\Sigma)$ is finite for some $p > m(m+2)$. Then there exist constants $C
  > 0$ and $\tau \in (0,1)$ such that for any $x \in \Sigma$ and any $r > 0$
  small enough we have
  \begin{displaymath}
    \beta(x,r) \le C r^{\tau} \,,
  \end{displaymath}
  where $\beta(x,r)$ denote the P. Jones' $\beta$-numbers of $\Sigma$.
\end{introprop}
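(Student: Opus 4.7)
The plan is to produce, for each $x\in\Sigma$ and each sufficiently small $r>0$, an affine $m$-plane $L$ through a ``fat'' inscribed simplex, and to bound the maximal deviation $h:=\sup\{\dist(y,L):y\in\Sigma\cap\Ball(x,r)\}$ directly against the finite energy. First, $m$-Ahlfors regularity alone forces the existence of a non-degenerate inscribed $m$-simplex at scale $r$: if every $(m+1)$-tuple in $\Sigma\cap\Ball(x,r)$ had $m$-volume less than $\varepsilon r^{m}$, then $\Sigma\cap\Ball(x,r)$ would be contained in an $O(\varepsilon^{1/m}r)$-neighbourhood of an affine $(m-1)$-plane, with $\HM^{m}$-mass $\lesssim\varepsilon^{1/m}r^{m}$, contradicting the lower Ahlfors bound once $\varepsilon$ is small enough. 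So I can pick $x_{0},\ldots,x_{m}\in\Sigma\cap\Ball(x,r)$ with $\HM^{m}(\simp(x_{0},\ldots,x_{m}))\ge c_{1}r^{m}$ and set $L:=\aff(x_{0},\ldots,x_{m})$.

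Next, fix any $y\in\Sigma\cap\Ball(x,r)$ with $h_{y}:=\dist(y,L)$ close to the supremum $h$, and a parameter $\eta\in(0,1]$ to be chosen. For every choice of perturbed vertices
\[
  (x_{0}',\ldots,x_{m}',y') \,\in\, \prod_{i=0}^{m}\bigl(\Sigma\cap\Ball(x_{i},\eta r)\bigr)\,\times\,\bigl(\Sigma\cap\Ball(y,\eta r)\bigr)\,,
\]
a direct perturbation computation gives $\HM^{m}(\simp(x_{0}',\ldots,x_{m}'))\ge (c_{1}-C_{2}\eta)r^{m}$, and shows that the affine hull $L'=\aff(x_{0}',\ldots,x_{m}')$ is tilted from $L$ by angle at most $C_{3}\eta$, so $\dist(y',L')\ge h_{y}-C_{4}\eta r$. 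The base-times-height formula then yields
\[
  \K(x_{0}',\ldots,x_{m}',y')
  \,=\, \frac{\HM^{m+1}(\simp(x_{0}',\ldots,x_{m}',y'))}{\diam(\{x_{0}',\ldots,x_{m}',y'\})^{m+2}}
  \,\gtrsim\, \frac{r^{m}\,(h_{y}-C_{4}\eta r)}{r^{m+2}}\,.
\]
Choosing $\eta:=\min\{c_{1}/(2C_{2}),\,h_{y}/(4C_{4}r)\}$ keeps both the base-volume and the height at half of their unperturbed size, producing the pointwise lower bound $\K\gtrsim h_{y}/r^{2}$ throughout the product of Ahlfors balls.

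Now integrate this pointwise estimate against $(\HM^{m})^{m+2}$; using that each of the $m+2$ Ahlfors balls has $\HM^{m}$-mass $\gtrsim(\eta r)^{m}$, one obtains
\[
  E \,\ge\, \E_{p}(\Sigma) \,\gtrsim\, (\eta r)^{m(m+2)}\,\Bigl(\tfrac{h_{y}}{r^{2}}\Bigr)^{\!p}\,.
\]
In the non-trivial regime $h_{y}\ll r$ the effective choice is $\eta\simeq h_{y}/r$, which substitutes to $h_{y}^{\,m(m+2)+p}\lesssim E\,r^{2p}$ and therefore
\[
  h_{y} \,\le\, C\,r^{\,\frac{2p}{p+m(m+2)}} \,=\, C\,r^{\,1+\tau}\,,\qquad \tau\,:=\,\frac{p-m(m+2)}{p+m(m+2)}\,\in\,(0,1)\,,
\]
because $p>m(m+2)$. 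Taking the supremum over $y$ yields $h\le Cr^{\,1+\tau}$, i.e.\ $\beta(x,r)\le h/r\le C\,r^{\tau}$, as claimed.

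The main technical obstacle is the geometric perturbation lemma: one must verify cleanly that, as the $m+1$ well-separated vertices slide independently inside Ahlfors balls of radius $\eta r$, the $m$-volume of the base simplex drops by at most $O(\eta)r^{m}$ and the affine hull tilts by at most $O(\eta)$, so that a point at height $h_{y}$ above $L$ still has height comparable to $h_{y}$ above every perturbed plane $L'$. Once this is in place, the rest of the argument is a single Chebyshev-type integration against $E$, with the delicate point being the self-referential choice $\eta\simeq h_{y}/r$ that couples the size of the perturbation to the unknown deviation itself and produces the final H\"{o}lder exponent $\tau$.
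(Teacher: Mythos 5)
Your central mechanism --- perturb all $m+2$ vertices inside balls of radius $\eta r$, keep the discrete curvature bounded below pointwise, and integrate against the lower Ahlfors bound --- is exactly the mechanism of Proposition~\ref{prop:eta-d-balance}, and your coupling $\eta r\simeq h_y$ would even produce a better exponent than the paper's $\tau$. The genuine gap is in your first step. You claim that lower Ahlfors regularity \emph{alone} yields an inscribed $m$-simplex of volume $\ge c_1 r^m$, arguing that otherwise $\Sigma\cap\Ball(x,r)$ lies in an $O(\varepsilon^{1/m}r)$-neighbourhood of an affine $(m-1)$-plane and therefore has $\HM^m$-mass $\lesssim\varepsilon^{1/m}r^m$. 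That last implication is false: the $\HM^m$-measure of a subset of a thin slab is not controlled by the slab's width unless you also have an \emph{upper} mass bound $\HM^m(\Sigma\cap\Ball(y,\rho))\le C\rho^m$, and the hypothesis here (as in Proposition~\ref{prop:eta-d-balance}) is only the lower bound. For $m=1$, $n=2$, a compact set containing a filled disc of tiny radius satisfies $\HM^1(\Sigma\cap\Ball(y,\rho))=\infty\ge A\rho$ at every point and scale, yet sits inside an arbitrarily thin neighbourhood of a $0$-plane, so no contradiction with lower regularity arises. This is not a cosmetic issue: your tilt estimate for $L'$ and the height bound $\dist(y',L')\ge h_y-C_4\eta r$ need the base simplex to have minimal height comparable to $r$, so the whole pointwise curvature bound collapses without this step (with a possibly degenerate base, the admissible perturbation radius and the tilt control degrade with its minimal height, which is exactly the difficulty the paper's $\varsigma_m(\eta)\approx\eta^{(m+1)^2}$ quantifies).

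The paper sidesteps the need for an a priori fat base: in Corollary~\ref{cor:beta-est} one takes the simplex $T$ of \emph{maximal} $\HM^{m+1}$-measure with vertices in $\Sigma\cap\CBall(x,r)$; maximality alone forces every point of $\Sigma\cap\CBall(x,r)$ to lie within $2\hmin(T)$ of the plane of its largest face, and $T$ is automatically $(\eta,r)$-voluminous with the self-referential parameter $\eta\simeq\hmin(T)/r$, so the balance of Proposition~\ref{prop:eta-d-balance} (whose perturbation radius comes from Proposition~\ref{prop:perturb}) bounds $\eta$ and hence $\nbeta(x,r)$, at the price of the smaller exponent $\tau=\lambda/\kappa$. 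To salvage your route and its better exponent you would have to either add two-sided Ahlfors regularity as a hypothesis (then your covering argument for step one is fine) or restructure the selection of the base simplex along the lines of the paper.
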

This proposition plays a key role in \S\ref{sec:tangent-planes} where we
establish the following
\begin{introthm}[cf. Theorem~\ref{thm:C1tau}]
  \label{intro:thm:C1tau}
  Let $\Sigma \in \F(m)$ be an $m$-fine set such that $\E_p(\Sigma) \le E <
  \infty$ for some $p > m(m+2)$. Then there exist constants $R > 0$ and $\tau
  \in (0,1)$ such that for each $x \in \Sigma$ the set $\Sigma \cap \Ball(x,R)$
  is a graph of some function $F_x \in C^{1,\tau}(T_x\Sigma,
  T_x\Sigma^{\perp})$. Moreover the radius $R$ and the H{\"o}lder norm of $DF_x$
  depend only on $E$, $m$ and $p$.
\end{introthm}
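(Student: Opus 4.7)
The plan is to combine the $\beta$-number decay coming from Proposition~\ref{intro:prop:beta-est} with the metric regularity built into $m$-fineness. First one verifies that an $m$-fine set $\Sigma$ with finite $p$-energy is uniformly $m$-Ahlfors regular below some scale $R_0 = R_0(E,m,p)$; this ought to follow either from Definition~\ref{def:fine} itself or by combining it with the stopping-distance machinery used for Theorem~\ref{intro:thm:uahlreg}. Once this is in place, Proposition~\ref{intro:prop:beta-est} yields constants $C$ and $\tau \in (0,1)$, depending only on $E$, $m$ and $p$, such that $\beta(x,r) \le C r^{\tau}$ for every $x \in \Sigma$ and every $r \le R_0$.

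Next I would extract the tangent planes and quantify their oscillation. For each $x \in \Sigma$ and $r > 0$ let $L(x,r)$ be an affine $m$-plane through $x$ almost realizing $\beta(x,r)$. By Ahlfors regularity the slab of $\Sigma \cap \Ball(x,r/2)$ about both $L(x,r)$ and $L(x,r/2)$ contains $m+1$ affinely independent points at mutual distances of order $r$, which forces
\[
  \dgras\bigl(L(x,r),L(x,r/2)\bigr) \le C' r^{\tau}.
\]
Telescoping the dyadic scales down to $0$ gives a well-defined limit $T_x\Sigma$ with $\dgras(L(x,r),T_x\Sigma) \le C'' r^{\tau}$. Running the same comparison at two nearby points $x,y \in \Sigma$ with $r = 2|x-y|$, so that $\Ball(x,r) \cap \Ball(y,r)$ carries enough $\HM^m$-mass, yields
\[
  \dgras(T_x\Sigma, T_y\Sigma) \le C_{\star}|x-y|^{\tau},
\]
with all constants depending only on $E$, $m$ and $p$.

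Finally, this Hölder control is converted into a graph representation. One chooses $R$ small enough that for every $y \in \Sigma \cap \Ball(x,R)$ the segment $\overline{xy}$ makes an angle below $\pi/8$ with $T_x\Sigma$: indeed $y$ lies within $C\beta(x,|x-y|)|x-y|$ of $x + T_y\Sigma$ while $T_y\Sigma$ is $C_{\star}|x-y|^{\tau}$-close to $T_x\Sigma$. Hence the orthogonal projection $\pi_x \colon \Sigma \cap \Ball(x,R) \to T_x\Sigma$ is bi-Lipschitz (injectivity from the cone condition, surjectivity onto a ball in $T_x\Sigma$ from the Ahlfors lower bound), so $\Sigma \cap \Ball(x,R)$ is the graph of some $F_x$. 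Identifying $DF_x(\pi_x(y))$ with the linear map over $T_x\Sigma$ whose graph equals $T_y\Sigma - y$ transports the $\tau$-Hölder modulus of $y \mapsto T_y\Sigma$ onto $DF_x$, giving $F_x \in C^{1,\tau}$ with norms depending only on $E$, $m$, $p$.

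The main technical obstacle is the quantitative plane-to-plane comparison at the bottom of the second paragraph: one must upgrade two one-sided bounds (each plane is close to $\Sigma$) into a genuine two-sided bound between the planes, with a constant depending only on the Ahlfors regularity constant. Once that lemma is in hand, everything else is careful book-keeping of telescoping Hölder estimates across dyadic scales.
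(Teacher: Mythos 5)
Your outline (energy $\Rightarrow$ $\beta$-decay $\Rightarrow$ dyadic telescoping of approximating planes $\Rightarrow$ H\"older oscillation of tangent planes $\Rightarrow$ graph via the projection $\pi_x$) is the same skeleton as the paper's, but the two steps you lean on Ahlfors regularity for are exactly the ones it cannot carry, and one of them is the step you yourself flag as unresolved. The plane-to-plane comparison $\dgras(L(x,r),L(x,r/2))\lesssim r^{\tau}$ does not follow from the lower bound $\HM^m(\Sigma\cap\Ball(x,r))\ge A_\Sigma r^m$ plus one-sided flatness: no upper Ahlfors bound is assumed anywhere, so all of the mass of $\Sigma\cap\Ball(x,r/2)$ could a priori concentrate in a tiny ball inside the slab (think of many crinkled sheets), and then two planes both containing $\Sigma$ in a thin slab need not be close to each other at all. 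What rescues this is precisely condition II of Definition~\ref{def:fine}: $\ntheta(x,r)\le M_\Sigma\,\nbeta(x,r)$ says the \emph{plane} is close to $\Sigma$, so every direction $v$ in the competitor plane is witnessed by an actual point $z\in\Sigma$ near $y+v$, and feeding that point into the flatness of the other plane gives the two-sided angle bound (this is Lemma~\ref{lem:bap-osc}, via Proposition~\ref{prop:dist-ang}). The same remark applies to your surjectivity claim: ``surjectivity onto a ball in $T_x\Sigma$ from the Ahlfors lower bound'' is false as stated, since a lower mass bound does not prevent the projection from having holes; the paper's Lemma~\ref{lem:domain} proves that the domain of $F_x$ is the full disc by an open-and-closed argument whose key step again uses $\ntheta(y,\rho)\le\frac14$ to produce a point of $\Sigma$ projecting into any allegedly empty region. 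So the ``main technical obstacle'' you name is not book-keeping: it is the content of the theorem, and the tool that closes it is the gap-control half of $m$-fineness, which your argument never actually invokes.

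The second gap is the uniformity claim. Your first paragraph asserts uniform Ahlfors regularity below a scale $R_0(E,m,p)$ ``from Definition~\ref{def:fine} itself or by combining it with the stopping-distance machinery'' of the uniform Ahlfors regularity theorem. Neither works at that stage: the definition only provides constants $A_\Sigma,R_\Sigma,M_\Sigma$ depending on $\Sigma$, and the stopping-distance argument (Proposition~\ref{prop:big-proj-fat-simp}) is proved for $(\delta,m)$-admissible sets and needs the linking condition, which an $m$-fine set is not known to satisfy until after one knows it is a closed $C^1$ manifold --- i.e.\ after the conclusion you are proving. The paper resolves this by a bootstrap: first run the whole argument with $\Sigma$-dependent constants, conclude that $\Sigma$ is a compact closed $C^{1,\tau}$ manifold, hence admissible (Example~\ref{ex:mfld}), and only then invoke Theorem~\ref{thm:uahlreg}, Corollary~\ref{cor:ind-const} and Theorem~\ref{thm:adm-fine} to replace $A_\Sigma,R_\Sigma,M_\Sigma$ by quantities depending only on $E$, $m$, $p$, which then feed back into the radius and the H\"older norm. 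Without that (or an equivalent device), your proof only yields constants depending on $\Sigma$, which falls short of the statement.
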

The proof employs a technique similar to the one used by David, Kenig and Toro
in the proof of~\cite[Proposition~9.1]{MR1808649}. It is technical but with the
Proposition~\ref{intro:prop:beta-est} it becomes rather straightforward. Bounds
on the $\beta$-numbers together with the properties of $m$-fine sets imply that
$\Sigma$ is Reifenberg flat with vanishing constant (see
Definition~\ref{def:rfvc}) and let us prove $C^{1,\tau}$ regularity. Our proof
is independent of the result by David, Kenig and Toro~\cite{MR1808649} and the
outcome is slightly stronger. We show that the scale $R$ and the H{\"o}lder norm
of $DF_x$ do not depend on $\Sigma$ but only on the energy bound $E$. We believe
that this will be crucial when we apply our results in variational problems.

It is worth mentioning that our technique does not use any concept of a
\emph{trapping box} which was introduced in~\cite[\S5.1]{1102.3642}. Instead we
exploit the fact that $(\delta,m)$-admissible sets with finite $p$-energy are
$m$-fine, which gives a bound on the Reifenberg's $\theta$-numbers of $\Sigma$
(also called \emph{bilateral $\beta$-numbers}).

In \S\ref{sec:improved-holder} we improve the exponent $\tau$ to the optimal
value $\alpha = 1 - \frac{m(m+2)}{p}$. This is done employing the method
developed by Strzelecki, Szuma{\'n}ska and von der
Mosel~\cite[\S6.1]{MR2668877}. Again, we were able to simplify things a little
bit. We introduce only two sets of \emph{bad parameters} $\Sigma_0$ and
$\Sigma_1(x_0,\ldots,x_m)$ and we employ good properties of the metric on the
Grassmannian gathered in \S\ref{sec:grass}.

The proof of $C^{1,\alpha}$ regularity boils down to estimating the oscillation
of the tangent planes. The angle between two tangent planes
$\dgras(T_x\Sigma,T_y\Sigma)$ is estimated by the angle $\dgras(X,Y)$, where $X$
and $Y$ are ''secant'' $m$-planes through some appropriately chosen points in
$\Sigma$. First we choose a very big natural number $N \in \N$. The points
$x_0,\ldots,x_m$ and $y_0,\ldots,y_m$ of $\Sigma$ which span $X$ and $Y$
respectively are chosen so that they form almost orthogonal systems and so that
the distances from $x$ to any of $x_0,\ldots,x_m$ or from $y$ to any of
$y_0,\ldots,y_m$ is $N$ times smaller than the distance from $x$ to $y$.
Applying the fundamental theorem of calculus, we estimate the angle between
$T_x\Sigma$ and $X$ by the oscillation of the tangent planes on a set of
diameter $\frac{|x-y|}N$. The same applies to $T_y\Sigma$ and $Y$. Then using
the bound $\E_p(\Sigma) \le E$ we prove that $\dgras(X,Y) \lesssim
|x-y|^{\alpha}$. Next we use a method drawn from the theory of PDE and iterate
our estimates to show that the error made when passing from $T_x\Sigma$ to $X$
and from $T_y\Sigma$ to $Y$ is negligible.

We expect that theorems obtained here can be used in proving further results. We
plan to study other energy functionals and their relations with regularity of
compact subsets of $\Rn$. We believe that our work can also be applied in
variational problems with topological constraints. Furthermore we want to pursue
the connections of this theory with the theory of Sobolev spaces.



\mysection{Preliminaries}
\label{sec:prelim}

\mysubsection{Some notation}
Throughout this paper $m$ and $n$ are two fixed positive integers satisfying $0
< m < n$. The symbol $\Rn$ stands for the $n$-dimensional Euclidean space with
the standard scalar product. We write $\Sphere$ for the unit $(n-1)$-dimensional
sphere centered at the origin and we write $\Ball$ for the unit $n$-dimensional
open ball centered at the origin. We also use the symbols 
\begin{displaymath}
  \Sphere_r := r \Sphere\,, \quad 
  \Ball_r := r \Ball\,, \quad
  \Sphere(x,r) := x + \Sphere_r \quad
  and \quad
  \Ball(x,r) := x + \Ball_r \,.
\end{displaymath}

Let $H$ be an $m$-dimensional linear subspace of $\Rn$ and let $x_0$, \ldots,
$x_k$ be some points in $\Rn$. We use the symbol $\pi_H$ to denote the
orthogonal projection onto $H$ and $Q_H := I - \pi_H$ to denote the orthogonal
projection onto the orthogonal complement $H^{\perp}$. We write $\aff\{x_0,
\ldots, x_m \}$ for the smallest affine subspace of $\Rn$ containing points
$x_0$, \ldots, $x_m$, i.e.
\begin{displaymath}
  \aff\{x_0, \ldots, x_m \} := x_0 + \opspan\{x_1-x_0, \ldots, x_m-x_0\} \,.
\end{displaymath}
We use the notation $\simp(x_0, \ldots, x_k)$ for the convex hull of the set
$\{x_0,\ldots,x_k\}$, which in a typical case is a $k$-dimensional simplex with
vertices $x_0$, \ldots, $x_k$. The symbol $\HM^k$ stands for the $k$-dimensional
Hausdorff measure.

\begin{rem}
  We assume that every simplex $T = \simp(x_0, x_1, \ldots, x_k)$ is
  equipped with appropriate ordering of its vertices, so e.g. $T' = \simp(x_1,
  x_0, x_2, \ldots, x_k)$ is \emph{not} the same as $T$.
\end{rem}

\begin{defin}
  \label{def:face-hmin}
  Let $T = \simp(x_0, \ldots, x_k)$. We define
  \begin{itemize}
  \item $\face_i T := \simp(x_0, \ldots, \widehat{x_i}, \ldots, x_k)$ - the $i$-th face of $T$,
  \item $\height_i(T) := \dist(x_i, \aff\{ x_0, \ldots, \widehat{x_i} , \ldots, x_k \}$ - the height lowered from $x_i$,
  \item $\hmin(T) := \min\{ \height_i(T) : i = 0,1,\ldots,k \}$ - the minimal height of $T$.
  \end{itemize}
\end{defin}

In the course of the proofs we will frequently use cones and ''conical caps'' of
different sorts.
\begin{defin}
  \label{def:cones}
  We define
  \begin{itemize}
  \item the \emph{cone} with ''axis'' $H^{\perp}$ and ''angle'' $\delta$ as the set
    \begin{displaymath}
      \Cone(\delta,H) := \{ x \in \Rn : |Q_H(x)| \ge \delta |x| \} \,,
    \end{displaymath}
  \item the \emph{shell} (or the \emph{$n$-annulus}) of radii $r$ and $R$ as the
    open set
    \begin{displaymath}
      \Shell(r,R) := \Ball_R \setminus \overline{\Ball}_r \,,
    \end{displaymath}
  \item the \emph{conical cap} with ''angle'' $\delta$, ''axis'' $H^{\perp}$
    and radii $r$ and $R$ as the intersection of a cone with a shell
    \begin{displaymath}
      \Cone(\delta,H,r,R) := \Cone(\delta,H) \cap \Shell(r,R) \,.
    \end{displaymath}
  \end{itemize}
\end{defin}

\begin{rem}
  We have the identity
  \begin{displaymath}
    \Cone(\sqrt{1 - \delta^2},H^{\perp}) = \overline{\Rn \setminus \Cone(\delta,H)} \,.
  \end{displaymath}
\end{rem}

We write $G(n,m)$ to denote the Grassmann manifold of $m$-dimensional linear
subspaces of $\Rn$. Whenever we write $U \in G(n,m)$ we identify the point $U$
of the space $G(n,m)$ with the appropriate $m$-dimensional subspace of $\Rn$. In
particular any vector $u \in U$ is treated as an $n$-dimensional vector in the
ambient space $\Rn$ which happens to lie in $U \subseteq \Rn$.

All the subscripted constants $C_1$, $C_2$, \ldots, $R_1$, $R_2$, \ldots have
global meaning and we never use the same subscripted name for two different
constants. We use the notation $C = C(x,y,z)$ to denote that $C$ depends only on
the values of $x$, $y$ and $z$.


\mysubsection{Degree of a map and the linking number}
In this paragraph we briefly present known facts about the degree of a map. We
also state some simple propositions about the linking number in the setting
suitable for our purposes. These notions come from algebraic topology. As a
reference we use the book by Hirsch~\cite{MR0448362}. A clear and detailed
presentation of degree modulo $2$ can be also found in e.g. Blat's
paper~\cite{MR2537023}.

The contents of this paragraph is based on a paper by Strzelecki and von der
Mosel~\cite{1102.3642}. We list here some results from~\cite{1102.3642} which
will be needed later on.

The following fact summarizes of a few lemmas and theorems proved
in~\cite[Chapter 5, \S 1]{MR0448362}.
\begin{fact}
  Let $M$ and $N$ be compact manifolds of class $C^1$ and of the same dimension
  $k$. Assume that $N$ is connected. There exists a map
  \begin{displaymath}
    \deg_2 : C^0(M,N) \to \Z_2 := \{ 0, 1 \}
  \end{displaymath}
  such that
  \begin{enumerate}[(i)]
  \item If $\deg_2 g = 1$, then $g \in C^0(M,N)$ is surjective;
  \item If $H : M \times [0,1] \to N$ is continuous, $f(x) := H(x,0)$ and $g(x) := H(x,1)$, then 
    \begin{displaymath}
      \deg_2 f = \deg_2 g \,;
    \end{displaymath}
  \item If $f : M \to N$ is of class $C^1$ and $y \in N$ is a regular value of $f$, then
    \begin{displaymath}
      \deg_2 f = \# f^{-1}(y) \mod 2 \,.
    \end{displaymath}
  \end{enumerate}
\end{fact}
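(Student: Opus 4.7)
The plan is to define $\deg_2$ first on $C^1$ maps via property (iii), then to establish that this definition is independent of the regular value, then to extend the definition to continuous maps by smooth approximation, and finally to check the three listed properties. This is the standard strategy from differential topology and the arguments rest essentially on Sard's theorem together with the classification of compact $1$-manifolds.

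First I would invoke Sard's theorem to guarantee that, for any $f \in C^1(M,N)$, the set of regular values is dense in $N$; moreover, by compactness of $M$ and the implicit function theorem, $f^{-1}(y)$ is a finite set whenever $y$ is a regular value. I would then \emph{define} $\deg_2 f := \#f^{-1}(y) \bmod 2$ for such $f$ and $y$. The first thing to prove is independence of the regular value. Given two regular values $y_0, y_1$ I would use connectedness of $N$ to pick a smooth path $\gamma : [0,1] \to N$ joining them, and after a small perturbation (applying Sard again) arrange that $\gamma$ is transverse to $f$. Then $f^{-1}(\gamma([0,1]))$ is a compact $1$-dimensional submanifold of $M$ whose boundary is $f^{-1}(y_0) \sqcup f^{-1}(y_1)$. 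Because every compact $1$-manifold has an even number of boundary points, $\#f^{-1}(y_0) \equiv \#f^{-1}(y_1) \pmod 2$.

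The extension to $C^0(M,N)$ relies on the Whitney approximation theorem: any continuous $f$ can be uniformly approximated by a $C^1$ map $\tilde f$ which is moreover $C^0$-homotopic to $f$ through maps with values in a fixed tubular neighborhood of $N$ in some ambient $\R^L$. Set $\deg_2 f := \deg_2 \tilde f$. To see this is well-defined, and at the same time to obtain property (ii), I would apply the same $1$-manifold cobordism argument to a $C^1$ homotopy: if $H \in C^1(M \times [0,1], N)$ and $y$ is a regular value of $H$ (existing by Sard) that is also regular for $H(\cdot,0)$ and $H(\cdot,1)$, then $H^{-1}(y)$ is a compact $1$-manifold with boundary contained in $M \times \{0,1\}$, so the two boundary counts agree modulo $2$. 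Any continuous homotopy can be approximated by a $C^1$ one inside the tubular neighborhood and then retracted back to $N$, so homotopy invariance propagates from $C^1$ to $C^0$. Property (i) then follows by contrapositive: if $g \in C^0(M,N)$ misses some point $y \in N$, any sufficiently fine $C^1$ approximation $\tilde g$ also misses $y$; hence $y$ is trivially a regular value of $\tilde g$ with empty preimage, so $\deg_2 g = \deg_2 \tilde g = 0$.

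The main obstacle is the well-definedness step: both the independence of the regular value in (iii) and the compatibility of the definition under smooth approximation ultimately depend on the same cobordism argument involving compact $1$-manifolds. Everything else (existence of regular values, upgrading continuous homotopies to smooth ones, the surjectivity statement) is a consequence of Sard's theorem and Whitney approximation, applied in a standard way.
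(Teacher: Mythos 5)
Your construction is correct: defining $\deg_2$ on $C^1$ maps by counting preimages of a regular value, proving independence of the regular value and homotopy invariance via the boundary-of-a-compact-$1$-manifold argument, and extending to $C^0$ by smooth approximation is exactly the standard development. The paper itself gives no proof of this Fact -- it is stated as a summary of results cited from Hirsch's book (Chapter 5, \S 1), and your argument is essentially the one found there, so there is nothing to compare beyond noting that your sketch reproduces the cited source's approach.
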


We introduce the following definition for brevity in stating
Lemmas~\ref{lem:lk-htp-inv}-\ref{lem:intpoint}. We shall use it only in this
paragraph.
\begin{defin}
  Let $I$ be any countable set of indices. We say that $\Sigma \subseteq \Rn$ is
  a \emph{good set} if there exist $m$-dimensional manifolds $M_i$ of class
  $C^1$ and continuous maps $f_i \in C^0(M_i,\Rn)$, such that
  \begin{displaymath}
    \Sigma = \bigcup_{i \in I} f_i(M_i) \cup Z \,,
  \end{displaymath}
  where $\HM^m(Z) = 0$.
\end{defin}

Now we can define the linking number modulo $2$ in the setting appropriate for
our needs.
\begin{defin}
  \label{def:lk2-single}
  Let $M$ and $N$ be compact manifolds of class $C^1$ of dimension $m$ and
  $n-m-1$ respectively. Assume $N$ is embedded in $\Rn$ and assume we have a
  continuous mapping $f : M \to \Rn$ such that $(\opim f) \cap N = \emptyset$. We
  define the following function
  \begin{align*}
    F : M \times N &\to \Sphere^{n-1} \,,\\
    F(w,z) &:= \frac{f(w) - z}{|f(w) - z|} \,,
  \end{align*}
  and set
  \begin{displaymath}
    \oplk(f,N) := \deg_2 F \,.
  \end{displaymath}
\end{defin}
In our applications $N$ will usually be a true round sphere.

\begin{defin}
  \label{def:lk2-good}
  Let $\Sigma \subseteq \Rn$ be a good set and let $N \subseteq \Rn$ be a
  compact manifold of class $C^1$ of dimension $n-m-1$. Assume that $\Sigma \cap
  N = \emptyset$. For each $i \in I$ we define
  \begin{align*}
    F_i : M_i \times N &\to \Sphere^{n-1} \,,\\
    F_i(w,z) &:= \frac{f_i(w) - z}{|f_i(w) - z|} \,,
  \end{align*}
  and we set
  \begin{displaymath}
    \oplk(\Sigma,N) :=
    \left\{
      \begin{array}{ll}
        1 & \text{if there exists an $i \in I$ such that } \deg_2(F_i) = 1 \,, \\
        0 & \text{otherwise} \,.
      \end{array}
    \right.
  \end{displaymath}
  We say that \emph{$\Sigma$ is linked with $N$} if $\oplk(\Sigma,N) = 1$.
\end{defin}

\begin{lem}[\!\!\cite{1102.3642}, Lemma 3.2]
  \label{lem:lk-htp-inv}
  Let $A \subseteq \Rn$ be a good set and let $N$ be a compact, closed
  $(n-m-1)$-dimensional manifold of class $C^1$, and let $N_j = h_j(N)$ for
  $j=0,1$, where $h_j$ is a $C^1$ embedding of $N$ into $\Rn$ such that $N_j
  \cap \Sigma = \emptyset$. If there is a homotopy
  \begin{displaymath}
    G : N \times [0,1] \to \Rn \setminus \Sigma \,,
  \end{displaymath}
  such that $G(-,0) = h_0$ and $G(-,1) = h_1$, then 
  \begin{displaymath}
    \oplk(\Sigma,N_0) = \oplk(\Sigma,N_1) \,.
  \end{displaymath}
\end{lem}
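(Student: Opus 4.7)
The plan is to reduce the claim to property (ii) of the Fact (homotopy invariance of $\deg_2$) applied to each piece $f_i(M_i)$ of the good set $\Sigma = \bigcup_{i\in I} f_i(M_i) \cup Z$ separately. By Definition~\ref{def:lk2-good}, $\oplk(\Sigma,N_j)=1$ iff there exists some index $i$ such that the map
\begin{displaymath}
  F_i^j : M_i \times N \to \Sphere^{n-1}, \qquad F_i^j(w,z) := \frac{f_i(w) - h_j(z)}{|f_i(w) - h_j(z)|},
\end{displaymath}
has $\deg_2 F_i^j = 1$. So it suffices to show $\deg_2 F_i^0 = \deg_2 F_i^1$ for every single $i \in I$; then the ''there exists'' quantifier in the definition produces the same value for $N_0$ and $N_1$.

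Fix $i \in I$. The homotopy $G : N \times [0,1] \to \Rn \setminus \Sigma$ provided by the hypothesis gives a natural candidate linking homotopy
\begin{displaymath}
  H_i : M_i \times N \times [0,1] \to \Sphere^{n-1}, \qquad H_i(w,z,t) := \frac{f_i(w) - G(z,t)}{|f_i(w) - G(z,t)|},
\end{displaymath}
which interpolates between $F_i^0$ at $t=0$ and $F_i^1$ at $t=1$. I would then verify that $H_i$ is well-defined and continuous: since $f_i(w) \in f_i(M_i) \subseteq \Sigma$ while $G(z,t) \in \Rn\setminus\Sigma$ for all $(z,t)$, the difference $f_i(w) - G(z,t)$ never vanishes, and by compactness of $M_i \times N \times [0,1]$ it is uniformly bounded away from $0$, so the quotient defines a continuous map into $\Sphere^{n-1}$.

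The dimensions also match: $M_i \times N$ is a compact $C^1$ manifold of dimension $m + (n-m-1) = n-1$, and the target $\Sphere^{n-1}$ is a connected compact $C^1$ manifold of the same dimension, so the Fact applies and property (ii) yields $\deg_2 F_i^0 = \deg_2 F_i^1$. Combining this equality for all $i \in I$ with the ''there exists $i$'' definition of $\oplk$ completes the proof.

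The only subtle point—arguably the ''main obstacle''—is purely conceptual rather than technical: one has to notice that although $\deg_2$ in the Fact is stated for maps between \emph{two fixed} manifolds $M$ and $N$, the right manifolds to plug in here are $M_i \times N$ and $\Sphere^{n-1}$, and the homotopy is built by varying only the $N$-coordinate of the source via $G$, with $M_i$ along for the ride. Once this repackaging is carried out, the conclusion is immediate and requires no analytical estimates beyond the trivial positive lower bound on $|f_i(w)-G(z,t)|$.
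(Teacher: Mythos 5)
Your proof is correct, and it is the intended argument: the paper does not prove this lemma at all but quotes it from~\cite{1102.3642}, where the proof is exactly this reduction to homotopy invariance of $\deg_2$ applied to the map $(w,z,t)\mapsto (f_i(w)-G(z,t))/|f_i(w)-G(z,t)|$ for each piece $f_i(M_i)$ separately, the denominator being nonzero because $f_i(M_i)\subseteq\Sigma$ while $G$ avoids $\Sigma$. The only point worth making explicit is that Definition~\ref{def:lk2-good} computes $\oplk(\Sigma,N_j)$ from maps on $M_i\times N_j$, whereas your $F_i^j$ lives on $M_i\times N$; these have the same $\deg_2$ because $\opid\times h_j$ is a $C^1$ diffeomorphism onto $M_i\times N_j$ and mod-$2$ degree is unchanged under precomposition with such a map, so your identification is harmless.
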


\begin{lem}[\!\!\cite{1102.3642}, Lemma 3.4]
  \label{lem:lk-far}
  Let $\Sigma \subseteq \Rn$ be a good set. Chose $y \in \Rn$ and $\varepsilon
  \in \R$ such that $0 < \varepsilon < r < 2 \varepsilon$ and $\dist(y,\Sigma)
  \ge 3 \varepsilon$. Then
  \begin{displaymath}
    \oplk(\Sigma,\Sphere(y,r) \cap (y+V)) = 0
  \end{displaymath}
  for each $V \in G(n,n-m)$.
\end{lem}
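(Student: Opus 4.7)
The plan is to fix an arbitrary $i\in I$ and show that $\deg_2 F_i = 0$; by Definition~\ref{def:lk2-good} this gives $\oplk(\Sigma,N)=0$. The key geometric point is that $N = \Sphere(y,r)\cap(y+V) \subseteq \Ball(y,r)$ is well separated from $\Sigma$ since $\dist(y,\Sigma)\ge 3\varepsilon > r$; in the formula for $F_i$ the point $z\in N$ is therefore a small perturbation of $y$ relative to $f_i(w)\in\Sigma$, and can be homotoped to $y$ without the denominator vanishing.

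Concretely, I would introduce the straight-line homotopy
\[
  H_t(w,z) \;:=\; \frac{f_i(w) - \bigl((1-t)z + ty\bigr)}{\bigl|f_i(w) - \bigl((1-t)z + ty\bigr)\bigr|}\,,\qquad t\in[0,1]\,.
\]
The denominator is bounded below by $|f_i(w)-y| - (1-t)|z-y| \ge 3\varepsilon - r > \varepsilon > 0$, since $|z-y|=r < 2\varepsilon$ and $f_i(w)\in\Sigma$, so $H\colon M_i\times N\times[0,1]\to\Sphere^{n-1}$ is a well-defined continuous homotopy from $F_i = H_0$ to the map $H_1(w,z) = (f_i(w)-y)/|f_i(w)-y|$, which is independent of $z$. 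Applying the homotopy invariance of $\deg_2$ and then further homotoping this $z$-independent map to $\Phi := \tilde G\circ p_1$, where $p_1\colon M_i\times N\to M_i$ is the projection and $\tilde G\colon M_i\to\Sphere^{n-1}$ is a sufficiently close $C^1$ approximation of $w\mapsto(f_i(w)-y)/|f_i(w)-y|$ (obtained by standard smoothing followed by radial projection onto the unit sphere), one obtains $\deg_2 F_i = \deg_2\Phi$.

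To finish, I would apply Sard's theorem to $\tilde G$. Since $\Phi$ factors as $\tilde G\circ p_1$, the derivative $d\Phi_{(w,z)}$ has the same rank as $d\tilde G_w$, so a point $q\in\Sphere^{n-1}$ is a regular value of $\Phi$ iff it is a regular value of $\tilde G$, and $\Phi^{-1}(q) = \tilde G^{-1}(q)\times N$. I expect the main obstacle to be the need to treat two cases separately. If $n-m-1\ge 1$, then $\dim M_i = m < n-1 = \dim\Sphere^{n-1}$, so every point of $M_i$ is critical for $\tilde G$, and by Sard $\tilde G(M_i)$ has Lebesgue measure zero in $\Sphere^{n-1}$; any $q$ in its nonempty complement is trivially a regular value of $\Phi$ with empty preimage, hence $\deg_2\Phi\equiv 0\pmod 2$. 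If instead $m=n-1$, then $N$ consists of exactly two points and Sard applied to $\tilde G$ yields a regular value $q$ whose preimage $\Phi^{-1}(q)=\tilde G^{-1}(q)\times N$ has even cardinality $2\cdot\#\tilde G^{-1}(q)$, so $\deg_2\Phi = 0$. In either case $\deg_2 F_i = 0$, and as $i\in I$ was arbitrary the conclusion follows.
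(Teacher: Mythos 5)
Your argument is correct. Note that the paper itself does not prove this lemma; it is quoted verbatim from~\cite{1102.3642} (Lemma~3.4), so there is no internal proof to compare against. Your route is the natural one: the hypothesis $\dist(y,\Sigma)\ge 3\varepsilon>r$ guarantees that the radial contraction of the linking sphere to its center $y$ never makes the numerator $f_i(w)-\bigl((1-t)z+ty\bigr)$ vanish (your lower bound $3\varepsilon-r>\varepsilon$ is right), so by homotopy invariance of $\deg_2$ each $F_i$ has the same degree as a map factoring through the projection onto $M_i$. The remaining steps are also handled correctly: the smoothing of $w\mapsto (f_i(w)-y)/|f_i(w)-y|$ is genuinely needed (a merely continuous map factoring through the $m$-dimensional $M_i$ could still be surjective onto $\Sphere^{n-1}$, so one cannot invoke non-surjectivity directly), and the case split is exhaustive since $0<m<n$: for $m<n-1$ the $C^1$ image has measure zero, giving a regular value with empty preimage, while for $m=n-1$ the fiber $\tilde G^{-1}(q)\times N$ has even cardinality because $N$ is a two-point set. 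Since every $\deg_2 F_i=0$, Definition~\ref{def:lk2-good} yields $\oplk(\Sigma,N)=0$, as claimed.
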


\begin{lem}[\!\!\cite{1102.3642}, Lemma 3.5]
  \label{lem:intpoint}
  Let $\Sigma \subseteq \Rn$ be a good set. Assume that for some $y \in \Rn$, $r
  > 0$ and $V \in G(n,n-m)$ we have
  \begin{displaymath}
    \oplk(\Sigma,\Sphere(y,r) \cap (y+V)) = 1 \,.
  \end{displaymath}
  Then the disk $\Ball(y,r) \cap (y+V)$ contains at least one point of $\Sigma$.
\end{lem}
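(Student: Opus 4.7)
The plan is to argue by contraposition: I assume that the open disk $D := \Ball(y,r) \cap (y+V)$ is disjoint from $\Sigma$ and derive that $\oplk(\Sigma, \Sphere(y,r) \cap (y+V)) = 0$, contradicting the hypothesis. The key initial observation is that the bounding sphere $S := \Sphere(y,r) \cap (y+V)$ is itself disjoint from $\Sigma$, since $\oplk(\Sigma, S)$ is defined. Together with $\Sigma \cap D = \emptyset$, this forces the whole closed disk $\overline{D}$ to lie in $\Rn \setminus \Sigma$; in particular $y \notin \Sigma$, so $d := \dist(y,\Sigma) > 0$.

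Next I would pick $\rho > 0$ small enough that $\rho < r$ and $9\rho/4 \le d$, and consider the radial-contraction homotopy
\begin{displaymath}
  G(z,t) := y + \Bigl( 1 - t + t\,\tfrac{\rho}{r} \Bigr) (z - y) \,,
\end{displaymath}
viewed as a continuous map from the standard sphere $\Sphere^{n-m-1}$ (identified with $S$ via any fixed $C^1$ diffeomorphism) into $\Rn$. At $t=0$ this gives the embedding onto $S$ and at $t=1$ the rescaled embedding onto $\Sphere(y,\rho) \cap (y+V)$. The image of $G$ traces the closed annular ring $\{y + sv : v \in V,\ |v|=1,\ \rho \le s \le r\} \subseteq \overline{D}$, which is disjoint from $\Sigma$ by the previous paragraph. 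Hence Lemma~\ref{lem:lk-htp-inv} applies and gives $\oplk(\Sigma, S) = \oplk(\Sigma, \Sphere(y,\rho) \cap (y+V))$. Setting $\varepsilon := 3\rho/4$, one has $\varepsilon < \rho < 2\varepsilon$ and $\dist(y,\Sigma) \ge d \ge 3\varepsilon$, so Lemma~\ref{lem:lk-far} yields $\oplk(\Sigma, \Sphere(y,\rho) \cap (y+V)) = 0$, closing the contradiction.

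I do not expect any serious obstacle: given the two preceding lemmas, the argument reduces to contracting the sphere within its own $(n-m)$-dimensional disk down to a small sphere around $y$ and invoking the vanishing result for far-away points. The only point that requires care is verifying that the image of the homotopy genuinely avoids $\Sigma$; this is why one must note at the outset that the \emph{closed} disk $\overline{D}$ (not merely the open disk $D$) is disjoint from $\Sigma$, combining the hypothesis with the defining condition $S \cap \Sigma = \emptyset$.
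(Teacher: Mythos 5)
Your argument is exactly the intended one (the paper does not reprove this lemma but cites \cite{1102.3642}, where the proof is the same radial contraction inside the $\Sigma$-free disk followed by the ``far away'' vanishing lemma), and the homotopy bookkeeping — noting that the \emph{closed} disk misses $\Sigma$, so the annulus swept by $G$ lies in $\Rn\setminus\Sigma$, and checking $\varepsilon<\rho<2\varepsilon$, $3\varepsilon\le d$ — is all correct.

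The one step that does not follow from the hypotheses as stated is the inference ``$y\notin\Sigma$, so $d:=\dist(y,\Sigma)>0$''. This needs $\Sigma$ to be closed, and a \emph{good set} in the sense of this paper need not be: it is a countable union of the compact pieces $f_i(M_i)$ together with an arbitrary set $Z$ with $\HM^m(Z)=0$, and either the infinite union or $Z$ may accumulate at $y$ without meeting the plane $y+V$ at all (e.g.\ $Z\supseteq\{y+\tfrac1k u\}_{k\in\N}$ with $u\notin V$). In that situation $\overline{\Ball}(y,r)\cap(y+V)$ is still disjoint from $\Sigma$ but $\dist(y,\Sigma)=0$, and Lemma~\ref{lem:lk-far} cannot be invoked, so your contraposition stalls even though the conclusion is true. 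The fix is one line: since $\oplk(\Sigma,S)=1$, there is a single index $i_0$ with $\deg_2 F_{i_0}=1$; replace $\Sigma$ by $\Sigma':=f_{i_0}(M_{i_0})$, which is itself a good set, is compact (hence closed, so $\dist(y,\Sigma')>0$), satisfies $\oplk(\Sigma',S)=1$, and still misses the closed disk under your contradiction hypothesis. Running your homotopy argument verbatim for $\Sigma'$ gives $\oplk(\Sigma',S)=0$, the desired contradiction. (In every application in this paper $\Sigma$ is in fact compact, so the issue is harmless there, but for the lemma in the stated generality you should add this reduction.)
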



\mysubsection{The Grassmannian as a metric space}
\label{sec:grass}

In this paragraph we gather some facts about the metric $\dgras$ on the
Grassmannian. These facts can be summarized as follows: having two linear
subspaces $U = \opspan\{ u_1, \ldots, u_m \}$ and $V = \opspan\{ v_1, \ldots,
v_m \}$ in $\Rn$ such that the bases $(u_1,\ldots,u_m)$ and $(v_1,\ldots,v_m)$
are roughly orthonormal and such that $|u_i - v_i| \le \varepsilon$, we derive
the estimate $\dgras(U,V) \lesssim \varepsilon$. This will become especially
useful in \S\ref{sec:improved-holder}.

Recall that the symbol $G(n,m)$ stands for the Grassmann manifold of
$m$-dimensional linear subspaces of $\Rn$. Formally, $G(n,m)$ is defined as the
homogeneous space
\begin{displaymath}
  G(n,m) := O(n) / (O(m) \times O(n-m)) \,,
\end{displaymath}
where $O(n)$ is the orthogonal group; see e.g. Hatcher's book~\cite[\S4.2,
Examples 4.53, 4.54 and 4.55]{MR1867354} for the reference. We treat $G(n,m)$ as
a topological space with the standard quotient topology.
\begin{defin}
  Let $U,V \in G(n,m)$. We introduce the following function on $G(n,m)$
  \begin{displaymath}
    \dgras(U,V) := \| \pi_U - \pi_V \| = \sup_{w \in \Sphere} | \pi_U(w) - \pi_V(w) | \,.
  \end{displaymath}
\end{defin}

\begin{rem}
  Let $I : \Rn \to \Rn$ denote the identity mapping. Note that
  \begin{displaymath}
    \dgras(U,V) = \| \pi_U - \pi_V \| = \| I - Q_U - (I - Q_V) \| = \| Q_V - Q_U \| \,.
  \end{displaymath}
\end{rem}

\begin{rem}
  \label{rem:orth-angle}
  If $\dgras(U,V) < 1$ then $U^{\perp} \cap V = \{ 0 \}$ and $U \cap V^{\perp} =
  \{ 0 \}$. Indeed if there is a unit vector $v \in U^{\perp} \cap V$, then
  $|\pi_U(v) - \pi_V(v)| = |\pi_V(v)| = |v| = 1$, so $\dgras(U,V) \ge 1$. In
  particular, if $\dgras(U,V) < 1$ then both mappings $\pi_U|_V : V \to U$ and
  $Q_U|_{V^{\perp}} : V^{\perp} \to U^{\perp}$ are linear isomorphisms.
  Therefore we can define the inverse mappings
  \begin{displaymath}
    L_U := (\pi_U|_V)^{-1} : U \to V
    \qquad \text{and} \qquad
    K_U := (Q_U|_{V^{\perp}})^{-1} : U^{\perp} \to V^{\perp} \,.
  \end{displaymath}

  To be precise, we treat $U$, $U^{\perp}$, $V$ and $V^{\perp}$ as subsets of
  $\Rn$, so the domains of $L_U$ and $K_U$ contain those $n$-dimensional vectors
  which lie in $U \subseteq \Rn$ and $U^{\perp} \subseteq \Rn$
  respectively. Also the values $L_U(u)$ and $K_U(u)$ are $n$-dimensional. Let
  $I : \Rn \to \Rn$ be the identity. It makes sense to define the mapping $P :=
  L_U - I$, which maps $U \subseteq \Rn$ to $U^{\perp} \subseteq \Rn$. This
  will be used in \S\ref{sec:tangent-planes} where we construct a
  parameterization for $\Sigma$.
\end{rem}

\begin{fact}
  The function $\dgras$ defines a metric on the Grassmannian $G(n,m)$ and the
  topology induced by this metric agrees with the standard quotient topology
  (cf. Remark~\ref{rem:alt-grass-metric}).
\end{fact}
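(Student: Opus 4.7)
The plan is to separate the statement into two independent claims: that $\dgras$ satisfies the four metric axioms, and that its metric topology coincides with the quotient topology on $G(n,m)$.

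For the metric axioms, nonnegativity, symmetry, and the triangle inequality are inherited verbatim from the operator norm on the space of linear endomorphisms of $\Rn$: indeed $\|\pi_U-\pi_V\|\ge 0$, $\|\pi_U-\pi_V\|=\|\pi_V-\pi_U\|$, and for any third plane $W$ one has $\|\pi_U-\pi_W\|\le\|\pi_U-\pi_V\|+\|\pi_V-\pi_W\|$. The only axiom requiring a brief argument is separation: if $\dgras(U,V)=0$ then $\pi_U=\pi_V$ as linear maps on $\Rn$, and taking images gives $U=\pi_U(\Rn)=\pi_V(\Rn)=V$.

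For the topology statement, I would fix a reference plane $U_0\in G(n,m)$ (for example $U_0=\R^m\times\{0\}^{n-m}$) and work with the orbit map
\begin{displaymath}
  \phi: O(n)\longrightarrow G(n,m),\qquad \phi(A):=A\cdot U_0,
\end{displaymath}
which is surjective, continuous, and precisely the quotient map defining the quotient topology. The key algebraic observation is that $\pi_{AU_0}=A\,\pi_{U_0}\,A^T$, because the right-hand side is symmetric, idempotent, and has image $AU_0$. Consequently, for each fixed $V\in G(n,m)$, the composition
\begin{displaymath}
  O(n)\ni A\ \longmapsto\ \|A\,\pi_{U_0}\,A^T-\pi_V\|\in\R
\end{displaymath}
is continuous (matrix multiplication, transposition, and the operator norm are all continuous). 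Since this function is constant on the fibers of $\phi$, the universal property of the quotient topology yields continuity of $U\mapsto \|\pi_U-\pi_V\|=\dgras(U,V)$ on $G(n,m)$ with the quotient topology. Thus the identity map from $G(n,m)$ with the quotient topology to $G(n,m)$ with the metric $\dgras$ is continuous.

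To conclude I would invoke a compactness argument: the orthogonal group $O(n)$ is compact, so $G(n,m)$ with the quotient topology is compact; any metric topology is Hausdorff; and a continuous bijection from a compact space onto a Hausdorff space is automatically a homeomorphism. The only potentially delicate point is the identity $\pi_{AU_0}=A\pi_{U_0}A^T$ together with making precise that $\phi$ really induces the quotient topology referenced in the definition $O(n)/(O(m)\times O(n-m))$; both are routine once one notes that the stabilizer of $U_0$ under the $O(n)$-action on $G(n,m)$ is exactly $O(m)\times O(n-m)$.
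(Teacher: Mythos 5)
Your proof is correct. Note, however, that the paper does not actually prove this Fact: it is stated without proof, with only a pointer to Remark~\ref{rem:alt-grass-metric}, whose implicit route is to compare $\dgras$ with the basis-infimum metric $\mathfrak{d}$ via the quantitative estimates of Propositions~\ref{prop:close-bases} and~\ref{prop:dist-ang}, taking for granted that $\mathfrak{d}$ metrizes the quotient topology. Your argument is a genuinely different and fully self-contained route: the metric axioms come directly from the operator norm together with the observation $U=\opim \pi_U$, and the topological statement is reduced to the conjugation identity $\pi_{AU_0}=A\,\pi_{U_0}\,A^{T}$ (valid since the right-hand side is a symmetric idempotent with image $AU_0$), which makes $A\mapsto\dgras(\phi(A),V)$ continuous and constant on the fibers of the orbit map $\phi$; the universal property of the quotient then gives continuity of $U\mapsto\dgras(U,V)$ for each fixed $V$, hence openness in the quotient topology of every metric ball and continuity of the identity map, and the compact-to-Hausdorff bijection argument upgrades this to a homeomorphism without any need to verify openness by hand. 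What each approach buys: yours settles the Fact from first principles with soft topology only, while the paper's comparison with $\mathfrak{d}$ is quantitative and two-sided, which is what is actually exploited later (the Grassmannian estimates of \S\ref{sec:grass} feed into \S\ref{sec:tangent-planes} and \S\ref{sec:improved-holder}); the two are complementary rather than in conflict.
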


\begin{fact}
  \label{fact:ang-dist}
  We have
  \begin{align*}
    \forall v \in V \quad
    |Q_U(v)| &= \dist(v,U) \le |v| \dgras(V,U) \\
    \text{and} \quad
    \forall v \in V^{\perp} \quad
    |\pi_U(v)| &= \dist(v,U^{\perp}) \le |v| \dgras(V,U) \,.
  \end{align*}
\end{fact}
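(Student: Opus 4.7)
The plan is to reduce both inequalities directly to the operator-norm characterization of $\dgras$ recorded in the preceding remark, namely $\dgras(V,U) = \|\pi_U - \pi_V\| = \|Q_U - Q_V\|$.

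First I would dispense with the two equalities $|Q_U(v)| = \dist(v,U)$ and $|\pi_U(v)| = \dist(v,U^\perp)$. These are nothing but the standard fact that for any linear subspace $W \subseteq \Rn$ and any $w \in \Rn$, the orthogonal decomposition $w = \pi_W(w) + Q_W(w)$ realizes the distances $\dist(w,W) = |Q_W(w)|$ and $\dist(w,W^\perp) = |\pi_W(w)|$, because $\pi_W(w)$ is the unique minimizer of $u \mapsto |w-u|$ over $u \in W$ (and similarly for $W^\perp$).

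For the first estimate, I would use that $v \in V$ forces $\pi_V(v) = v$ and hence $Q_V(v) = 0$. Therefore
\begin{displaymath}
  Q_U(v) \;=\; Q_U(v) - Q_V(v) \;=\; (Q_U - Q_V)(v) \,,
\end{displaymath}
and taking norms gives
\begin{displaymath}
  |Q_U(v)| \;\le\; \|Q_U - Q_V\| \cdot |v| \;=\; \dgras(V,U) \cdot |v| \,,
\end{displaymath}
where the last equality is precisely the alternative formula for $\dgras$ from the remark. For the second estimate I would argue symmetrically: $v \in V^\perp$ yields $\pi_V(v) = 0$, so
\begin{displaymath}
  \pi_U(v) \;=\; \pi_U(v) - \pi_V(v) \;=\; (\pi_U - \pi_V)(v) \,,
\end{displaymath}
and the definition $\dgras(V,U) = \|\pi_U - \pi_V\|$ yields $|\pi_U(v)| \le \dgras(V,U) |v|$.

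There is no real obstacle here; the statement is essentially a restatement of the definition of $\dgras$ as an operator norm, combined with the trivial observation that the projection onto $V$ (respectively $V^\perp$) acts as identity (respectively zero) on vectors already lying in $V$ (respectively $V^\perp$). The only subtle point is being careful that $\dgras$ is symmetric in its arguments, so that the bound from $\|\pi_U - \pi_V\|$ may equivalently be written with $\dgras(V,U)$ as stated.
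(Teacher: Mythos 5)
Your proposal is correct and is essentially the same argument as the paper's: both exploit $Q_V(v)=0$ for $v\in V$ (resp. $\pi_V(v)=0$ for $v\in V^{\perp}$) together with the operator-norm identity $\dgras(V,U)=\|\pi_V-\pi_U\|=\|Q_V-Q_U\|$. The only addition is your explicit justification of the distance equalities, which the paper takes for granted.
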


\begin{proof}
  For $v \in V$ a straightforward calculation gives
  \begin{displaymath}
    |v| \dgras(V,U) = |v| \| Q_V - Q_U \| \ge | Q_V(v) - Q_U(v) | = |Q_U(v)| \,.
  \end{displaymath}
  If $v \in V^{\perp}$ then
  \begin{displaymath}
    |v| \dgras(V,U) = |v| \| \pi_V - \pi_U \| \ge | \pi_V(v) - \pi_U(v) | = |\pi_U(v)| \,.
  \end{displaymath}
\end{proof}

\begin{cor}
  if $\dgras(U,V) \le \alpha < 1$, then for all $v \in V$ we have $(1 - \alpha)
  |v| \le |\pi_U(v)| \le \alpha |v|$. Analogous estimate holds also for $v \in
  V^{\perp}$ and $Q_U(v)$, hence
  \begin{displaymath}
    \| L_U \|_U \le \frac{1}{1 - \alpha}
    \qquad \text{and} \qquad
    \| K_U \|_{U^{\perp}} \le \frac{1}{1 - \alpha} \,.
  \end{displaymath}
\end{cor}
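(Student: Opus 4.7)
The plan is to derive the lower bound on $|\pi_U(v)|$ for $v \in V$ directly from Fact~\ref{fact:ang-dist} combined with the Pythagorean identity, and then to translate this into the operator bound on $L_U$ via the definition of $L_U$ as an inverse. The corresponding statement on $V^\perp$ and $Q_U$ follows by the same recipe with the roles of $\pi_U$ and $Q_U$ swapped. I read the displayed inequality ``$(1-\alpha)|v| \le |\pi_U(v)| \le \alpha|v|$'' as a mildly compressed way of stating both the lower bound $|\pi_U(v)| \ge (1-\alpha)|v|$ and the complementary upper bound $|Q_U(v)| \le \alpha|v|$, since otherwise it is self-contradictory for $\alpha < 1$; the sentence ``Analogous estimate holds also for $v \in V^\perp$ and $Q_U(v)$'' confirms this reading.

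First I would fix $v \in V$ with $v \neq 0$. Fact~\ref{fact:ang-dist} directly gives $|Q_U(v)| \le |v|\, \dgras(V,U) \le \alpha |v|$, which is the upper-type bound. For the lower bound, orthogonality of the decomposition $v = \pi_U(v) + Q_U(v)$ yields
\begin{displaymath}
  |\pi_U(v)|^2 = |v|^2 - |Q_U(v)|^2 \ge (1 - \alpha^2) |v|^2 \,,
\end{displaymath}
and since $\sqrt{1-\alpha^2} = \sqrt{(1-\alpha)(1+\alpha)} \ge 1-\alpha$ for $\alpha \in [0,1)$, I conclude $|\pi_U(v)| \ge (1-\alpha)|v|$. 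For $v \in V^\perp$ the same Fact supplies $|\pi_U(v)| \le \alpha|v|$, and an identical Pythagorean step gives $|Q_U(v)| \ge (1-\alpha)|v|$.

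The operator norm bounds then drop out from Remark~\ref{rem:orth-angle}: because $\dgras(U,V) \le \alpha < 1$, the mappings $\pi_U|_V$ and $Q_U|_{V^\perp}$ are linear isomorphisms, so $L_U$ and $K_U$ are well defined. Given $u \in U$, set $v := L_U(u) \in V$, so that $\pi_U(v) = u$. The lower bound above applied to this $v$ gives
\begin{displaymath}
  |u| = |\pi_U(v)| \ge (1-\alpha)|v| = (1-\alpha) |L_U(u)| \,,
\end{displaymath}
hence $\|L_U\|_U \le 1/(1-\alpha)$. The inequality $\|K_U\|_{U^\perp} \le 1/(1-\alpha)$ follows verbatim by replacing $\pi_U$ with $Q_U$ and working on $V^\perp$.

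There is no real obstacle here: the whole argument is an exercise in elementary Euclidean geometry once Fact~\ref{fact:ang-dist} is in hand. The only point that deserves a moment of care is the passage $\sqrt{1-\alpha^2} \ge 1-\alpha$, which is the (mild) loss one accepts in order to phrase everything in terms of $\alpha$ rather than $\sqrt{1-\alpha^2}$; a sharper formulation would replace $(1-\alpha)$ by $\sqrt{1-\alpha^2}$ throughout, but the weaker bound is entirely sufficient for the applications in \S\ref{sec:tangent-planes} and \S\ref{sec:improved-holder}.
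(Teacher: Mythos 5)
Your proof is correct and takes essentially the same (immediate) route the paper intends: the corollary is stated there without proof as a direct consequence of Fact~\ref{fact:ang-dist} together with the definitions of $L_U$ and $K_U$ from Remark~\ref{rem:orth-angle}, and your reading of the garbled display (the $\alpha|v|$ upper bound really belongs to $|Q_U(v)|$, resp.\ $|\pi_U(v)|$ on $V^{\perp}$) is the right one. The only cosmetic difference is that the shortest path uses the triangle inequality $|\pi_U(v)| \ge |v| - |Q_U(v)| \ge (1-\alpha)|v|$, whereas you go through Pythagoras and $\sqrt{1-\alpha^2} \ge 1-\alpha$; the two are interchangeable here, and your sharper $\sqrt{1-\alpha^2}$ remark is a fair observation.
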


\begin{prop}
  \label{prop:close-bases}
  If $U,V \in G(n,m)$ have orthonormal bases $(e_1,\ldots,e_m)$ and
  $(f_1,\ldots,f_m)$ respectively and if $|e_i - f_i| \le \vartheta$ for each $i
  = 1,\ldots,m$, then $\dgras(U,V) \le 2m\vartheta$.
\end{prop}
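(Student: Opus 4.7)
The plan is to write both projections in coordinates with respect to the given orthonormal bases and then estimate their difference term by term. More precisely, for any $w \in \Sphere$ we have
\begin{displaymath}
  \pi_U(w) = \sum_{i=1}^m \langle w, e_i \rangle e_i
  \qquad \text{and} \qquad
  \pi_V(w) = \sum_{i=1}^m \langle w, f_i \rangle f_i \,,
\end{displaymath}
so it suffices to bound $\big| \sum_{i=1}^m \big( \langle w, e_i \rangle e_i - \langle w, f_i \rangle f_i \big) \big|$ uniformly in $w \in \Sphere$.

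The key algebraic step will be the standard ``add and subtract'' trick applied to each summand:
\begin{displaymath}
  \langle w, e_i \rangle e_i - \langle w, f_i \rangle f_i
  = \langle w, e_i - f_i \rangle e_i + \langle w, f_i \rangle (e_i - f_i) \,.
\end{displaymath}
Applying the Cauchy--Schwarz inequality together with $|w| = |e_i| = |f_i| = 1$ and the hypothesis $|e_i - f_i| \le \vartheta$, the norm of each summand is at most $2\vartheta$.

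Summing over $i = 1, \ldots, m$ and taking the supremum over $w \in \Sphere$ yields $\dgras(U,V) = \|\pi_U - \pi_V\| \le 2m\vartheta$, as required. There is no real obstacle here; the estimate is a direct calculation, and the factor $2m$ simply records that we pay a constant $2$ per basis vector and sum over the $m$ vectors of each basis. Note that we do not need any assumption on $\vartheta$ (such as $\vartheta < 1$) for the argument to go through, although the bound is of course only informative when $2m\vartheta < 1$.
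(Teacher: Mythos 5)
Your proof is correct and follows essentially the same route as the paper: the same add-and-subtract decomposition of $\langle w, e_i \rangle e_i - \langle w, f_i \rangle f_i$ (with the roles of $e_i$ and $f_i$ merely swapped between the two terms), followed by Cauchy--Schwarz and summation over $i$. Nothing is missing.
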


\begin{proof}
  Let $w \in \Sphere$ be a unit vector in $\Rn$. We calculate
  \begin{align*}
    |\pi_U(w) - \pi_V(w)| 
    &= \left| \sum_{i=1}^m \langle w, e_i \rangle e_i - \langle w, f_i \rangle f_i \right| \\
    &= \left| \sum_{i=1}^m \langle w, e_i \rangle (e_i - f_i) + \langle w, (e_i - f_i) \rangle f_i \right| \\
    &\le \sum_{i=1}^m |e_i - f_i| + |e_i - f_i| \le 2 m \vartheta \,.
   \end{align*}
\end{proof}

\begin{defin}
  Let $V \in G(n,m)$ and let $(v_1,\ldots,v_m)$ be the basis of $V$. Fix some
  radius $\rho > 0$ and two small constants $\varepsilon \in (0,1)$ and $\delta
  \in (0,1)$.
  \begin{itemize}
  \item We say that $(v_1,\ldots,v_n)$ is a \emph{$\red$-basis} with constants
    $\rho$, $\varepsilon$ and $\delta$ if the following conditions are satisfied
    \begin{align*}
      (1-\varepsilon) \rho \le |v_i| &\le (1+\varepsilon) \rho \quad \text{for } i = 1, \ldots, m \\
      \text{and} \quad
      |\langle v_i, v_j \rangle| &\le \delta \rho^2 \quad \text{for } i \ne j \,.
    \end{align*}

  \item We say that $(v_1,\ldots,v_n)$ is an \emph{ortho-$\rho$-normal basis} if
    \begin{align*}
      |v_i| &= \rho \quad \text{for } i = 1, \ldots, m \\
      \text{and} \quad
      \langle v_i, v_j \rangle &= 0 \quad \text{for } i \ne j \,.
    \end{align*}
  \end{itemize}
\end{defin}

\begin{defin}
  \label{def:GS-process}
  Let $(v_1, \ldots, v_m)$ be an ordered basis of some $m$-plane $H \in G(n,m)$.
  \begin{itemize}
  \item We say that \emph{an orthonormal basis $(\hat{v}_1,\ldots,\hat{v}_m)$
      arises from $(v_1, \ldots, v_m)$ by the Gram-Schmidt
      process}\footnote{Note that all the bases considered here are ordered and
      the result of the Gram-Schmidt process depends on that ordering.} if
    \begin{displaymath}
      \hat{v}_1 = \frac{v_1}{|v_1|} 
      \quad \text{and for $k = 2,\ldots,m$} \quad
      \hat{v}_k = \frac{w_k}{|w_k|} 
      \quad \text{where} \quad 
      w_k = v_k - \sum_{i = 1}^{k-1} \langle v_k, \hat{v}_i \rangle \hat{v}_i \,.
    \end{displaymath}

  \item We say that \emph{an ortho-$\rho$-normal basis
      $(\bar{v}_1,\ldots,\bar{v}_m)$ arises from $(v_1, \ldots, v_m)$ by the
      Gram-Schmidt process} if the orthonormal basis
    \begin{displaymath}
      (\hat{v}_1,\ldots,\hat{v}_m) := (\rho^{-1}\bar{v}_1,\ldots,\rho^{-1}\bar{v}_m)
    \end{displaymath}
    arises from $(v_1, \ldots, v_m)$ by the Gram-Schmidt process.
  \end{itemize}
\end{defin}

\begin{prop}
  \label{prop:gs-red}
  Let $\rho > 0$, $\varepsilon \in (0,1)$ and $\delta \in (0,1)$ be some
  constants. Let $(v_1,\ldots,v_m)$ be a $\red$-basis of $V \in G(n,m)$ and let
  $(\hat{v}_1,\ldots,\hat{v}_m)$ be an ortho-$\rho$-normal basis of $V$ which
  arises from $(v_1,\ldots,v_m)$ by the Gram-Schmidt process. There exist two
  constants $\Cl{gs-eps} = \Cr{gs-eps}(m)$ and $\Cl{gs-del} = \Cr{gs-del}(m)$
  such that
  \begin{align*}
    |v_i - \hat{v}_i| \le (\Cr{gs-eps} \varepsilon + \Cr{gs-del} \delta) \rho
    \quad \text{for } i = 1,\ldots,m \,.
  \end{align*}
\end{prop}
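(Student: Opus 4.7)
The plan is to prove the estimate by induction on $i = 1, \ldots, m$, directly unwinding the Gram--Schmidt recursion. Throughout, let $\tilde v_i := \rho^{-1}\hat v_i$ denote the orthonormal basis produced from $(v_1, \ldots, v_m)$, so that $\tilde v_1 = v_1/|v_1|$ and, for $k \ge 2$, $\hat v_k = \rho\, w_k/|w_k|$ with $w_k = v_k - \sum_{i<k} \langle v_k, \tilde v_i\rangle\,\tilde v_i$. Set $A_i := |v_i - \hat v_i|/\rho$; the goal is $A_i \le \Cr{gs-eps}\varepsilon + \Cr{gs-del}\delta$ with constants depending only on $m$. Without loss of generality we may assume $\varepsilon$ and $\delta$ are so small that all denominators below stay close to $\rho$; otherwise the estimate is trivial after enlarging the constants.

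For the base case $i = 1$, a direct computation gives
\begin{displaymath}
  |v_1 - \hat v_1| = \bigl|v_1 - \rho v_1/|v_1|\bigr| = \bigl||v_1| - \rho\bigr| \le \varepsilon\rho \,,
\end{displaymath}
so $A_1 \le \varepsilon$. For the inductive step, assume $A_1, \ldots, A_{k-1}$ are controlled. The key is to bound the Gram--Schmidt coefficients $c_i := \langle v_k, \tilde v_i\rangle = \rho^{-1}\langle v_k, \hat v_i\rangle$. Writing $\hat v_i = v_i - (v_i - \hat v_i)$ and using the $\red$-basis hypothesis together with $|v_k| \le (1+\varepsilon)\rho$,
\begin{displaymath}
  |c_i| \le \rho^{-1}\bigl(|\langle v_k, v_i\rangle| + |v_k|\,|v_i - \hat v_i|\bigr) \le \delta\rho + (1+\varepsilon)A_i\rho \,.
\end{displaymath}
This immediately yields $|w_k - v_k| \le \sum_{i<k} |c_i| \le \rho\bigl((k-1)\delta + (1+\varepsilon)\sum_{i<k} A_i\bigr)$, and in particular $\bigl||w_k| - \rho\bigr| \le |w_k - v_k| + \bigl||v_k| - \rho\bigr| \le |w_k - v_k| + \varepsilon\rho$.

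Now I split $v_k - \hat v_k = (v_k - w_k) + w_k(1 - \rho/|w_k|)$ to get
\begin{displaymath}
  |v_k - \hat v_k| \le |v_k - w_k| + \bigl||w_k| - \rho\bigr| \le 2|v_k - w_k| + \varepsilon\rho \,,
\end{displaymath}
so $A_k \le \varepsilon + 2(k-1)\delta + 2(1+\varepsilon)\sum_{i<k} A_i$. A trivial induction then gives $A_k \le \Cr{gs-eps}(m)\,\varepsilon + \Cr{gs-del}(m)\,\delta$ with constants depending only on $m$, since the recursion has at most $m$ steps.

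The argument is essentially a routine induction, and the only mildly delicate point is keeping the two types of errors (the $\varepsilon$ from the norms $|v_i|$ being close to $\rho$, and the $\delta$ from near-orthogonality) cleanly separated so that they propagate linearly rather than mixing multiplicatively; this is ensured by writing $c_i$ via $\hat v_i = v_i - (v_i - \hat v_i)$ and pulling $\langle v_k, v_i\rangle$ out as a $\delta$-term. No deeper obstacle arises.
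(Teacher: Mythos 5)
Your proof is correct and follows essentially the same route as the paper's: an induction along the Gram--Schmidt recursion, bounding the coefficients $\langle v_k,\hat v_i\rangle$ by splitting off $\langle v_k,v_i\rangle$ as a $\delta$-term and the error $|v_i-\hat v_i|$ as the inductively controlled term, then handling the renormalization via $\bigl||w_k|-\rho\bigr|$ (the paper just works with the rescaled vectors $e_i=v_i/\rho$ and tracks the constants $A_k,B_k$ explicitly). Note that your ``WLOG $\varepsilon,\delta$ small'' reduction is not actually needed, since none of your estimates require a lower bound on $|w_k|$ beyond $w_k\ne 0$, which linear independence already gives.
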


\begin{proof}
  For $i = 1,\ldots,m$ set $e_i := v_i / \rho$. Let $(f_1,\ldots,f_m)$ be an
  orthonormal basis of $V$ obtained from $(e_1,\ldots,e_m)$ by the Gram-Schmidt
  process. Note that
  \begin{displaymath}
    1 - \varepsilon \le |e_i| \le 1 + \varepsilon
    \qquad \text{and} \qquad
    |\langle e_i, e_j \rangle| \le \delta \,.
  \end{displaymath}
  
  We will show inductively that for each $i = 1,\ldots,m$ there exist constants
  $A_i$ and $B_i$ such that $|f_i - e_i| \le A_i \varepsilon + B_i \delta$. For
  the first vector we have
  \begin{displaymath}
    f_1 := \frac{e_1}{|e_1|}
    \qquad \text{hence} \qquad
    |f_1 - e_1| \le \varepsilon \,,
  \end{displaymath}
  so we can set $A_1 := 1$ and $B_1 := 0$.

  Assume we already proved that $|f_i - e_i| \le A_i \varepsilon + B_i \delta$
  for $i = 1, \ldots, k-1$. The Gram-Schmidt process gives
  \begin{displaymath}
    \tilde{f}_k = e_k - \sum_{i=1}^{k-1} \langle e_k, f_i \rangle f_i
    \qquad \text{and} \qquad
    f_k = \frac{\tilde{f_k}}{|\tilde{f_k}|} \,.
  \end{displaymath}
  Let us first estimate $|\langle e_k, f_i \rangle|$ for $i = 1, \ldots, k-1$.
  \begin{align*}
    |\langle e_k, f_i \rangle|
    &\le |\langle e_k, e_i \rangle| + |\langle e_k, (f_i - e_i) \rangle| 
    \le |\langle e_k, e_i \rangle| + |e_k| |f_i - e_i| \\
    &\le \delta + (1 + \varepsilon)(A_i \varepsilon + B_i \delta)
    \le (1 + 2B_i) \delta + 2A_i \varepsilon \,.
  \end{align*}
  Here we used the fact that $\varepsilon,\delta \in (0,1)$, so $\varepsilon
  \delta \le \delta$ and $\varepsilon^2 \le \varepsilon$. Set $\tilde{A}_k := 2
  \sum_{i=1}^{k-1} A_i$ and $\tilde{B}_k := \sum_{i=1}^{k-1} (1 + 2B_i)$. We
  then have
  \begin{displaymath}
    \left| \sum_{i=1}^{k-1} \langle e_k, f_i \rangle f_i \right|
    \le \sum_{i=1}^{k-1} |\langle e_k, f_i \rangle|
    \le \tilde{A}_k \varepsilon + \tilde{B}_k \delta \,.
  \end{displaymath}
  Hence
  \begin{displaymath}
    |\tilde{f}_k| 
    \ge |e_k| - \left| \sum_{i=1}^{k-1} \langle e_k, f_i \rangle f_i \right|
    \ge 1 - (\varepsilon + \tilde{A}_k \varepsilon + \tilde{B}_k \delta) 
  \end{displaymath}
  and
  \begin{align*}
    |e_k - f_k| 
    &\le |e_k - \tilde{f}_k| + |\tilde{f}_k - f_k| \\
    &\le  \tilde{A}_k \varepsilon + \tilde{B}_k \delta + \varepsilon + \tilde{A}_k \varepsilon + \tilde{B}_k \delta 
    = (1 + 2 \tilde{A}_k) \varepsilon + 2\tilde{B}_k \delta 
  \end{align*}
  This gives
  \begin{displaymath}
    A_k := 1 + 2 \tilde{A}_k = 1+  4 \sum_{i=1}^{k-1} A_i
    \quad \text{and} \quad
    B_k := 2 \tilde{B}_k = 2(k-1) + 4 \sum_{i=1}^{k-1} B_i \,.
  \end{displaymath}
  Since the sequences $A_k$ and $B_k$ are increasing we may set $\Cr{gs-eps} :=
  A_m$ and $\Cr{gs-del} := B_m$. Recall that $v_i := \rho e_i$ and $\hat{v}_i :=
  \rho f_i$, so
  \begin{displaymath}
    |v_i - \hat{v}_i| = \rho |e_i - f_i| \le (\Cr{gs-eps} \varepsilon + \Cr{gs-del} \delta) \rho \,.
  \end{displaymath}
  for each $i = 1,\ldots,m$.
\end{proof}

\begin{prop}
  \label{prop:dist-ang}
  Let $U,V \in G(n,m)$ and let $(e_1,\ldots,e_m)$ be some orthonormal basis of
  $V$. Assume that for each $i = 1,\ldots,m$ we have the estimate $\dist(e_i,U)
  = |Q_U(e_i)| \le \vartheta$ for some $\vartheta \in (0,1)$. Then there exists
  a constant $\Cl{dist-ang} = \Cr{dist-ang}(m)$ such that
  \begin{displaymath}
    \dgras(U,V) \le \Cr{dist-ang} \vartheta \,.
  \end{displaymath}
\end{prop}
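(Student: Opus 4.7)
The plan is to manufacture an orthonormal basis of $U$ whose vectors are close to $e_1,\ldots,e_m$ and then invoke Proposition~\ref{prop:close-bases}. Set $f_i := \pi_U(e_i) \in U$; by hypothesis $|e_i - f_i| = |Q_U(e_i)| \le \vartheta$. First I would reduce to the case of small $\vartheta$, say $\vartheta < 1/(2m)$: when $\vartheta \ge 1/(2m)$ the trivial bound $\dgras(U,V) \le 2$ already gives the conclusion provided $\Cr{dist-ang}$ is chosen large enough.

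Under the smallness assumption, I would verify that $(f_1,\ldots,f_m)$ is actually a basis of $U$ by a one-line linear independence argument: if $\sum c_i f_i = 0$ with $\max_i |c_i| = 1$, then $\sum c_i e_i = \sum c_i Q_U(e_i)$, so taking norms on both sides yields $1 \le \bigl|\sum c_i e_i\bigr| \le m\vartheta < 1/2$, a contradiction. Next I would check that $(f_1,\ldots,f_m)$ is a $\red$-basis of $U$ with $\rho = 1$, $\varepsilon = \vartheta$ and $\delta = \vartheta$. The length bound is automatic from $|f_i|^2 = 1 - |Q_U(e_i)|^2 \in [1-\vartheta^2, 1]$, and the near-orthogonality follows from the identity
\begin{displaymath}
  \langle f_i, f_j \rangle = \langle \pi_U(e_i), \pi_U(e_j) \rangle = \langle e_i, \pi_U(e_j) \rangle = \langle e_i, e_j \rangle - \langle e_i, Q_U(e_j) \rangle,
\end{displaymath}
which for $i \ne j$ has absolute value at most $|e_i|\,|Q_U(e_j)| \le \vartheta$.

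Having verified the $\red$-basis hypothesis, Proposition~\ref{prop:gs-red} produces an orthonormal basis $(\hat{f}_1,\ldots,\hat{f}_m)$ of $U$ arising from $(f_1,\ldots,f_m)$ by Gram-Schmidt, with $|f_i - \hat{f}_i| \le (\Cr{gs-eps} + \Cr{gs-del})\vartheta$. The triangle inequality then gives $|e_i - \hat{f}_i| \le (1 + \Cr{gs-eps} + \Cr{gs-del})\vartheta$, and Proposition~\ref{prop:close-bases} applied to the orthonormal bases $(e_1,\ldots,e_m)$ of $V$ and $(\hat{f}_1,\ldots,\hat{f}_m)$ of $U$ yields
\begin{displaymath}
  \dgras(U,V) \le 2m(1 + \Cr{gs-eps} + \Cr{gs-del})\vartheta,
\end{displaymath}
so $\Cr{dist-ang} := \max\{4m,\,2m(1 + \Cr{gs-eps} + \Cr{gs-del})\}$ works.

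The only genuine subtlety is making sure $(f_1,\ldots,f_m)$ genuinely spans $U$ rather than a strict subspace, which is exactly why the preliminary reduction to small $\vartheta$ is needed; once that is in place the argument is just a clean assembly of the two Grassmannian facts already established, namely the Gram-Schmidt stability of $\red$-bases (Proposition~\ref{prop:gs-red}) and the closeness-of-bases estimate (Proposition~\ref{prop:close-bases}).
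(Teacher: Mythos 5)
Your proposal is correct and takes essentially the same route as the paper's own proof: project the orthonormal basis $(e_1,\ldots,e_m)$ of $V$ onto $U$, verify that the projections form a $\red$-basis of $U$, and then combine Proposition~\ref{prop:gs-red} with Proposition~\ref{prop:close-bases}. The only differences are cosmetic: the paper exploits $\langle e_i,e_j\rangle=0$ to get the sharper parameters $\varepsilon=\delta=\vartheta^2$ and skips the small-$\vartheta$ reduction, whereas your explicit linear-independence check (under $\vartheta<1/(2m)$) makes precise a point the paper leaves implicit, at the cost of a slightly larger constant.
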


\begin{proof}
  Set $u_i := \pi_U(e_i)$. For each $i = 1, \ldots, m$ we have $|Q_U(e_i)| \le
  \vartheta$, so
  \begin{align}
    |u_i - e_i| &= |Q_U(e_i)| \le \vartheta
    \quad \text{hence} \notag \\
    \label{est:ui-len}
    1 - \vartheta^2 \le \sqrt{1 - \vartheta^2} &\le |u_i| \le 1 \le 1 + \vartheta^2
    \quad \text{for } i = 1, \ldots, m \,.
  \end{align}
  For any $i \ne j$ the vectors $e_i$ and $e_j$ are orthogonal, hence
  \begin{align*}
    0 = \langle e_i, e_j \rangle 
    &= \langle \pi_U(e_i) + Q_U(e_i), \pi_U(e_j) + Q_U(e_j) \rangle \\
    &= \langle \pi_U(e_i), \pi_U(e_j) \rangle + \langle Q_U(e_i), Q_U(e_j) \rangle \,.
  \end{align*}
  Therefore
  \begin{equation}
    \label{est:uiuj-ang}
    |\langle u_i, u_j \rangle| 
    = | \langle Q_U(e_i), Q_U(e_j) \rangle | 
    \le |Q_U(e_i)| |Q_U(e_j)| \le \vartheta^2 \,.
  \end{equation}
  
  Estimates \eqref{est:ui-len} and \eqref{est:uiuj-ang} show that
  $(u_1,\ldots,u_m)$ is a $\red$-basis of $U$ with constants $\rho = 1$,
  $\varepsilon = \vartheta^2$ and $\delta = \vartheta^2$. Let $(f_1,\ldots,f_m)$
  be the orthonormal basis of $U$ arising from $(u_1,\ldots,u_m)$ by the
  Gram-Schmidt process. Applying Proposition~\ref{prop:gs-red} we obtain
  \begin{displaymath}
    |f_i - e_i| \le |f_i - u_i| + |u_i - e_i|
    \le (\Cr{gs-eps} + \Cr{gs-del}) \vartheta^2 + \vartheta \,.
  \end{displaymath}
  Using Proposition~\ref{prop:close-bases} and the fact that $\vartheta^2 <
  \vartheta < 1$ we finally get
  \begin{displaymath}
    \dgras(U,V) \le 2 m ((\Cr{gs-eps} + \Cr{gs-del}) \vartheta^2 + \vartheta) 
    \le 2 m (\Cr{gs-eps} + \Cr{gs-del} + 1) \vartheta \,.
  \end{displaymath}
  Now we can set $\Cr{dist-ang} = \Cr{dist-ang}(m) := 2 m (\Cr{gs-eps}(m) +
  \Cr{gs-del}(m) + 1)$.
\end{proof}

\begin{prop}
  \label{prop:red-ang}
  Let $(v_1,\ldots,v_m)$ be a $\red$-basis of $V \in G(n,m)$ with constants
  $\rho > 0$, $\varepsilon \in (0,1)$ and $\delta \in (0,1)$. Let
  $(u_1,\ldots,u_m)$ be some basis of $U \in G(n,m)$, such that $|u_i - v_i| \le
  \vartheta \rho$ for some $\vartheta \in (0,1)$ and for each $i =
  1,\ldots,m$. Furthermore, let us assume that
  \begin{equation}
    \label{cond:eps-del}
    \Cr{dist-ang} (\Cr{gs-eps} \varepsilon + \Cr{gs-del} \delta) < 1 \,.
  \end{equation}
  Then there exists a constant $\Cl{red-ang} =
  \Cr{red-ang}(m,\varepsilon,\delta)$ such that
  \begin{displaymath}
    \dgras(U,V) \le \Cr{red-ang} \vartheta \,.
  \end{displaymath}
\end{prop}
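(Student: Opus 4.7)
The plan is to construct an orthonormal basis of $V$ whose vectors lie within distance $O(\vartheta)$ of $U$, and then invoke Proposition~\ref{prop:dist-ang}. The naive estimate $\dist(\bar v_i,U)\le|\bar v_i-v_i|+|v_i-u_i|$ only gives $O(\varepsilon+\delta)+O(\vartheta)$, which is useless once $\vartheta\ll\varepsilon+\delta$, because the orthonormal vectors $\bar v_i$ are $O(\varepsilon+\delta)$ away from the $v_i$'s. So rather than approximating $\bar v_i$ by a single $u_j$, I will replace it by the appropriate \emph{linear combination} of all the $u_j$'s; the main work will be to control the coefficients of that change of basis.

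First I apply the Gram--Schmidt process to $(v_1,\ldots,v_m)$ to obtain the ortho-$\rho$-normal basis $(\bar v_1,\ldots,\bar v_m)$ of $V$. Proposition~\ref{prop:gs-red} yields $|v_i-\bar v_i|\le\vartheta_0\rho$ with $\vartheta_0:=\Cr{gs-eps}\varepsilon+\Cr{gs-del}\delta$. Since $v_i-\bar v_i\in V$, expanding it in the ortho-$\rho$-normal basis $(\bar v_j)$ produces a representation $v_i=\sum_j M_{ij}\bar v_j$ in which $M=I+A$ with $|A_{ij}|=\rho^{-2}|\langle v_i-\bar v_i,\bar v_j\rangle|\le\vartheta_0$, and hence $\|A\|_{\mathrm{op}}\le m\vartheta_0$.

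This is the crux of the proof: hypothesis \eqref{cond:eps-del}, combined with the explicit formula $\Cr{dist-ang}=2m(\Cr{gs-eps}+\Cr{gs-del}+1)\ge 2m$ recorded at the end of the proof of Proposition~\ref{prop:dist-ang}, forces $m\vartheta_0<\tfrac12$. Hence $M$ is invertible with $\|M^{-1}\|_{\mathrm{op}}\le 2$, so every entry of $M^{-1}$ is bounded by $2$, and the inverted representation $\bar v_i=\sum_j (M^{-1})_{ij}v_j$ holds. Setting $\tilde u_i:=\sum_j(M^{-1})_{ij}u_j\in U$ and using $|v_j-u_j|\le\vartheta\rho$ then gives $\dist(\bar v_i,U)\le|\bar v_i-\tilde u_i|\le 2m\vartheta\rho$, so the orthonormal basis $e_i:=\bar v_i/\rho$ of $V$ satisfies $\dist(e_i,U)\le 2m\vartheta$.

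Finally, when $2m\vartheta<1$ Proposition~\ref{prop:dist-ang} delivers $\dgras(U,V)\le 2m\Cr{dist-ang}\vartheta$, and in the complementary range the trivial bound $\dgras(U,V)\le 1\le 2m\vartheta$ suffices, so $\Cr{red-ang}:=\max\{1,2m\Cr{dist-ang}\}$ works. The one genuinely delicate step is the invertibility of $M=I+A$, and this is precisely where hypothesis \eqref{cond:eps-del} is used; everything else is direct bookkeeping.
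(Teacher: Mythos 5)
Your proof is correct, but it runs along a genuinely different line than the paper's. The paper keeps the Gram--Schmidt vectors $\hat e_i$ of $V$ and estimates $|Q_U(\hat e_i)|\le|\hat e_i-e_i|\,\dgras(U,V)+\vartheta\le(\Cr{gs-eps}\varepsilon+\Cr{gs-del}\delta)\dgras(U,V)+\vartheta$, feeds this into Proposition~\ref{prop:dist-ang}, and then \emph{absorbs} the resulting $\dgras(U,V)$-term on the left, which is exactly where \eqref{cond:eps-del} enters; the constant obtained is $\Cr{dist-ang}/(1-\Cr{dist-ang}(\Cr{gs-eps}\varepsilon+\Cr{gs-del}\delta))$, which degenerates as \eqref{cond:eps-del} approaches equality. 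You instead invert the change-of-basis matrix $M=I+A$ between $(v_i)$ and its Gram--Schmidt orthonormalization and push the perturbed vectors $u_j$ through $M^{-1}$, so that each orthonormal vector of $V$ is $2m\vartheta$-close to $U$; here \eqref{cond:eps-del} is used only (via the explicit value $\Cr{dist-ang}=2m(\Cr{gs-eps}+\Cr{gs-del}+1)$ from the \emph{proof} of Proposition~\ref{prop:dist-ang}, not merely its statement) to get $\|A\|\le m\vartheta_0<\tfrac12$ and hence $\|M^{-1}\|\le 2$. Your bookkeeping is sound ($A_{ij}=\rho^{-2}\langle v_i-\bar v_i,\bar v_j\rangle$, entries of $M^{-1}$ bounded by its operator norm, $\tilde u_i\in U$), and the case split on $2m\vartheta\ge 1$ is legitimate, though the bound $\dgras(U,V)\le 1$ for a difference of orthogonal projections is a standard fact not recorded in the paper --- the cruder bound $\dgras(U,V)\le\|\pi_U\|+\|\pi_V\|=2$ would serve equally well with the same constant. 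What your route buys is a constant $\Cr{red-ang}$ depending on $m$ alone, uniform in $\varepsilon,\delta$ up to the threshold \eqref{cond:eps-del}; what the paper's route buys is brevity and reliance only on the statements of Propositions~\ref{prop:gs-red} and~\ref{prop:dist-ang}, at the price of a constant that blows up near the threshold.
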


\begin{proof}
  Set $e_i := v_i / \rho$ and let $(\hat{e}_1,\ldots,\hat{e}_m)$ be the
  orthonormal basis of $V$ arising from $(e_1,\ldots,e_m)$ by the Gram-Schmidt
  process. Set $f_i := u_i / \rho$.
  \begin{align*}
    |Q_U (\hat{e}_i)| &\le |Q_U (\hat{e}_i - e_i)| + |Q_U (e_i)|
    \le |\hat{e}_i - e_i| \dgras(U,V) + |e_i - f_i| \\
    &\le |\hat{e}_i - e_i| \dgras(U,V) + \vartheta \,.
  \end{align*}
  From Proposition~\ref{prop:gs-red} we have $|\hat{e}_i - e_i| \le \Cr{gs-eps}
  \varepsilon + \Cr{gs-del} \delta$, so 
  \begin{displaymath}
    |Q_U (\hat{e}_i)| 
    \le (\Cr{gs-eps} \varepsilon + \Cr{gs-del} \delta) \dgras(U,V) + \vartheta \,.
  \end{displaymath}
  Applying Proposition~\ref{prop:dist-ang} we obtain
  \begin{align*}
    \dgras(U,V) &\le
    \Cr{dist-ang} (\Cr{gs-eps} \varepsilon + \Cr{gs-del} \delta) \dgras(U,V) + \Cr{dist-ang} \vartheta
    \quad \text{hence} \\
    (1 - \Cr{dist-ang} (\Cr{gs-eps} \varepsilon + \Cr{gs-del} \delta)) \dgras(U,V) &\le \Cr{dist-ang} \vartheta \,.
  \end{align*}
  Since we assumed \eqref{cond:eps-del} we can divide both sides by $1 -
  \Cr{dist-ang} (\Cr{gs-eps} \varepsilon + \Cr{gs-del} \delta)$ reaching the
  estimate
  \begin{displaymath}
    \dgras(U,V) \le \frac{\Cr{dist-ang}}{1 - \Cr{dist-ang} (\Cr{gs-eps} \varepsilon + \Cr{gs-del} \delta)} \vartheta \,.
  \end{displaymath}
  Finally we set
  \begin{displaymath}
    \Cr{red-ang} = \Cr{red-ang}(m,\varepsilon,\delta) 
    := \frac{\Cr{dist-ang}(m)}{1 - \Cr{dist-ang}(m) (\Cr{gs-eps}(m) \varepsilon + \Cr{gs-del}(m) \delta)} \,.
  \end{displaymath}
\end{proof}

\begin{rem}
  \label{rem:alt-grass-metric}
  Propositions~\ref{prop:close-bases} and~\ref{prop:dist-ang} show that the
  metric on $G(n,m)$ given by
  \begin{displaymath}
    \mathfrak{d}(U,V) := \inf \left\{
      \left( \sum_{i=1}^m |v_i - u_i|^2 \right)^{\frac 12}
      :
      \begin{array}{l}
        (v_1,\ldots,v_m) \text{ an orthonormal basis of } V\,, \\
        (u_1,\ldots,u_m) \text{ an orthonormal basis of } U
      \end{array}
    \right\}
  \end{displaymath}
  is equivalent to the metric $\dgras$.
\end{rem}


\mysubsection{Properties of cones}

\mysubsubsection{Homotopies inside cones}

In this section we prove two facts which will allow us to construct complicated
deformations of spheres in Section~\ref{sec:ahl-reg}. In the proof of
Proposition~\ref{prop:big-proj-fat-simp} we construct a set $F$ by glueing
conical caps together. Then we need to know that we can deform one sphere lying
in $F$ to some other sphere lying in $F$ without leaving $F$. To be able to do
this easily we need Proposition~\ref{prop:sph-trans} and
Corollary~\ref{cor:sph-in-cone} stated below.

\begin{defin}
  \label{def:grass-cone}
  Let $H \in G(n,m)$ be an $m$-dimensional subspace of $\Rn$ and let $\delta \in
  (0,1)$ be some number. We define the set
  \begin{displaymath}
    \G(\delta,H) :=
    \{
    V \in G(n,n-m) : \forall v \in V \,\, |Q_H(v)| \ge \delta |v|
    \} \,.
  \end{displaymath}
\end{defin}

In other words $V \in \G(\delta,H)$ if and only if $V$ is contained in the cone
$C(\delta,H)$ (cf. Definition~\ref{def:cones}). If $n = 3$ and $m = 1$ then $H$
is a line in $\R^3$ and the cone $C(\delta,H)$ contains all the $2$-dimensional
planes $V$ such that $\sin(\opang(H,V)) \ge \delta$.

\begin{prop}
  \label{prop:grass-path}
  For any two spaces $U$ and $V$ in $\G(\delta,H)$ there exists a continuous
  path $\gamma : [0,1] \to \G(\delta,H)$ such that $\gamma(0) = V$ and
  $\gamma(1) = U$.
\end{prop}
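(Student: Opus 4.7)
The plan is to identify $\G(\delta,H)$ with the closed operator-norm ball of a certain radius in the finite-dimensional space of linear maps $H^{\perp} \to H$, and then to use convexity of that ball to interpolate linearly.

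First I would observe that every $V \in \G(\delta,H)$ is a graph over $H^{\perp}$. The defining condition $|Q_H(v)| \geq \delta |v|$ together with $\delta > 0$ forces $Q_H|_V$ to be injective; since $\dim V = n-m = \dim H^{\perp}$, it is in fact a linear isomorphism onto $H^{\perp}$. Setting $A_V := \pi_H \circ (Q_H|_V)^{-1} : H^{\perp} \to H$ gives $V = \{w + A_V(w) : w \in H^{\perp}\}$. Conversely, for any linear map $A : H^{\perp} \to H$, writing $v = w + A(w)$ with $w \in H^{\perp}$ yields $|Q_H(v)| = |w|$ and $|v|^2 = |w|^2 + |A(w)|^2$, so the graph $V_A$ belongs to $\G(\delta,H)$ if and only if $\|A\| \leq \sqrt{1-\delta^2}/\delta$. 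Thus the map $\Phi : A \mapsto V_A$ is a bijection from the closed operator-norm ball $\mathcal{B}_\delta$ of that radius onto $\G(\delta,H)$.

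Next I would check that $\Phi$ is a homeomorphism. For continuity of $\Phi$, fix an orthonormal basis $(e_1,\ldots,e_{n-m})$ of $H^{\perp}$; the vectors $e_i + A(e_i)$ form a basis of $V_A$ that depends continuously on $A$, and applying Gram-Schmidt together with Propositions~\ref{prop:close-bases} and~\ref{prop:gs-red} bounds $\dgras(V_A,V_{A'})$ in terms of $\|A - A'\|$. Continuity of $\Phi^{-1}$ follows from the explicit formula $A_V = \pi_H \circ (Q_H|_V)^{-1}$ combined with the uniform bound $\|(Q_H|_V)^{-1}\| \leq 1/\delta$ valid throughout $\G(\delta,H)$.

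Finally, given $U,V \in \G(\delta,H)$ with associated operators $A_U,A_V \in \mathcal{B}_\delta$, convexity of $\mathcal{B}_\delta$ guarantees that $(1-t)A_V + tA_U$ remains in $\mathcal{B}_\delta$ for every $t \in [0,1]$, so $\gamma(t) := \Phi\bigl((1-t)A_V + tA_U\bigr)$ is a continuous path in $\G(\delta,H)$ with $\gamma(0) = V$ and $\gamma(1) = U$. I do not anticipate any genuine obstacle; the only mildly technical point is verifying that $\Phi$ is a homeomorphism in the Grassmannian metric $\dgras$, and this reduces cleanly to the basis-comparison estimates already assembled in \S\ref{sec:grass}.
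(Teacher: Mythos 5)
Your argument is correct, but it is genuinely different from the paper's. The paper proves path-connectedness of $\G(\delta,H)$ by an explicit finite sequence of rotations: it picks a unit vector of $V$ making the smallest angle with $H^{\perp}$, rotates it into $H^{\perp}$ inside a fixed $2$-plane while keeping the orthogonal complement of that plane fixed, checks by hand (via the decomposition $|Q_H(R_{t\alpha}(z))|^2 = z_1^2|Q_H(R_{t\alpha}(v_1))|^2 + |\sum_{i\ge 2} z_i Q_H(v_i)|^2$ and $\cos^2((1-t)\alpha)\ge\cos^2\alpha$) that the whole rotation stays inside the cone, and iterates until $V$ has been moved onto $H^{\perp}$, concatenating at most $n-m$ such pieces; two arbitrary $U,V$ are then joined through $H^{\perp}$. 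You instead put global ``graph'' coordinates on $\G(\delta,H)$: the cone condition with $\delta>0$ makes $Q_H|_V$ an isomorphism onto $H^{\perp}$, and your computation correctly identifies $\G(\delta,H)$ with the closed operator-norm ball $\|A\|\le\sqrt{1-\delta^2}/\delta$ of maps $A:H^{\perp}\to H$, so linear interpolation of the operators gives the path at once. This buys more than the proposition asks for — it exhibits $\G(\delta,H)$ as homeomorphic to a convex set, hence contractible — and it feeds into Corollary~\ref{cor:path-lift} just as well, since the lifting argument there uses only the homotopy lifting property of the bundles, not the particular path. One small repair to your continuity step: Propositions~\ref{prop:close-bases} and~\ref{prop:gs-red} do not literally apply to the basis $(e_i+A(e_i))$, since for small $\delta$ these vectors have norms up to $1/\delta$ and pairwise inner products $\langle A(e_i),A(e_j)\rangle$ that need not be small, so they do not form a $\red$-basis with admissible constants. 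Continuity of $A\mapsto V_A$ is nonetheless easy: for a unit vector $v=\frac{w+A(w)}{|w+A(w)|}\in V_A$ one has $\dist(v,V_{A'})\le\|A-A'\|$, so taking an orthonormal basis of $V_A$ and applying Proposition~\ref{prop:dist-ang} (in $G(n,n-m)$) bounds $\dgras(V_A,V_{A'})\le \Cr{dist-ang}\|A-A'\|$; alternatively one can simply invoke continuity of the Gram--Schmidt process. With that adjustment the proof is complete.
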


\begin{cor}
  \label{cor:path-lift}
  The path $\gamma$ from Proposition~\ref{prop:grass-path} lifts to a continuous
  path $\tilde{\gamma} : [0,1] \to O(n)$ in the orthogonal group.
\end{cor}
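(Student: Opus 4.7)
The plan is to apply the path lifting property of the canonical quotient map
\begin{displaymath}
  q : O(n) \to G(n,n-m) = O(n)/(O(n-m) \times O(m)) \,.
\end{displaymath}
This is a locally trivial principal $(O(n-m) \times O(m))$-bundle --- indeed this is precisely the bundle structure that endows $G(n,n-m)$ with the manifold/topology used throughout the paper. Being a fiber bundle, $q$ is a Serre fibration and therefore has the path lifting property: for any continuous $\gamma : [0,1] \to G(n,n-m)$ and any $g_0 \in q^{-1}(\gamma(0))$, there exists a continuous $\tilde{\gamma} : [0,1] \to O(n)$ with $q \circ \tilde{\gamma} = \gamma$ and $\tilde{\gamma}(0) = g_0$. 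Applied to the path produced by Proposition~\ref{prop:grass-path} this yields the required lift $\tilde{\gamma}$, and one is done.

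If one prefers a self-contained construction, one can proceed by hand. Around each $W \in G(n,n-m)$ there is an open neighborhood $U_W$ admitting a continuous local section $s_W : U_W \to O(n)$ of $q$: take fixed orthonormal bases of $W$ and $W^\perp$, project them to $V$ and $V^\perp$ for $V$ with $\dgras(V,W) < 1$, and orthonormalize the two families via Gram--Schmidt; the resulting orthonormal frame of $\Rn$ adapted to $V$ depends continuously on $V$. By compactness of $[0,1]$ and continuity of $\gamma$, choose a partition $0 = t_0 < t_1 < \cdots < t_k = 1$ such that for each $i$ one has $\gamma([t_{i-1}, t_i]) \subseteq U_{W_i}$ for some $W_i$.

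Finally, build $\tilde{\gamma}$ piecewise. On $[t_0,t_1]$ set $\tilde{\gamma}(t) := s_{W_1}(\gamma(t)) \cdot h_0$, where $h_0 \in O(n-m)\times O(m)$ is the unique element making $\tilde{\gamma}(0) = g_0$; such $h_0$ exists because the fibers of $q$ are exactly the right cosets of $O(n-m)\times O(m)$. Inductively, given $\tilde{\gamma}$ on $[0,t_i]$, pick the unique $h_i \in O(n-m)\times O(m)$ with $s_{W_{i+1}}(\gamma(t_i)) \cdot h_i = \tilde{\gamma}(t_i)$ and extend by $\tilde{\gamma}(t) := s_{W_{i+1}}(\gamma(t)) \cdot h_i$ on $[t_i, t_{i+1}]$. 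The resulting map is continuous, projects onto $\gamma$, and starts at $g_0$. The only substantive input is the existence of continuous local sections of $q$, which is entirely classical, so there is no genuine obstacle to overcome.
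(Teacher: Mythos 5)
Your proof is correct, but it follows a slightly different route than the paper. The paper does not lift through the quotient $O(n) \to O(n)/(O(n-m)\times O(m))$ in one step; instead it factors the lift through the Stiefel manifold, using the two bundles $O(n-m) \to V(n,n-m) \to G(n,n-m)$ and $O(m) \to O(n) \to V(n,n-m)$ from Hatcher, and applies the homotopy lifting property (for the pair $(\{\star\},\{\star\})$) twice: first choosing an orthonormal basis of $\gamma(0)$ to lift $\gamma$ into $V(n,n-m)$, then completing that frame to an orthonormal basis of $\Rn$ to lift further into $O(n)$. Your one-step argument needs the standard fact that the quotient of a compact Lie group by a closed subgroup is a locally trivial principal bundle (hence a Serre fibration with the path lifting property); the paper's two-step argument avoids invoking that general statement by citing the two concrete bundles directly. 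Your second, self-contained construction --- continuous local sections of $q$ obtained by projecting fixed frames of $W$ and $W^{\perp}$ onto $V$ and $V^{\perp}$ (which works precisely when $\dgras(V,W)<1$, by the isomorphism statement in Remark~\ref{rem:orth-angle}) and Gram--Schmidt, followed by a piecewise lift over a partition of $[0,1]$ with unique fiber corrections $h_i \in O(n-m)\times O(m)$ --- is exactly the kind of ``by hand'' construction the paper mentions as possible but deliberately skips in favor of abstract tools; carrying it out makes the corollary independent of any fibration theory, at the cost of a longer argument. Both versions are sound.
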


In the proof of Proposition~\ref{prop:grass-path} we actually construct pieces
of the path $\gamma$ in the orthogonal group $O(n)$ and then we compose such a
piece with the projection onto the Grassmannian. The problem with lifting such a
path occurs when we want to glue separate pieces together. We bypass this
problem using some abstract topological tools in the proof below. With some
effort one could construct the path $\tilde{\gamma}$ by hand, e.g. using the
fact that $SO(n)$ is path-connected and that any orthonormal base of $\Rn$ can
be easily modified to define an element of $SO(n)$ just by multiplying one
vector by $-1$. To keep the proof of Proposition~\ref{prop:grass-path}
relatively simple, we chose to employ some properties of fiber bundles.

\begin{proof}
  We consider the fiber bundles (see~\cite[Examples 4.53 and 4.54]{MR1867354})
  \begin{align*}
    O(n-m) &\to V(n,n-m) \to G(n,n-m) \\
    \text{and} \quad O(m) &\to O(n) \to V(n,n-m) \,,
  \end{align*}
  where $V(n,n-m) = O(n)/O(m)$ is the Stiefel manifold of orthonormal frames of
  $n-m$ vectors in $\Rn$ considered as a subspace of a product of $n-m$
  spheres. According to \cite[Proposition~4.48]{MR1867354}, these bundles have
  the homotopy lifting property with respect to any CW pair $(X,A)$. Let us take
  $X = A = \{ \star \}$. The homotopy we want to lift is
  \begin{align*}
    F : \{\star\} \times [0,1] &\to G(n,n-m) \\
    (\star,t) &\mapsto \gamma(t) \,.
  \end{align*}
  All we need to do is to choose a starting point $\tilde{F}(\star,0) \in
  V(n,n-m)$, which boils down to choosing an orthonormal basis of $\gamma(0) \in
  G(n,n-m)$. Using the homotopy lifting property we get a map
  \begin{displaymath}
    \tilde{F} : \{\star\} \times [0,1] \to V(n,n-m) \,.
  \end{displaymath}
  Now we use the homotopy lifting property once again for the second fiber
  bundle. For the starting point $\tilde{\tilde{F}}(\star,0)$ we need to
  complete the basis $\tilde{F}(\star,0)$ to some orthonormal basis of $\Rn$ but
  we can always do that. Finally we set $\tilde{\gamma}(t) =
  \tilde{\tilde{F}}(\star,t)$.
\end{proof}

\begin{proof}[Proof of Proposition~\ref{prop:grass-path}]
  Fix some $V \in \G(\delta,H)$. It suffices to show that we can continuously
  deform $V$ to the space $H^{\perp}$ inside $\G(\delta,H)$. Then, for any other
  space $U \in \G(\delta,H)$ we can find a second path joining $U$ with
  $H^{\perp}$ and combine these two path to make a path from $V$ to $U$.

  We will construct a finite sequence of paths $\gamma_1$, \ldots,
  $\gamma_{N-1}$ in the Grassmannian $G(n,m)$ and a finite sequence of
  $m$-planes $V =: V_1$, $V_2$, \ldots, $V_N := H^{\perp}$. For each $i =
  1,\ldots,N-1$ the path $\gamma_i$ will join $V_i$ with $V_{i+1}$ and the
  intersection $V_{i+1} \cap H^{\perp}$ will have strictly bigger dimension then
  $V_i \cap H^{\perp}$. For fixed $i$ we shall first construct a path
  $\tilde{\gamma}_i$ in the orthogonal group $O(n)$ and then we shall set
  $\gamma_i = \tilde{\gamma}_i \circ \text{pr}$, where $\text{pr} : O(n) \to
  G(n,n-m)$ is the standard projection mapping. To construct the path
  $\tilde{\gamma}_i$ we find a continuous family of rotations (i.e. elements of
  $O(n)$) which act on the space
  \begin{displaymath}
    X_i := (V_i \cap H^{\perp})^{\perp} \,,
  \end{displaymath}
  stabilizing the orthogonal complement $X_i^{\perp} = V_i \cap H^{\perp}$.
  This way we know, that along the path $\gamma_i$ we never decrease the
  dimension of the space $\gamma_i(t) \cap H^{\perp}$. In other words, once we
  make $V_i$ intersect $H^{\perp}$ on some subspace, we do all the consecutive
  rotations in the orthogonal complement of that subspace, so along the way, we
  can only increase the dimension of the intersection with $H^{\perp}$.

  Set 
  \begin{align*}
    V_1 &:= V \,,&
    X_1 &:= (V_1 \cap H^{\perp})^{\perp} \,,&
    \bar{V}_1 &:= V_1 \cap X_1 \,,&
    &\text{and}&
    H_1^{\perp} &:= H^{\perp} \cap X_1 \,.    
  \end{align*}
  Note that $\bar{V}_1 \cap H_1^{\perp} = \{ 0 \}$ and that $\dim H_1^{\perp} =
  \dim \bar{V}_1$. Choose a vector $v_1 \in \bar{V}_1 \cap \Sphere$ such that
  \begin{displaymath}
    | Q_H(v_1) | = \max_{v \in \bar{V}_1 \cap \Sphere} | Q_H(v) | \,.
  \end{displaymath}
  This condition says that $v_1 \in \bar{V}_1$ is a unit vector which makes the
  smallest angle with $H_1^{\perp}$. Set $h_1 := Q_H(v_1) \in H_1^{\perp}$ and
  set $P := \opspan\{v_1, h_1\}$. Note that $|h_1| < 1$, because we restricted
  ourselfs to the space $X_1$ in which $\bar{V}_1 \cap H_1^{\perp} = \{ 0 \}$. We
  will make the rotation in the plane $P$.

  Set 
  \begin{displaymath}
    u_1 := \frac{h_1 - \langle h_1, v_1 \rangle v_1}{|h_1 - \langle h_1, v_1 \rangle v_1|} \,,
  \end{displaymath}
  so that $\{v_1, u_1\}$ makes an orthonormal basis of $P$. Choose an orthonormal
  basis of $P^{\perp}$ consisting of vectors $v_2$, \ldots, $v_{n-m}$ and $u_2$,
  \ldots, $u_m$ such that
  \begin{align*}
    V_1 &= \opspan \{ v_1, \ldots, v_{n-m} \}  \,, \\
    V_1^{\perp} &= \opspan\{ u_1, \ldots, u_m \} \,.
  \end{align*}
  For any angle $\alpha$ we define the rotation $R_{\alpha} : \Rn \to \Rn$ with
  the formula
  \begin{displaymath}
    R_{\alpha}(z) :=
    \langle z, v_1 \rangle (v_1 \cos \alpha + u_1 \sin \alpha) +
    \langle z, u_1 \rangle (u_1 \cos \alpha - v_1 \sin \alpha) \,.
  \end{displaymath}
  Set $\alpha := \opang(v_1,h_1)$ and define a path $\tilde{\gamma}_1 : [0,1]
  \to O(n)$ in the orthogonal group
  \begin{displaymath}
    \tilde{\gamma}_1(t) := 
    ( R_{t\alpha}(v_1), v_2, \ldots, v_{n-m},
    R_{t\alpha}(u_1), u_2, \ldots, u_m ) \,.
  \end{displaymath}
  Let $\text{pr} : O(n) \to O(n) / (O(n-m) \times O(m)) = G(n,n-m)$ denote the
  standard projection mapping and set $\gamma_1 := \text{pr} \circ
  \tilde{\gamma}_1$. This defines a continuous path in the Grassmanian. Of
  course $\gamma_1(0) = V_1$ and $\gamma_1(1) = \opspan \{ h_1, v_2, \ldots,
  v_{n-m} \}$ which intersects $H^{\perp}$ along $V_1 \cap H^{\perp}$ but also
  along the direction $h_1 \notin V_1 \cap H^{\perp}$.

  Now we set 
  \begin{align*}
    V_2 &:=  \gamma_1(1) \,,&
    X_2 &:= (V_2 \cap H^{\perp})^{\perp} \,,&
    \bar{V}_2 &:= V_2 \cap X_2 \,,&
    &\text{and}&
    H_2^{\perp} &:= H^{\perp} \cap X_2 \,.
  \end{align*}
  If $V_2 \ne H^{\perp}$, we can repeat the whole procedure finding another path
  $\gamma_2$ which joins $V_2$ with some $(n-m)$-plane $V_3 := \gamma_2(1)$
  which intersects $H^{\perp}$ on a subspace of dimension at least $\dim(V_2
  \cap H^{perp}) + 1$.

  Since the dimension of $V_i \cap H^{\perp}$ increases in each step and $\dim
  H^{\perp} = n-m$, after $N \le n-m$ steps we shall have $V_N = H^{\perp}$.
  Glueing consecutive paths $\gamma_j$ together, we construct a path $\gamma$
  between $V$ and $H^{\perp}$ inside $G(n,n-m)$.

  What is left to show, is that for each $t \in [0,1]$ the space $\gamma(t)$ is
  really a member of $\G(\delta,H)$ (i.e. $\gamma(t)$ is contained in the cone
  $C(\delta,H)$). It suffices to show that for each $j$ and for each $t \in
  [0,1]$ the space $\gamma_j(t)$ belongs to $\G(\delta,H)$. We will focus on the
  case $j = 1$. For all other $j$'s the proof is identical.

  Fix some $t \in [0,1]$ and some vector $z \in V \cap \Sphere$. Note that $z_t
  := R_{t\alpha}(z)$ is a vector in $\gamma_1(t) \cap \Sphere$ and that any
  vector $\bar{w} \in \gamma_1(t) \cap \Sphere$ can be expressed as $\bar{w} =
  R_{t\alpha}(\bar{z})$ for some $\bar{z} \in V \cap \Sphere$. Hence, it
  suffices to show that $|Q_H(R_{t\alpha}(z))| \ge \delta$. Set $z_i := \langle
  z, v_i \rangle$ so that
  \begin{displaymath}
    z = \sum_{i = 1}^{n-m} z_i v_i \,.
  \end{displaymath}
  Note that for $i > 1$ we have $v_i \perp P$ and also $R_{t\alpha}(v_i) = v_i$ so
  \begin{displaymath}
    Q_H(v_i) = Q_H(R_{t\alpha}(v_i)) 
    = \pi_{H^{\perp} \cap P}(v_i) + \pi_{H^{\perp} \cap P^{\perp}}(v_i) 
    = \pi_{H{\perp} \cap P^{\perp}}(v_i) \in P^{\perp} \,.
  \end{displaymath}
  For $i = 1$ we have $v_1 \in P$ and also $R_{t\alpha}(v_1) \in P$ so
  \begin{align*}
    Q_H(v_1) &= \pi_{H^{\perp} \cap P}(v_1) \in P \\
    \text{and} \quad
    Q_H(R_{t\alpha}(v_1)) &= \pi_{H^{\perp} \cap P}(R_{t\alpha}(v_1)) \in P \,.
  \end{align*}
  This gives us
  \begin{align*}
    Q_H(v_1) &\perp Q_H(v_i) \quad \text{for } i > 1 \\
    \text{and} \quad
    Q_H(R_{t\alpha}(v_1)) &\perp Q_H(R_{t\alpha}(v_i)) \quad \text{for } i > 1 \,.
  \end{align*}
  Hence, we have
  \begin{displaymath}
    \delta \le | Q_H(z) |^2 
    = \left| z_1 Q_H(v_1) + \sum_{i = 2}^{n-m} z_i Q_H(v_i) \right|^2 
    =  z_1^2 | Q_H(v_1) |^2 + \left| \sum_{i = 2}^{n-m} z_i Q_H(v_i) \right|^2
  \end{displaymath}
  \begin{align*}
    \text{and} \quad
    | Q_H(R_{t\alpha}(z)) |^2 
    &= \left| z_1 Q_H(R_{t\alpha}(v_1)) + \sum_{i = 2}^{n-m} z_i Q_H(v_i) \right|^2 \\
    &=  z_1^2 | Q_H(R_{t\alpha}(v_1)) |^2 + \left| \sum_{i = 2}^{n-m} z_i Q_H(v_i) \right|^2 \,,
  \end{align*}
  so it suffices to show that $| Q_H(R_{t\alpha}(v_1)) |^2 \ge | Q_H(v_1) |^2$.
  From the definition of $v_1$ and $\alpha$ we have $|Q_H(v_1)|^2 = \cos^2 \alpha$
  and from the definition of $R_{t\alpha}$ we have $|Q_H(R_{t\alpha}(v_1))|^2 =
  \cos^2 (1-t)\alpha$. In our setting $0 \le \alpha \le \frac{\pi}2$ and $t \in
  [0,1]$, so $\cos (1-t)\alpha \ge \cos \alpha$ and this completes the proof.
\end{proof}

\begin{cor}
  \label{cor:sph-in-cone}
  Let $H$ and $\delta$ be as in Proposition~\ref{prop:grass-path}. Let $S_1$ and
  $S_2$ be two round spheres centered at the origin, contained in the conical
  cap $\Cone(\delta,H,\rho_1,\rho_2)$ and of the same dimension
  $(n-m-1)$. Moreover assume that $0 \le \rho_1 < \rho_2 $.  There exists an
  isotopy
  \begin{displaymath}
    F : S_1 \times [0,1] \to \Cone(\delta,H,\rho_1,\rho_2) \,,
  \end{displaymath}
  such that
  \begin{displaymath}
    F(-,0) = \opid
    \qquad \text{and} \qquad
    \opim F|_{S_1 \times \{1\}} = S_2 \,. 
  \end{displaymath}
\end{cor}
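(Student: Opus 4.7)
The plan is to realise the isotopy as a continuous family of orthogonal rotations of $\Rn$ composed with a linear scaling of the radius. First I would observe that each $(n-m-1)$-sphere $S_i$ centered at $0$ lies in a unique $(n-m)$-dimensional subspace $V_i := \opspan S_i$ and has some radius $r_i > 0$. Since every non-zero vector in $V_i$ is a positive scalar multiple of a point of $S_i$ and $S_i \subseteq \Cone(\delta,H,\rho_1,\rho_2)$, the inequality $|Q_H(v)| \ge \delta|v|$ holds for all $v \in V_i$, hence $V_i \in \G(\delta,H)$; moreover $\rho_1 < r_i < \rho_2$ because $S_i \subseteq \Shell(\rho_1,\rho_2)$.

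Next, Proposition~\ref{prop:grass-path} supplies a continuous path $\gamma : [0,1] \to \G(\delta,H)$ with $\gamma(0) = V_1$ and $\gamma(1) = V_2$, and Corollary~\ref{cor:path-lift} lifts it to a continuous $\tilde\gamma : [0,1] \to O(n)$. Under the standard projection $O(n) \to G(n,n-m)$ the first $n-m$ columns of $\tilde\gamma(t)$ form an orthonormal basis of $\gamma(t)$, so the orthogonal map
\begin{displaymath}
  R_t := \tilde\gamma(t) \circ \tilde\gamma(0)^{-1}
\end{displaymath}
carries the frame of $V_1$ given by $\tilde\gamma(0)$ to the corresponding frame of $\gamma(t)$; in particular $R_0 = \opid$ and $R_t(V_1) = \gamma(t)$ for every $t \in [0,1]$.

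I would then define
\begin{displaymath}
  F(x,t) := \frac{(1-t) r_1 + t r_2}{r_1}\, R_t(x), \qquad (x,t) \in S_1 \times [0,1],
\end{displaymath}
and verify the required properties. For every $t$, $R_t(S_1)$ is the round sphere of radius $r_1$ in $\gamma(t)$, so $F(\cdot,t)(S_1)$ is the round sphere of radius $r(t) := (1-t)r_1 + t r_2$ in $\gamma(t)$. Since $\gamma(t) \in \G(\delta,H)$ this sphere lies in $\Cone(\delta,H)$, and since $r(t) \in (\rho_1,\rho_2)$ by convexity of the open interval it lies in $\Shell(\rho_1,\rho_2)$; together $F$ takes values in $\Cone(\delta,H,\rho_1,\rho_2)$. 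Moreover $F(\cdot,0) = \opid$ and $F(\cdot,1)(S_1)$ is the unique $(n-m-1)$-sphere of radius $r_2$ in $V_2 = \gamma(1)$ centered at the origin, namely $S_2$. Each slice $F(\cdot,t)$ is the restriction of a scaled orthogonal map to $S_1$ and hence a smooth embedding, so $F$ is an isotopy.

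The only genuinely non-trivial ingredient is the continuous lift $\tilde\gamma$ provided by Corollary~\ref{cor:path-lift}; once it is in hand, the construction and verification of $F$ are mechanical. The one small subtlety is that the conical cap is open in the radial variable, but because $r_1, r_2 \in (\rho_1,\rho_2)$ and this interval is convex the interpolated radius $r(t)$ remains strictly inside it for all $t$.
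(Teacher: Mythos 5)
Your proof is correct and follows essentially the same route as the paper: identify the spanning subspaces $V_1,V_2\in\G(\delta,H)$, use Proposition~\ref{prop:grass-path} and Corollary~\ref{cor:path-lift} to get the lifted path $\tilde\gamma$ in $O(n)$, and move $S_1$ by $\tilde\gamma(t)\tilde\gamma(0)^{-1}$ while controlling the radius inside $(\rho_1,\rho_2)$. The only (cosmetic) difference is that you interpolate the radius simultaneously with the rotation in a single formula, whereas the paper rotates first and then adjusts the radius inside $V_2\cap\Shell(\rho_1,\rho_2)$.
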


\begin{proof}
  Let $r_1$ and $r_2$ be the radii of $S_1$ and $S_2$ respectively. We have
  $\rho_1 < r_1,r_2 < \rho_2$. Let $V_1,V_2 \in G(n,n-m)$ be the two subspaces
  of $\Rn$ such that $S_1 \subseteq V_1$ and $S_2 \subseteq V_2$. In other words
  $S_1 = \Sphere_{r_1} \cap V_1$ and $S_2 = \Sphere_{r_2} \cap V_2$. Because
  $S_1$ and $S_2$ are subsets of $\Cone(\delta,H)$, we know that $V_1$ and $V_2$
  are elements of $\G(\delta,H)$. From Proposition~\ref{prop:grass-path} we get
  a continuous path $\gamma$ joining $V_1$ with $V_2$. By
  Corollary~\ref{cor:path-lift}, this path lifts to a path $\tilde{\gamma}$ in
  the orthogonal group $O(n)$. For $z \in S_1$ and $t \in [0,1]$ we set
  \begin{align*}
    F(z,t) &:= \tilde{\gamma}(t) \tilde{\gamma}(0)^{-1} z \,.
  \end{align*}
  This gives a continuous deformation of $S_1 = \Sphere_{r_1} \cap V_1$ into
  $\Sphere_{r_1} \cap V_2$. Now, we only need to adjust the radius but this can be
  easily done inside $V_2 \cap \Shell(\rho_1,\rho_2)$ so the corollary is proved.
\end{proof}

\begin{prop}
  \label{prop:sph-trans}
  Let $H \in G(n,m)$. Let $S$ be a sphere perpendicular to $H$, meaning that $S
  = \Sphere(x,r) \cap (x + H^{\perp})$ for some $x \in H$ and $r > 0$. Assume
  that $S$ is contained in the ''conical cap'' $\Cone(\delta,H,\rho_1,\rho_2)$,
  where $\rho_2 > 0$. Fix some $\rho \in (\rho_1,\rho_2)$. There exists an
  isotopy
  \begin{displaymath}
    F : S \times [0,1] \to \Cone(\delta,H,\rho_1,\rho_2) \,,
  \end{displaymath}
  such that
  \begin{displaymath}
    F(\cdot,0) = \opid 
    \qquad \text{and} \qquad
    \opim F|_{S \times \{1\}} = \Sphere_{\rho} \cap H^{\perp} \,.
  \end{displaymath}
\end{prop}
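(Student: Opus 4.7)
The plan is to build $F$ in two phases, then splice them together. The key geometric observation is that since $x \in H$ and every point of $S$ has the form $x + v$ with $v \in H^{\perp}$, $|v| = r$, the orthogonal decomposition gives $|x + v|^2 = |x|^2 + r^2$ independently of $v$. Consequently, $S$ lies entirely on the single round sphere $\Sphere_{\sqrt{|x|^2+r^2}}$, which by hypothesis sits inside $\Shell(\rho_1, \rho_2)$, and the cone hypothesis $S \subseteq \Cone(\delta, H)$ collapses to the single scalar inequality $r \ge \delta \sqrt{|x|^2 + r^2}$. I will exploit these two facts to bypass anything as heavy as Corollary~\ref{cor:sph-in-cone}.

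For the first phase I would slide the center $x$ linearly to the origin along $H$ while inflating the radius just enough that the image of $S$ stays on the same ambient sphere. Explicitly, for $t \in [0,1]$ set
\[
  F_1(x+v, t) := (1-t)\, x + \tfrac{r(t)}{r}\, v,
  \qquad r(t) := \sqrt{r^2 + |x|^2\, t(2-t)}.
\]
Each $F_1(\cdot, t)$ is an affine embedding of $S$ onto the $(n-m-1)$-sphere of radius $r(t)$ centered at $(1-t)x$ in the affine plane $(1-t)x + H^{\perp}$, with $F_1(\cdot, 0) = \opid$ and $F_1(S, 1) = \Sphere_{\sqrt{|x|^2+r^2}} \cap H^{\perp}$. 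The formula for $r(t)$ is rigged so that $(1-t)^2 |x|^2 + r(t)^2 \equiv |x|^2 + r^2$, which forces $F_1(S, t)$ to lie on the fixed outer sphere in $\Shell(\rho_1, \rho_2)$; this takes care of the shell condition automatically. For the cone condition, the orthogonal decomposition gives
\[
  |Q_H F_1(x+v, t)| = \bigl|\tfrac{r(t)}{r} v\bigr| = r(t) \ge r \ge \delta \sqrt{|x|^2 + r^2} = \delta\, |F_1(x+v, t)|,
\]
where $r(t) \ge r$ follows from $|x|^2\, t(2-t) \ge 0$ and the last inequality is precisely the cone hypothesis on $S$.

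The second phase is a pure radial scaling inside $H^{\perp}$. For $s \in [0,1]$ let
\[
  F_2(w, s) := \left( (1-s) + s \frac{\rho}{\sqrt{|x|^2 + r^2}} \right) w,
\]
which deforms $\Sphere_{\sqrt{|x|^2+r^2}} \cap H^{\perp}$ to $\Sphere_{\rho} \cap H^{\perp}$ through a one-parameter family of round spheres in $H^{\perp}$ centered at the origin. The shell condition is preserved because both endpoint radii lie in the convex interval $(\rho_1, \rho_2)$, and the cone condition is trivial since every image vector lies in $H^{\perp}$ and therefore satisfies $|Q_H w| = |w|$. Splicing $F_1$ on $[0, \tfrac{1}{2}]$ with $F_2(F_1(\cdot, 1), \cdot)$ on $[\tfrac{1}{2}, 1]$, after obvious time reparametrization, gives the required isotopy $F$. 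I do not foresee any serious obstacle; the only non-routine step is the specific choice of $r(t)$ in Phase~1, which is dictated by the requirement that the shell condition hold automatically along the whole deformation.
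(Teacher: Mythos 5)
Your proposal is correct and follows essentially the same route as the paper: the Phase~1 formula $(1-t)x + \tfrac{r(t)}{r}v$ with $r(t)=\sqrt{r^2+|x|^2\,t(2-t)}$ is exactly the paper's deformation $(1-t)x + y\sqrt{r^2+|x|^2-|(1-t)x|^2}$, with the same cone estimate, and your explicit Phase~2 scaling is just a spelled-out version of the paper's final radius adjustment inside $H^{\perp}\cap\Shell(\rho_1,\rho_2)$.
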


\begin{figure}[!htb]
  \centering
  \includegraphics{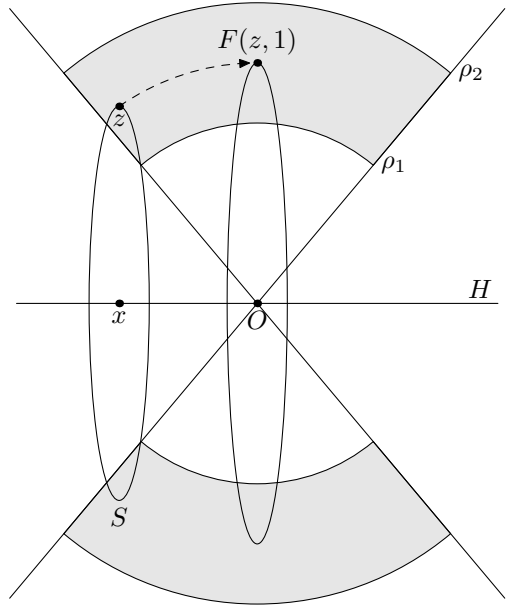}
  \caption{When we move the center of a sphere to the origin, we need to control
    the radius so that the deformation is performed inside the conical cap.}
  \label{F:sphere-move}
\end{figure}

\begin{proof}
  Any point $z \in S$ can be uniquely decomposed into a sum $z = x + r y$, where
  $y \in \Sphere \cap H^{\perp}$ is a point in the unit sphere in
  $H^{\perp}$. We define
  \begin{displaymath}
    F(x + r y, t) := (1-t)x + y \sqrt{r^2 + |x|^2 - |(1-t)x|^2} \,.
  \end{displaymath}
  This gives an isotopy which deforms $S$ to a sphere perpendicular to $H$ and
  centered at the origin (see Figure~\ref{F:sphere-move}). Fix some $z = x + ry
  \in S$. The sphere $S$ is contained in $\Cone(\delta,H)$, so it follows that
  \begin{displaymath}
    \frac{|Q_H(F(z,t))|}{|F(z,t)|} 
    = \frac{\sqrt{r^2 + |x|^2 - |(1-t)x|^2}}{\sqrt{r^2 + |x|^2}}
    \ge \frac{r}{\sqrt{r^2 + |x|^2}}
    = \frac{|Q_H(z)|}{|z|} \ge \delta \,.
  \end{displaymath}
  This shows that the whole deformation is performed inside $\Cone(\delta,H)$. Next,
  we only need to continuously change the radius to the value $\rho$ but this can
  be easily done inside $H^{\perp} \cap (\Ball_{\rho_2} \setminus \Ball_{\rho_1})$.
\end{proof}

\mysubsubsection{Intersecting cones}

In this paragraph we prove a result which allows us to handle the situation of
two intersecting cones. Let $P$ and $H$ be to $m$-planes such that $\dgras(P,H)
< 1$ and such that the cones $\Cone(\sqrt{1 - \alpha^2},P)$ and $\Cone(\sqrt{1 -
  \beta^2},H)$ intersect. The question is: does the intersection
$\Cone(\alpha,P) \cap \Cone(\beta,H)$ contain a cone $\Cone(\gamma,H)$ for some
$\gamma \in (0,1)$? We give a sufficient condition for $\alpha$ and $\beta$
which ensures a positive answer. This will become useful in the proof of
Proposition~\ref{prop:big-proj-fat-simp} where we construct a set $F$ by glueing
some conical caps together and we need to assure that certain spheres contained
in $F$ are linked with $\Sigma$. Knowing that the intersection of two conical
caps contains another one allows us to continuously translate spheres from the
first conical cap to the second.

\begin{prop}
  \label{prop:two-cones}
  Let $\alpha > 0$ and $\beta > 0$ be two real numbers satisfying $\alpha+\beta <
  \sqrt{1-\beta^2}$ and let $H_0,H_1 \in G(n,m)$ be two $m$-planes in $\Rn$.
  Assume that
  \begin{displaymath} 
    \Cone(\sqrt{1-\alpha^2},H_0^{\perp}) \cap \Cone(\sqrt{1-\beta^2},H_1^{\perp}) \ne \emptyset \,.
  \end{displaymath}
  Then for any $\epsilon > 0$ we have the inclusion
  \begin{equation}
    \label{eq:gen-cone-int}
    \Cone((\alpha+\beta)/\sqrt{1-\beta^2} + \epsilon,H_0)
    \subseteq
    \Cone(\epsilon,H_1)\,.
  \end{equation}
  In particular, if $\alpha+\beta \le (1-\beta)\sqrt{1-\beta^2}$, then
  \begin{displaymath}
    H_0^{\perp} \subseteq \Cone(\alpha,H_0) \cap \Cone(\beta,H_1) \,.
  \end{displaymath}
\end{prop}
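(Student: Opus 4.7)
The plan is to prove the inclusion \eqref{eq:gen-cone-int} by contraposition, extracting all the necessary geometric data from the common point of the two cones, and then to obtain the ``in particular'' statement by specialising $\epsilon=\beta$. Pick a unit vector $x\in\Cone(\sqrt{1-\alpha^2},H_0^{\perp})\cap\Cone(\sqrt{1-\beta^2},H_1^{\perp})$. Using $Q_{H^{\perp}}=\pi_H$, the cone memberships translate into $|\pi_{H_0}(x)|\ge\sqrt{1-\alpha^2}$ and $|\pi_{H_1}(x)|\ge\sqrt{1-\beta^2}$, equivalently $|Q_{H_0}(x)|\le\alpha$ and $|Q_{H_1}(x)|\le\beta$.

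The first key step is to show that $\tilde x := \pi_{H_1}(x)\in H_1$, whose length is at least $\sqrt{1-\beta^2}$, has small $H_0^{\perp}$-component. Writing $\tilde x = x - Q_{H_1}(x)$ and combining the triangle inequality with $\|Q_{H_0}\|\le 1$ yields
\[
|Q_{H_0}(\tilde x)|\ \le\ |Q_{H_0}(x)| + |Q_{H_1}(x)|\ \le\ \alpha+\beta\,.
\]
The assumption $\alpha+\beta<\sqrt{1-\beta^2}$ then guarantees that after normalising $\tilde x$, the space $H_1$ contains a unit direction whose $H_0^{\perp}$-component is bounded by $(\alpha+\beta)/\sqrt{1-\beta^2}<1$---precisely the ratio that reappears in the conclusion.

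For the main inclusion I argue by contraposition: assume $|y|=1$ and $|Q_{H_1}(y)|<\epsilon$; I aim at $|Q_{H_0}(y)|<(\alpha+\beta)/\sqrt{1-\beta^2}+\epsilon$. Decomposing $y=\pi_{H_1}(y)+Q_{H_1}(y)$, the triangle inequality together with $|Q_{H_0}(Q_{H_1}(y))|\le|Q_{H_1}(y)|<\epsilon$ reduce the task to the estimate
\[
|Q_{H_0}(u)|\ \le\ \frac{\alpha+\beta}{\sqrt{1-\beta^2}}\,|u|\quad\text{for every }u\in H_1\,,
\]
which applied to $u=\pi_{H_1}(y)$ (of norm at most $1$) produces exactly the required bound. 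Promoting the one-direction control on $\tilde x$ from the previous paragraph to this uniform estimate on the whole of $H_1$ is the main obstacle, and I expect it to require a careful geometric argument exploiting the position of $x$ inside both cones. Once \eqref{eq:gen-cone-int} is established, the ``in particular'' clause is immediate: the hypothesis $\alpha+\beta\le(1-\beta)\sqrt{1-\beta^2}$ rearranges to $(\alpha+\beta)/\sqrt{1-\beta^2}+\beta\le 1$, so every $y\in H_0^{\perp}$, being automatically in $\Cone(1,H_0)\subseteq\Cone((\alpha+\beta)/\sqrt{1-\beta^2}+\beta,H_0)$, lies in $\Cone(\beta,H_1)$ by \eqref{eq:gen-cone-int} with $\epsilon=\beta$; the inclusion $H_0^{\perp}\subseteq\Cone(\alpha,H_0)$ is trivial because $\alpha\le 1$.
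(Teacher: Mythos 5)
Your reduction is the same as the paper's: everything hinges on the single claim that $|Q_{H_0}(u)| \le \frac{\alpha+\beta}{\sqrt{1-\beta^2}}\,|u|$ for \emph{every} $u \in H_1$, and once that is granted your triangle-inequality bookkeeping and the specialisation $\epsilon=\beta$ for the ``in particular'' clause are correct. But that claim \emph{is} the content of the proposition --- the rest is two lines of estimates --- and your proposal does not prove it: from the common point $x$ of the two hypothesis cones you control only the one direction $\tilde x=\pi_{H_1}(x)$, and you explicitly defer the passage from this single direction to all of $H_1$ as ``the main obstacle''. So as it stands this is not a proof; the key step is missing.

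Moreover, the obstacle you flagged is genuine and cannot be removed by a cleverer argument from the stated hypothesis alone. A nonzero common vector $x$ controls only the line $\R\,\pi_{H_1}(x)\subseteq H_1$, and for $m\ge 2$ nothing more follows: take $n=4$, $m=2$, $H_0=\opspan\{e_1,e_2\}$, $H_1=\opspan\{e_1,e_3\}$, $\alpha=\beta=\tfrac1{10}$. Then $e_1$ lies in both hypothesis cones, yet $e_3\in H_1\cap H_0^{\perp}$ has $|Q_{H_0}(e_3)|=1$ and $|Q_{H_1}(e_3)|=0$, so the uniform estimate, the inclusion \eqref{eq:gen-cone-int} and the ``in particular'' conclusion all fail for this pair of planes. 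For comparison, the paper's proof produces the estimate \eqref{est:QH0z} by taking the common line $L$ and, for $z\in H_1$, a point $y\in L$ with $\pi_{H_1}(y)=z$, so that $|y|\le |z|/\sqrt{1-\beta^2}$ and $|Q_{H_0}(z)|\le |Q_{H_0}(y)|+|Q_{H_1}(y)|\le(\alpha+\beta)|y|$; but such a $y$ exists only when $z$ lies in the line $\pi_{H_1}(L)$, which is exactly the one-dimensional control you already have. In other words, a complete argument must use more than a nonempty intersection of the cones --- namely that the intersection contains a quantitatively well-spread family of directions spanning $H_1$, as it does in the situations where the proposition is applied --- and your proposal (like a literal reading of the statement) does not supply that input.
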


\begin{proof}
  First we estimate the ``angle'' between $H_0$ and $H_1$. Since the cones
  $\Cone(\sqrt{1-\alpha^2},H_0^{\perp})$ and $\Cone(\sqrt{1-\beta^2},H_1^{\perp})$ have
  nonempty intersection they both must contain a common line $L \in G(n,1)$.
  \begin{displaymath}
    L \subseteq \Cone(\sqrt{1-\alpha^2},H_0^{\perp}) \cap \Cone(\sqrt{1-\beta^2},H_1^{\perp}) \,.
  \end{displaymath}
  Choose some point $z \in H_1$ and find a point $y \in L$ such that $z =
  \pi_{H_1}(y)$ (see Figure~\ref{F:PLH}). Since $y \in
  \Cone(\sqrt{1-\beta^2},H_1^{\perp})$ it follows that $|Q_{H_1}(y)| < \beta
  |y|$.  Furthermore, by the Pythagorean theorem
  \begin{align*}
    |y|^2 = |\pi_{H_1}(y)|^2 + |Q_{H_1}(y)|^2 
    &\le |z|^2 + \beta^2|y|^2 \\
    \text{hence}
    |y| &\le \frac{|z|}{\sqrt{1 - \beta^2}} \,.
  \end{align*}
  Because $y$ also belongs to the cone $\Cone(\sqrt{1-\alpha^2},H_0^{\perp})$ we
  have $|Q_{H_0}(y)| < \delta |y|$, so we obtain
  \begin{align}
    |Q_{H_0}(z)| &\le |Q_{H_0}(y)| + |Q_{H_0}(z-y)| 
    \le |Q_{H_0}(y)| + |z-y|  \notag \\
    &= |Q_{H_0}(y)| + |Q_{H_1}(y)|
    \le \alpha |y| + \beta |y|  
    \le \frac{\alpha + \beta}{\sqrt{1-\beta^2}} |z|
    \qquad \text{for all } z \in H_1
    \label{est:QH0z} \,.
  \end{align}

  \begin{figure}[!htb]
    \centering
    \includegraphics{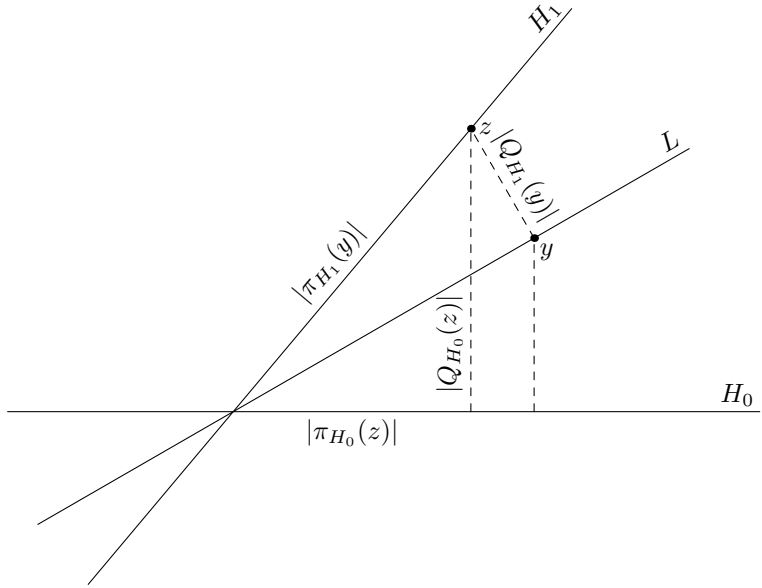}
    \caption{The line $L$ lies in the intersection of two cones:
      $\Cone(\sqrt{1 - \alpha^2},H_0^{\perp})$ and
      $\Cone(\sqrt{1 - \beta^2},H_1^{\perp})$.
      This allows us to estimate the ``angle'' between $H_0$ and $H_1$.}
    \label{F:PLH}
  \end{figure}

  Choose some $\epsilon > 0$ and let
  \begin{displaymath}
    x \in C\left(
      \frac{\alpha+\beta}{\sqrt{1-\beta^2}} + \epsilon,H_0
    \right) \,,
    \quad \text{so} \quad
    |Q_{H_0}(x)| \ge \left(
      \frac{\alpha+\beta}{\sqrt{1-\beta^2}} + \epsilon
    \right)
    |x| \,.
  \end{displaymath}
  If $\epsilon$ is small enough, then such $x$ exists by the assumption that
  $\alpha+\beta < \sqrt{1-\beta^2}$. For bigger $\epsilon$ the inclusion
  $\Cone((\alpha+\beta)/\sqrt{1-\beta^2} + \epsilon,H_0) \subseteq \Cone(\epsilon,H_1)$
  is trivially true. From the triangle inequality
  \begin{align*}
    \frac{\alpha+\beta}{\sqrt{1-\beta^2}} |x|
    &\le |Q_{H_0}(x)| 
    \le |Q_{H_0}(Q_{H_1}(x))| + |Q_{H_0}(\pi_{H_1}(x))| \\
    &\le |Q_{H_1}(x)| + |Q_{H_0}(\pi_{H_1}(x))| \,,
  \end{align*}
  hence
  \begin{displaymath}
    |Q_{H_1}(x)| \ge \frac{\alpha+\beta}{\sqrt{1-\beta^2}} |x| + \epsilon |x| - |Q_{H_0}(\pi_{H_1}(x))| \,.
  \end{displaymath}
  Because $\pi_{H_1}(x) \in H_1$ and because of estimate \eqref{est:QH0z} we have
  \begin{displaymath}
    |Q_{H_1}(x)| 
    \ge \frac{\alpha+\beta}{\sqrt{1-\beta^2}} |x| + \epsilon |x| - \frac{\alpha+\beta}{\sqrt{1-\beta^2}} |\pi_{H_1}(x)|
    \ge \epsilon |x| \,,
  \end{displaymath}
  which ends the proof.
\end{proof}



\mysubsection{Flatness}
Recall the definition of P. Jones' $\beta$-numbers
\begin{defin}
  \label{def:beta}
  Let $\Sigma \subseteq \Rn$ be any set. Let $x \in \Sigma$ and $r > 0$. We
  define the $m$-dimensional \emph{$\beta$ numbers} of $\Sigma$ by the formula
  \begin{align*}
    \nbeta(x,r) &:= \frac 1r \inf \left\{
      \sup_{z \in \Sigma \cap \CBall(x,r)} \dist(z,x+H) : H \in G(n,m)
    \right\} \\
    &= \frac 1r \inf \left\{
      \sup_{z \in \Sigma \cap \CBall(x,r)} |Q_H(z-x)| : H \in G(n,m)
    \right\} \,.
  \end{align*}
\end{defin}

\begin{defin}
  \label{def:hdist}
  For any two sets $E,F \subseteq \Rn$ we define the \emph{Hausdorff distance}
  between these two sets to be
  \begin{displaymath}
    \HD(E,F) := \sup\{ \dist(y,F) : y \in E \} + \sup\{ \dist(y,E) : y \in F \} \,.
  \end{displaymath}
\end{defin}

We will also need the following definition, which originated from Reifenberg's
work~\cite{MR0114145} and his famous topological disc theorem (see~\cite{Sim96}
for a modern proof).
\begin{defin}
  \label{def:theta-fun}
  Let $\Sigma \subseteq \Rn$. For $x \in \Sigma$ and $r > 0$ we define the
  \emph{$\theta$ numbers}
  \begin{align*}
    \ntheta(x,r) &:= \frac 1r \inf\{
    \HD(\Sigma \cap \CBall(x,r), (x + H) \cap \CBall(x,r)) : H \in G(n,m)
    \} \,.
  \end{align*}
\end{defin}

\begin{rem}
  \label{rem:beta-le-theta}
  For each $x \in \Sigma$ and all $r > 0$ we always have $\nbeta(x,r) \le
  \ntheta(x,r)$.
\end{rem}

In~\cite{MR1808649}, David, Kenig and Toro introduced a slightly different
definition of $\beta(x,r)$ and $\theta(x,r)$ using open balls
\begin{align*}
  \beta_m(x,r) &:= \frac 1r
  \inf \left\{
    \sup_{z \in \Sigma \cap \Ball(x,r)} |Q_H(z-x)| : H \in G(n,m)
  \right\} \,, \\
  \theta_m(x,r) &:= \frac 1r \inf\{
  \HD(\Sigma \cap \Ball(x,r), (x + H) \cap \Ball(x,r)) : H \in G(n,m)
  \} \,.
\end{align*}
We use closed balls just for convenience. Unfortunately the $\beta$ and the
$\theta$ numbers are not monotone with respect to $r$, and there is no obvious
relation between $\ntheta$ and $\theta_m$. We shall prove the following
\begin{prop}
  \label{prop:theta-theta}
  For each $x \in \Sigma$ and each $r > 0$ we have
  \begin{align*}
    \beta_m(x,r) &\le \nbeta(x,r) \\
    \text{and} \quad
    \theta_m(x,r) &\le 3 \ntheta(x,r) \,.
  \end{align*}
\end{prop}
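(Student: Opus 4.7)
The first inequality $\beta_m(x,r) \le \nbeta(x,r)$ is immediate: since $\Ball(x,r) \subseteq \CBall(x,r)$, for every $H \in G(n,m)$ the supremum of $|Q_H(z-x)|$ over $\Sigma \cap \Ball(x,r)$ is dominated by the supremum over $\Sigma \cap \CBall(x,r)$, and taking the infimum over $H$ preserves the inequality.

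For the inequality $\theta_m(x,r) \le 3 \ntheta(x,r)$, the plan is to fix a near-minimizing plane $H$ for $\ntheta(x,r)$ and show it also yields a good bound for $\theta_m(x,r)$. Concretely, for any $\varepsilon > 0$ I pick $H$ with $\HD(\Sigma \cap \CBall(x,r), (x+H) \cap \CBall(x,r)) \le (\ntheta(x,r)+\varepsilon) r$ and write the two one-sided suprema making up this sum (recall that in this paper $\HD$ is a \emph{sum} rather than a maximum) as $\varepsilon_1 r$ and $\varepsilon_2 r$, so that $\varepsilon_1 + \varepsilon_2 \le \ntheta(x,r)+\varepsilon$. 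I aim to dominate the analogous one-sided suprema for open balls by $\varepsilon_1 r$ and $3\varepsilon_2 r$ respectively; their sum will then be at most $3(\ntheta(x,r)+\varepsilon) r$, and letting $\varepsilon \to 0$ concludes the proof.

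The direction $\sup_{y \in \Sigma \cap \Ball(x,r)} \dist(y, (x+H) \cap \Ball(x,r)) \le \varepsilon_1 r$ is easy: I use the orthogonal projection $\pi_{x+H}(y)$. By Pythagoras $|\pi_{x+H}(y)-x|^2 = |y-x|^2 - |Q_H(y-x)|^2 < r^2$, so $\pi_{x+H}(y) \in \Ball(x,r)$, and
\begin{displaymath}
  |y-\pi_{x+H}(y)| = \dist(y,x+H) \le \dist(y,(x+H)\cap\CBall(x,r)) \le \varepsilon_1 r \,.
\end{displaymath}

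The reverse direction is the only real obstacle: the nearest point in $\Sigma \cap \CBall(x,r)$ supplied by the hypothesis could lie on the bounding sphere $\Sphere(x,r)$, so one cannot directly extract a witness in $\Sigma \cap \Ball(x,r)$. My fix is to retreat slightly inward before invoking the hypothesis. Given $y \in (x+H) \cap \Ball(x,r)$, in the main case $\varepsilon_2 \le 1/2$ set $\tilde y := x + (1 - 2\varepsilon_2)(y - x) \in (x+H) \cap \CBall(x,r)$ and pick $\tilde z \in \Sigma \cap \CBall(x,r)$ with $|\tilde z - \tilde y| \le \varepsilon_2 r$. The scaling factor is tuned precisely so that $|\tilde z - x| \le \varepsilon_2 r + (1 - 2\varepsilon_2)|y-x| < (1-\varepsilon_2) r < r$, guaranteeing $\tilde z \in \Ball(x,r)$, while $|\tilde z - y| \le \varepsilon_2 r + 2\varepsilon_2|y-x| \le 3\varepsilon_2 r$. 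The complementary range $\varepsilon_2 > 1/2$ is handled trivially using $x \in \Sigma$: then $\dist(y,\Sigma\cap\Ball(x,r)) \le |y-x| < r < 3\varepsilon_2 r$. Combining both directions completes the argument.
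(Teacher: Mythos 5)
Your proof is correct and follows essentially the same route as the paper: fix a (near-)optimal plane for $\ntheta(x,r)$, observe that only the plane-to-set half of the Hausdorff distance is problematic, and produce an interior witness by retreating radially inward by an amount of order $\ntheta(x,r)\,r$, which is exactly what costs the factor $3$. The paper implements the retreat via an inner-disc/annulus decomposition with the dichotomy $\overline{\theta}_H \gtrless \tfrac 23$, whereas you use the uniform contraction $y \mapsto x+(1-2\varepsilon_2)(y-x)$ with the dichotomy $\varepsilon_2 \gtrless \tfrac 12$ --- a cosmetic difference.
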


\begin{proof}
  The case of $\beta$-numbers is easy. Let us fix some $H \in
  G(n,m)$, then certainly
  \begin{displaymath}
    \sup_{z \in \Sigma \cap \Ball(x,r)} |Q_H(z-x)| 
    \le \sup_{z \in \Sigma \cap \CBall(x,r)} |Q_H(z-x)| \,,
  \end{displaymath}
  hence $\beta_m(x,r) \le \nbeta(x,r)$. For the $\theta$ numbers the situation is
  somewhat more complicated.
  \begin{multline}
    \label{eq:HdistH}
    \HD(\Sigma \cap \Ball(x,r), (x + H) \cap \Ball(x,r))
    = \sup\{ |Q_H(y-x)| : y \in \Sigma \cap \Ball(x,r) \} \\
    \phantom{= }+ \sup\{ \dist(y,\Sigma \cap \Ball(x,r)) : z \in (x + H) \cap \Ball(x,r) \} \,.
  \end{multline}
  Let
  \begin{displaymath}
    \overline{\theta}_H := \tfrac 1r \HD \left(
      \Sigma \cap \CBall(x,r), (x + H) \cap \CBall(x,r)
      \right) \,.
  \end{displaymath}
  Note that the value of \eqref{eq:HdistH} is at most $2r$, so if
  $\overline{\theta}_H \ge \frac 23$, then we obviously have
  \begin{equation}
    \HD(\Sigma \cap \Ball(x,r), (x + H) \cap \Ball(x,r)) \le 2r \le 3 \overline{\theta}_H \,.
  \end{equation}
  We will show that this is also true for $\overline{\theta}_H \le \frac
  23$. The first term of \eqref{eq:HdistH} can be estimated as in the case of
  $\beta$ numbers. Indeed,
  \begin{displaymath}
    \sup\{ |Q_H(y-x)| : y \in \Sigma \cap \Ball(x,r) \} 
    \le \sup\{ |Q_H(y-x)| : y \in \Sigma \cap \CBall(x,r) \}
    \le \overline{\theta}_H r \,.
  \end{displaymath}
  To estimate the second term in \eqref{eq:HdistH} we need to divide the set $(x
  + H) \cap \Ball(x,r)$ into two parts. Set
  \begin{align*}
    A_1 &:= (x + H) \cap \Ball(x,(1-\overline{\theta}_H)r) \\
    \text{and} \quad
    A_2 &:= (x + H) \cap (\Ball(x,r) \setminus \Ball(x,(1-\overline{\theta}_H)r)) \,.
  \end{align*}
  Note that for each $z \in A_1$ there exists a point $y \in \Sigma \cap
  \CBall(x,r)$ such that $|y-z| \le \overline{\theta}_H r$, so $|z-x| \le
  |z-y|+|y-x| < r$. Hence $y \in \Sigma \cap \Ball(x,r)$. On the other hand if
  we take $y \in \partial \Ball(x,r)$, then $|z-y| \ge \overline{\theta}_H
  r$. This shows that
  \begin{displaymath}
    \sup\{ \dist(y,\Sigma \cap \Ball(x,r)) : z \in A_1 \} \le \overline{\theta}_H r \,.
  \end{displaymath}
  For each $z \in A_2$ we can find $z' \in A_1$ such that $|z - z'| \le \overline{\theta}_H
  r$ and repeating the previous argument we obtain
  \begin{displaymath}
    \sup\{ \dist(y,\Sigma \cap \Ball(x,r)) : z \in A_2 \} \le 2 \overline{\theta}_H r \,.
  \end{displaymath}
  Therefore
  \begin{displaymath}
    \HD(\Sigma \cap \Ball(x,r), (x + H) \cap \Ball(x,r)) \le 3 \overline{\theta}_H r \,.
  \end{displaymath}
  Taking the infimum over all $H \in G(n,m)$ on both sides and dividing by $r$
  we reach our conclusion $\theta_m(x,r) \le 3 \ntheta(x,r)$.
\end{proof}

For convenience we also introduce the following
\begin{defin}
  \label{def:beta-plane}
  Let $\Sigma \subseteq \Rn$ be any set. Let $x \in \Sigma$ and $r > 0$. We
  say that $H \in G(n,m)$ is the \emph{best approximating $m$-plane} for
  $\Sigma$ in $\CBall(x,r)$ and write $H \in \BAP(x,r)$ if the
  following condition is satisfied
  \begin{displaymath}
    \HD(\Sigma \cap \CBall(x,r), (x + H) \cap \CBall(x,r))
    \le \ntheta(x,r) \,.
  \end{displaymath}
\end{defin}
Since $G(n,m)$ is compact, such $H$ always exists, but it might not be unique,
e.g. consider the set $\Sigma = \Sphere \cup \{ 0 \}$ and take $x=0$, $r=2$.

\begin{rem}
  For each $x,y \in \Sigma$ and each $H \in \BAP(x,|x-y|)$ we have
  \begin{displaymath}
    \dist(y, x + H) \le \nbeta(x,|x-y|) \,.
  \end{displaymath}
\end{rem}

\begin{defin}[\!\!\cite{MR1808649}, Definition~1.3]
  \label{def:rfvc}
  We say that a closed set $\Sigma \subseteq \Rn$ is \emph{Reifenberg-flat with
    vanishing constant} (of dimension $m$) if for every compact subset $K
  \subseteq \Sigma$
  \begin{displaymath}
    \lim_{r \to 0} \sup_{x \in K} \theta_m(x,r) = 0 \,.
  \end{displaymath}
\end{defin}

The following proposition was proved by David, Kenig and Toro.
\begin{prop}[\!\!\cite{MR1808649}, Proposition~9.1]
  \label{prop:dkt-reg}
  Let $\tau \in (0,1)$ be given. Suppose $\Sigma$ is a Reifenberg-flat set with
  vanishing constant of dimension $m$ in $\Rn$ and that, for each compact subset
  $K \subseteq \Sigma$ there is a constant $C_K$ such that
  \begin{displaymath}
    \beta_m(x,r) \le C_K r^{\tau} \quad \text{for each $x \in K$ and $r \le 1$.}
  \end{displaymath}
  Then $\Sigma$ is a $C^{1,\tau}$-submanifold of $\Rn$.
\end{prop}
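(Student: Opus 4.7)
The plan is to combine the Reifenberg flat with vanishing constant hypothesis with the quantitative $\beta_m$ decay to upgrade the bi-H\"older parameterizations given by Reifenberg's topological disk theorem into $C^{1,\tau}$ charts. Fix a compact $K \subseteq \Sigma$ and work at a single $x_0 \in K$; all constants below may depend on $K$.

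First I would construct a tangent plane at every point of $K$ as a limit of approximating planes at geometric scales. For $x \in K$ and $k \ge 0$ set $r_k := 2^{-k}$ and pick $H_k \in \BAP(x,r_k)$. Because $\Sigma$ is Reifenberg flat with vanishing constant, for $r_k$ small enough the set $\Sigma \cap \CBall(x, r_{k+1})$ is Hausdorff-close to $x + H_k$ and also to $x + H_{k+1}$. In particular, inside $\CBall(x, r_{k+1})$ one can select $m+1$ points $z_0,\dots,z_m$ of $\Sigma$ whose differences $z_i - z_0$ form a $\red$-basis with constants $\rho \sim r_{k+1}$ and $\varepsilon, \delta$ as small as we wish (the vanishing $\theta$-hypothesis ensures the spread, while distinctness comes from the density implicit in Reifenberg flatness). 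By hypothesis each such $z_i$ is within $C_K r_{k+1}^{1+\tau}$ of both $x + H_k$ and $x + H_{k+1}$. Applying Proposition~\ref{prop:dist-ang} (with $\vartheta \sim r_{k+1}^{\tau}$ after rescaling) to each of these two planes yields
\[
  \dgras(H_k, H_{k+1}) \le C' r_{k+1}^{\tau}.
\]
A geometric series then gives $\dgras(H_k, H_\ell) \le C'' r_k^{\tau}$ for all $\ell \ge k$, so $(H_k)$ is Cauchy in $G(n,m)$; call its limit $T_x\Sigma$, and note $\dgras(H_k, T_x\Sigma) \le C'' r_k^{\tau}$.

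Next I would prove the H\"older continuity of the assignment $x \mapsto T_x\Sigma$. For $x, y \in K$ with $d := |x-y|$ sufficiently small, set $r := 3d$, pick $H \in \BAP(x,r)$ and observe that the same $H$ also approximates $\Sigma$ well on $\CBall(y, r)$ (up to a factor 3, which only rescales constants). The previous step yields $\dgras(T_x\Sigma, H) \lesssim r^{\tau}$ and, by the same argument centered at $y$, $\dgras(T_y\Sigma, H) \lesssim r^{\tau}$, whence
\[
  \dgras(T_x\Sigma, T_y\Sigma) \lesssim d^{\tau}.
\]

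Finally I would produce the graph representation. Set $H := T_{x_0}\Sigma$ and choose $R > 0$ so small that $\dgras(T_y\Sigma, H) < \tfrac{1}{4}$ for all $y \in \Sigma \cap \CBall(x_0, 4R)$. Using the Reifenberg parameterization, $\Sigma \cap \Ball(x_0, R)$ is a topological $m$-disk; the inclusion of cones $\Cone(\sqrt{1-\delta^2}, T_y\Sigma^\perp) \subseteq \Cone(\tfrac12, H^\perp)$ (Proposition~\ref{prop:two-cones}) forces $\pi_H$ to be injective on it and its image to contain a neighborhood $U$ of $\pi_H(x_0)$ in $x_0 + H$. Let $F : U \to H^{\perp}$ be the resulting graph map. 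To identify $DF$, take $y_1, y_2$ in the graph with $u_i := \pi_H(y_i - x_0)$; the bound $\dist(y_2, y_1 + T_{y_1}\Sigma) \le C_K |y_2 - y_1|^{1+\tau}$ (obtained from $\beta_m(y_1, |y_2-y_1|) \le C_K|y_2-y_1|^{\tau}$) shows that
\[
  F(u_2) - F(u_1) = P_{y_1}(u_2 - u_1) + O\bigl(|u_2-u_1|^{1+\tau}\bigr),
\]
where $P_{y_1} := (L_H - I)|_H$ is the map from Remark~\ref{rem:orth-angle} describing $T_{y_1}\Sigma$ as a graph over $H$. Hence $F$ is differentiable with $DF(u_1) = P_{y_1}$, and the H\"older continuity of $y \mapsto T_y\Sigma$ translates into $DF \in C^{0,\tau}(U, \mathrm{Hom}(H, H^{\perp}))$.

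The main obstacle is the injectivity of $\pi_H$ on $\Sigma \cap \Ball(x_0, R)$ together with the fullness of its image in a fixed neighborhood of $\pi_H(x_0)$. This is precisely the place where one needs the Reifenberg topological disk theorem (a purely qualitative statement which is not proved in this excerpt): without it one could not rule out small holes or self-touchings in $\Sigma$, and the algebraic manipulations with $\beta$-numbers and cones alone would only yield $C^{1,\tau}$ regularity on an a~priori disconnected graph.
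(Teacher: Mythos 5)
The paper does not actually prove this proposition---it is quoted from David--Kenig--Toro---but it proves the closely related Theorem~\ref{thm:C1tau} by exactly the scheme you propose: approximating planes at dyadic scales, a Cauchy/H\"older estimate for their oscillation (Lemmas~\ref{lem:bap-osc}--\ref{lem:tan-osc}), then a graph representation whose differential is the oblique projection of Remark~\ref{rem:orth-angle} (Lemmas~\ref{lem:smooth} and~\ref{lem:DF-holder}). So your route is essentially the in-paper route, transplanted to the DKT hypotheses.

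Two points need attention. First, a step that fails as written: you take $H_k\in\BAP(x,r_k)$ and then assert that the spread points $z_i$ lie within $C_K r_{k+1}^{1+\tau}$ of $x+H_k$. Under the hypotheses of the proposition only the one-sided numbers $\beta_m$ decay like $r^{\tau}$; the bilateral numbers $\ntheta$, which define $\BAP$ in Definition~\ref{def:beta-plane}, merely tend to $0$ with no rate. The $\theta$-optimal plane may therefore sit at angular distance of order $\theta_m(x,r_k)\gg r_k^{\tau}$ from the $\beta$-optimal one, so your estimate $\dgras(H_k,H_{k+1})\le C' r_{k+1}^{\tau}$---and every later use of $\dgras(T_x\Sigma,H)\lesssim r^{\tau}$ for $H\in\BAP(x,r)$---is not justified. (In the paper's $m$-fine setting this distinction is invisible because $\ntheta\le M_{\Sigma}\nbeta$.) The repair is immediate: work throughout with planes that nearly attain the infimum defining $\beta_m(x,r)$; the rest of your argument then goes through. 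Second, a misattribution rather than a gap: injectivity of $\pi_H$ on $\Sigma\cap\Ball(x_0,R)$ does not require Reifenberg's disk theorem; it follows from the $\beta$-decay plus the H\"older oscillation of tangent planes exactly as in Lemma~\ref{lem:1point-rad}. The only place where a bilateral or topological input is needed is the fullness of the projected image (absence of holes); there you may either invoke the disk theorem together with invariance of domain, as you do, or---closer to the paper and more elementary---run the open-and-closed argument of Lemma~\ref{lem:domain}, which only needs $\ntheta(y,\rho)\le\tfrac 14$ at small scales, a bound the vanishing-constant hypothesis supplies uniformly on compact pieces of $\Sigma$.
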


In \S\ref{sec:tangent-planes} we show how to use this proposition to prove the
regularity of a certain class (cf. Definition~\ref{def:fine}) of sets with
finite integral curvature - but this is not enough for our purposes. We need to
control the parameters of a local graph representation of $\Sigma$ in terms of
the energy $\E_p(\Sigma)$ (see Definition~\ref{def:p-energy}). We need to prove
that there exists a scale $R$ such that $\Sigma \cap \Ball(x,R)$ is a graph of
some function $F_x$, and the bound for the H{\"o}lder constant of $DF_x$ and the
radius $R$ can be estimated in terms of $\E_p(\Sigma)$. Hence, we formulate
Theorem~\ref{thm:C1tau} and we prove it independently of
Proposition~\ref{prop:dkt-reg}.


\mysubsection{Voluminous simplices}

In Section~\ref{sec:p-energy} we give the definition of the energy functional
$\E_p$. This functional is just the integral over all $(m+1)$-simplices with
vertices on $\Sigma$. The integrand measures the ''regularity'' of each simplex
divided by its diameter. For ''quite regular'' simplices it is proportional to
the inverse of the diameter. Here we formalize what we mean by ''quite regular''
defining tha class of \emph{$(\eta,d)$-voluminous} simplices and prove that
simplices close to a fixed voluminous simplex are again voluminous. We will need
this result in the proof of Proposition~\ref{prop:dSigma-lower} to estimate the
$p$-energy of $\Sigma$. Having one voluminous simplex and knowing that there are
many (in the sense of measure) points of $\Sigma$ close to each vertex of that
simplex, we can use the result of this section to estimate $\E_p(\Sigma)$ from
below. This will show (cf. Proposition~\ref{prop:eta-d-balance}) that whenever
we have a bound $\E_p(\Sigma) < E$, then at some small scale, depending only on
$E$, all the simplices with vertices on $\Sigma$ are almost flat.

Let $T = \simp(x_0, \ldots, x_{k+1}) \subseteq \Rn$ be a $(k+1)$-dimensional
simplex. Recall (see Definition~\ref{def:face-hmin}) that $\face_jT$ and
$\height_jT$ denote the $j^{th}$ face and the $j^{th}$ height of $T$ respectively.
\begin{defin}
  \label{def:regsimp}
  Let $\eta \in (0,1)$ and $d > 0$. Choose some $k \in \{1,\ldots,n-1\}$. We say
  that $T = \simp(x_0, \ldots, x_{k+1}) \subseteq \Rn$ is
  $(\eta,d)$-\emph{voluminous} and write $T \in \Reg_k(\eta,d)$ if the following
  conditions are satisfied
  \begin{itemize}
  \item $T$ is contained in some ball of radius $d$, i.e.
    \begin{equation}
      \exists\ x \in \Rn \quad T \subseteq \CBall(x,d) \,, \label{fat:ball}
    \end{equation}
  \item the measure of the base of $T$ is not less than $(\eta d)^k$, i.e.
    \begin{equation}
      \HM^{k}(\face_{k+1} T) \ge (\eta d)^k \,, \label{fat:base}
    \end{equation}
  \item the height of $T$ is not less than $\eta d$, i.e.
    \begin{equation}
      \height_{k+1}(T) \ge \eta d \,. \label{fat:height}
    \end{equation}
  \end{itemize}
\end{defin}

The following remarks will be used in the proof of
Proposition~\ref{prop:perturb} but they also show that we obtain an equivalent
definition of a voluminous simplex if we replace conditions \eqref{fat:base} and
\eqref{fat:height} by just one condition: $\hmin(T) \ge \eta d$. However, our
definition of $\Reg_k(\eta,d)$ is more convenient for proving theorems stated in
Section \ref{sec:ahl-reg}.

\begin{rem}
  \label{rem:hmin-ito-measures}
  Let $k \in \{1,\ldots,n-1\}$. For any $i = 0, \ldots, k+1$ the
  $(k+1)$-dimensional measure of $T$ is given by the formula
  \begin{displaymath}
    \HM^{k+1}(T) = \frac 1{k+1} \height_i(T) \HM^k(\face_i T) \,.
  \end{displaymath}
  Hence, we can express $\hmin(T)$ only in terms of measures of simplices
  \begin{displaymath}
    \hmin(T) = (k+1) \HM^{k+1}(T) \left( \max_{0 \le i \le k+1} \HM^k(\face_i T) \right)^{-1} \,.
  \end{displaymath}
\end{rem}

\begin{rem}
  \label{rem:hmin-estimates}
  Let $k \in \{1,\ldots,n-1\}$. If $T \in \Reg_k(\eta,d)$ then we can estimate
  its measure from below by
  \begin{equation} \label{est:meas-low}
    \HM^{k+1}(T) \ge \frac 1 {k+1} (\eta d)^{k+1} \,.
  \end{equation}

  Using the Pythagorean theorem, one can easily prove that $\hmin(T)$ is less or
  equal to any height of any simplex in the skeleton of $T$ of any
  dimension. This means in particular, that
  \begin{equation} \label{est:diam-low}
    |x_i - x_j| \ge \hmin(T) \quad \text{for any } i \ne j \,.
  \end{equation}

  Due to condition \eqref{fat:ball} all the $l$-dimensional faces of $T$ have
  measure bounded from above by $\omega_l d^l$, where $\omega_l :=
  \HM^l(\Ball \cap \R^l)$. Hence we get an estimate for the $l$-measure of any
  $l$-simplex in the $l$-skeleton of $T$ for any $l \le k+1$. In particular
  \begin{align}
    \frac 1 {(k+1)!} \hmin(T)^{k+1} \le \HM^{k+1}(T) &\le \omega_{k+1} d^{k+1} \,, \label{est:meas-T} \\
    \frac 1 {k!} \hmin(T)^k \le \HM^k(\face_i T) &\le \omega_k d^k \,. \label{est:meas-face-T}
  \end{align}
  Note that \eqref{fat:ball} lets us also derive a lower bound on $\hmin(T)$
  \begin{displaymath} 
    \hmin(T) = \frac{(k+1) \HM^{k+1}(T)}{\max_{0 \le i \le k+1} \HM^k(\face_i T)} 
    \ge \frac{(\eta d)^{k+1}}{\omega_k d^k} = d \frac{\eta^{k+1}}{\omega_k} \,.
  \end{displaymath}
  Combining this and \eqref{est:meas-face-T} we obtain
  \begin{equation}
    \label{est:hmin}
    d \frac{\eta^{k+1}}{\omega_k} \le \hmin(T) \le d \sqrt[k]{\omega_k k!} \,.
  \end{equation}
\end{rem}

\begin{defin}
  Let $k \in \{1,\ldots,n-1\}$ and let $T = \simp(x_0, \ldots, x_{k+1})$, $T' =
  \simp(x_0', \ldots, x_{k+1}')$ be two $(k+1)$-simplices in $\Rn$. We define
  the \emph{pseudo-distance between $T$ and $T'$} as
  \begin{displaymath}
    \|T - T'\| := \min
    \left\{
      \max_{0 \le i \le k+1} |x_{i} - x_{\sigma_i}'| : \sigma \in \perm(k+2)
    \right\} \,,
  \end{displaymath}
  where $\perm(k+2)$ denotes the set of all permutations of the set $\{0,1,
  \ldots, k+1\}$.
\end{defin}

\begin{rem}
  $\|T - T'\| = 0$ if and only if $T$ and $T'$ represent the same geometrical
  simplex, meaning that they can only differ by a permutation of vertices.
\end{rem}

Now we prove that all simplices close to some fixed voluminous simplex are again
voluminous with slightly changed parameters. We need this result for the proof
of Proposition~\ref{prop:dSigma-lower} relating the $p$-energy to the values of
$\beta$-numbers.
\begin{prop}
  \label{prop:perturb}
  Let $\eta \in (0,1)$ and $T \in \Reg_k(\eta,d)$. There exists a small,
  positive number $\varsigma_k = \varsigma_k(\eta)$ such that for each $T'$
  satisfying $\|T - T'\| \le \varsigma_k d$ we have $T' \in \Reg_k(\tfrac 12
  \eta, \frac 32 d)$.
\end{prop}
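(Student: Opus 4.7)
The plan is to verify the three defining conditions of $\Reg_k(\tfrac{1}{2}\eta, \tfrac{3}{2}d)$ for $T' = \simp(x_0', \ldots, x_{k+1}')$ separately. After relabeling via the permutation realizing the minimum in the definition of $\|T - T'\|$, I may assume $|x_i - x_i'| \le \varsigma_k d$ for every $i = 0, \ldots, k+1$. The ball containment is immediate: if $T \subseteq \CBall(y,d)$, then $T' \subseteq \CBall(y, (1+\varsigma_k) d) \subseteq \CBall(y, \tfrac{3}{2}d)$ as soon as $\varsigma_k \le \tfrac{1}{2}$, which leaves only the conditions on the $k$-measure of the base and on the height $\height_{k+1}(T')$ to be checked.

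The core observation is that both $\HM^k(\face_{k+1}T)$ and $\HM^{k+1}(T)$ depend Lipschitz-continuously on the vertices. Setting $v_i := x_i - x_0$ and $v_i' := x_i' - x_0'$, one has $|v_i|, |v_i'| \le 2d$ and $|v_i - v_i'| \le 2\varsigma_k d$. Since $k!\,\HM^k(\face_{k+1}T) = |v_1 \wedge \cdots \wedge v_k|$, the telescoping identity
\begin{displaymath}
v_1' \wedge \cdots \wedge v_k' - v_1 \wedge \cdots \wedge v_k = \sum_{i=1}^k v_1' \wedge \cdots \wedge v_{i-1}' \wedge (v_i' - v_i) \wedge v_{i+1} \wedge \cdots \wedge v_k
\end{displaymath}
together with the bound $(2d)^{k-1}(2\varsigma_k d) = 2^k \varsigma_k d^k$ on each summand yields $|\HM^k(\face_{k+1}T') - \HM^k(\face_{k+1}T)| \le A_k \varsigma_k d^k$ for some constant $A_k$ depending only on $k$. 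The analogous expansion in one higher dimension gives $|\HM^{k+1}(T') - \HM^{k+1}(T)| \le B_k \varsigma_k d^{k+1}$.

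Next I combine these Lipschitz estimates with the lower bounds $\HM^k(\face_{k+1}T) \ge (\eta d)^k$ from \eqref{fat:base} and $\HM^{k+1}(T) \ge (\eta d)^{k+1}/(k+1)$ from \eqref{est:meas-low}. Choose $\varsigma_k = \varsigma_k(\eta)$ small enough that $A_k \varsigma_k \le \eta^k/10$ and $(k+1) B_k \varsigma_k \le \eta^{k+1}/10$; this forces both measures to change by a multiplicative factor lying between $9/10$ and $11/10$. The base bound then follows from $(3/4)^k \le 3/4 < 9/10$, namely
\begin{displaymath}
\HM^k(\face_{k+1}T') \ge \tfrac{9}{10}(\eta d)^k \ge \left(\tfrac{1}{2}\eta \cdot \tfrac{3}{2}d\right)^k,
\end{displaymath}
and for the height I apply the formula from Remark~\ref{rem:hmin-ito-measures} to both $T$ and $T'$ to obtain
\begin{displaymath}
\height_{k+1}(T') = \frac{(k+1)\HM^{k+1}(T')}{\HM^k(\face_{k+1}T')} \ge \frac{9/10}{11/10}\cdot \height_{k+1}(T) \ge \tfrac{9}{11}\eta d \ge \tfrac{3}{4}\eta d = \tfrac{1}{2}\eta \cdot \tfrac{3}{2}d.
\end{displaymath}

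The only genuinely technical ingredient is the wedge-product Lipschitz estimate, and it is there that the dependence of $\varsigma_k$ on $\eta$ is unavoidable: the absolute perturbation of each measure is of order $\varsigma_k d^{k+1}$ while the baseline values are of order $(\eta d)^{k+1}$, which forces $\varsigma_k$ to be at most a constant multiple of $\eta^{k+1}$. A more geometric alternative would be to show via Propositions~\ref{prop:dist-ang} and~\ref{prop:red-ang} that $\aff\{x_0', \ldots, x_k'\}$ is close in $\dgras$ to $\aff\{x_0, \ldots, x_k\}$ and then control $\dist(x_{k+1}', \aff\{x_0', \ldots, x_k'\})$ directly; this would give the same dependence on $\eta$ but requires significantly more bookkeeping around Gram--Schmidt coefficients, so the measure-ratio route is preferable.
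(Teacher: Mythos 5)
Your proof is correct, but it takes a genuinely different route from the paper's. The paper moves the vertices one at a time: for each single displaced vertex it bounds the change of $\HM^k(\face_{k+1}T)$ and $\HM^{k+1}(T)$ through the height-from-that-vertex decomposition, then converts this into a multiplicative loss factor $\tfrac{\Upsilon-1}{\Upsilon+1}$ per step and iterates $k+2$ times, finally tuning $\Upsilon(k)$ in \eqref{def:upsilon} so that the accumulated loss is at most the factor $\tfrac 12$; this bookkeeping, together with the use of the weaker volume bound through $\hmin$, is what produces the exponent in $\varsigma_k(\eta)\approx\eta^{(k+1)^2}$ of \eqref{def:alpha} and \eqref{eq:alpha-eta}. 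You instead perturb all vertices simultaneously and control both $\HM^k(\face_{k+1}T)$ and $\HM^{k+1}(T)$ in one shot via the telescoping wedge-product identity (a Lipschitz estimate for the multilinear volume form), then pass to the height exactly as the paper does, through the ratio formula of Remark~\ref{rem:hmin-ito-measures}; since you compare the perturbation directly with the sharp lower bounds \eqref{fat:base} and \eqref{est:meas-low}, you get the larger admissible threshold $\varsigma_k\sim\eta^{k+1}$, with visibly less bookkeeping. This changes nothing downstream: the proposition is only an existence statement, and a larger admissible $\varsigma_k$ is at least as good (Proposition~\ref{prop:eta-d-balance} would in fact get slightly better constants, though the paper's explicit $\eta^{(k+1)^2}$ formula would have to be replaced or simply kept as a smaller valid choice). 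Two small points to tidy up: after relabeling you have $|v_i'|\le 2(1+\varsigma_k)d\le 3d$ rather than $2d$, which only adjusts the constants $A_k,B_k$; and your final choice of $\varsigma_k$ should explicitly include the constraint $\varsigma_k\le\tfrac 12$ used for the ball containment. Neither affects the validity of the argument.
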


\begin{proof}
  First we ensure that $\varsigma_k d$ is less than half of the length of the
  shortest side of $T$. Then $T'$ can be obtained from $T$ by moving each vertex
  inside a ball of radius $\varsigma_k d$. Using \eqref{est:diam-low} and
  \eqref{est:hmin} we get
  \begin{displaymath}
    \tfrac 12 \min_{i \ne j} |x_i - x_j|
    \ge \tfrac 12 \hmin(T) 
    \ge  d \frac{\eta^{k+1}}{2\omega_k} \,.
  \end{displaymath}
  Hence
  \begin{equation}
    \label{est:alpha-side}
    \varsigma_k \le \frac{\eta^{k+1}}{2 \max \{ 1, \omega_k \}}
    \quad \text{is enough to ensure} \quad
    \varsigma_k d \le \tfrac 12 \min_{i \ne j} |x_i - x_j| \,.
  \end{equation}

  The plan is to move the vertices of $T$ one by one controlling the parameters
  $\eta$ and $d$ at each step. Note that all the simplices involved in this
  process are contained in the ball $\CBall(x,(1 + \varsigma_k) d)$, where $x$
  is the point defined in \eqref{fat:ball}. We set the value of the second
  parameter to $(1 + \varsigma_k)d$ and never change it. This means that
  $\varsigma_k$ should be at most $\tfrac 12$ and that is why we put $\max\{ 1,
  \omega_k \}$ in \eqref{est:alpha-side}, which now guarantees that $\varsigma_k
  \le \tfrac 12$ because $\eta \in (0,1)$. After changing $d$, the first
  parameter $\eta$ has to be adjusted, so that $T$ meets the conditions imposed
  on voluminous simplices. One can easily see that $T \in
  \Reg_k(\frac{\eta}{1+\varsigma_k},(1+\varsigma_k)d)$. Now we need to calculate
  how does the first parameter change when we move the first vertex $x_0$ to a
  new position $\tilde{x}_0$, such that $|x_0 - \tilde{x}_0| \le \varsigma_k d$.

  Set $T_1 := \simp(\tilde{x}_0, x_1, \ldots, x_{k+1})$, where $\tilde{x}_0 \in
  \CBall(x_0,\varsigma_k d)$. Note that
  \begin{displaymath}
    \HM^{k}(\face_{k+1} T) = \frac 1m \height_0(\face_{k+1} T) \HM^{k-1}(\face_0 \face_{k+1} T) \,.
  \end{displaymath}
  The only factor of the above product which depends on $x_0$ is
  $\height_0(\face_{k+1} T)$. If we move $x_0$ inside $\CBall(x_0,\varsigma_k d)$ we can
  change the value of $\height_0(\face_{k+1} T)$ by at most $\varsigma_k d$. This means
  that $\HM^{k}(\face_{k+1} T)$ changes by at most $\frac 1m \varsigma_k d
  \HM^{k-1}(\face_0 \face_{k+1} T)$. Our simplex $T$ lies inside the ball
  $\CBall(x,(1+\varsigma_k)d)$, so the measure $\HM^{k-1}(\face_0 \face_{k+1} T)$ cannot
  exceed $\omega_{k-1} ((1+\varsigma_k)d)^{k-1}$. This gives the estimate
  \begin{equation}
    \label{est:hm-diff}
    \left| \HM^k(\face_{k+1} T) - \HM^k(\face_{k+1} T_1) \right|
    \le \frac{\omega_{k-1}}{k} \frac{\varsigma_k}{1+\varsigma_k} ((1+\varsigma_k)d)^k \,.
  \end{equation}
  Using the same method for $(k+1)$-dimensional simplices we obtain
  \begin{equation}
    \label{est:hm1-diff}
    \left| \HM^{k+1}(T) - \HM^{k+1}(T_1) \right|
    \le \frac{\omega_k}{(k+1)} \frac{\varsigma_k}{(1+\varsigma_k)} ((1+\varsigma_k)d)^{k+1} \,.
  \end{equation}
  Let $\Upsilon = \Upsilon(k) > 0$ be some big number. We will fix its value later. To ensure
  that condition \eqref{fat:base} does not change too much for $T_1$ we impose
  another constraint,
  \begin{equation}
    \label{est:alpha-base-diff}
    (1+\varsigma_k)^{k-1} \varsigma_k \le \frac{k \eta^k}{\Upsilon \omega_{k-1}} \,.
  \end{equation}
  For such $\varsigma_k$ we have
  \begin{multline}
    \label{est:base-diff}
    \HM^k(\face_{k+1} T_1) 
    \ge \HM^k(\face_{k+1} T) - \frac 1K \left( \frac{\eta}{1+\varsigma_k} \right)^k ((1+\varsigma_k)d)^k \\
    \ge \frac{\Upsilon-1}{\Upsilon} \left( \frac{\eta}{1+\varsigma_k} \right)^k ((1+\varsigma_k)d)^k 
    \ge \left( \frac{\frac{\Upsilon-1}{\Upsilon+1}\eta}{1+\varsigma_k} \right)^k ((1+\varsigma_k)d)^k \,.
  \end{multline}
  Here, we used the estimate $\eqref{est:meas-face-T}$ for $T \in
  \Reg_k(\frac{\eta}{1+\varsigma_k},(1+\varsigma_k)d)$.

  Finally, we can estimate the height $\height_{k+1}(T_1)$ as follows:
  \begin{displaymath}
    \height_{k+1}(T_1) 
    = \frac{(k+1)\HM^{k+1}(T_1)}{\HM^k(\face_{k+1} T_1)}
    \, \operatorname*{\ge}^{\eqref{est:hm1-diff}}_{\eqref{est:hm-diff}}\,
    \frac{(k+1)\HM^{k+1}(T) - \frac{\varsigma_k}{1+\varsigma_k} \omega_k ((1+\varsigma_k)d)^{k+1}}
    {\HM^k(\face_{k+1} T) + \frac{\varsigma_k}{1+\varsigma_k} \frac{\omega_{k-1}}{k} ((1+\varsigma_k)d)^k} \,.
  \end{displaymath}
  To obtain a handy form of this estimate we impose the following constraints on
  $\varsigma_k$:
  \begin{align*}
    \frac{\varsigma_k}{1+\varsigma_k} \omega_k ((1+\varsigma_k)d)^{k+1} &\le \frac 1K (k+1) \HM^{k+1}(T) \\
    \text{and} \quad
    \frac{\varsigma_k}{1+\varsigma_k} \frac{\omega_{k-1}}{k} ((1+\varsigma_k)d)^k &\le \frac 1K \HM^k(\face_{k+1} T) \,.
  \end{align*}
  Using \eqref{est:meas-T}, \eqref{est:meas-face-T} and \eqref{est:hmin} adjusted
  for the class $\Reg_k(\frac{\eta}{1+\varsigma_k},(1+\varsigma_k)d)$ we can guarantee these
  constraints by choosing $\varsigma_k$ satisfying
  \begin{align}
    (1+\varsigma_k)^{(k+1)^2 - 1}\varsigma_k &\le \frac{\eta^{(k+1)^2}}{\Upsilon \omega_k^{k+2} k!}
    \label{est:alpha-num} \\
    \text{and} \quad
    (1+\varsigma_k)^{k(k+1) - 1}\varsigma_k &\le \frac{\eta^{k(k+1)}}{\Upsilon \omega_k^k \omega_{k-1} (k-1)!} \,.
    \label{est:alpha-denom}
  \end{align}
  This way we get the estimate
  \begin{equation}
    \label{est:height-diff}
    \height_{k+1}(T_1) \ge 
    \frac{(k+1)\HM^{k+1}(T)(1 - \frac 1K)}{\HM^k(\face_{k+1} T)(1 + \frac 1K)}
    = \tfrac{\Upsilon-1}{\Upsilon+1} \height_{k+1}(T)
    \ge \frac{\tfrac{\Upsilon-1}{\Upsilon+1} \eta}{1+\varsigma_k} (1+\varsigma_k)d \,.
  \end{equation}

  Up to now we have a few restrictions on $\varsigma_k$, namely
  \eqref{est:alpha-side}, \eqref{est:alpha-base-diff}, \eqref{est:alpha-num} and
  \eqref{est:alpha-denom}. Recall that $\eta < 1$, so among these inequalities
  the smallest right-hand side is in \eqref{est:alpha-num}. Adding one more
  constraint
  \begin{displaymath}
    \varsigma_k \le 2^{1/(k+1)^2} - 1
  \end{displaymath}
  we can assume that all the left-hand sides of \eqref{est:alpha-side},
  \eqref{est:alpha-base-diff}, \eqref{est:alpha-num} and \eqref{est:alpha-denom}
  are at most $2\varsigma_k$. Now, we can safely set
  \begin{equation}
    \label{def:alpha}
    \varsigma_k := \min\left\{
      2^{1/(k+1)^2} - 1,
      \frac{\eta^{(k+1)^2}}{2 \Upsilon \omega_k^{k+2} k!}
    \right\} \,.
  \end{equation}
  With this value of $\varsigma_k$ we have 
  \begin{displaymath}
    T \in \Reg_k \left( \frac{\eta}{(1+\varsigma_k)}, (1+\varsigma_k)d \right)
    \quad \text{and} \quad
    T_1 \in \Reg_k \left( \frac{\tfrac{\Upsilon-1}{\Upsilon+1}\eta}{(1+\varsigma_k)}, (1+\varsigma_k)d \right) \,.
  \end{displaymath}

  Set $\eta' = \frac{\Upsilon-1}{\Upsilon+1}\eta$ and let $T_2 = \simp(\tilde{x}_0,
  \tilde{x}_1, \ldots, x_{k+1})$ be a simplex obtained from $T_1$ by moving
  $x_1$ to a new position $\tilde{x}_1$, such that $|x_1 - \tilde{x}_1| \le
  \varsigma_k d$ and leaving other vertices fixed. Note that $T_1 \in
  \Reg_k(\frac{\eta'}{1+\varsigma_k},(1+\varsigma_k)d)$. Repeating the previous reasoning we
  get
  \begin{displaymath}
    T_2 \in \Reg_k\left( \frac{\tfrac{\Upsilon-1}{\Upsilon+1}\eta'}{(1+\varsigma_k)}, (1+\varsigma_k)d \right)
    = \Reg_k\left( \left(\tfrac{\Upsilon-1}{\Upsilon+1}\right)^2 \frac{\eta}{(1+\varsigma_k)}, (1+\varsigma_k)d \right)\,.
  \end{displaymath}
  Moving each vertex one by one we obtain by induction
  \begin{displaymath}
    T' \in \Reg_k \left(
      \left( \tfrac{\Upsilon-1}{\Upsilon+1} \right)^{k+2}\tfrac{\eta}{1 + \varsigma_k}, (1+\varsigma_k) d
    \right)
    \subseteq \Reg_k \left(
      \tfrac 23 \left( \tfrac{\Upsilon-1}{\Upsilon+1} \right)^{k+2}\eta, \tfrac 32 d
    \right) \,.
  \end{displaymath}
  Now we can fix the value of $\Upsilon(k)$
  \begin{equation}
    \label{def:upsilon}
    \Upsilon(k) := \frac{1 + \left(\tfrac 34\right)^{1/(k+2)}}{1 - \left(\tfrac 34\right)^{1/(k+2)}}
  \end{equation}
  and we get the desired conclusion $T' \in \Reg_k(\tfrac 12 \eta, \frac 32 d)$.
\end{proof}

In Section \ref{sec:ahl-reg} we will need to know how does $\varsigma_k$ depend on
$\eta$, when $\eta \to 0$.

\begin{rem}
  Recall that
  \begin{displaymath}
    \omega_k := \HM^{k}(\Ball \cap \R^k) = \frac{\pi^{k/2}}{\Gamma(\frac k2 + 1)} \,,
  \end{displaymath}
  so $\omega_k$ converges to zero when $k \to \infty$. Set
  \begin{equation}
    \label{def:Omega}
    \Omega := \sup\{ \omega_k : k \in  \N \} \,.
  \end{equation}
  We can find an absolute constant $\Cl{eta-const} \in (0,1)$ such that for
  every $k \in \N$
  \begin{displaymath}
    2^{1/(k+1)^2} - 1 \ge \frac{\sqrt{\Cr{eta-const}}}{(k+1)^2}
    \quad \text{and} \quad
    \frac 1{(k+1)^2} \ge \frac{\sqrt{\Cr{eta-const}}}{2 \Upsilon(k) \Omega^{k+2} k!} \,.
  \end{displaymath}
  Recall that $\varsigma_k$ was defined by \eqref{def:alpha}. Since $\eta \in (0,1)$
  we have
  \begin{equation}
    \label{eq:alpha-eta}
    \frac{\Cr{eta-const} \eta^{(k+1)^2}}{2 \Upsilon(k) \Omega^{k+2} k!} 
    \le \varsigma_k(\eta) 
    \le \frac{\eta^{(k+1)^2}}{2 \Upsilon(k) \omega_k^{k+2} k!} \,,
  \end{equation}
  so
  \begin{displaymath}
    \varsigma_k(\eta) \approx \eta^{(k+1)^2} \,.
  \end{displaymath}
\end{rem}



\mysubsection{The $p$-energy functional}
\label{sec:p-energy}

First we define a higher dimensional analogue of the Menger curvature defined
for curves.
\begin{defin}
  \label{def:disc-curv}
  Let $T = \simp(x_0, \ldots, x_{m+1})$. The \emph{discrete curvature} of $T$ is
  \begin{displaymath}
    \K(T) := \frac{\HM^{m+1}(T)}{\diam(T)^{m+2}} \,.
  \end{displaymath}
\end{defin}
Note that $\K(\alpha T) = \frac 1{\alpha}\K(T) \to \infty$ when $\alpha \to 0$,
so our curvature behaves under scaling like the original Menger curvature. If
$T$ is a regular simplex (meaning that all the side lengths are equal), then
$\K(T) \simeq \frac{1}{\diam T} \simeq R(T)^{-1}$, where $R(T)$ is the radius of
a circumsphere of the vertices of $T$.

For $m = 1$ using the sine theorem we obtain
\begin{align*}
  \frac 1{R(T)} &= \frac{4 \operatorname{Area}(T)}{|x_0-x_1| |x_1-x_2| |x_2-x_0|} \\
  \text{and} \quad
  \K(T) &= \frac{\operatorname{Area}(T)}{\max\{|x_0-x_1|,|x_1-x_2|,|x_2-x_0|\}^3} \,.
\end{align*}
Hence, for an equilateral triangle this two quantities are the same up to an
absolute constant. For all other triangles we only have $\K(T) \le R(T)^{-1}$.

In the case of surfaces ($m=2$), Strzelecki and von der Mosel~\cite{0911.2095}
suggested the following definition of discrete curvature
\begin{displaymath}
  \K'(T) := \frac{\operatorname{Volume}(T)}{\operatorname{Area}(T) \diam(T)^2} \,.
\end{displaymath}
For a regular tetrahedron $\operatorname{Volume}(T) = \frac{\sqrt 2}{12}d^3$ and
$\operatorname{Area}(T) = \sqrt 3 d^2$, so in this case
\begin{displaymath}
  \K'(T) = \frac{\sqrt 2}{12 \sqrt 3 \diam(T)} = \frac 1{\sqrt 3} \K(T) \,.
\end{displaymath}
Once again we see that these definitions coincide for regular simplices. Note
also that $\operatorname{Area}(T) \le 4 \pi d^2$ so $\K(T) \le 4\pi \K'(T)$.

We emphasis the behavior on regular simplices because small, close to regular
(or \emph{voluminous}) simplices are the reason why $\E_p(\Sigma)$ might get
very big or infinite. For the class of voluminous simplices $T \in
\Reg_m(\eta,d)$ the value $\K(T)$ is comparable with yet another possible
definition of discrete curvature
\begin{displaymath}
  \K''(T) := \frac{\hmin(T)}{\diam(T)^2} 
  = \frac{1}{\diam(T)} \frac{\hmin(T)}{\diam(T)} \,,
\end{displaymath}
which is basically $\frac 1{\diam(T)}$ multiplied by a scale-invariant
''regularity coefficient'' $\frac{\hmin(T)}{\diam(T)}$. This last factor
prevents $\K''$ from blowing up on simplices with vertices on smooth manifolds.

One could ask, if we cannot define $\K(T)$ to be $R(T)^{-1}$. Actually
$R(T)^{-1}$ is not good in the sense that there are examples (see~\cite[Appendix
B]{0911.2095}) of $C^2$ manifolds for which $R(T)^{-1}$ explodes. These examples
use the fact that a circumsphere of a small, very elongated simplex may be quite
different from the tangent sphere and intersect the affine tangent space on a
big set. This is the advantage of our definition of $\K(T)$. It is defined in
such a way that very thin simplices have small discrete curvature.

\begin{fact}
  \label{fact:reg-curv}
  If $T \in \Reg_m(\eta,d)$ then
  \begin{equation}
    \label{est:curv}
    \K(T) \ge \frac{(\eta d)^{m+1}}{(m+1) (2d)^{m+2}} = \frac 1 {(m+1) 2^{m+2}} \frac{\eta^{m+1}}{d} \,.
  \end{equation}
\end{fact}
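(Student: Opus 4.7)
The plan is to combine two facts about voluminous simplices that are already available in the excerpt, namely a lower bound on the $(m+1)$-dimensional measure and an upper bound on the diameter, and plug them into the definition of $\K(T)$.

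First I would note that condition \eqref{fat:ball} in Definition~\ref{def:regsimp} gives $T \subseteq \CBall(x,d)$ for some $x \in \Rn$, so in particular $\diam(T) \le 2d$. Hence
\begin{displaymath}
  \diam(T)^{m+2} \le (2d)^{m+2} \,.
\end{displaymath}

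Next I would invoke estimate \eqref{est:meas-low} from Remark~\ref{rem:hmin-estimates} (with $k=m$), which says that any $T \in \Reg_m(\eta,d)$ satisfies
\begin{displaymath}
  \HM^{m+1}(T) \ge \frac{1}{m+1}(\eta d)^{m+1} \,.
\end{displaymath}
This lower bound is itself a direct consequence of conditions \eqref{fat:base} and \eqref{fat:height}: the standard formula $\HM^{m+1}(T) = \frac{1}{m+1}\height_{m+1}(T)\HM^m(\face_{m+1}T)$ combined with $\height_{m+1}(T) \ge \eta d$ and $\HM^m(\face_{m+1}T) \ge (\eta d)^m$ yields exactly this estimate.

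Finally, inserting both bounds into Definition~\ref{def:disc-curv} gives
\begin{displaymath}
  \K(T) = \frac{\HM^{m+1}(T)}{\diam(T)^{m+2}}
  \ge \frac{\tfrac{1}{m+1}(\eta d)^{m+1}}{(2d)^{m+2}}
  = \frac{1}{(m+1) 2^{m+2}} \frac{\eta^{m+1}}{d} \,,
\end{displaymath}
which is the claimed inequality \eqref{est:curv}. There is no real obstacle here: the statement is a one-line consequence of the definitions together with the already-established estimates collected in Remark~\ref{rem:hmin-estimates}, and the only thing to check is that the constants match as stated.
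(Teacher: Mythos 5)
Your proof is correct and follows exactly the route the paper intends (and leaves implicit): the measure bound \eqref{est:meas-low} from Remark~\ref{rem:hmin-estimates} together with $\diam(T)\le 2d$ from \eqref{fat:ball}, plugged into Definition~\ref{def:disc-curv}. The constants match, so nothing further is needed.
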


\begin{defin}
  Let $\Sigma \subseteq \Rn$ be any $\HM^m$-measurable set. We define the
  measure $\mu_{\Sigma}$ to be the $(m+2)$-fold product of the $m$-dimensional
  Hausdorff measures, restricted to $\Sigma$, i.e.
  \begin{displaymath}
    \mu_{\Sigma} := \underbrace{\HM^m|_{\Sigma} \otimes \cdots \otimes \HM^m|_{\Sigma}}_{m+2} \,.
  \end{displaymath}
\end{defin}

In this paper we usually work with only one set $\Sigma$, so if there is no
ambiguity, we will drop the subscript and write just $\mu$ for the measure
$\mu_{\Sigma}$.

\begin{defin}
  \label{def:p-energy}
  For $\Sigma \subseteq \Rn$ a $\HM^m$-measurable set we define the
  \emph{$p$-energy} functional
  \begin{displaymath}
    \E_p(\Sigma) := \int_{\Sigma^{m+2}} \K(T)^p\ d\mu_{\Sigma}(T) \,.
  \end{displaymath}
\end{defin}

\begin{prop}
  \label{prop:beta-curv}
  If $\Sigma \subseteq \Rn$ is $m$-dimensional, compact and such that
  \begin{displaymath}
    \exists R > 0 \ 
    \exists C > 0 \ 
    \forall x \in \Sigma \ 
    \forall r \in (0,R]
    \quad
    \nbeta(x,r) \le C r
  \end{displaymath}
  then the discrete curvature $\K$ is uniformly bounded on
  $\Sigma^{m+2}$. Therefore for such $\Sigma$ the $p$-energy $\E_p(\Sigma)$ is
  finite for any $p > 0$.
\end{prop}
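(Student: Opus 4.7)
The plan is to prove that $\K$ is uniformly bounded on $\Sigma^{m+2}$; once that is achieved, the finiteness of $\E_p$ is immediate from the crude estimate $\E_p(\Sigma) \le \|\K\|_\infty^p \cdot \HM^m(\Sigma)^{m+2}$, using that $\Sigma$ is compact and $m$-dimensional so that $\HM^m(\Sigma) < \infty$.

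Fix an $(m+2)$-tuple $(x_0, \ldots, x_{m+1}) \in \Sigma^{m+2}$ and set $T = \simp(x_0, \ldots, x_{m+1})$, $d = \diam(T)$; observe $T \subseteq \CBall(x_0, d)$. In the large-scale case $d > R$ the isodiametric inequality applied inside the $(m+1)$-plane $\aff\{x_0,\ldots,x_{m+1}\}$ already gives $\HM^{m+1}(T) \le \omega_{m+1} d^{m+1}$, so $\K(T) \le \omega_{m+1}/d \le \omega_{m+1}/R$. In the essential case $d \le R$, I would apply the $\beta$-hypothesis at the vertex $x_0$ at scale $r = d$ (the infimum defining $\nbeta$ is attained since $G(n,m)$ is compact): there exists $H \in G(n,m)$ such that $|Q_H(x_i - x_0)| \le \nbeta(x_0, d) \cdot d \le C d^2$ for every $i = 1, \ldots, m+1$, because all of $x_1, \ldots, x_{m+1}$ lie in $\Sigma \cap \CBall(x_0, d)$. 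Decomposing $v_i := x_i - x_0 = u_i + w_i$ with $u_i = \pi_H(v_i)$ and $w_i = Q_H(v_i)$ gives $|u_i| \le d$ and $|w_i| \le C d^2$.

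The key observation is that $u_1, \ldots, u_{m+1}$ are $m+1$ vectors in the $m$-dimensional space $H$, so $u_1 \wedge \cdots \wedge u_{m+1} = 0$. Expanding
\begin{displaymath}
v_1 \wedge \cdots \wedge v_{m+1} = (u_1 + w_1) \wedge \cdots \wedge (u_{m+1} + w_{m+1})
\end{displaymath}
and discarding this vanishing summand, every remaining term contains at least one factor $w_i$; a term with exactly $k \ge 1$ such factors has norm at most $(C d^2)^k d^{m+1-k} \le C^k R^{k-1} d^{m+2}$, using $d \le R$. Summing the at most $2^{m+1} - 1$ nonvanishing terms and using $\HM^{m+1}(T) = \frac{1}{(m+1)!} |v_1 \wedge \cdots \wedge v_{m+1}|$ yields $\HM^{m+1}(T) \le K(m, C, R)\, d^{m+2}$, hence $\K(T) \le K(m, C, R)$. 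No step is a serious obstacle; the only delicate ingredient is the wedge-product expansion, which encodes the essential mechanism---a $\beta$-number of linear order in $r$ at scale $r$ forces the $(m+1)$-volume of any inscribed simplex of diameter $d \le r$ to pick up an extra factor of $d$ over the trivial volume bound.
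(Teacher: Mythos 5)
Your proof is correct, but it takes a genuinely different route from the paper's. The paper argues by contradiction: if $\K(T_k)\to\infty$, compactness of $\Sigma$ forces $d_k:=\diam T_k\to 0$; the hypothesis $\nbeta(x_0^k,d_k)\le Cd_k$ places $T_k$ in a slab of thickness $h_k=Cd_k^2$ around an $m$-plane, and the crucial extra factor of $d_k$ is then extracted by an ambient volume comparison: one takes the product of $T_k$ with a cube $S_k$ of side $2h_k$ in the orthogonal complement of the affine span of $T_k$, and compares $\HM^n(T_k\times S_k)=\HM^{m+1}(T_k)\,(2h_k)^{n-m-1}$ with the measure of a slightly enlarged box, which is $\lesssim d_k^m h_k^{n-m}$; this forces $\HM^{m+1}(T_k)\lesssim d_k^m h_k = C d_k^{m+2}$, contradicting the blow-up. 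You reach the same key inequality $\HM^{m+1}(T)\le K(m,C,R)\,d^{m+2}$ directly, writing the volume as $\frac{1}{(m+1)!}\,|v_1\wedge\cdots\wedge v_{m+1}|$, splitting each edge as $v_i=\pi_H(v_i)+Q_H(v_i)$, noting that the wedge of the $m+1$ tangential parts vanishes because they lie in the $m$-plane $H$, and bounding every surviving term by Hadamard's inequality, each such term carrying at least one factor $|Q_H(v_i)|\le Cd^2$. Your argument is direct and quantitative (an explicit bound, plus the isodiametric bound $\K(T)\le\omega_{m+1}/R$ for simplices of diameter exceeding $R$, a case the paper disposes of implicitly via compactness and boundedness of $\HM^{m+1}(T_k)$), at the price of the multilinear-algebra input; the paper's box/Fubini-type comparison is more elementary pointwise but needs the contradiction-and-limit scaffolding. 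One minor caveat: your final bound $\E_p(\Sigma)\le\|\K\|_\infty^p\,\HM^m(\Sigma)^{m+2}$ requires $\HM^m(\Sigma)<\infty$, which does not follow merely from compactness and Hausdorff dimension $m$; however, the paper's own ``therefore'' step makes the same tacit assumption (finite $\HM^m$-measure is what ``$m$-dimensional'' is meant to convey there), so this is not a gap relative to the paper.
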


\begin{proof}
  Let us assume that there exists a sequence of simplices $T_k$ such that
  $\K(T_k)$ is unbounded, meaning
  \begin{equation}
    \label{eq:bigvol}
    \forall \tilde{C} > 0 \ 
    \exists k_0\ 
    \forall k \ge k_0
    \quad
    \HM^{m+1}(T_k) \ge \tilde{C} \diam(T_k)^{m+2} \,.
  \end{equation}
  Let us denote the vertices of $T_k$ by $x_0^k$, $x_1^k$, \ldots,
  $x_{m+1}^k$. Set $d_k := \diam(T_k)$. Since $\Sigma$ is compact the diameter
  of $T_k$ is bounded. Hence the measure $\HM^{m+1}(T_k)$ is also bounded, so if
  $\K(T_k)$ explodes, then $d_k$ must converge to $0$.

  Choose $k_0 \in \N$ such that $d_k < \min \{ R, \frac 1C \}$ for each $k \ge
  k_0$. For each $k$ fix some $m$-plane $H_k \in G(n,m)$ such that
  \begin{equation}
    \label{eq:points-dist}
    \forall y \in \Sigma \cap \CBall(x_0^k,d_k)
    \quad
    \dist(y, x_0^k + H_k) \le C d_k^2 \,.
  \end{equation}
  This is possible because $\nbeta(x_0^k,d_k) \le C d_k$. Fix some $k \ge k_0$
  and set $h_k := C d_k^2 \le d_k$. We shall estimate the measure of $T_k$ and
  contradict~\eqref{eq:bigvol}.

  Without loss of generality we can assume $x_0^k$ lies at the origin. Let us
  choose an orthonormal coordinate system $v_1$, \ldots, $v_n$ such that $H_k =
  \opspan\{v_1, \ldots, v_m\}$. Because of~\eqref{eq:points-dist} in our
  coordinate system we have
  \begin{displaymath}
    T_k \subseteq [-d_k, d_k]^m \times [-h_k,h_k]^{n-m}\,.
  \end{displaymath}
  Of course $T_k$ lies in some $(m+1)$-dimensional section of the above
  product. Let 
  \begin{align*}
    V_k &:= \aff \{ x_0^k, \ldots, x_{m+1}^k \} = \opspan \{ x_1^k, \ldots, x_{m+1}^k \} \,,\\
    Q(a,b) &:= [-a,a]^m \times [-b,b]^{n-m} \,,\\
    Q_k &:= Q(d_k,h_k) \\
    \text{and} \quad
    P_k &:= V_k \cap Q_k \,.
  \end{align*}
  Note that all of the sets $V_k$, $Q_k$ and $P_k$ contain $T_k$. Choose another
  orthonormal basis $w_1$, \ldots, $w_n$ of $\Rn$, such that $V_k = \opspan\{w_1,
  \ldots, w_{m+1}\}$. Let $S_k := \{ x \in V_k^{\perp} : |\langle x, w_i \rangle|
  \le h_k \}$, so $S_k$ is just the cube $[-h_k,h_k]^{n-m-1}$ placed in the
  orthogonal complement of $V_k$. Note that $\diam S_k = 2 h_k \sqrt{n-m-1}$. In
  this setting we have
  \begin{equation} \label{eq:PxS}
    P_k \times S_k 
    = P_k + S_k 
    \subseteq Q(d_k + 2h_k \sqrt{n-m-1}, h_k + 2h_k\sqrt{n-m-1}) \,.
  \end{equation}
  Recall that $h_k = C d_k^2 \le d_k$. We obtain the following estimate
  \begin{align}
    \label{eq:PxS-upper-bound}
    \HM^n(T_k \times S_k) 
    &\le \HM^n(P_k \times S_k) \\
    &\le \HM^n(Q(d_k + 2h_k \sqrt{n-m-1}, h_k + 2 h_k \sqrt{n-m-1})) \notag \\
    &\le (2 d_k + 4 h_k \sqrt{n-m-1})^m (2 h_k + 4 h_k \sqrt{n-m-1})^{n-m} \notag \\
    &\le (2 + 4\sqrt{n-m-1})^m (2C + 4C\sqrt{n-m-1})^{n-m} d_k^m h_k^{n-m} \notag \\
    &=: C'(n,m) C^{n-m} d_k^m h_k^{n-m} \notag \,.
  \end{align}
  Choose $\tilde{C} > C'(n,m) C^{n-m+1}$ and use \eqref{eq:bigvol} to find $k$
  such that $\HM^{m+1}(T_k) \ge \tilde{C} d_k^{m+2}$. Then we obtain
  \begin{align}
    \label{eq:PxS-lower-bound}
    \HM^n(T_k \times S_k) &= \HM^{m+1}(T_k) \HM^{n-m-1}(S_k) \\
    &\ge \tilde{C} 2^{n-m-1} h_k^{n-m-1} d_k^{m+2} \notag \\
    &> \frac{2^{n-m-1}}{C} \tilde{C} h_k^{n-m} d_k^m \notag \\
    &\ge 2^{n-m-1} C^{n-m} C'(n,m) h_k^{n-m} d_k^m \notag \,.
  \end{align}
  Now, \eqref{eq:PxS-upper-bound} and \eqref{eq:PxS-lower-bound} give a
  contradiction, so condition \eqref{eq:bigvol} must have been false.
\end{proof}

\begin{cor}
  \label{cor:C2mani}
  If $M \subseteq \Rn$ is a compact, $m$-dimensional, $C^2$ manifold embedded in
  $\Rn$ then the discrete curvature $\K$ is uniformly bounded on
  $M^{m+2}$. Therefore the $p$-energy $\E_p(M)$ is finite for every $p > 0$.
\end{cor}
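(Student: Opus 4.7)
The plan is to reduce the claim to Proposition~\ref{prop:beta-curv} by showing that a compact, $C^2$-embedded manifold $M$ satisfies the linear $\beta$-number bound $\nbeta(x,r) \le Cr$ for some constants $C$ and $R$ uniform on $M$. Given such a bound, Proposition~\ref{prop:beta-curv} immediately yields uniform boundedness of $\K$ on $M^{m+2}$ and hence finiteness of $\E_p(M)$ for every $p > 0$.

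To establish the $\beta$-number estimate, I would fix $x \in M$, take $H = T_xM \in G(n,m)$ (the tangent plane identified with the underlying linear subspace), and show that locally $M$ is a graph of a $C^2$ function $f_x : T_xM \to T_xM^{\perp}$ with $f_x(0) = 0$ and $Df_x(0) = 0$. The second-order Taylor expansion then gives
\begin{displaymath}
  |f_x(v)| \le \tfrac{1}{2} \|D^2 f_x\|_\infty |v|^2
  \qquad \text{for $v$ in a neighborhood of $0$ in $T_xM$,}
\end{displaymath}
so that for every $y \in M$ sufficiently close to $x$ we obtain
\begin{displaymath}
  \dist(y, x + T_xM) = |Q_{T_xM}(y-x)| \le C_x |y-x|^2
\end{displaymath}
with $C_x$ depending on the norm of the second fundamental form of $M$ at $x$ and on the radius of the local graph neighborhood.

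The main step is then a compactness argument to make these constants uniform. Since $M$ is a compact $C^2$ submanifold, the second fundamental form is a continuous section of a vector bundle over $M$, hence bounded; moreover the radius of a local graph neighborhood in which the above Taylor estimate holds can be chosen continuously in $x$. A standard covering argument (or direct appeal to the compactness of $M$) then produces constants $C < \infty$ and $R > 0$ such that for every $x \in M$ and every $r \in (0,R]$,
\begin{displaymath}
  \sup_{y \in M \cap \CBall(x,r)} \dist(y, x + T_xM) \le C r^2 \,,
\end{displaymath}
which by Definition~\ref{def:beta} gives $\nbeta(x,r) \le Cr$.

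The only delicate point is this uniformization, but it is routine once one uses either (i) the fact that $x \mapsto (T_xM, D^2 f_x)$ is continuous on the compact manifold $M$, or (ii) a finite covering of $M$ by graph charts over tangent planes together with a Lebesgue-number argument to pass from the discrete family of charts to a uniform scale $R$. With this estimate in hand, Proposition~\ref{prop:beta-curv} closes the proof of the corollary.
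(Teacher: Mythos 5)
Your proposal is correct and follows the same overall strategy as the paper: both reduce the corollary to Proposition~\ref{prop:beta-curv} by establishing the uniform linear bound $\nbeta(x,r)\le Cr$ for small $r$. The difference lies in how that quadratic flatness estimate is obtained. The paper invokes the tubular neighborhood of the compact $C^2$ manifold: every point $x$ has an excluded ball of uniform radius $\varepsilon$ tangent to $T_xM$ on either side, and an explicit trigonometric computation on $\partial\Ball(x,r)\cap\partial\Ball(y,\varepsilon)$ gives $\dist(y,x+T_xM)\le r^2/(2\varepsilon)$ for \emph{all} $y\in M\cap\CBall(x,r)$ at once. You instead use local $C^2$ graph representations over tangent planes, the second-order Taylor expansion, boundedness of the second fundamental form, and a compactness argument to make the constants uniform. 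This works, but note that the delicate point is slightly more than uniformizing the chart radius: you must also rule out points of $M\cap\CBall(x,r)$ that are close to $x$ in the ambient metric yet lie outside the local graph patch (a distant sheet of $M$ returning near $x$). That is exactly what positive reach, i.e.\ the uniform tubular neighborhood, excludes, so your covering/Lebesgue-number argument should be supplemented by a compactness-plus-embeddedness step (or a direct appeal to the tubular neighborhood theorem) to guarantee that for $r\le R$ the whole set $M\cap\CBall(x,r)$ sits in the graph over $T_xM$. The paper's route packages this automatically and, as it remarks, extends verbatim to sets of positive reach, which is the main thing its argument buys over the chart-based one.
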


\begin{proof}
  Since $M$ is a compact $C^2$-manifold, it has a tubular neighborhood
  \begin{displaymath}
    M_{\varepsilon} = M + \overline{B}_{\varepsilon}
    := \{ x + y : x \in M,\, y \in \overline{B}_{\varepsilon} \}
  \end{displaymath}
  of some radius $\varepsilon > 0$ and the nearest point projection $\pi :
  M_{\varepsilon} \to M$ is a well-defined, continuous function (see
  e.g.~\cite{MR0110078} for a discussion of the properties of the nearest point
  projection mapping $\pi$). To find $\varepsilon$ one proceeds as follows. Take
  the principal curvatures $\kappa_1,\ldots,\kappa_m$ of $M$. These are
  continuous functions $M \to \R$, because $M$ is a $C^2$ manifold. Next set
  \begin{displaymath}
    \varepsilon := \sup_{x \in M} \max \{ |\kappa_1|, \ldots, |\kappa_m| \} \,.
  \end{displaymath}
  Such maximal value exists due to continuity of $\kappa_j$ for each $j =
  1,\ldots,m$ and compactness of $M$.
  
  We will show that for all $r \le \varepsilon$ and all $x \in \Sigma$ we have
  \begin{equation}
    \label{eq:C2-beta}
    \nbeta(x,r) \le \frac 1{2 \varepsilon} r \,.
  \end{equation}
  Next, we apply Proposition~\ref{prop:beta-curv} and get the desired result.

  Choose $r \in (0,\varepsilon]$. Fix some point $x \in \Sigma$ and pick a
  point $y \in T_{x}M^{\perp}$ with $|x - y| = \varepsilon$. Note that $y$
  belongs to the tubular neighborhood $M_{\varepsilon}$ and that $\pi(y) =
  x$. Hence, the point $x$ is the only point of $M$ in the ball
  $\CBall(y,\varepsilon)$. In other words $M$ lies in the complement of
  $\CBall(y,\varepsilon)$. This is true for any $y$ satisfying $y \in
  T_{x}M^{\perp}$ and $|x - y| = \varepsilon$, so we have
  \begin{displaymath}
    M \subseteq \Rn \setminus \bigcup \left\{
      \CBall(y,\varepsilon) :
      y \perp T_{x}M,\, |y - x| = \varepsilon
    \right\} \,.
  \end{displaymath}
  Pick another point $\bar{x} \in \Sigma \cap \CBall(x,r)$. We then have
  \begin{equation}
    \label{eq:x-pos}
    \bar{x} \in \CBall(x,r) \setminus \bigcup
    \left\{
      \CBall(y,\varepsilon) :
      y \perp T_{x}M,\, |y - x| = \varepsilon
    \right\} \,.
  \end{equation}
  \begin{figure}[!htb]
    \centering
    \includegraphics{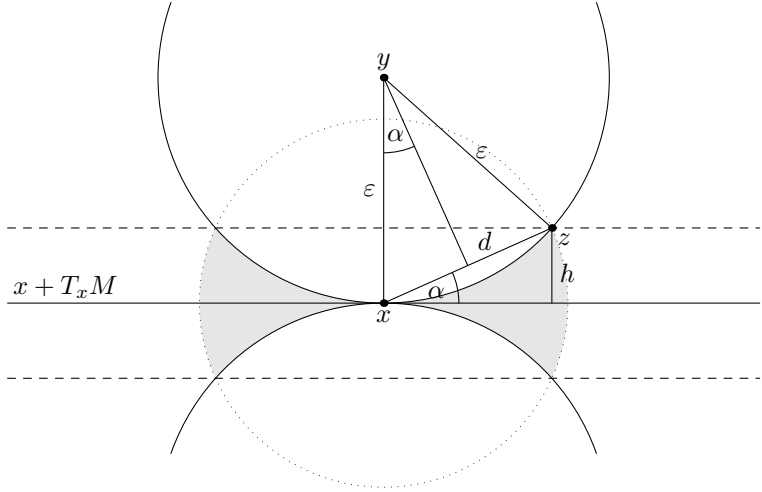}
    \caption{All of $M \cap \CBall(x,r)$ lies in the grey area. The point
      $\bar{x}$ lies in the complement of $\Ball(y,\varepsilon)$ and inside
      $\CBall(x,r)$ so it has to be closer to $T_{x}M$ than $z$.}
    \label{F:height}
  \end{figure}
  Using \eqref{eq:x-pos} and simple trigonometry, it is ease to calculate the
  maximal distance of $\bar{x}$ from the tangent space $T_{x}M$. Let $z$ be any
  point in the intersection $\partial \Ball(x,r) \cap \partial
  \Ball(y,\varepsilon)$. Note that points of $M \cap \CBall(x,\varepsilon)$ must
  be closer to $T_{x}M$ than $z$. In other words
  \begin{equation}
    \label{eq:z-max-dist}
    \forall x \in M \cap \Ball(x,r)
    \quad
    \dist(x,T_{x}M) \le \dist(z,T_{x}M) \,.
  \end{equation}
  This situation is presented on Figure \ref{F:height}. Let $\alpha$ be the
  angle between $T_{x}M$ and $z$ and set $h := \dist(z,T_{x}M)$. We use the fact
  that the distance $|z - x|$ is equal to $r$.
  \begin{equation} \label{eq:h-calc}
    \sin \alpha = \frac{|z - x|}{2 \varepsilon} = \frac{h}{|z - x|}
    \quad \Rightarrow \quad
    h = \frac{|z - x|^2}{2 \varepsilon} = \frac{r^2}{2 \varepsilon} \,.
  \end{equation}
  This proves \eqref{eq:C2-beta} and now we can apply
  Proposition~\ref{prop:beta-curv}.
\end{proof}

\begin{rem}
  Note that the only property of $M$, which allowed us to prove
  Corollary~\ref{cor:C2mani} was the existence of an appropriate tubular
  neighborhood $M_{\varepsilon}$. One can easily see that
  Corollary~\ref{cor:C2mani} still holds if $M$ is just a set of \emph{positive
    reach} as was defined in~\cite{MR0110078}.
\end{rem}

\begin{rem}
  In a forthcoming, joint paper with Marta Szuma{'n}ska~\cite{SKMS}, we prove
  that graphs of a $C^{1,\nu}$ functions have finite integral Menger curvature
  whenever
  \begin{displaymath}
    \nu > \nu_0 := 1 - \frac{m(m+1)}{p}
  \end{displaymath}
  We also construct an example of a $C^{1,\nu_0}$ function such that its graph
  has infinite $p$-energy. This shows that $\nu_0$ is optimal and can not be
  better.
\end{rem}



\mysubsection{Classes of admissible and of fine sets}

In this paragraph we introduce the definitions of two classes of sets. This is
the outcome of the way we worked on this paper. First we proved uniform Ahlfors
regularity (Theorem~\ref{thm:uahlreg}) for the class $\A(\delta,m)$ of
\emph{$(\delta,m)$-admissible} sets. The definition (Definition~\ref{def:adm})
of $\A(\delta,m)$ was based on the definition introduced by Strzelecki and von
der Mosel~\cite[Definition~2.10]{1102.3642} and seemed to be the most
appropriate one for the purpose of the proof of Theorem~\ref{thm:uahlreg}.
However, in the proof of $C^{1,\tau}$ regularity (Theorem~\ref{thm:C1tau}) it is
enough to work with less restrictive conditions, so we introduced the class
$\F(m)$ of \emph{$m$-fine} sets (Definition~\ref{def:fine}). It turns out that
if the $p$-energy of an $m$-dimensional set $\Sigma$ is finite ($\E_p(\Sigma) <
\infty$) for some $p > m(m+2)$ then $\Sigma$ is $(\delta,m)$-admissible if and
only if it is $m$-fine. If we do not assume finiteness of the $p$-energy then
the relation between $\F(m)$ and $\A(\delta,m)$ is not clear. Nevertheless,
starting from a set $\Sigma$ in any of these classes and assuming finiteness of
the $p$-energy we are able to prove $C^{1,\alpha}$ regularity.

\mysubsubsection{Admissible sets}

\begin{defin}
  \label{def:perp}
  Let $H \in G(n,m)$. We say that a sphere $S$ is \emph{perpendicular to $H$}
  if it is of the form $S = \Sphere(x,r) \cap (x + H^{\perp})$ for some $x \in
  H$ and some $r > 0$.
\end{defin}

\begin{defin}
  \label{def:adm}
  Let $\delta \in (0,1)$ and let $I$ be a countable set of indices. Let $\Sigma$
  be a compact subset of $\Rn$. We say that $\Sigma$ is
  $(\delta,m)$-\emph{admissible} and write $\Sigma \in \A(\delta,m)$ if the
  following conditions are satisfied
  \begin{enumerate}[I.]
  \item \textbf{Ahlfors regularity.}
    \label{adm:reg}
    There exist constants $A_{\Sigma} > 0$ and $R_{\Sigma} > 0$ such that for
    any $x \in \Sigma$ and any $r < R_{\Sigma}$ we have
    \begin{equation}
      \label{eq:adm:reg}
      \HM^m(\Sigma \cap \Ball(x,r)) \ge A_{\Sigma} r^m \,.
    \end{equation}
  \item \textbf{Structure.}
    \label{adm:str}
    There exist compact, closed, $m$-dimensional manifolds $M_i$ of class $C^1$
    and continuous maps $f_i : M_i \to \Rn$, $i \in I$, such that
    \begin{equation} \label{eq:adm:str}
      \Sigma = \bigcup_{i \in I} f_i(M_i) \cup Z \,,
    \end{equation}
    where $\HM^m(Z) = 0$.
  \item \textbf{Mock tangent planes and flatness.}
    \label{adm:cone}
    There exists a dense subset $\Sigma^* \subseteq \Sigma$ such that
    \begin{itemize}
    \item $\HM^m(\Sigma \setminus \Sigma^*) = 0$,
    \item for each $x \in \Sigma^*$ there is an $m$-plane $H = H_x \in G(n,m)$
      and a radius $r_0 = r_0(x) > 0$ such that
      \begin{equation} \label{eq:adm:cone}
        |Q_H(y-x)| < \delta |y-x| \quad \text{for each } y \in \Ball(x,r_0) \cap \Sigma \,.
      \end{equation}
    \end{itemize}
  \item \textbf{Linking.}
    \label{adm:link}
    Let $x \in \Sigma^*$ and set $\Slk_x := \Sphere(x,\frac 12 r_0) \cap (x + H_x^{\perp})$.
    Then $\Slk_x$ satisfies
    \begin{equation} \label{eq:adm:link}
      \oplk(\Sigma,\Slk_x) = 1 \,.
    \end{equation}
  \end{enumerate}
\end{defin}

Condition \ref{adm:reg} says that the set $\Sigma$ should be at least
$m$-dimensional. It ensures that $\Sigma$ does not have very long and thin
''fingers''. Intuitively the constant $A_{\Sigma}$ gives a lower bound on the
thickness of any such ''finger''. This means that $\Sigma$ is really
$m$-dimensional and does not behave like a lower dimensional set at any point.

Condition \ref{adm:str} is convenient for the condition \ref{adm:link}. The
degree modulo $2$ was defined for $C^1$-manifolds and continuous mappings so, to
be able to talk about linking number we need to assume \ref{adm:str}. Actually
\ref{adm:str} is a very weak constraint.

Condition \ref{adm:link} says that at each point of $\Sigma$ there is a sphere
$\Slk_x$ which is linked with $\Sigma$. This means intuitively, that we cannot
move $\Slk_x$ far away from $\Sigma$ without tearing one of these sets. Examples
\ref{ex:flat} and \ref{ex:spiral} show that this condition is unavoidable for
the theorems stated in this paper to be true.

Finally, we believe that it is not really necessary to assume a priori that
Condition~\ref{adm:cone} holds. We suspect that if we assume that the $p$-energy
$\E_p(\Sigma)$ (see Definition~\ref{def:p-energy}) is finite for some $p >
m(m+2)$, then condition~\ref{adm:cone} is automatically satisfied. Up to now,
now we were not able to prove this.

\begin{ex}
  \label{ex:mfld}
  Let $\Sigma$ be any closed, compact, $m$-dimensional submanifold of $\Rn$ of
  class $C^1$. Then $\Sigma \in \A(\delta,m)$ for any $\delta \in (0,1)$.

  It is easy to verify that $\Sigma \in \A(\delta,m)$. Take $M_1 = \Sigma$ and
  $f_1 = \opid$. The set $Z$ will be empty, so $\Sigma^* = \Sigma$. At each
  point $x \in \Sigma$ we set $H_x$ to be the tangent space $T_x\Sigma$. Small
  spheres centered at $x \in \Sigma$ and contained in $x + H_x^{\perp}$ are
  linked with $\Sigma$; for the proof see e.g. \cite[pp. 194-195]{MR898008}.
  Note that we do not assume orientability; that is why we used degree modulo
  $2$.
\end{ex}

\begin{ex}
  \label{ex:union-mfld}
  Let $\Sigma = \bigcup_{i=1}^N \Sigma_i$, where $\Sigma_i$ are closed, compact,
  $m$-dimensional submanifolds of $\Rn$ of class $C^1$. Moreover assume that
  these manifolds intersect only on sets of zero $m$-dimensional Hausdorff
  measure, i.e.
  \begin{displaymath}
    \HM^m(\Sigma_i \cap \Sigma_j) = 0 \quad \text{for } i \ne j \,.
  \end{displaymath}
  Then $\Sigma \in \A(\delta,m)$ for any $\delta \in (0,1)$.
\end{ex}

The above examples were taken from \cite{1102.3642}. Now we give some negative
examples showing the role of condition~\ref{adm:link}.

\begin{ex}
  \label{ex:flat}
  Let $H \in G(n,m)$ and let $\Sigma = \pi_H(\Sphere) = \Ball \cap H$. Then
  $\Sigma$ satisfies conditions \ref{adm:reg}, \ref{adm:str} and
  \ref{adm:cone} but it does not satisfy \ref{adm:link}. Hence, it is not
  admissible. Although $\Sigma$ is a compact, $m$-dimensional submanifold of
  $\Rn$ of class $C^1$, it is not closed.
\end{ex}

\begin{ex}
  \label{ex:spiral}
  Let $\gamma : [0,1] \to \R^2$ be defined by
  \begin{displaymath}
    \gamma(t) =
    \left\{
      \begin{array}{lr}
        2^{-2^{1/t}}(\cos \tfrac{\pi}{2t}, \sin \tfrac{\pi}{2t})  & \text{for } t > 0\\
        (0,0) & \text{for } t = 0 \,.
      \end{array}
    \right.
  \end{displaymath}
  We set $\Sigma = \gamma([0,1]) \times [0,1]^{m-1}$. This set satisfies all the
  conditions \ref{adm:reg}, \ref{adm:str} and \ref{adm:cone} but it does not
  satisfy \ref{adm:link}. For the decomposition into a sum $\bigcup f_i(M_i)$ we
  may use a sphere $\Sphere$, then find a continuous mapping $\Sphere
  \to \partial [0,1]^m$, next compose it with the projection $\pi_{\R^m}$ and
  finally compose it with the mapping $(\gamma,\opid) : [0,1]^m \to \R^{m+1}$.
  Set $M_1 = \Sphere$ and set $f_1$ to be the discussed composition.

  This set has the property that for each $r > 0$ there is an $m$-plane $P$ such
  that the distance of any point $x \in \Sigma \cap \Ball(0,r)$ to $P$ is
  approximately $r^2$. Therefore $\Sigma$ gets flatter and flatter when we
  decrease the scale. Using Proposition~\ref{prop:beta-curv} we see that the
  discrete curvature $\K$ is bounded on $\Sigma^{m+2}$ and that $\E_p(\Sigma)$
  is finite for any $p > 0$. This shows that condition \ref{adm:link} is really
  crucial in our considerations.
\end{ex}

\begin{ex}
  \label{ex:decomp}
  Let $\Sigma = \Sphere \cap \R^{m+1}$. Of course $\Sigma$ is admissible as it
  falls into the case presented in Example~\ref{ex:mfld}. We want to emphasize
  that there are good and bad decompositions of $\Sigma$ into the sum $\bigcup
  f_i(M_i)$ from condition \ref{adm:str}.

  The easiest one and the best one is to set $M_1 = \Sigma$ and $f_1 = \opid$.
  But there are other possibilities. Set $M_1 = \Sphere \cap \R^{m+1}$ and $M_2
  = \Sphere \cap \R^{m+1}$ and set
  \begin{align*}
    f_1(x_1,\ldots,x_{m+1}) &:= (x_1,\ldots,x_m,|x_{m+1}|) \,, \\
    f_2(x_1,\ldots,x_{m+1}) &:= (x_1,\ldots,x_m,-|x_{m+1}|) \,,
  \end{align*}
  so that $f_1$ maps $M_1$ to the upper hemisphere and $f_2$ maps $M_2$ to the
  lower hemisphere. This decomposition is bad, because condition \ref{adm:link}
  is not satisfied at any point.
\end{ex}

\mysubsubsection{Fine sets}

Here we introduce the class of $m$-fine sets which captures exactly the
conditions which are needed to prove $C^{1,\tau}$ regularity in
\S\ref{sec:tangent-planes}.
\begin{defin}
  \label{def:fine}
  Let $\Sigma \subseteq \Rn$ be a compact set. We call $\Sigma$ an
  \emph{$m$-fine set} and write $\Sigma \in \F(m)$ if there exist constants
  $A_{\Sigma} > 0$, $R_{\Sigma} > 0$ and $M_{\Sigma} \ge 2$ such that
  \begin{enumerate}[I.]
  \item \textbf{(Ahlfors regularity)}
    \label{fine:reg}
    for all $x \in \Sigma$ and all $r \le R_{\Sigma}$ we have
    \begin{equation}
      \label{eq:fine:reg}
      \HM^m(\Sigma \cap \Ball(x,r)) \ge A_{\Sigma} r^m
    \end{equation}
  \item \textbf{(control of gaps in small scales)}
    \label{fine:gaps}
    and for each $x \in \Sigma$ and each $r \le R_{\Sigma}$ we have
    \begin{displaymath}
      \ntheta(x,r) \le M_{\Sigma} \nbeta(x,r) \,.
    \end{displaymath}
  \end{enumerate}
\end{defin}

\begin{ex}
  Let $M$ be any $m$-dimensional, compact, closed manifold of class $C^1$ and
  let $f : M \to \Rn$ be an immersion. Then the image $\Sigma := \opim(f)$ is an
  $m$-fine set. At each point $x \in M$, there is a radius $R_x$ such that the
  neighborhood $U_x \subseteq f^{-1}(\Ball(f(x),R_x))$ of $x$ in $M$ is mapped
  to the set $V_x := f(U_x) \subseteq \Ball(f(x),R_x)$ and is a graph of some
  Lipschitz function $\Phi_x : Df(x)T_xM \to (Df(x)T_xM)^{\perp}$. If we choose
  $R_x$ small then we can make the Lipschitz constant of $\Phi_x$ smaller than
  some $\varepsilon > 0$. Due to compactness of $M$ and continuity of $Df$ we
  can find a global radius $R_{\Sigma} := \min\{ R_x : x \in M \}$. Then we can
  safely set $A_{\Sigma} = \sqrt{1 - \varepsilon^2}$ and $M_{\Sigma} = 4$.
\end{ex}

Intuitively condition~\ref{fine:gaps} says that $\Sigma$ is ''continuous'' and
has no holes. Consider the case of a unit square in the $2$-plane,
i.e. $\Sigma_0 = \partial [0,1]^2$. Let $\Sigma_1$ be the set obtained from
$\Sigma_0$ by removing some small open interval $J$ from one of the sides of
$\Sigma_0$. Then we have nonempty boundary $\partial \Sigma_1$. For small radii
at the boundary points the $\beta$-numbers will be small and the
$\theta$-numbers will be roughly equal to $\frac 12$. Hence there is no chance
for $\Sigma_1$ to satisfy condition~\ref{fine:gaps}. Note that we can fix that
problem by filling the ''gap'' we made earlier with a complement of some Cantor
set lying inside $\overline{J}$ but then the resulting set $\Sigma_2$ is not
compact. This shows that $m$-fine sets can not be too ''thin'' or too
''sparse''. Nevertheless they can be very ''thick''.
\begin{ex}
  Let $\Sigma$ be the van Koch snowflake in $\R^2$. Then $\Sigma \in \F(1)$ but
  it fails to be $1$-dimensional.
\end{ex}

\begin{ex}
  Let $m = 1$, $n = 2$ and
  \begin{displaymath}
    \Sigma = \bigcup_{k=1}^{\infty} (-Q_k) \cup \left\{
      (t,0) \in \R^2 : t \in [-1,1]
    \right\} \cup \bigcup_{k=1}^{\infty} Q_k \,,
  \end{displaymath}
  where
  \begin{displaymath}
    Q_0 = \partial \big([0,1] \times [0,1]\big)
    \quad \text{and} \quad
    Q_k = \Big( \sum_{j=1}^k 2^{-j}, -\tfrac 12 \Big) + 2^{-(k+1)} Q_0 \,.
  \end{displaymath}
  See Figure~\ref{F:oscillation} for a graphical presentation. Condition
  \ref{fine:gaps} holds at the boundary points $(-1,0)$ and $(1,0)$ of $\Sigma$,
  because the $\beta$-numbers do not converge to zero with $r \to 0$ at these
  points. All the other points of $\Sigma$ are internal points of line segments
  or corner points of squares, so at these points conditions \ref{fine:reg} and
  \ref{fine:gaps} are also satisfied. Hence, $\Sigma$ belongs to the class
  $\F(1)$.

  \begin{figure}[!htb]
    \centering
    \includegraphics{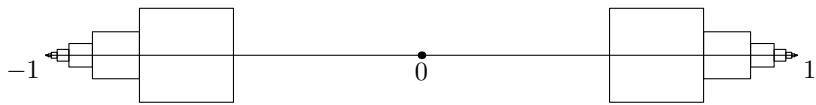}
    \caption{This set is $1$-fine despite the fact that it has boundary points.}
    \label{F:oscillation}
  \end{figure}

  This example shows that condition \ref{fine:gaps} does not exclude boundary
  points but at any such boundary point we have to add some oscillation, to
  prevent $\beta$-numbers from getting too small. The same effect can be
  observed in the following example
  \begin{displaymath}
    \Sigma = \partial \big([1,2] \times [-1,1]\big) \cup
    \overline{ \left\{ (x, x \sin( \tfrac 1x)) : x \in (0,1] \right\} } \,.
  \end{displaymath}
\end{ex}




\mysection{Uniform Ahlfors regularity}
\label{sec:ahl-reg}

In this paragraph, after introducing all the preparatory material we are ready
to prove our first important result:
\begin{thm}
  \label{thm:uahlreg}
  Let $E < \infty$ be some positive constant and let $\Sigma \in \A(\delta,m)$
  be an admissible set, such that $\E_p(\Sigma) \le E$ for some $p >
  m(m+2)$. There exist two constants $\Cl{uahlreg1} = \Cr{uahlreg1}(\delta,m)$
  and $\Cl{uahlreg2} = \Cr{uahlreg2}(\delta,m)$ and a radius
  \begin{displaymath}
    \Cl[R]{uar-rad} = \Cr{uar-rad}(E,p,m,\delta) 
    := \left( \frac{\Cr{uahlreg1} \Cr{uahlreg2}^p}{E} \right)^{\frac 1{p-m(m+2)}} \,,
  \end{displaymath}
  such that for each $\rho \le \Cr{uar-rad}$ and each $x \in \Sigma$ we have
  \begin{displaymath}
    \HM^m(\Sigma \cap \Ball(x,\rho)) \ge (1 - \delta^2)^{\frac m2} \omega_m \rho^m \,.
  \end{displaymath}
\end{thm}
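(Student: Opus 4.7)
The plan is to reduce the theorem to Proposition~\ref{prop:big-proj-fat-simp}, which for $\HM^m$-a.e.\ $x \in \Sigma$ produces a positive stopping distance $d(x)$ together with two conclusions valid at every $r \le d(x)$: (i) there is an $m$-plane $H$ such that $\pi_H(\Sigma \cap \Ball(x,r)) \supseteq \Ball(x,\sqrt{1-\delta^2}\,r) \cap (x + H)$, and (ii) there is a simplex $T_r \in \Reg_m(\eta,r)$ with $\eta = \eta(\delta,m)$ and $x$ among its vertices. The task then splits into using (i) to derive the target measure bound, and using (ii) together with the energy hypothesis to derive a uniform lower bound on $d(x)$.

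For the first part, since $\pi_H$ is $1$-Lipschitz we have $\HM^m(\pi_H(A)) \le \HM^m(A)$ for every Borel $A$, so (i) immediately gives
\begin{displaymath}
  \HM^m(\Sigma \cap \Ball(x,r)) \ge \HM^m\bigl(\Ball(x,\sqrt{1-\delta^2}\,r) \cap (x+H)\bigr) = (1-\delta^2)^{m/2} \omega_m\, r^m
\end{displaymath}
for every $r \le d(x)$. The set $\{x \in \Sigma : d(x)>0\}$ is $\HM^m$-a.e.\ equal to $\Sigma$ and therefore dense in the closed set $\Sigma$; a routine monotonicity argument (for $y \in \Sigma$, $r' < r$, and a good point $y_n \to y$ the inclusion $\Ball(y_n, r') \subseteq \Ball(y,r)$ eventually holds) transfers the bound to every $x \in \Sigma$, provided $r$ lies below some uniform scale supplied by the second part.

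For the second part, fix a good $x$ with $d := d(x)$ and take $T = \simp(x_0, \ldots, x_{m+1}) \in \Reg_m(\eta,d)$ from (ii) at scale $r=d$, with $x_0 = x$. Set $\varsigma := \varsigma_m(\eta)$ from Proposition~\ref{prop:perturb}: any perturbation $T' = \simp(y_0, \ldots, y_{m+1})$ with $|y_i - x_i| \le \varsigma d$ lies in $\Reg_m(\eta/2, 3d/2)$, and by Fact~\ref{fact:reg-curv} satisfies $\K(T') \ge \Cr{uahlreg2}/d$ for a constant $\Cr{uahlreg2} = \Cr{uahlreg2}(\delta,m)$. Applying the first-part estimate at each vertex $x_i$ (each close to a good point, so inheriting the bound at scale $\varsigma d$) yields $\HM^m(\Sigma \cap \Ball(x_i, \varsigma d)) \ge (1-\delta^2)^{m/2}\omega_m (\varsigma d)^m$ for every $i$. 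Integrating the pointwise bound $\K^p \ge (\Cr{uahlreg2}/d)^p$ over the product of the $m+2$ balls against $\mu_\Sigma$ gives
\begin{displaymath}
  E \ge \E_p(\Sigma) \ge \Cr{uahlreg1}\,\Cr{uahlreg2}^p\, d^{m(m+2)-p}
\end{displaymath}
for an explicit $\Cr{uahlreg1} = \Cr{uahlreg1}(\delta,m)$. Since $p > m(m+2)$, rearranging gives $d \ge \Cr{uar-rad}$ with $\Cr{uar-rad} = (\Cr{uahlreg1}\Cr{uahlreg2}^p/E)^{1/(p-m(m+2))}$.

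The main obstacle is the bootstrap coupling between the two parts: the energy lower bound in part two requires the Ahlfors-type regularity at each vertex of $T$, yet that regularity is precisely what the theorem asserts. The resolution is that part one already provides it at $\HM^m$-a.e.\ point with the universal constant $(1-\delta^2)^{m/2}\omega_m$, and the stability of voluminous simplices under small perturbations of vertices (Proposition~\ref{prop:perturb}) allows one to replace the exact vertices $x_i$ by good-point approximations when bounding $\E_p$ from below, closing the loop. Once the uniform lower bound $d(x) \ge \Cr{uar-rad}$ holds at a.e.\ $x$, the density argument from part one promotes the measure estimate to every $x \in \Sigma$.
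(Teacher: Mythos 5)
Your reduction to Proposition~\ref{prop:big-proj-fat-simp} and the first part (the $1$-Lipschitz projection argument, identical to Corollary~\ref{cor:uahlreg}) are fine, and your extension from $\Sigma^*$ to all of $\Sigma$ by approximating with good points and letting $r'\uparrow r$ is legitimate (in fact a little lighter than the paper's Proposition~\ref{prop:big-proj-all-points}, which instead passes the projection property itself to the limit). The genuine gap is in the second part, exactly at the step ``each vertex $x_i$ is close to a good point, so inheriting the bound at scale $\varsigma d$.'' The first-part estimate at a good point $y$ is only valid for radii $r\le d(y)$, and at this stage of the argument you have no control whatsoever on $d(y)$ for good points $y$ near the vertices $x_1,\dots,x_{m+1}$: a priori $d(y)$ could be far smaller than $\varsigma_m d(x)$, since a uniform lower bound on the stopping distances is precisely what you are trying to prove. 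Replacing the vertices by good-point approximations (via Proposition~\ref{prop:perturb}) keeps the simplices voluminous, but it does nothing to justify the measure lower bound $\HM^m(\Sigma\cap\Ball(x_i,\varsigma_m d))\ge(1-\delta^2)^{m/2}\omega_m(\varsigma_m d)^m$ at that scale; so your ``closing the loop'' paragraph does not actually close it.

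The paper closes the loop with two ingredients you are missing. First, it establishes that $d(\Sigma):=\inf_{x_0\in\Sigma^*}d(x_0)$ is strictly positive, using the \emph{a priori}, $\Sigma$-dependent Ahlfors constants $A_\Sigma$, $R_\Sigma$ from condition I of Definition~\ref{def:adm} fed into the balance condition of Proposition~\ref{prop:eta-d-balance} (this yields a lower bound that still depends on $A_\Sigma$, $R_\Sigma$). Second, and crucially, the energy estimate in Proposition~\ref{prop:dSigma-lower} is run not at an arbitrary good point but at a \emph{near-infimal} one: choose $x_0\in\Sigma^*$ with $d(x_0)<(1+\varepsilon)d(\Sigma)$ and $\varsigma_m(1+\varepsilon)\le 1$, so that $\varsigma_m d(x_0)\le d(\Sigma)\le d(y)$ for \emph{every} good point $y$; only then is Corollary~\ref{cor:uahlreg} applicable at scale $\varsigma_m d(x_0)$ around each vertex with the universal constant, and the resulting bound on $d(x_0)$, hence on $d(\Sigma)$, depends only on $E,m,p,\delta$. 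Without the preliminary positivity of $d(\Sigma)$ and the near-infimal choice of $x_0$, your argument either remains circular or produces a radius depending on $A_\Sigma$ and $R_\Sigma$, which is weaker than the statement of the theorem.
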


\begin{cor}
  \label{cor:ind-const}
  If $\Sigma \in \A(\delta,m)$ with some constants $A_{\Sigma}$ and $R_{\Sigma}$
  and if $\E_p(\Sigma) \le E < \infty$ for some $p > m(m+2)$, then $\Sigma \in
  \A(\delta,m)$ with constants $R_{\Sigma}' := \Cr{uar-rad}$ and $A_{\Sigma}' :=
  (1-\delta^2)^{m/2} \omega_m$, which depend only on $E$, $m$, $p$ and $\delta$.
\end{cor}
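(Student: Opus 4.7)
The plan is to derive Theorem~\ref{thm:uahlreg} from Proposition~\ref{prop:big-proj-fat-simp}, which the introduction advertises as the technical backbone. That proposition supplies, at every point $x$ of the dense set $\Sigma^{\ast}$ given by condition~\ref{adm:cone}, a positive \emph{stopping distance} $d(x)$ together with two outputs: first, for every $r \le d(x)$ an $m$-plane $H_{x,r}$ such that $\pi_{x+H_{x,r}}\bigl(\Sigma \cap \Ball(x,r)\bigr)$ contains the disk $\Ball(x,\sqrt{1-\delta^2}\,r) \cap (x+H_{x,r})$; second, a voluminous simplex $T \in \Reg_m(\eta,d(x))$ with $x$ as one of its vertices, for some $\eta = \eta(\delta) \in (0,1)$. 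Since orthogonal projections are $1$-Lipschitz, the first output immediately delivers
\begin{displaymath}
  \HM^m(\Sigma \cap \Ball(x,r)) \ge \HM^m\bigl(\Ball(x,\sqrt{1-\delta^2}\,r) \cap (x+H_{x,r})\bigr) = (1-\delta^2)^{m/2}\omega_m r^m
\end{displaymath}
for every $r \le d(x)$. It therefore suffices to establish the uniform lower bound $d(x) \ge \Cr{uar-rad}$ on $\Sigma^{\ast}$.

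For that bound I would exploit the voluminous simplex $T = \simp(x_0,\ldots,x_{m+1})$ at scale $d := d(x)$, where $x_0 = x$. By Proposition~\ref{prop:perturb}, any nearby simplex $T' = \simp(y_0,\ldots,y_{m+1})$ with $y_i \in \Ball(x_i, \varsigma_m(\eta)\,d)$ lies in $\Reg_m(\tfrac12\eta,\tfrac32 d)$, so Fact~\ref{fact:reg-curv} gives $\K(T')^p \ge C_1(m,\eta)\,d^{-p}$. Combined with a measure lower bound $\HM^m(\Sigma \cap \Ball(x_i, \varsigma_m(\eta)d)) \ge C_2(m,\delta)\,(\varsigma_m(\eta)d)^m$ on each vertex ball and integration over the $(m+2)$-fold product, this yields
\begin{displaymath}
  E \;\ge\; \E_p(\Sigma) \;\ge\; C_1(m,\eta)\,d^{-p}\,\bigl(C_2(m,\delta)\,\varsigma_m(\eta)^m\bigr)^{m+2}\,d^{m(m+2)} \;=\; \Cr{uahlreg1}\,\Cr{uahlreg2}^{p}\,d^{-(p-m(m+2))}.
\end{displaymath}
Because $p > m(m+2)$, solving for $d$ forces $d(x) \ge \Cr{uar-rad}$ with the stated dependence on $E,p,m,\delta$.

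The passage from $\Sigma^{\ast}$ to all of $\Sigma$ is a routine density argument: given $x \in \Sigma$ and $\rho \le \Cr{uar-rad}$, choose $x_n \in \Sigma^{\ast}$ with $x_n \to x$, apply the inequality at $x_n$ with radius $\rho - |x-x_n| < \Cr{uar-rad}$, use $\Sigma \cap \Ball(x,\rho) \supseteq \Sigma \cap \Ball(x_n, \rho - |x-x_n|)$, and let $n \to \infty$.

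The only delicate point, which I expect to be the main obstacle, is the measure lower bound on the vertex balls used in the energy estimate: invoking condition~\ref{adm:reg} naively would leak the set-dependent constant $A_{\Sigma}$ into $\Cr{uar-rad}$, which the theorem forbids. I would bypass this by bootstrapping. Proposition~\ref{prop:big-proj-fat-simp} produces a positive stopping distance at \emph{every} $x_i \in \Sigma^{\ast}$, so at scales below $\min_i d(x_i)$ the projection argument already delivers the \emph{universal} bound $(1-\delta^2)^{m/2}\omega_m \rho^m$. Using the vertex separation estimate~\eqref{est:diam-low} and~\eqref{est:hmin} one checks that $\varsigma_m(\eta)\,d$ is comparable to $\min_i d(x_i)$ (up to constants depending only on $\eta,m$), so the vertex-ball measures are bounded universally; the energy inequality then produces constants depending only on $\delta,m,p,E$, which is exactly the content of the theorem and of Corollary~\ref{cor:ind-const}.
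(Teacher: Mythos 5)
Your overall skeleton is the paper's: Proposition~\ref{prop:big-proj-fat-simp} gives the big projection (hence the measure bound at scales below $d(x)$) plus a voluminous simplex, Proposition~\ref{prop:perturb} and Fact~\ref{fact:reg-curv} turn that simplex into a lower bound for $\E_p(\Sigma)$, and one solves for $d$. The passage from $\Sigma^*$ to $\Sigma$ by density and monotonicity of measure is fine for the corollary (the paper proves the stronger statement that the big projection itself survives the limit, Proposition~\ref{prop:big-proj-all-points}, but for the Ahlfors bound your simpler argument suffices). The genuine gap is exactly at the point you flag as delicate: your bootstrap rests on the claim that $\varsigma_m(\eta)\,d(x)$ is comparable to $\min_i d(x_i)$ ``by the vertex separation estimates \eqref{est:diam-low} and \eqref{est:hmin}''. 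Those estimates control distances \emph{between the vertices of the simplex} $T$; they say nothing about the stopping distances $d(x_i)$, which are outputs of the cone-growing algorithm run at the other vertices and are governed by the local flatness of $\Sigma$ near $x_i$, not by the shape of $T$. Nothing prevents $d(x_i)$ from being far smaller than $\varsigma_m(\eta)\,d(x_0)$ a priori, in which case the universal bound $(1-\delta^2)^{m/2}\omega_m\rho^m$ is simply not available on the vertex ball $\Ball(x_i,\varsigma_m d(x_0))$, and your energy estimate silently falls back on the set-dependent constant $A_{\Sigma}$ — the very thing the corollary forbids.

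The paper closes this hole differently (Proposition~\ref{prop:dSigma-lower}): one does not argue at an arbitrary point, but at a near-minimizer of the stopping distance. First, Proposition~\ref{prop:eta-d-balance} applied with the \emph{given} constants $A_{\Sigma},R_{\Sigma}$ is used only to guarantee that $d(\Sigma):=\inf_{x\in\Sigma^*}d(x)>0$. Then one picks $x_0\in\Sigma^*$ with $d(x_0)<(1+\varepsilon)d(\Sigma)$; for $\varepsilon$ small, $\varsigma_m d(x_0)\le d(\Sigma)\le d(x_i)$ for every vertex, so Corollary~\ref{cor:uahlreg} legitimately gives $\HM^m(\Sigma\cap\Ball(x_i,\varsigma_m d(x_0)))\ge(1-\delta^2)^{m/2}\omega_m(\varsigma_m d(x_0))^m$, and rerunning the balance computation with $A_1=\sqrt{1-\delta^2}\,\omega_m\varsigma_m^m$ in place of $A_{\Sigma}$ yields the $A_{\Sigma}$-free lower bound for $d(\Sigma)$, hence for every $d(x)$, hence $\Cr{uar-rad}$ depending only on $E,m,p,\delta$. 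If you replace your comparability claim by this infimum argument (keeping the rest of your proposal), the proof goes through.
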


In other words we claim that a bound on the $p$-energy implies uniform Ahlfors
regularity below some fixed scale. This means that whenever $\Sigma$ has
$p$-energy lower than $E$, then it cannot have very long and very thin
''tentacles'' in that scale. The thickness of any such ''tentacle'' is bounded
from below by a constant depending only on $E$. Another way to understand this
result is the intuition that $\Sigma$ has to really be $m$-dimensional when we
look at it in small scales. At large scales one can see some very thin
''antennas'', which look like lower dimensional objects, but looking closer he
or she will see that these ''antennas'' are really thick tubes. The scale at
which we have to look depends only on the $p$-energy.

\mysubsection{Bounded energy and flatness}
\label{sec:bdd-ene-flat}

\begin{prop}
  \label{prop:eta-d-balance}
  Let $\Sigma \subseteq \Rn$ be some $m$-Ahlfors regular, $\HM^m$-measurable
  set, meaning that there exist constants $A_{\Sigma} > 0$ and $R_{\Sigma} > 0$
  such that for all $x \in \Sigma$ and all $r \in (0,R_{\Sigma})$
  \begin{displaymath}
    \HM^m(\Sigma \cap \Ball(x,r)) \ge A_{\Sigma} r^m \,,
  \end{displaymath}
  Assume that $\E_p(\Sigma) \le E < \infty$ for some $p > m(m+2)$. Furthermore,
  assume that there exists a simplex $T_0 = \simp(x_0,\ldots,x_{m+1})$ with
  vertices on $\Sigma$ and such that $T_0 \in \Reg_m(\eta,d)$ for some $d \le
  R_{\Sigma} / \varsigma_m$. Then $\eta$ and $d$ must satisfy
  \begin{equation}
    \label{est:eta-d}
    d \ge \left( \frac{\Cl{eta-d1}\Cl{eta-d2}^p A_{\Sigma}^{m+2}}{E} \right)^{1/\lambda} \eta^{\kappa/\lambda}
    \quad \text{or equivalently} \quad
    \eta \le \left( \frac{E}{\Cr{eta-d1}\Cr{eta-d2}^p A_{\Sigma}^{m+2}} \right)^{1/\kappa} d^{\lambda/\kappa} \,,
  \end{equation}
  where
  \begin{align*}
    \lambda &= \lambda(m,p) := p - m(m+2) \,, & 
    \kappa &= \kappa(m,p) := (m+1)(m(m+1)(m+2)+p) \,, \\
    \Cr{eta-d2} &= \Cr{eta-d2}(m) 
    := \frac{1}{(m+1)2^{m+2}} \,, &
    \Cr{eta-d1} &= \Cr{eta-d1}(m) 
    := \left( \frac{\Cr{eta-const}}{2 \Upsilon(m) \Omega^{m+2} m!} \right)^{m(m+2)} \,,
  \end{align*}
  $\Upsilon(m)$ is a constant defined by~\eqref{def:upsilon} and $\Omega$ is
  defined by \eqref{def:Omega}.
\end{prop}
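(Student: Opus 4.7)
The plan is to convert the hypothesized voluminous simplex $T_0$ into a uniform pointwise lower bound on $\K$ over a product of $(m+2)$ small neighborhoods, integrate to get a lower bound on $\E_p(\Sigma)$, and compare with the assumed upper bound $E$.

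First, set $\varsigma := \varsigma_m(\eta)$ from~\eqref{def:alpha} and look at the product set
\[
W\ :=\ \prod_{i=0}^{m+1}\bigl(\Sigma\cap\Ball(x_i,\varsigma d)\bigr)\ \subseteq\ \Sigma^{m+2}.
\]
For every tuple $(y_0,\ldots,y_{m+1})\in W$ the identity permutation witnesses $\|T-T_0\|\le\varsigma d$ for $T:=\simp(y_0,\ldots,y_{m+1})$, so by Proposition~\ref{prop:perturb} we have $T\in\Reg_m(\eta/2,3d/2)$. Applying Fact~\ref{fact:reg-curv} to this simplex yields a uniform pointwise bound $\K(T)\ge c_m\,\eta^{m+1}/d$ on $W$, with a dimensional constant which, after absorbing the factors $3/2$ and $1/2$, may be identified with $\Cr{eta-d2}$ in the statement. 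Meanwhile the Ahlfors regularity hypothesis---applicable because $d\le R_\Sigma/\varsigma_m$ ensures $\varsigma d\le R_\Sigma$---gives $\HM^m(\Sigma\cap\Ball(x_i,\varsigma d))\ge A_\Sigma(\varsigma d)^m$ for each $i$, hence $\mu_\Sigma(W)\ge A_\Sigma^{m+2}(\varsigma d)^{m(m+2)}$.

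Restricting the defining integral of $\E_p(\Sigma)$ to $W$ and combining the two lower bounds,
\[
E\ \ge\ \E_p(\Sigma)\ \ge\ \int_W \K(T)^p\,d\mu_\Sigma\ \ge\ \bigl(\Cr{eta-d2}\,\eta^{m+1}/d\bigr)^p\,A_\Sigma^{m+2}\,(\varsigma d)^{m(m+2)}.
\]
Now I would substitute the lower bound $\varsigma_m\ge \Cr{eta-const}\,\eta^{(m+1)^2}/\bigl(2\Upsilon(m)\Omega^{m+2}m!\bigr)$ from~\eqref{eq:alpha-eta} and collect exponents: the power of $\eta$ becomes $p(m+1)+m(m+1)^2(m+2)=(m+1)\bigl(p+m(m+1)(m+2)\bigr)=\kappa$, while the power of $d$ becomes $m(m+2)-p=-\lambda$, leaving $E\ge \Cr{eta-d1}\Cr{eta-d2}^p A_\Sigma^{m+2}\eta^{\kappa}d^{-\lambda}$. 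Solving for $d$ (respectively for $\eta$) produces the two equivalent forms of~\eqref{est:eta-d}.

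The only delicate point is the bookkeeping of constants in the first step: one must check that the numerical factor coming from Fact~\ref{fact:reg-curv} applied to $\Reg_m(\eta/2,3d/2)$ can indeed be written as $\Cr{eta-d2}$ with the stated numerical value, possibly at the cost of enlarging the purely dimensional constant $\Cr{eta-d1}$ absorbed into the final inequality. Everything else is a straightforward Fubini computation using the product structure of the measure $\mu_\Sigma$, and the key conceptual input---that a single voluminous simplex generates a whole product of voluminous simplices---is furnished entirely by Proposition~\ref{prop:perturb}.
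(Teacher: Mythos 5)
Your proposal is correct and follows essentially the same route as the paper: restrict the energy integral to the product of balls $\Ball(x_i,\varsigma_m d)$, use Proposition~\ref{prop:perturb} plus Fact~\ref{fact:reg-curv} for a pointwise lower bound on $\K$, use Ahlfors regularity for the measure of the product, and substitute the lower bound~\eqref{eq:alpha-eta} on $\varsigma_m(\eta)$ to collect the exponents $\kappa$ and $\lambda$. The constant-bookkeeping caveat you flag (the factors $\tfrac 12$ and $\tfrac 32$ from the perturbed class $\Reg_m(\tfrac 12\eta,\tfrac 32 d)$) is real and is in fact glossed over in the paper's own computation, but it only shifts the dimensional constants and does not affect the statement's substance.
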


\begin{proof}
  We shall estimate the $p$-energy of $\Sigma$. Let $\varsigma_m$ be defined by
  \eqref{def:alpha}.
  \begin{multline}
    \label{eq:energy-low}
    \infty > E \ge \E_p(\Sigma) = \int_{\Sigma^{m+2}} \K^p(T)\ d\mu(T) \\
    \ge \int_{\Sigma \cap \Ball(x_0,\varsigma_m d)} \cdots \int_{\Sigma \cap \Ball(x_{m+1},\varsigma_m d)}
    \K^p(\simp(y_0,\ldots,y_{m+1}))\ d\HM^m_{y_0} \ldots\ d\HM^m_{y_{m+1}} \,.
  \end{multline}
  Proposition~\ref{prop:perturb} combined with Fact~\ref{fact:reg-curv} lets us
  estimate the integrand
  \begin{displaymath}
    \K^p(\simp(y_0,\ldots,y_{m+1})) 
    \ge \left( \frac{\eta^{m+1}}{(m+1) 2^{m+2}d} \right)^p \,.
  \end{displaymath}
  From the $m$-Ahlfors regularity of $\Sigma$, we get a lower bound on the measure
  of the sets over which we integrate
  \begin{displaymath}
    \HM^m(\Sigma \cap \Ball(x_i,\varsigma_m d)) \ge A_{\Sigma} (\varsigma_m d)^m \,.
  \end{displaymath}
  Plugging the last two estimates into \eqref{eq:energy-low} we obtain
  \begin{displaymath}
    E \ge ( A_{\Sigma} (\varsigma_m d)^m )^{m+2} \left( \frac{\eta^{m+1}}{(m+1) 2^{m+2}d} \right)^p 
    = \Cr{eta-d2}(m)^p \frac{A_{\Sigma}^{m+2}}{d^{p - m(m+2)}}  \varsigma_m^{m(m+2)} \eta^{p(m+1)} \,.
  \end{displaymath}
  Recalling \eqref{eq:alpha-eta} we get
  \begin{displaymath}
    E \ge  \Cr{eta-d1}(m) \Cr{eta-d2}(m)^p \frac{A_{\Sigma}^{m+2}}{d^{p - m(m+2)}} \eta^{(m+1)(m(m+1)(m+2) + p)} \,,
  \end{displaymath}
  which gives us the balance condition
  \begin{displaymath}
    d^{p - m(m+2)} E \ge \Cr{eta-d1}(m) \Cr{eta-d2}(m)^p A_{\Sigma}^{m+2} \eta^{(m+1)(m(m+1)(m+2) + p)}\,.
  \end{displaymath}
  Inequalities \eqref{est:eta-d} and \eqref{est:eta-d} now follow.
\end{proof}

This lemma is interesting in itself. It says that whenever the energy of
$\Sigma$ is finite, we cannot have very small and voluminous simplices with
vertices on $\Sigma$. It gives a bound on the ''regularity'' (i.e. parameter
$\eta$) of any simplex in terms of its diameter $d$ and we see that $\eta$ goes
to $0$ when we decrease $d$. Now we shall prove that an upper bound on $\eta$
imposes an upper bound on the Jones' $\beta$-numbers.

\begin{cor}
  \label{cor:beta-est}
  Let $\Sigma \subseteq \Rn$ be as in Proposition~\ref{prop:eta-d-balance}. Then
  there exists a constant $\Cl{beta-est} = \Cr{beta-est}(m,p,A_{\Sigma})$ such
  that for any $x \in \Sigma$ and any $r \in (0,R_{\Sigma})$ we have
  \begin{displaymath}
    \nbeta(x,r) \le \Cr{beta-est} E^{\frac 1{\kappa}} r^{\tau} \,,
  \end{displaymath}
  where
  \begin{align}
    \label{def:tau}
    \tau = \frac{\lambda}{\kappa} = \frac{p - m(m+2)}{(m+1)(m(m+1)(m+2)+p)} \in (0,1) \,.
  \end{align}
\end{cor}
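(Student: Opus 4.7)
The plan is to invoke Proposition~\ref{prop:eta-d-balance} contrapositively. Fix $x \in \Sigma$ and $r \in (0,R_\Sigma)$, set $\beta := \nbeta(x,r)$, and assume $\beta > 0$ (else the estimate is trivial). I would construct, from these data alone, an $(\eta,r)$-voluminous simplex with vertices in $\Sigma \cap \CBall(x,r)$ whose parameter $\eta$ is comparable to $\beta$; the proposition then immediately translates into the desired bound on $\beta$.

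First I would build a fat $m$-simplex $\simp(y_0,\dots,y_m)$ inside $\Sigma \cap \CBall(x,r)$ by a greedy procedure. Set $y_0 := x$ and, for $k=1,\dots,m$, let $y_k \in \Sigma \cap \CBall(x,r)$ realize the maximum of $\dist(z,\aff\{y_0,\dots,y_{k-1}\})$ over $z$. For any fixed $c_0 < \beta$, that maximum must be at least $c_0 r$; otherwise $\Sigma \cap \CBall(x,r)$ would lie in the $c_0 r$-tube around $V := \aff\{y_0,\dots,y_{k-1}\}$, and because $y_0 = x \in V$ any completion of $V - x$ to an $H \in G(n,m)$ would yield an $m$-plane $x+H \supseteq V$ witnessing $\sup_z \dist(z,x+H) \le c_0 r$ and contradicting $\nbeta(x,r)=\beta > c_0$. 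The resulting bounds $\dist(y_k,\aff\{y_0,\dots,y_{k-1}\}) \ge c_0 r$ are precisely the lengths of the successive Gram--Schmidt components of $y_1 - y_0,\dots,y_m - y_0$, so the Gram determinant formula delivers $\HM^m(\simp(y_0,\dots,y_m)) \ge (c_0 r)^m/m!$.

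Next I would adjoin the remaining vertex using $\nbeta(x,r)$ itself. The plane $H := \opspan\{y_1 - y_0,\dots,y_m - y_0\}$ is admissible in the infimum defining $\nbeta(x,r)$, and the associated supremum is attained by compactness of $\Sigma \cap \CBall(x,r)$; hence some $x_{m+1}$ satisfies $\dist(x_{m+1},\aff\{y_0,\dots,y_m\}) \ge \beta r$. Setting $T := \simp(y_0,\dots,y_m,x_{m+1})$, one checks $T \subseteq \CBall(x,r)$, $\HM^m(\face_{m+1} T) \ge (c_0 r)^m/m!$, and $\height_{m+1}(T) \ge \beta r$, so $T \in \Reg_m(\eta,r)$ for $\eta := \min\bigl(c_0/(m!)^{1/m},\,\beta\bigr)$. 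Since $\eta < 1$ forces $\varsigma_m(\eta) < 1$ via \eqref{def:alpha}, the hypothesis $d \le R_\Sigma/\varsigma_m$ of Proposition~\ref{prop:eta-d-balance} holds automatically, and the proposition then yields
\begin{equation*}
\eta \le \Bigl( \frac{E}{\Cr{eta-d1}\,\Cr{eta-d2}^p\, A_\Sigma^{m+2}} \Bigr)^{1/\kappa} r^{\lambda/\kappa}.
\end{equation*}
Letting $c_0 \to \beta$ gives $\eta \to \beta/(m!)^{1/m}$ (using $(m!)^{1/m} \ge 1$), and rearranging produces the claimed inequality $\beta \le \Cr{beta-est}\, E^{1/\kappa} r^\tau$ with $\Cr{beta-est} := (m!)^{1/m}\bigl(\Cr{eta-d1}\Cr{eta-d2}^p A_\Sigma^{m+2}\bigr)^{-1/\kappa}$.

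The one step worth flagging is the greedy/tube dichotomy in paragraph two: one must verify that when the procedure fails at step $k$, the resulting thin tube is centered on an affine subspace actually passing through $x$, so no factor-of-$2$ loss occurs when reinterpreting it as a tube around an $m$-plane through $x$. Insisting on $y_0 = x$ at the outset makes this automatic; everything else (the Gram-determinant lower bound, compactness of $\Sigma \cap \CBall(x,r)$, and the automatic verification of the hypothesis on $d$) is routine.
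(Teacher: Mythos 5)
Your proof is correct and follows essentially the same strategy as the paper: build an $(\eta,r)$-voluminous simplex with vertices in $\Sigma\cap\CBall(x,r)$ whose parameter $\eta$ is comparable to $\nbeta(x,r)$, and then let the balance condition of Proposition~\ref{prop:eta-d-balance} bound $\eta$ from above. The only difference is cosmetic: the paper takes the simplex of maximal $\HM^{m+1}$-measure and bounds $\nbeta(x,r)\le 2\hmin(T)/r$ (picking up a factor $2$ from translating the base plane to pass through $x$), whereas you build the base greedily anchored at $y_0=x$ and use the definition of $\nbeta$ to supply the apex, which is an equally valid (contrapositive) packaging of the same argument.
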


\begin{proof}
  Fix some point $x \in \Sigma$ and a radius $r \in (0,R_{\Sigma})$. Let $T
  = \simp(x_0, \ldots, x_{m+1})$ be an $(m+1)$-simplex such that $x_i \in
  \Sigma \cap \CBall(x,r)$ for $i = 0,1,\ldots,m+1$ and such that
  $T$ has maximal $\HM^{m+1}$-measure among all simplices with vertices in
  $\Sigma \cap \CBall(x,r)$.
  \begin{displaymath}
    \HM^{m+1}(T) = \max\{ \HM^{m+1}(\simp(x_0',\ldots,x_{m+1}')) : x_i' \in \Sigma \cap \CBall(x,r) \} \,.
  \end{displaymath}
  The existence of such simplex follows from the fact that the set $\Sigma
  \cap \CBall(x,r)$ is compact and from the fact that the function
  $T \mapsto \HM^{m+1}(T)$ is continuous with respect to $x_0$, \ldots,
  $x_{m+1}$.

  Rearranging the vertices of $T$ we can assume that $\hmin(T) =
  \height_{m+1}(T)$, so the largest $m$-face of $T$ is
  $\simp(x_0,\ldots,x_m)$. Let $H = \opspan\{ x_1-x_0, \ldots, x_m-x_0 \}$, so
  that $x_0 + H$ contains the largest $m$-face of $T$. Note that the distance
  of any point $y \in \Sigma \cap \CBall(x,r)$ from the affine plane
  $x_0 + H$ has to be less then or equal to $\hmin(T) = \dist(x_{m+1},x_0+H)$.
  If we could find a point $y \in \Sigma \cap \CBall(x,r)$ with
  $\dist(y,x_0+H) > \hmin(T)$, than the simplex $\simp(x_0,\ldots,x_m,y)$
  would have larger $\HM^{m+1}$-measure than $T$ but this is impossible due to
  the choice of $T$.

  Since $x \in \Sigma \cap \CBall(x,r)$, we know that
  $\dist(x,x_0+H) \le \hmin(T)$, so we obtain
  \begin{equation}
    \label{est:beta-hmin}
    \forall y \in \Sigma \cap \CBall(x,r) \quad
    \dist(y,x+H) \le 2 \hmin(T) \,.
  \end{equation}
  Now we only need to estimate $\hmin(T) = \height_{m+1}(T)$ from above. We have
  (cf. Remark~\ref{rem:hmin-estimates}) $\HM^m(\face_{m+1}T) \ge \frac 1{m!}
  \hmin(T)^m$, hence
  \begin{displaymath}
    T \in \Reg_m \left( \tfrac{\hmin(T)}{r \sqrt[m]{m!}}, r \right) \,.
  \end{displaymath}
  Let $\eta = \tfrac{\hmin(T)}{r \sqrt[m]{m!}}$. From
  Proposition~\ref{prop:eta-d-balance} we know that $\eta \le \eta_0$, so
  we obtain
  \begin{equation}
    \label{est:hmin2}
    \frac{\hmin(T)}{r \sqrt[m]{m!}} \le \eta_0
    \quad \Rightarrow \quad
    \hmin(T) \le \frac{\eta_0}{\sqrt[m]{m!}} r \,.
  \end{equation}

  Estimates \eqref{est:beta-hmin} and \eqref{est:hmin2} immediately give us an
  upper bound on the $\beta$-numbers
  \begin{displaymath}
    \nbeta(x,r) \le \frac{2\eta_0}{\sqrt[m]{m!}} 
    = \frac 2{\sqrt[m]{m!}} \left(
      \frac{E}{\Cr{eta-d1}\Cr{eta-d2}^p A_{\Sigma}^{m+2}}
    \right)^{1/\kappa} r^{\lambda/\kappa} 
    =: \Cr{beta-est} E^{\frac 1{\kappa}} r^{\lambda/\kappa}\,.
  \end{displaymath}
\end{proof}


\mysubsection{Proof of Theorem~\ref{thm:uahlreg}}

The proof of Theorem~\ref{thm:uahlreg} has several steps. The whole idea was
taken from the paper of Strzelecki and von der Mosel~\cite{0911.2095}. We repeat
the same steps but in greater generality. Paradoxically, when working in a more
abstract setting we were able to simplify things. The crucial part is
Proposition~\ref{prop:big-proj-fat-simp} which allows us to find
$(\eta,d(x_0))$-voluminous simplices with vertices on $\Sigma$ at a scale
$d(x_0)$ which may vary depending on the choice of the first vertex. It is an
analogue of~\cite[Theorem~3.3]{0911.2095} and the proof rests on an algorithm
quite similar to the one described by Strzelecki and von der Mosel but it
considers only two cases and clearly exposes the essential difficulty of the
reasoning.

Earlier we proved Proposition~\ref{prop:eta-d-balance} which gives us a balance
condition between $\eta$ and $d$. The fact that $\eta$ from
Proposition~\ref{prop:big-proj-fat-simp} depends only on $\delta$ and $m$ and
does not depend on $x_0$ lets us prove (Proposition~\ref{prop:dSigma-lower})
that there is a lower bound $\Cr{uar-rad}$ for $d(x_0)$ which depends only on
the $p$-energy. The reasoning used here mimics the proof
of~\cite[Proposition~3.5]{0911.2095}.

Besides the existence of good simplices Proposition~\ref{prop:big-proj-fat-simp}
ensures also that at any scale below $d(x_0)$ our set $\Sigma$ has big
projection onto some affine $m$-plane. This immediately gives us Ahlfors
regularity below the scale $d(x_0)$. Now, since we have a lower bound $d(x_0)
\ge \Cr{uar-rad}$ and $\Cr{uar-rad}$ does not depend on the choice of $x_0$, we
obtain the desired result. All this is proven for $x_0 \in \Sigma^*$, so the
final step (Proposition~\ref{prop:big-proj-all-points}) is to show that it works
for any other point $x_0 \in \Sigma \setminus \Sigma^*$ but this is easily done
by passing to a limit. The proof is basically the same as the proof
of~\cite[Proposition~3.4]{0911.2095}.

Proposition~\ref{prop:big-proj-fat-simp} is proved by defining an algorithmic
procedure. We start by choosing some point $x_0 \in \Sigma^*$. From the
definition of an admissible set we know that we can touch $\Sigma$ by some cone
$x_0 + \Cone(\delta,H_0)$ and that there are no points of $\Sigma \cap
\Ball(x_0,\rho_0)$ inside this cone for small $\rho_0$. We increase the radius
$\rho_0$ until we hit $\Sigma$. Condition~\ref{adm:link} of the
Definition~\ref{def:adm} ensures that we can choose a well spread $m$-tuple of
points in $\Sigma \cap \Ball(x_0,\rho_0)$. We do that just by choosing $m$
points $y_1$, \ldots, $y_m$ on $\partial \Ball(x_0,\sqrt{1-\delta^2}\rho_0)$
such that the vectors $(y_1 - x_0)$, \ldots, $(y_m - x_0)$ form an orthogonal
basis of $H_0$ - this is what we mean be a ,,well spread tuple of points''. Then
we use Lemma~\ref{lem:intpoint} to find appropriate points $x_i \in \Sigma \cap
\Ball(x_0,\rho_0)$ for $i = 1,2, \ldots, m$. The points $x_0$, $x_1$, \ldots,
$x_m$ span some $m$-plane $P$. Now, we stop and analyze the situation. There are
two possibilities. Either we can find a point of $\Sigma$ far from $P$ at scale
comparable to $\rho_0$, or $\Sigma$ is almost flat at scale $\rho_0$ which means
that it is very close to $P$. In the first case we can stop, since we have found
a good simplex. In the second case we need to continue. We set $H_1 := P$ and
repeat the procedure but now we consider not the set $\Cone(\delta,H_1) \cap
\Ball(x_0,\rho_1)$ but only the conical cap $\Cone(\delta,H_1,\tfrac 12
\rho_0,\rho_1)$. From the fact that $\Sigma$ is close to $H_1 = P$ at scale
$\rho_0$ we deduce that $\Cone(\delta,H_1,\tfrac 12 \rho_0,\rho_1)$ does not
intersect $\Sigma$ for $\rho_1 \le 2 \rho_0$. We increase $\rho_1$ until we hit
$\Sigma$ and iterate the whole algorithm.

In the course of the proof we build an increasing sequence of sets $F_i$ made up
from the conical caps $\Cone(\delta,H_i,\tfrac 12 \rho_{i-1}, \rho_i)$. For each
$i$ the set $F_i$ does not intersect $\Sigma$, it contains the conical cap
$\Cone(\delta,H_i,\tfrac 12 \rho_{i-1}, 2\rho_{i-1})$ and appropriate spheres
contained in $F_i$ are linked with $\Sigma$. Using these properties of $F_i$ and
using Lemma~\ref{lem:intpoint} we obtain big projections of $\Sigma \cap
\Ball(x_0,\rho_i)$ onto $H_i$ for each $i$. The idea to use the linking number
and to construct continuous deformations of spheres inside conical caps comes
from~\cite{1102.3642}.

\begin{prop}
  \label{prop:big-proj-fat-simp}
  Let $\delta \in (0,1)$ and $\Sigma \in \A(m,\delta)$ be an admissible
  set in $\Rn$. There exists a real number $\eta = \eta(\delta,m) > 0$ such that
  for every point $x_0 \in \Sigma^*$ there is a stopping distance $d = d(x_0) >
  0$, and a $(m+1)$-tuple of points $(x_1, x_2, \ldots, x_{m+1}) \in
  \Sigma^{m+1}$ such that
  \begin{displaymath}
    T = \simp(x_0, \ldots, x_{m+1}) \in \Reg_m(\eta, d) \,.
  \end{displaymath}
  Moreover, for all $\rho \in (0, d)$ there exists an $m$-dimensional subspace
  $H = H(\rho) \in G(n,m)$ with the property
  \begin{equation}
    \label{eq:big-proj}
    (x_0 + H) \cap \Ball(x_0, \sqrt{1 - \delta^2} \rho)
    \subseteq \pi_{x_0 + H}(\Sigma \cap \Ball(x_0, \rho)) \,.
  \end{equation}
\end{prop}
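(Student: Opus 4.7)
My plan is to execute an algorithmic procedure that produces, for each $x_0 \in \Sigma^*$, an increasing sequence of radii $0 < \rho_0 < \rho_1 < \cdots$ and $m$-planes $H_0, H_1, \ldots \in G(n,m)$ together with a growing ``forbidden'' region
\[ F_i := \bigcup_{j=0}^{i} \Cone\bigl(\delta, H_j, \tfrac 12 \rho_{j-1}, \rho_j\bigr) \quad \text{(with $\rho_{-1} := 0$),} \]
that remains disjoint from $\Sigma$; the process would halt as soon as it produces an $(\eta,d)$-voluminous simplex. The initial data $H_0 := H_{x_0}$ and $r_0 = r_0(x_0)$ come from condition~\ref{adm:cone} of Definition~\ref{def:adm}, together with the seed sphere $\Slk_{x_0}$ from condition~\ref{adm:link} which is linked with $\Sigma$.

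The $i$-th iteration would proceed as follows (with $x_0$ translated to the origin). First, grow $\rho$ starting from $\rho_{i-1}$ until $\Sigma$ first touches the cap $\Cone(\delta, H_i, \tfrac 12 \rho_{i-1}, \rho)$; compactness of $\Sigma$ guarantees such a hit radius $\rho_i$. To locate well-spread points of $\Sigma$ inside $\Ball(x_0, \rho_i)$, I would pick an orthogonal $m$-frame $y_1, \ldots, y_m$ on $\partial \Ball(x_0, \sqrt{1-\delta^2}\rho_i) \cap (x_0 + H_i)$ and form small $(n-m-1)$-spheres $S_k = \Sphere(y_k, r) \cap (y_k + V_k)$ perpendicular to $H_i$. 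Combining Proposition~\ref{prop:sph-trans}, Corollary~\ref{cor:sph-in-cone}, and Proposition~\ref{prop:two-cones} (to arrange that consecutive caps of $F_i$ overlap inside a common cone) with the homotopy invariance of the linking number (Lemma~\ref{lem:lk-htp-inv}), I would deform $\Slk_{x_0}$ into each $S_k$ without leaving $\Rn \setminus \Sigma$, concluding $\oplk(\Sigma, S_k) = 1$. Lemma~\ref{lem:intpoint} then furnishes points $x_k \in \Sigma$ in the disks bounded by $S_k$, and $x_0, x_1, \ldots, x_m$ span an $m$-plane $P_i$ close to $H_i$.

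Next comes the case split that drives the algorithm. \emph{Either} there exists $y \in \Sigma \cap \Ball(x_0, 2\rho_i)$ with $|Q_{P_i}(y-x_0)| \ge \eta \rho_i$ for a suitable $\eta = \eta(\delta,m) > 0$, in which case $T = \simp(x_0, x_1, \ldots, x_m, y)$ fulfills the three conditions of Definition~\ref{def:regsimp} with parameters $(\eta, 2\rho_i)$ by direct verification, and we stop with $d := 2 \rho_i$; \emph{or} every such $y$ lies in the $\eta \rho_i$-slab around $x_0 + P_i$, in which case one sets $H_{i+1} := P_i$ and deduces $\Cone(\delta, H_{i+1}, \tfrac 12 \rho_i, 2 \rho_i) \cap \Sigma = \emptyset$, so the iteration continues with $\rho_{i+1} \ge 2 \rho_i$. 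The big-projection property~\eqref{eq:big-proj} on the whole interval $(0, d)$ would fall out of the same linking argument applied with spheres rescaled to radius proportional to $\rho$ and $H(\rho) := H_i$ for $\rho \in (\rho_{i-1}, \rho_i]$.

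The main obstacle is twofold. First, termination: because each iteration yields $\rho_{i+1} \ge 2 \rho_i$, the sequence grows geometrically, and once $\rho_i > 3 \diam \Sigma$ Lemma~\ref{lem:lk-far} would force the preserved linking number to be $0$, a contradiction~--- hence the algorithm must stop at some finite $i$ producing the desired voluminous simplex. Second, and more delicate, is the linking bookkeeping: verifying that each homotopy from $\Slk_{x_0}$ to $S_k$ genuinely stays in $\Rn \setminus \Sigma$ requires carefully chaining Corollary~\ref{cor:sph-in-cone} inside a single cap with Proposition~\ref{prop:two-cones} at the seam between consecutive caps, and it is precisely the choice of the shell bounds $\tfrac 12 \rho_{j-1}$ and $\rho_j$ in the definition of each cap that makes this chaining possible, since it guarantees the previous sphere can be first translated to the axis (Proposition~\ref{prop:sph-trans}) and then rotated inside the common overlap cone (Corollary~\ref{cor:sph-in-cone}).
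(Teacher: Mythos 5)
Your proposal follows essentially the same route as the paper's own proof: the same iterative construction of conical caps glued into a forbidden region disjoint from $\Sigma$, the same linking/deformation toolkit (Proposition~\ref{prop:sph-trans}, Corollary~\ref{cor:sph-in-cone}, Proposition~\ref{prop:two-cones}, Lemma~\ref{lem:intpoint}) to extract well-spread points, the same dichotomy between finding a far point (stop, voluminous simplex at scale $2\rho_i$) and near-flatness (set $H_{i+1}:=P_i$, double the radius), and termination by compactness of $\Sigma$ with geometric growth of $\rho_i$. The outline is correct; the only ingredient left implicit is the precise smallness condition on the slab width (the paper's choice of $h_0$ in \eqref{def:h0choice}) that makes the cone-intersection argument at the seams go through, which you correctly flag as the delicate point.
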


\begin{cor}
  \label{cor:uahlreg}
  For any $x_0 \in \Sigma^*$ and any $\rho \le d(x_0)$ we have
  \begin{equation}
    \label{eq:uahlreg}
    \HM^m(\Sigma \cap \Ball(x_0,\rho)) \ge (1 - \delta^2)^{\frac m2} \omega_m \rho^m \,.
  \end{equation}
\end{cor}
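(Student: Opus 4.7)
The plan is to derive the corollary directly from the projection statement \eqref{eq:big-proj} in Proposition~\ref{prop:big-proj-fat-simp} together with the classical fact that orthogonal projections do not increase Hausdorff measure. The whole argument is essentially a two-line measure-theoretic observation once the proposition is available, so I do not expect any serious obstacle here; the real content was already established in the construction of~$H = H(\rho)$.

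More concretely, I would proceed as follows. First, fix $x_0 \in \Sigma^*$ and $\rho \le d(x_0)$, and apply Proposition~\ref{prop:big-proj-fat-simp} to obtain an $m$-plane $H = H(\rho) \in G(n,m)$ satisfying
\begin{displaymath}
  (x_0 + H) \cap \Ball(x_0, \sqrt{1-\delta^2}\,\rho)
  \subseteq \pi_{x_0 + H}\bigl(\Sigma \cap \Ball(x_0,\rho)\bigr).
\end{displaymath}
Second, I would invoke the fact that the orthogonal projection $\pi_{x_0+H} : \Rn \to x_0 + H$ is $1$-Lipschitz, hence does not increase $m$-dimensional Hausdorff measure; consequently
\begin{displaymath}
  \HM^m\bigl(\pi_{x_0+H}(\Sigma \cap \Ball(x_0,\rho))\bigr) \le \HM^m\bigl(\Sigma \cap \Ball(x_0,\rho)\bigr).
\end{displaymath}

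Third, since $x_0 + H$ is an $m$-dimensional affine plane, the $\HM^m$ measure of the disk $(x_0+H) \cap \Ball(x_0, \sqrt{1-\delta^2}\,\rho)$ equals the Euclidean volume of an $m$-ball of radius $\sqrt{1-\delta^2}\,\rho$, namely $\omega_m (1-\delta^2)^{m/2} \rho^m$. Combining the inclusion above with monotonicity of $\HM^m$ and the $1$-Lipschitz property of $\pi_{x_0+H}$ yields the chain
\begin{displaymath}
  \HM^m\bigl(\Sigma \cap \Ball(x_0,\rho)\bigr)
  \ge \HM^m\bigl(\pi_{x_0+H}(\Sigma \cap \Ball(x_0,\rho))\bigr)
  \ge \HM^m\bigl((x_0+H) \cap \Ball(x_0,\sqrt{1-\delta^2}\,\rho)\bigr)
  = (1-\delta^2)^{m/2}\omega_m \rho^m,
\end{displaymath}
which is precisely \eqref{eq:uahlreg}. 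No further bookkeeping is required: the only ingredient besides the proposition is the Lipschitz behavior of $\pi_{x_0+H}$, so the corollary is really just the measure-theoretic shadow of the already-proven geometric projection estimate.
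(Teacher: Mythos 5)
Your argument is correct and is essentially identical to the paper's own proof: both deduce \eqref{eq:uahlreg} from the inclusion \eqref{eq:big-proj}, the fact that $\pi_{x_0+H}$ is $1$-Lipschitz and hence does not increase $\HM^m$, and the computation of the measure of the $m$-dimensional disk of radius $\sqrt{1-\delta^2}\,\rho$. Nothing further is needed.
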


\begin{proof}
  Orthogonal projections are Lipschitz mappings with constant $1$ so they cannot
  increase the measure. From~\eqref{eq:big-proj} we know that the image of
  $\Sigma \cap \Ball(x_0, \rho)$ under $\pi_{x_0 + H}$ contains the ball $(x_0 +
  H) \cap \Ball(x_0, \sqrt{1 - \delta^2} \rho)$. The measure of that ball is $(1
  - \delta^2)^{\frac m2} \omega_m \rho^m$, hence the
  inequality~\eqref{eq:uahlreg}.
\end{proof}

\begin{proof}[Proof of Proposition~\ref{prop:big-proj-fat-simp}]
  Without loss of generality we can assume that $x_0 = 0$ is the origin. To
  prove the proposition we will construct finite sequences of
  \begin{itemize}
  \item compact, connected, centrally symmetric sets $F_0 \subseteq F_1 \subseteq \ldots \subseteq F_N$,
  \item $m$-dimensional subspaces $H_i \subseteq \Rn$ for $i = 0, 1, \ldots, N$,
  \item and of radii $\rho_0 < \rho_1 < \cdots < \rho_N$.
  \end{itemize}
  For brevity, we define 
  \begin{displaymath}
    r_i := \sqrt{1 - \delta^2} \rho_i \,.
  \end{displaymath}
  The above sequences will satisfy the following conditions
  \begin{itemize}
  \item the interior of $F_i$ is disjoint with $\Sigma$
    \begin{equation}
      \label{cond:disj}
      \Sigma \cap \opint F_i = \emptyset \,,
    \end{equation}
  \item the radii grow geometrically, i.e.
    \begin{equation}
      \rho_{i+1} \ge 2 \rho_i \,, \label{cond:growth}
    \end{equation}
  \item each $F_i$ contains a large conical cap
    \begin{equation}
      \label{cond:cone}
      \Cone(\delta,H_{i+1},\tfrac 12 \rho_i,\rho_{i+1}) \subseteq F_{i+1} \,,
    \end{equation}
  \item all spheres $S$ centered at $H_i \cap \Ball_{r_i}$, perpendicular to $H_i$ and
    contained in $F_i$ are linked with $\Sigma$
    \begin{equation}
      \label{cond:link}
      \forall\, x \in H_i \cap \Ball_{r_i} \;
      \forall\, s > 0 \;
      \left(
        S := \Sphere(x,s) \cap (x + H_i^{\perp}) \subseteq F_i
        \quad \Rightarrow \quad
        \oplk(\Sigma, S) = 1
      \right) .
    \end{equation}
  \end{itemize}

  Let us define the first elements of these sequences. We set $F_0 := \emptyset$,
  $H_0 := H_1 := H_{x_0}$ and $\rho_0 := 0$. Let
  \begin{align*}
    \rho_1 &:= \inf \{ s > 0 : \Cone(\delta,H_0,0,s) \cap \Sigma \ne \emptyset \} \,, \\
    F_1 &:= \Cone(\delta,H_1,0,\rho_1) \,.
  \end{align*}
  Directly from the definition of an admissible set, we know that $\rho_1 > 0$, so
  the condition \eqref{cond:growth} is satisfied for $i = 0$. Conditions
  \eqref{cond:disj} and \eqref{cond:cone} are immediate for $i = 0$. Using
  Proposition~\ref{prop:sph-trans} one can deform any sphere $S$ from condition
  \eqref{cond:link} to the sphere $\Slk_x$ defined in \ref{adm:link} of the
  definition of $\A(\delta,m)$. This shows that \eqref{cond:link} is satisfied for
  $i = 0$.

  We proceed by induction. Assume we have already defined the sets $F_i$,
  subspaces $H_i$ and radii $\rho_i$ for $i = 0,1, \ldots, I$. Now, we will show
  how to continue the construction.

  Let $(e_1,e_2, \ldots, e_m)$ be an orthonormal basis of $H_I$. We choose $m$
  points lying on $\Sigma$ such that
  \begin{displaymath}
    x_i \in \Sigma \cap \Ball(r_I e_i, \delta \rho_I) \cap (H_I^{\perp} + r_I e_i) \,.
  \end{displaymath}
  In particular
  \begin{equation}
    \label{eq:good-ball}
    x_i \in \Ball(x_0, 2\rho_I)
    \quad \text{for} \quad
    i \in \{0,1,\ldots,m\}\,.
  \end{equation}
  Condition \eqref{cond:link} tells us that such points exist. The $m$-simplex $R
  := \simp(x_0, x_1, \ldots, x_m)$ will be the base of our $(m+1)$-simplex $T$. Note,
  that when we project $R$ onto $H_I$ we get the simplex
  \begin{displaymath}
    \pi_{H_I}(R) = \simp(0, r_I e_1, r_I e_2, \ldots, r_I e_m) \,.
  \end{displaymath}
  Since $\pi_{H_I}$ is a Lipschitz mapping with constant $1$, we can estimate the
  measure of $R$ as follows
  \begin{equation}
    \label{eq:good-base}
    \HM^m(R) \ge \HM^m(\pi_{H_I}(R)) 
    = \frac{1}{m!} r_I^m 
    = \frac{(\sqrt{1 - \delta^2})^m}{2^m m!} (2\rho_I)^m \,.
  \end{equation}
  This shows that the conditions \eqref{fat:ball} and \eqref{fat:base} of the
  definition of the class $\Reg_m(\tilde{\eta},2\rho_I)$ are satisfied with 
  \begin{displaymath}
    \tilde{\eta} := \frac{\sqrt{1 - \delta^2}}{2\sqrt[m]{m!}} \,.
  \end{displaymath}

  Recall that $x_0 = 0$. Let $P$ be the subspace spanned by $\{x_i\}_{i=1}^m$,
  i.e.
  \begin{displaymath}
    P := \opspan\{ x_1, x_2, \ldots, x_m \} \,.
  \end{displaymath}
  We need to find one more point $x_{m+1} \in \Sigma$ such that the distance
  $\dist(x_{m+1}, P) \ge \eta \rho_I$ for some positive $\eta = \eta(\delta,m) \le
  \tilde{\eta}$.

  Choose a small positive number $h_0 = h_0(\delta) \le \frac 12$ such that
  \begin{equation}
    \label{def:h0choice}
    \delta + 2h_0\delta \le (1 - 2h_0\delta) \sqrt{1 - (2h_0\delta)^2} \,.
  \end{equation}
  This is always possible because when we decrease $h_0$ to $0$ the left-hand
  side of \eqref{def:h0choice} converges to $\delta < 1$ and the right-hand side
  converges to $1$. We need this condition to be able to apply
  Proposition~\ref{prop:two-cones} later on.
  \begin{rem}
    \label{rem:delta14}
    Note that if $\delta \le \frac 14$, we can set $h_0 := \frac 12$ because
    then
    \begin{align*}
      \delta + 2 h_0 \delta &\le \tfrac 12 \\
      \text{and} \quad 
      (1 - 2 h_0 \delta) \sqrt{1 - (2 h_0 \delta)^2}
      &\ge \tfrac 34 \tfrac{\sqrt{15}}{16} \ge \tfrac{9}{16} \,.
    \end{align*}
  \end{rem}

  There are two possibilities (see Figure~\ref{F:casesAB})
  \begin{enumerate}[(A)]
  \item \label{pos:good} there exists a point $x_{m+1} \in \Sigma \cap
    \Shell(\tfrac 12 \rho_I,2 \rho_I)$ such that
    \begin{displaymath}
      \dist(x_{m+1},P) \ge h_0 \delta \rho_I \,,
    \end{displaymath}
  \item \label{pos:flat} $\Sigma$ is contained in a small neighborhood of $P$, i.e.
    \begin{displaymath} 
      \Sigma \cap \Shell(\tfrac 12 \rho_I,2 \rho_I) \subseteq  P + \Ball_{h_0 \delta \rho_I}\,.
    \end{displaymath}
  \end{enumerate}

  \begin{figure}[!htb]
    \centering
    \includegraphics{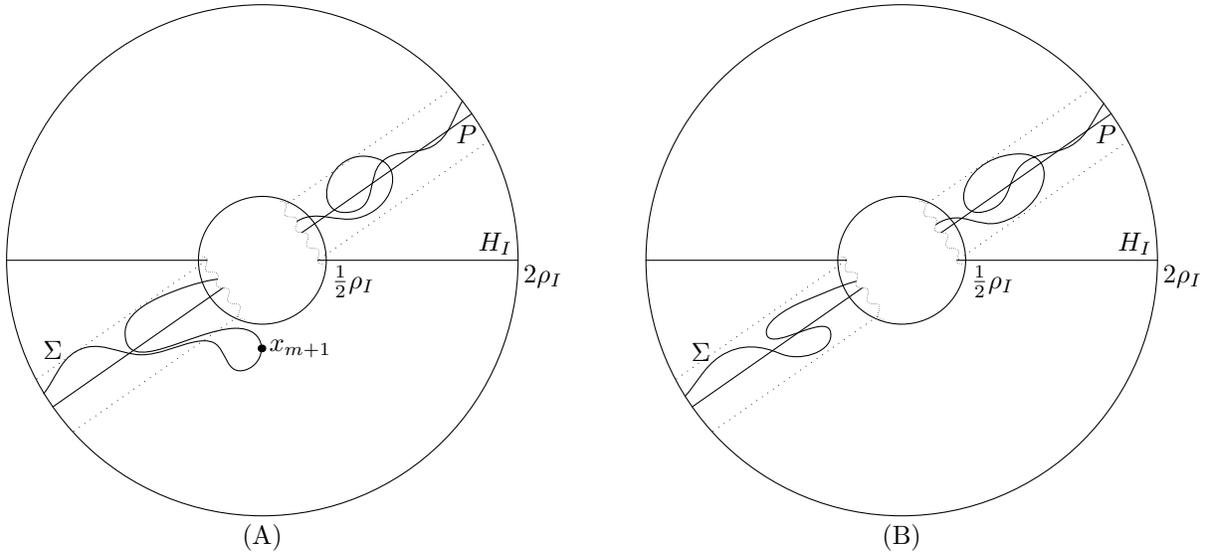}
    \caption{The two possible configurations.}
    \label{F:casesAB}
  \end{figure}

  If case \eqref{pos:good} occurs, then we can end our construction immediately.
  The point $x_{m+1}$ satisfies
  \begin{itemize}
  \item $x_{m+1} \in \Ball(x_0, 2 \rho_I)$,
  \item $\dist(x_{m+1}, P) \ge (\tfrac 12 h_0 \delta) (2 \rho_I)$.
  \end{itemize}
  We may set
  \begin{align}
    N &:= I \,, &
    \eta &:= \min \left\{ \tilde{\eta}, \tfrac 12 h_0 \delta \right\} 
    = \min \left\{ \frac{\sqrt{1 - \delta^2}}{2\sqrt[m]{m!}}, \frac{h_0 \delta}{2} \right\} \,, \label{def:eta} \\
    d &= d(x_0) := 2 \rho_I \quad \text{and}&
    T &:= \simp(x_0, \ldots, x_{m+1}) \,. \notag
  \end{align}
  Using \eqref{eq:good-ball} and \eqref{eq:good-base} we get $T \in
  \Reg_m(\eta,d)$.

  If case \eqref{pos:flat} occurs, then our set $\Sigma$ is almost flat in
  $\Shell(\tfrac 12 \rho_I,2 \rho_I)$ so there is no chance of finding a
  voluminous simplex in this scale and we have to continue our construction. Let
  \begin{itemize}
  \item $H_{I+1} := P$,
  \item $\rho_{I+1} := \inf \{ s > \rho_I : \Cone(\delta,P,\rho_I,s) \cap \Sigma \ne \emptyset \}$ and
  \item $F_{I+1} := F_I \cup \Cone(\delta,P,\tfrac 12 \rho_I,\rho_{I+1})$.
  \end{itemize}
  We assumed \eqref{pos:flat}, so it follows that
  \begin{equation}
    \label{eq:S-in-cone}
    \forall x \in \Sigma \cap \Shell(\tfrac 12 \rho_I,2 \rho_I)
    \quad
    |Q_P(x)| \le h_0 \delta \rho_I \le 2 h_0 \delta |x| < \delta |x| \,.
  \end{equation}
  This means that $\Cone(\delta,P,\tfrac 12 \rho_I,2\rho_I)$ does not intersect
  $\Sigma$ and we can safely set $H_{I+1} := P$. It is immediate that $\rho_{I+1}
  \ge 2 \rho_I$ so conditions \eqref{cond:disj}, \eqref{cond:growth} and
  \eqref{cond:cone} are satisfied. Now, the only thing left is to verify condition
  \eqref{cond:link}.

  We are going to show that all spheres $S$ contained in $F_{I+1}$ of the form 
  \begin{displaymath}
    S = \Sphere(x,r) \cap (x + P^{\perp}) \,,
    \quad \text{for some } x \in P \cap \Ball_{r_{I+1}}
  \end{displaymath}
  are linked with $\Sigma$. By the inductive assumption, we already know that
  spheres centered at $H_I \cap \Ball_{r_I}$, perpendicular to $H_I$ and
  contained in $F_I$ are linked with $\Sigma$. Therefore, all we need to do is
  to continuously deform $S$ to an appropriate sphere centered at $H_I$ and
  contained in $F_I$ in such a way that we never leave the set $F_{I+1}$ (see
  Figure~\ref{F:htp-in-F}).

  \begin{figure}[!htb]
    \centering
    \includegraphics{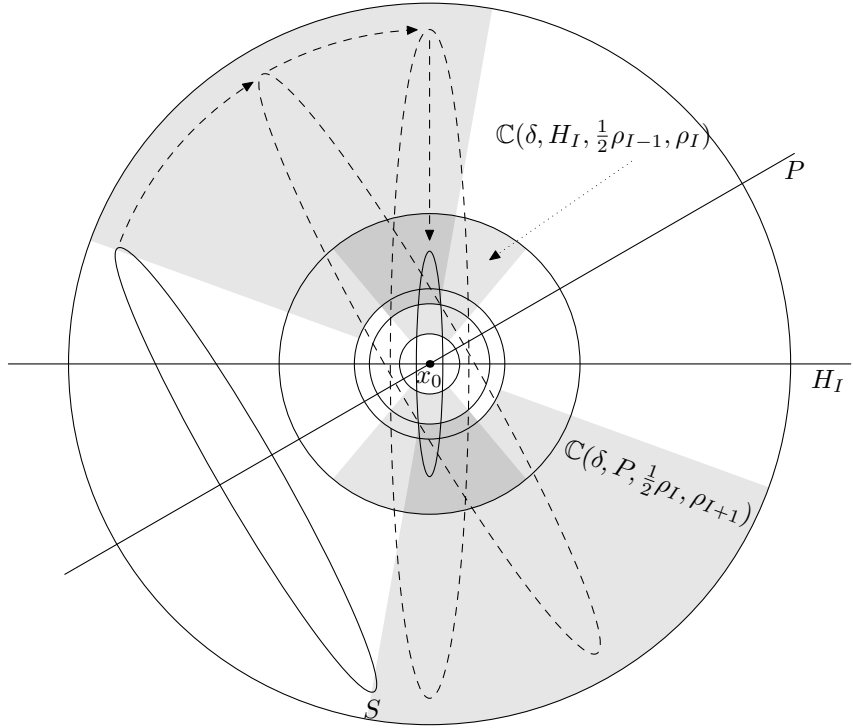}
    \caption{First we move the center of $S$ to $x_0$. Then we rotate $S$ so that
      it is perpendicular to $H_I$. Finally we change the radius so that it is
      between $\frac 12 \rho_{I-1}$ and $\rho_I$.}
    \label{F:htp-in-F}
  \end{figure}

  We know that $F_{I+1}$ contains the conical cap $CC := \Cone(\delta,P,\tfrac 12
  \rho_I,\rho_{I+1})$, so we can use Proposition~\ref{prop:sph-trans} to move
  $S$ inside $CC$, so that it is centered at the origin.

  From \eqref{eq:S-in-cone} we get
  \begin{displaymath}
    \Sigma \cap \Shell(\tfrac 12 \rho_I,2 \rho_I)
    \subseteq
    \Rn \setminus \Cone(2 h_0 \delta, P)
    \subseteq
    \Cone(\sqrt{1 - (2 h_0 \delta)^2}, P^{\perp}) \,.
  \end{displaymath}
  Using this and our inductive assumption we obtain
  \begin{displaymath}
    \Sigma \cap \Shell(\tfrac 12 \rho_I,\rho_I)
    \subseteq
    \Cone(\sqrt{1 - \delta^2},H_I^{\perp}) \cap \Cone(\sqrt{1 - (2 h_0 \delta)^2},P^{\perp}) \,.
  \end{displaymath}

  We have two cones that have nonempty intersection and we chose $h_0$ such
  that~\eqref{def:h0choice} holds, so we can apply
  Proposition~\ref{prop:two-cones} with $\alpha = \delta$ and $\beta = 2 h_0
  \delta$. Hence the intersection $\Cone(\delta,H_I) \cap \Cone(\delta,P)$
  contains the space $H_I^{\perp}$. Therefore
  \begin{displaymath}
    H_I^{\perp} \cap \Shell(\tfrac 12 \rho_I,\rho_{I+1})
    \subseteq
    \Cone(\delta,P,\tfrac 12 \rho_I,\rho_{I+1}) \cap F_I \,.
  \end{displaymath}
  Using Corollary~\ref{cor:sph-in-cone} we can rotate $S$ inside $CC$, so that
  it lies in $H^{\perp}$. Then we decrease the radius of $S$ to the value
  e.g. $\tfrac 34 \rho_I \in (\frac 12 \rho_{I-1},\rho_I)$. Applying the
  inductive assumption we obtain condition \eqref{cond:link} for $i = I+1$.

  The set $\Sigma$ is compact and $\rho_i$ grows geometrically, so our
  construction has to end eventually. Otherwise we would find arbitrary large
  spheres, which are linked with $\Sigma$ but this contradicts compactness.
\end{proof}

\begin{prop}
  \label{prop:dSigma-lower}
  Let $\Sigma \in \A(\delta,m)$ be an admissible set, such that $\E_p(\Sigma)
  \le E < \infty$ for some $p > m(m+2)$. Then the stopping distances $d(x_0)$
  defined in Proposition~\ref{prop:big-proj-fat-simp} have a positive lower bound
  \begin{equation}
    \label{eq:dSigma-lower}
    d(\Sigma) := \inf_{x_0 \in \Sigma^*} d(x_0)
    \ge \left( \frac{\Cr{uahlreg1} \Cr{uahlreg2}^p}{E} \right)^{\frac 1{p-m(m+2)}} \,.
  \end{equation}
  where $\Cr{uahlreg1} = \Cr{uahlreg1}(\delta,m)$ and $\Cr{uahlreg2} =
  \Cr{uahlreg2}(\delta,m)$ are some positive constants which depend only on
  $\delta$ and $m$.
\end{prop}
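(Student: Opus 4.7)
The plan is to apply Proposition~\ref{prop:eta-d-balance} to the voluminous simplex supplied by Proposition~\ref{prop:big-proj-fat-simp}, using the $\Sigma$-independent Ahlfors constant $(1-\delta^2)^{m/2}\omega_m$ coming from Corollary~\ref{cor:uahlreg}, and then pass to the infimum over $x_0 \in \Sigma^*$. For each $x_0 \in \Sigma^*$, Proposition~\ref{prop:big-proj-fat-simp} yields a simplex $T = \simp(x_0, x_1, \ldots, x_{m+1}) \in \Reg_m(\eta, d(x_0))$ with $\eta = \eta(\delta, m)$ depending only on $\delta$ and $m$, and feeding this into the balance condition \eqref{est:eta-d} with $A_\Sigma = (1-\delta^2)^{m/2}\omega_m$ produces the desired inequality.

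The delicate point is that Corollary~\ref{cor:uahlreg} supplies the uniform Ahlfors bound only at points of $\Sigma^*$ and only at scales $\le d(x)$, whereas Proposition~\ref{prop:eta-d-balance} needs it around every vertex $x_0, \ldots, x_{m+1}$ of $T$, and typically most $x_i$ lie in $\Sigma \setminus \Sigma^*$. I will handle this by a two-step bootstrap. First, the a priori $\Sigma$-dependent Ahlfors regularity built into the definition of $\A(\delta, m)$ (condition~I of Definition~\ref{def:adm}) lets me apply Proposition~\ref{prop:eta-d-balance} directly with $\Sigma$-dependent constants to obtain a positive (but non-uniform) lower bound on $d(x_0)$ uniform in $x_0 \in \Sigma^*$; in particular, $D := d(\Sigma) > 0$. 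Second, for any $x \in \Sigma$ and $\rho \le D$, picking $y_n \in \Sigma^*$ with $y_n \to x$ and applying Corollary~\ref{cor:uahlreg} at $y_n$ to the subball $\Ball(y_n, \rho - |x - y_n|) \subseteq \Ball(x, \rho)$ (legal because $d(y_n) \ge D \ge \rho - |x - y_n|$) and sending $n \to \infty$ extends the uniform Ahlfors bound to
\begin{equation*}
    \HM^m(\Sigma \cap \Ball(x, \rho)) \ge (1-\delta^2)^{m/2}\omega_m \rho^m
    \quad \text{for every } x \in \Sigma \text{ and } \rho \le D.
\end{equation*}

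Finally, I will pick $x_0 \in \Sigma^*$ with $d(x_0) \le D/\varsigma_m$, which is possible because $\varsigma_m \le \tfrac{1}{2}$ forces $D/\varsigma_m \ge 2D$ while $D$ is the infimum. Then $\varsigma_m d(x_0) \le D$, so the uniform Ahlfors bound just established holds around each vertex of $T$ at the scale required by Proposition~\ref{prop:eta-d-balance}; invoking that proposition with $A_\Sigma = (1-\delta^2)^{m/2}\omega_m$ and $R_\Sigma = D$ gives
\begin{equation*}
    d(x_0) \ge \left( \frac{\Cr{eta-d1}\Cr{eta-d2}^p ((1-\delta^2)^{m/2}\omega_m)^{m+2}}{E} \right)^{1/\lambda} \eta(\delta, m)^{\kappa/\lambda},
\end{equation*}
with $\lambda$ and $\kappa$ as in Proposition~\ref{prop:eta-d-balance}. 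Letting $d(x_0) \to D$ yields the stated estimate, from which $\Cr{uahlreg1}(\delta, m)$ and $\Cr{uahlreg2}(\delta, m)$ can be read off. The hard part is precisely the bootstrap: without first securing $D > 0$ from the $\Sigma$-dependent Ahlfors regularity, any direct appeal to Proposition~\ref{prop:eta-d-balance} would leave residual dependence on $A_\Sigma$ and $R_\Sigma$, violating the requirement that $\Cr{uahlreg1}$ and $\Cr{uahlreg2}$ depend only on $\delta$ and $m$.
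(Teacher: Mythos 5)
Your proposal is correct and follows essentially the same two-step bootstrap as the paper's proof: first a crude positive lower bound on $d(\Sigma)$ via Proposition~\ref{prop:eta-d-balance} with the $\Sigma$-dependent Ahlfors constant, then a second application of the same balance condition near an infimizing $x_0 \in \Sigma^*$, now with the uniform constant $(1-\delta^2)^{m/2}\omega_m$ supplied by Corollary~\ref{cor:uahlreg} at scales below $d(\Sigma)$, so that the final bound depends only on $\delta$, $m$, $p$ and $E$. Your explicit limiting argument extending the uniform Ahlfors bound from $\Sigma^*$ to every point of $\Sigma$ at radii $\rho \le d(\Sigma)$ is a useful refinement, since the vertices $x_1,\ldots,x_{m+1}$ of the voluminous simplex need not lie in $\Sigma^*$ --- a point the paper's own proof passes over when it invokes $d(\Sigma) \le d(x_i)$ and Corollary~\ref{cor:uahlreg} at those vertices.
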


\begin{proof}
  From Proposition~\ref{prop:eta-d-balance} we know that $d(\Sigma)$ must
  satisfy \eqref{est:eta-d} with the constant $A_{\Sigma}$ and $\eta =
  \eta(\delta,m)$ defined in \eqref{def:eta}. Hence, we already have a positive
  lower bound on $d(\Sigma)$. Now we only need to show that it does not depend
  on $A_{\Sigma}$.

  Fix a point $x_0 \in \Sigma^*$ such that $d(x_0) < (1 + \varepsilon)d(\Sigma)$
  for some small $\varepsilon > 0$. Proposition~\ref{prop:big-proj-fat-simp} gives us a
  simplex $T = \simp(x_0, \ldots, x_{m+1}) \in \Reg_m(\eta,d(x_0))$. From
  Proposition~\ref{prop:perturb} we know that there exists a small number $\varsigma_m
  < \tfrac 12$ such that $T' \in \Reg_m(\tfrac 12 \eta, \tfrac 32 d(x_0))$ for each $T' =
  \simp(x_0', \ldots, x_{m+1}')$ satisfying $|x_i - x_i'| \le \varsigma_m d(x_0)$ for
  $i = 0, \ldots, m+1$. If $\varepsilon < \frac{1}{\varsigma_m} - 1$ then
  \begin{displaymath}
    \varsigma_m d(x_0) \le \varsigma_m (1+\varepsilon) d(\Sigma) \le d(\Sigma) \le d(x_i) \,,
  \end{displaymath}
  so Corollary~\ref{cor:uahlreg} gives us
  \begin{displaymath}
    \HM^m(\Sigma \cap \Ball(x_i, \varsigma_m d(x_0)))
    \ge (1 - \delta^2)^{\frac m2} \omega_m (\varsigma_m d(x_0))^m \,.
  \end{displaymath}
  Now, we can repeat the calculation from the proof of
  Proposition~\ref{prop:eta-d-balance}, replacing $A_{\Sigma}$ by $A_1 =
  A_1(\delta,m) := \sqrt{1 - \delta^2} \omega_m \varsigma_m^m$ to obtain
  \begin{displaymath}
    (1+\varepsilon)d(\Sigma) > d(x_0) \ge
    \left(
      \frac{\Cr{eta-d1} \Cr{eta-d2}^p A_1^{m+2} \eta^{m(m+1)^2(m+2)} (\eta^{m+1})^p}{E}
    \right)^{\frac 1{p-m(m+2)}} \,.
  \end{displaymath}
  The constants $A_1$ and $\eta$ depend only on $\delta$ and $m$ so setting
  \begin{align*}
    \Cr{uahlreg1} &= \Cr{uahlreg1}(\delta,m) := \Cr{eta-d1}(m) A_1(\delta,m) \eta(\delta,m)^{m(m+1)^2(m+2)} \\
    \text{and} \quad
    \Cr{uahlreg2} &= \Cr{uahlreg2}(\delta,m) := \Cr{eta-d2}(m) \eta(\delta,m)^{m+1}
  \end{align*}
  and letting $\varepsilon \to 0$ we reach the estimate \eqref{eq:dSigma-lower}.
\end{proof}

\begin{prop}
  \label{prop:big-proj-all-points}
  Let $\Sigma \in \A(\delta,m)$, $E > 0$ and $p > m(m+2)$. Assume that
  $\E_p(\Sigma) \le E < \infty$. Set
  \begin{equation}
    \label{def:uar-rad}
    \Cr{uar-rad} = \Cr{uar-rad}(E,m,p,\delta)
    := \left( \frac{\Cr{uahlreg1} \Cr{uahlreg2}^p}{E} \right)^{\frac 1{p-m(m+2)}} \,.
  \end{equation}
  Then for each $x \in \Sigma$ and $\rho \le \Cr{uar-rad}$ there exists an $m$-plane $H =
  H(\rho) \in G(n,m)$ such that
  \begin{displaymath}
    (x + H) \cap \Ball(x,\sqrt{1 - \delta^2} \rho) \subseteq \pi_{x+H}(\Sigma \cap \Ball(x,\rho)) \,.
  \end{displaymath}
\end{prop}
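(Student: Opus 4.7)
The plan is to extend the statement from the dense subset $\Sigma^*$ to all of $\Sigma$ by a density-plus-compactness argument. Note first that combining Proposition~\ref{prop:big-proj-fat-simp} with Proposition~\ref{prop:dSigma-lower} we immediately get the conclusion for any $x_0 \in \Sigma^*$ at every radius $\rho \le \Cr{uar-rad} \le d(x_0)$, so it remains to handle points of $\Sigma \setminus \Sigma^*$. Fix $x \in \Sigma$ and $\rho \le \Cr{uar-rad}$; since $\Sigma^*$ is dense in $\Sigma$ by condition~\ref{adm:cone} of Definition~\ref{def:adm}, we can pick a sequence $\Sigma^* \ni x_k \to x$. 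For each $k$ the already-established case supplies an $m$-plane $H_k \in G(n,m)$ with the projection property at $x_k$. The Grassmannian $G(n,m)$ is compact, so after passing to a subsequence $H_k \to H$ in the metric $\dgras$ introduced in \S\ref{sec:grass}; this $H$ is the candidate.

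To verify the inclusion at $x$ with plane $H$, take any $y \in (x + H) \cap \Ball(x, \sqrt{1-\delta^2}\,\rho)$. Since the ball is open, we can choose $\rho' \in (0, \rho)$ slightly smaller so that $|y - x| < \sqrt{1-\delta^2}\,\rho'$; note $\rho' \le \Cr{uar-rad}$ automatically. Because $\dgras(H_k, H) \to 0$, Fact~\ref{fact:ang-dist} applied to the vector $y - x \in H$ yields vectors $v_k := \pi_{H_k}(y - x) \in H_k$ with $v_k \to y - x$. Set $y_k := x_k + v_k$; for $k$ large we have $|y_k - x_k| < \sqrt{1-\delta^2}\,\rho'$, so $y_k \in (x_k + H_k) \cap \Ball(x_k, \sqrt{1-\delta^2}\,\rho')$. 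Applying the already-proven case at $x_k \in \Sigma^*$ with radius $\rho'$, we obtain $z_k \in \Sigma \cap \Ball(x_k, \rho')$ satisfying $\pi_{x_k + H_k}(z_k) = y_k$.

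Finally, by compactness of $\Sigma$ we may extract a further subsequence with $z_k \to z \in \Sigma$. From $|z_k - x_k| < \rho'$ and $x_k \to x$ we get $|z - x| \le \rho' < \rho$, hence $z \in \Sigma \cap \Ball(x, \rho)$. The identity $\pi_{x_k+H_k}(z_k) = x_k + \pi_{H_k}(z_k - x_k) = y_k$ passes to the limit because convergence in $\dgras$ is exactly operator-norm convergence $\pi_{H_k} \to \pi_H$, so the map $(H', x', z') \mapsto x' + \pi_{H'}(z' - x')$ is jointly continuous. Thus $\pi_{x+H}(z) = y$, which proves the inclusion. The only real subtlety is the open-ball issue, resolved by the cushion $\rho' < \rho$; once this is arranged, the rest is a straightforward compactness/continuity exercise, and no further use of the energy bound, linking, or admissibility is required beyond what was already invoked to establish the result on $\Sigma^*$.
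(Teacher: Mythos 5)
Your overall strategy is the same as the paper's: prove the claim on $\Sigma^*$ via Propositions~\ref{prop:big-proj-fat-simp} and~\ref{prop:dSigma-lower}, then approximate a general $x\in\Sigma$ by $x_k\in\Sigma^*$, use compactness of $G(n,m)$ to get $H_k\to H$, and pass the preimage points to the limit by compactness and continuity. However, one step is genuinely flawed: the cushion radius $\rho'$. You fix the planes $H_k$ (and take their limit $H$) \emph{before} $y$ is chosen, and only then pick $\rho'$ with $|y-x|<\sqrt{1-\delta^2}\,\rho'$; yet you invoke ``the already-proven case at $x_k$ with radius $\rho'$'' to get $z_k\in\Sigma\cap\Ball(x_k,\rho')$ with $\pi_{x_k+H_k}(z_k)=y_k$. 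Proposition~\ref{prop:big-proj-fat-simp} produces a plane \emph{depending on the radius}, $H=H(\rho)$ -- in the construction the plane really does change from one annular scale to the next -- so at radius $\rho'$ the proposition only guarantees the inclusion for some plane $H_k(\rho')$, which need not equal your $H_k$. You also cannot take $H_k=H_k(\rho')$ from the start, because $\rho'$ depends on $y$ while the single plane $H$ in the conclusion must serve all $y$ simultaneously (and no single $\rho'<\rho$ works for all $y$, since $|y-x|$ may be arbitrarily close to $\sqrt{1-\delta^2}\,\rho$). With the only consistent choice $H_k=H_k(\rho)$, the big-projection property yields $z_k\in\Sigma\cap\Ball(x_k,\rho)$ only: nothing controls the fiber component $Q_{H_k}(z_k-x_k)$, which can be of size close to $\rho$ even when $y_k$ is close to $x_k$, so the claim $z_k\in\Ball(x_k,\rho')$ is unjustified.

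The remainder of your argument is fine and essentially the paper's: the paper writes $y_i=w_i+v_i$ with $v_i\in H_i^{\perp}$, bounds $|v_i|^2\le\rho^2-|w-x|^2$ and extracts a limit of the $v_i$, whereas you extract a limit of the preimages $z_k$ directly from compactness of $\Sigma$ and use that $\dgras$-convergence is operator-norm convergence of the projections; these are equivalent. If you drop the cushion and run your limit with $H_k=H_k(\rho)$, you recover the paper's proof verbatim -- but then, exactly as in the paper, the limit point is only known to satisfy $|z-x|\le\rho$, i.e.\ to lie in the closed ball (your $\rho'$ was evidently meant to repair this open-versus-closed point, but it cannot be implemented the way you did). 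This residual defect is harmless for the intended application: the closed-ball version of the inclusion for all radii below $\rho$ already gives the measure bound of Theorem~\ref{thm:uahlreg} by letting the radius increase to $\rho$.
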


\begin{proof}
  Proposition~\ref{prop:big-proj-fat-simp} gives us this result for any $x \in
  \Sigma^*$. We only need to show that it is true also for $x \in \Sigma
  \setminus \Sigma^*$.

  Let $x$ be a point in $\Sigma \setminus \Sigma^*$ and fix a radius $\rho \le
  \Cr{uar-rad}$. Choose a sequence of points $x_i \in \Sigma^*$ converging to $x$. From
  Proposition~\ref{prop:big-proj-fat-simp} we obtain a sequence of $m$-planes $H_i
  \in G(n,m)$ such that
  \begin{displaymath}
    D_i := (x_i + H_i) \cap \Ball(x_i,\sqrt{1 - \delta^2} \rho)
    \subseteq \pi_{x_i+H_i}(\Sigma \cap \Ball(x_i,\rho)) \,.
  \end{displaymath}
  Since the Grassmannian $G(n,m)$ is a compact manifold, passing to a subsequence
  we can assume that $H_i$ converges to some $H$ in $G(n,m)$. Set
  \begin{displaymath}
    D := (x + H) \cap \Ball(x,\sqrt{1 - \delta^2} \rho) \,.
  \end{displaymath}
  Fix a point $w \in D$. We will show that the preimage $\pi_{x+H}^{-1}(w) \cap
  (\Sigma \cap \Ball(x,\rho))$ is nonempty. Chose points $w_i \in D_i$ such that
  $|w_i - x_i| = |w - x|$ and $w_i \to w$. We know that there exist points $y_i
  \in \Sigma \cap \Ball(x_i,\rho)$ such that
  \begin{displaymath}
    \pi_{x_i+H_i}(y_i) = w_i \,,
  \end{displaymath}
  so
  \begin{displaymath}
    y_i = w_i + v_i \quad \text{for some } v_i \in H_i^{\perp} \,.
  \end{displaymath}
  Moreover
  \begin{displaymath}
    \rho^2 \ge |w_i - x_i|^2 + |v_i|^2 \,,
  \end{displaymath}
  hence
  \begin{displaymath}
    |v_i|^2 \le \rho^2 - |w_i - x_i|^2 = \rho^2 - |w-x|^2 \,.
  \end{displaymath}
  We now know that $v_i$ all lie inside a ball of radius $\rho^2 - |w-x|^2$, which
  is compact, so passing to a subsequence, we can assume that $v_i \to v \in H^{\perp}$.
  This gives us
  \begin{align*}
    y_i &= w_i + v_i \to y = w + v \,,\\
    |v|^2 &\le \rho^2 - |w - x|^2  \\
    \text{and} \quad
    |y-x|^2 &= |w-x|^2 + |v|^2 \le \rho
    \quad \Rightarrow \quad y \in \Sigma \cap \Ball(x,\rho) \,.
  \end{align*}
  We have found $y \in \Sigma \cap \Ball(x,\rho)$ such that $\pi_{x+H}(y) = w$
  and this completes the proof.
\end{proof}

\begin{proof}[Proof of Theorem~\ref{thm:uahlreg}]
  We proceed as in the proof of Corollary~\ref{cor:uahlreg}. Orthogonal
  projections are Lipschitz mappings with constant $1$ so they cannot increase
  the measure. From Proposition~\ref{prop:big-proj-all-points} we know that for
  each $x \in \Sigma$ and each $\rho \le \Cr{uar-rad} = \Cr{uar-rad}(E,m,p,\delta)$ there exists
  an $m$-plane $H$ such that the image of $\Sigma \cap \Ball(x, \rho)$ under
  $\pi_{x + H}$ contains the ball $(x + H) \cap \Ball(x, \sqrt{1 - \delta^2}
  \rho)$. The measure of that ball is $(1 - \delta^2)^{\frac m2} \omega_m
  \rho^m$ so the $\HM^m$-measure of $\Sigma \cap \Ball(x,\rho)$ cannot be less
  than this number.
\end{proof}



\mysubsection{Relation between admissible sets and fine sets}

In this paragraph we establish a connection between the class $\A(\delta,m)$ of
admissible sets and the class $\F(m)$ of fine sets. We show
(Theorem~\ref{thm:adm-fine}) that in the class of sets with finite $p$-energy
every admissible set is also fine. Later in \S\ref{sec:tangent-planes} we show
that $m$-fine sets with bounded $p$-energy are $C^{1,\tau}$ manifolds, hence
they are also $(\delta,m)$-admissible for any $\delta \in (0,1)$
(cf. Example~\ref{ex:mfld}).

\begin{prop}
  \label{prop:cone-limit}
  Let $\Sigma \in \A(\delta,m)$ be $(\delta,m)$-admissible set for some $\delta
  \in (0,1)$ such that $\E_p(\Sigma) < \infty$ for some $p > m(m+2)$. Choose any
  number $L$ such that 
  \begin{displaymath}
    \sqrt{\frac{2-\delta}{\delta}} <  L < \frac{1}{\delta} \,.
  \end{displaymath}
  Then for each $x \in \Sigma$ and each $r \le \Cr{uar-rad}$ there exists an
  $m$-plane $H \in G(n,m)$ such that
  \begin{enumerate}
  \item $(x + \Cone(L\delta,H,\frac 58 r,\frac 78 r)) \cap \Sigma = \emptyset$ and
  \item the sphere $S := \Sphere(x,\frac 68 r) \cap (x+H^{\perp})$ is linked
    with $\Sigma$.
  \end{enumerate}
\end{prop}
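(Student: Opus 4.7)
The plan is to choose $H$ as a near-optimal $\beta$-approximating $m$-plane for $\Sigma$ at $x$ at scale $r$, derive~(1) from the quantitative $\beta$-number decay provided by Corollary~\ref{cor:beta-est}, and obtain~(2) by an approximation argument combined with homotopy invariance of the linking number modulo~$2$.

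For~(1), Theorem~\ref{thm:uahlreg} provides uniform $m$-Ahlfors regularity below $\Cr{uar-rad}$, so Corollary~\ref{cor:beta-est} gives $\nbeta(x,r) \le \Cr{beta-est} E^{1/\kappa} r^{\tau}$ with $\tau>0$. Pick $H \in G(n,m)$ attaining this infimum (the continuous function $H \mapsto \sup_{z \in \Sigma \cap \CBall(x,r)} |Q_H(z-x)|$ attains its minimum on the compact space $G(n,m)$). For $z \in \Sigma \cap (x + \Shell(5r/8, 7r/8))$ one has $|Q_H(z-x)| \le r\,\nbeta(x,r)$ and $|z-x| \ge 5r/8$, hence
\begin{displaymath}
\frac{|Q_H(z-x)|}{|z-x|} \le \frac{8}{5}\nbeta(x,r) \le \frac{8}{5}\Cr{beta-est}\, E^{1/\kappa} r^{\tau}.
\end{displaymath}
Shrinking $\Cr{uar-rad}$ to a smaller radius depending only on $E,m,p,\delta$ and $L$ makes the right-hand side strictly less than $L\delta$, which gives~(1).

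For~(2), pick $x_j \in \Sigma^*$ with $x_j \to x$. Running the algorithm from the proof of Proposition~\ref{prop:big-proj-fat-simp} at each $x_j$ produces sets $F_i^{(j)}$ disjoint from $\Sigma$, planes $H_i^{(j)}$ and radii $\rho_i^{(j)}$ such that $\Cone(\delta, H_{i+1}^{(j)}, \rho_i^{(j)}/2, \rho_{i+1}^{(j)}) \subseteq F_{i+1}^{(j)}$ and every $(n-m-1)$-sphere centred on $H_{i+1}^{(j)} \cap \Ball_{r_{i+1}^{(j)}}$, perpendicular to $H_{i+1}^{(j)}$ and contained in $F_{i+1}^{(j)}$, is linked with $\Sigma$. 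Since $r \le \Cr{uar-rad} \le d(x_j)$ by Proposition~\ref{prop:dSigma-lower}, the scale $3r/4$ lies in some interval $[\rho_i^{(j)}/2, \rho_{i+1}^{(j)}]$, so $S_j := \Sphere(x_j, 3r/4) \cap (x_j + (H_{i+1}^{(j)})^{\perp})$ is linked with $\Sigma$. Pass to a Grassmannian-convergent subsequence $H_{i+1}^{(j)} \to H^{\star}$; Lemma~\ref{lem:lk-htp-inv} then yields that $\Sphere(x, 3r/4) \cap (x + (H^{\star})^{\perp})$ is linked with $\Sigma$. A concluding homotopy built from Propositions~\ref{prop:sph-trans} and~\ref{prop:grass-path} together with Corollary~\ref{cor:sph-in-cone} moves this sphere to the target $S = \Sphere(x, 3r/4) \cap (x + H^{\perp})$.

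The main technical obstacle is to keep this concluding homotopy inside $\Rn \setminus \Sigma$. This is where Proposition~\ref{prop:two-cones} enters, applied with $\alpha = L\delta$ and $\beta = \delta$: the lower bound $L > \sqrt{(2-\delta)/\delta}$ is the precise condition that makes the cones $\Cone(L\delta, H)$ and $\Cone(\delta, H^{\star})$ overlap enough to contain a connected one-parameter family of perpendicular $(n-m-1)$-spheres interpolating between $H^{\star}$ and $H$, all of them lying in the cone-avoidance region established in~(1) or inherited from admissibility at the $x_j$'s; the upper bound $L < 1/\delta$ gives $L\delta < 1$, so the target sphere $S$ itself lies in $x + \Cone(L\delta, H)$. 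A final application of Lemma~\ref{lem:lk-htp-inv} then transports the linking identity along this homotopy and yields $\oplk(\Sigma, S) = 1$.
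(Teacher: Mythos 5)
Your overall plan for part (2) — approximate $x$ by points of $\Sigma^*$, use the linked spheres produced by the algorithm of Proposition~\ref{prop:big-proj-fat-simp}, and transport the linking to the limit by homotopies inside $\Rn\setminus\Sigma$ — is the same route the paper takes. But the proof collapses exactly where the paper's actual work lies. You choose \emph{two different} planes: $H$ (a $\beta$-minimizing plane, used for (1)) and $H^\star$ (a limit of the algorithm's planes, carrying the linked sphere), and the proposition demands a \emph{single} plane satisfying both conditions. To move the linked sphere from $(x+(H^\star)^\perp)$ to $S\subseteq x+H^\perp$ without crossing $\Sigma$ you must control $\dgras(H,H^\star)$, or show that the two avoidance regions share a cone containing both perpendicular planes; you offer no such estimate. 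The appeal to Proposition~\ref{prop:two-cones} with $\alpha=L\delta$, $\beta=\delta$ does not work: its hypothesis $\alpha+\beta<\sqrt{1-\beta^2}$ fails for general admissible $\delta$ (e.g.\ $\delta=\tfrac12$ forces $L\delta>\tfrac{\sqrt3}{2}$, so $\alpha+\beta>\sqrt{1-\beta^2}$), and your claim that $L>\sqrt{(2-\delta)/\delta}$ is ``the precise condition'' for this overlap misidentifies its role. In the paper that lower bound enters Lemma~\ref{lem:cone-rotate}: it guarantees $\zeta:=1-\delta-\sqrt{1-(K\delta)^2}>0$, i.e.\ that a cone with the wider parameter $K\delta$ about the \emph{limit} plane fits inside the cones with parameter $\delta$ about the nearby planes $H_k$, while Lemma~\ref{lem:cone-translate} absorbs the translation from $x_k$ to $x$ (this is also why the cap shrinks to $\Cone(L\delta,H,\tfrac58 r,\tfrac78 r)$). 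The same quantitative inclusions are what make your earlier step legitimate: Grassmannian convergence $H^{(j)}\to H^\star$ alone does not transfer linking to the limit sphere; one needs the explicit isotopies $F_1,F_2,F_3$ of the paper, performed inside $x_k+\Cone(\delta,H_k,\tfrac12 r,r)$, which in turn rest on those two lemmas. The paper avoids your two-plane problem altogether by taking $H:=H^\star$ and proving (1) for that plane via the inclusion \eqref{eq:cone-incl}, rather than via $\beta$-numbers.

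A secondary issue: your argument for (1) only yields the conclusion after ``shrinking $\Cr{uar-rad}$'' so that $\tfrac85\Cr{beta-est}E^{1/\kappa}r^\tau<L\delta$, whereas the statement asserts the conclusion for every $r\le\Cr{uar-rad}$ as already defined. The weakened radius (depending on $E,m,p,\delta,L$) would probably not derail the later applications, but as written you are proving a different statement; the paper gets the full range of $r$ because emptiness of the conical cap at scale $r$ is inherited directly from the algorithm at the nearby points $x_k\in\Sigma^*$, not from $\beta$-number decay.
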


\begin{proof}
  In the proof of~\ref{prop:big-proj-fat-simp} we have shown that analogous
  conditions hold for $x \in \Sigma^*$. We know that at each $x \in \Sigma^*$
  and for each $r \le \Cr{uar-rad}$ there exists an $m$-plane $H_x \in G(n,m)$
  such that
  \begin{itemize}
  \item $(x + \Cone(\delta,H_x,\frac 12 r,r)) \cap \Sigma = \emptyset$ and
  \item the sphere $S := \Sphere(x,\frac 34 r) \cap (x+H_x^{\perp})$ is linked with $\Sigma$.
  \end{itemize}
  Now we only need to show that we can pass to a limit. Fix a number $K$
  satisfying $\sqrt{\frac{2-\delta}{\delta}} < K < L$ and fix $r \le
  \Cr{uar-rad}$, let $x \in \Sigma \setminus \Sigma^*$ and let $x_k \in
  \Sigma^*$ be a sequence of points converging to $x$. Using compactness of
  $G(n,m)$ and possibly passing to a subsequence we obtain a convergent sequence
  of $m$-planes $H_k$. Let $H_0$ be the limit of $H_k$. For any choice of $\zeta
  > 0$ and $\xi > 0$ we can find $k_0$ such that for $k > k_0$ we have
  \begin{displaymath}
    \dgras(H_k,H_0) \le \zeta
    \quad \text{and} \quad
    |x_k - x_0| \le \xi \,.
  \end{displaymath}

  \begin{lem}[Step 1]
    \label{lem:cone-rotate}
    There exists $\zeta = \zeta(\delta,K)$ such that whenever $\dgras(H_k,H_0)
    \le \zeta$ then
    \begin{displaymath}
      \Cone(K\delta,H_0) \subseteq \Cone(\delta,H_k) \,.
    \end{displaymath}
  \end{lem}
  \begin{proof}
    Let $x \in \Cone(K\delta,H_0)$. First we estimate $|\pi_{H_k}(x)|$.
    \begin{align*}
      |\pi_{H_k}(x)| 
      &\le |\pi_{H_k}(\pi_{H_0}(x))| + |\pi_{H_k}(Q_{H_0}(x))| \\
      &\le |\pi_{H_0}(x)| + \zeta |Q_{H_0}(x)|
      \le |x| (\sqrt{1 - (K\delta)^2} + \zeta) \,.
    \end{align*}
    Now we can wite
    \begin{displaymath}
      |Q_{H_k}(x)| 
      \ge |x| - |\pi_{H_k}(x)|
      \ge |x| (1 - \sqrt{1 - (K\delta)^2} - \zeta) \,.
    \end{displaymath}
    Therefore, we need to find $\zeta > 0$ such that $1 - \sqrt{1 -
      (K\delta)^2} - \zeta \ge \delta$. Let us calculate
    \begin{displaymath}
      1 - \sqrt{1 - (K\delta)^2} - \zeta \ge \delta
      \quad \iff \quad 
      \zeta \le 1 - \delta - \sqrt{1 - (K\delta)^2} \,.
    \end{displaymath}
    The question remains whether $1 - \delta - \sqrt{1 - (K\delta)^2}$ is
    positive. Another calculation shows
    \begin{displaymath}
      1 - \delta - \sqrt{1 - (K\delta)^2} > 0 
      \quad \iff \quad
      \frac{2 - \delta}{ \delta} <  K^2 \,,
    \end{displaymath}
    but this is exactly what we assumed about $K$. We can safely set
    \begin{displaymath}
      \zeta = \zeta(\delta,K) := 1 - \delta - \sqrt{1 - (K\delta)^2} \,.
    \end{displaymath}
  \end{proof}

  \begin{lem}[Step 2]
    \label{lem:cone-translate}
    There exists $\xi = \xi(K,L,\delta,r)$ such that whenever $|x_k - x_0| \le
    \xi$ then for each $x \in \Rn$ such that $|x - x_0| \ge \tfrac 12 r$
    \begin{displaymath}
      |Q_{H_0}(x - x_0)| \ge L\delta|x - x_0|
      \quad \Rightarrow \quad
      |Q_{H_0}(x - x_k)| \ge K\delta|x - x_k| \,.
    \end{displaymath}
    In other words
    \begin{displaymath}
      (x_0 + \Cone(L\delta,H_0)) \setminus \Ball(x_0,\tfrac 12 R)
      \subseteq
      (x_0 + \Cone(\delta,H_0)) \cap (x_k + \Cone(K\delta,H_0)) \,.
    \end{displaymath}
  \end{lem}
  \begin{proof}
    Let $x \in (x_0 + \Cone(L\delta,H_0))$ be such that $|x - x_0| \ge
    \tfrac 12 r$. We then have
    \begin{align*}
      |Q_{H_0}(x - x_k)|
      \ge |Q_{H_0}(x - x_0)| - |x_k - x_0|
      \ge L\delta |x - x_0| - \xi \,.
    \end{align*}
    We need to find $\xi > 0$ such that $L\delta |x - x_0| - \xi \ge
    K\delta |x - x_k|$. Set 
    \begin{displaymath}
      \xi = \xi(K,L,\delta,r) := \tfrac 14 \delta (L-K) r \,.
    \end{displaymath}
    We obtain
    \begin{align*}
      (1 +  K \delta) \xi &\le 2 \xi 
      \le \delta (L-K) \tfrac 12 r 
      \le \delta (L-K) |x - x_0| \\
      &\, \Rightarrow \quad
      |Q_{H_0}(x - x_k)| 
      \ge L \delta |x - x_0| - \xi 
      \ge K \delta (|x - x_0| + \xi) 
      \ge K \delta |x - x_k|
    \end{align*}
  \end{proof}

  Lemmas~\ref{lem:cone-rotate} and~\ref{lem:cone-translate} give us a good
  choice of $\zeta$ and $\xi$. Shrinking $\xi$ if needed, we can
  assume that $\xi < \frac 18 r$. Then we have
  \begin{align*}
    \CBall(x_0, \tfrac 12 r) \cup \CBall(x_k, \tfrac 12 r) 
    &\subseteq \CBall(x_0, \tfrac 58 r)  \\
    \text{and} \quad
    \Ball(x_0, r) \cap \Ball(x_k, r) 
    &\supseteq \Ball(x_0, \tfrac 78 r) \,.
  \end{align*}
  Hence, for each $k$ big enough
  \begin{equation}
    \label{eq:cone-incl}
    x_0 + \Cone(L\delta,H_0,\tfrac 58 r, \tfrac 78 r) 
    \subseteq
    x_k + \Cone(\delta,H_k,\tfrac 12 r, r) \,,
  \end{equation}
  and we obtain the first required condition
  \begin{displaymath}
    x_0 + \Cone(L\delta,H_0,\tfrac 58 r, \tfrac 78 r) \cap \Sigma = \emptyset \,.
  \end{displaymath}

  \begin{figure}[!htb]
    \centering
    \includegraphics{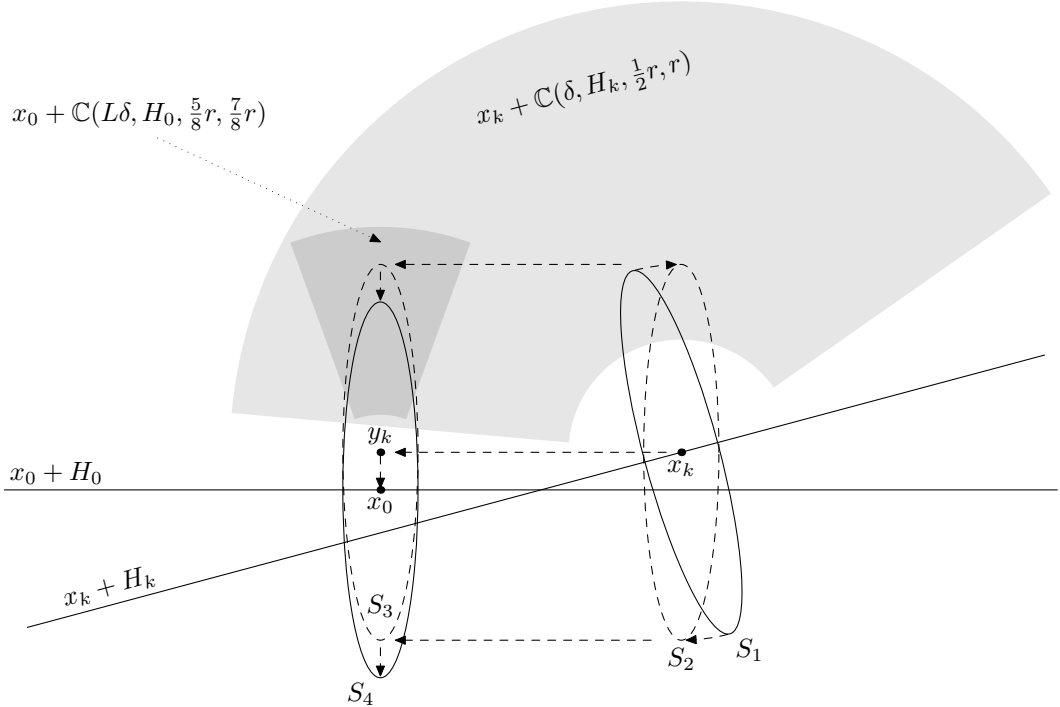}
    \caption{If $x_k$ is sufficiently close to $x_0$, then the cone over $x_k +
      H_k$ contains a small conical cap over $x_0 + H_0$. This allows us to
      continuously transform $S_1$ into $S_4$ without leaving the grey area.}
    \label{F:cone-limit}
  \end{figure}
  To prove the second condition, involving the linked spheres, let us set $S_1
  := \Sphere(x_k,\tfrac 68 r) \cap (x_k + H_k^{\perp})$. From the definition of
  admissible sets we know that $S_1$ is linked with $\Sigma$. We use
  Corollary~\ref{cor:sph-in-cone} to find an isotopy (see
  Figure~\ref{F:cone-limit})
  \begin{displaymath}
    F_1 : S_1 \times [0,1] \to \big( x_k + \Cone(\delta,H_k,\tfrac 12 r, r) \big) \,,
  \end{displaymath}
  which continuously rotates $S_1$ into $S_2 := \Sphere(x_k,\tfrac 68 r) \cap
  (x_k + H_0^{\perp})$. All we need to know is that $S_2$ is contained in $x_k +
  \Cone(\delta,H_k,\tfrac 12 r, r)$ but this follows from Lemma
  \ref{lem:cone-rotate}. Next, we continuously translate $S_2$ into $S_3 :=
  \Sphere(y_k,\tfrac 68 r) \cap (y_k + H_0^{\perp})$, where $y_k := x_k +
  \pi_{H_0}(x_0 - x_k)$, using the isotopy
  \begin{align*}
    F_2 : S_2 \times [0,1] &\to \big( x_k + \Cone(\delta,H_k,\tfrac 12 r, r) \big) \,, \\
    F_2(z,t) &:=  z + t \pi_{H_0}(x_0 - x_k) \,.
  \end{align*}
  To see that this transformation is performed inside $x_k + \Cone(\delta,H_k,\tfrac
  12 r, r)$ let us choose a point $z \in S_2$ and $t \in [0,1]$. Since
  $|\pi_{H_0}(x_0 - x_k)| \le |x_0 - x_k| \le \xi$, we have $\frac 68 r -
  \xi \le |F_2(z,t) - x_k| \le \frac 68 r + \xi$ and
  \begin{displaymath}
    \frac{|Q_{H_0}(F_2(z,t) - x_k)|}{|F_2(z,t) - x_k|}
    \ge \frac{\tfrac 68 r}{\tfrac 68 r + \xi} \ge \delta
    \iff
    \xi \le \frac{6 (1 - \delta)}{8\delta} r \,.
  \end{displaymath}
  To make everything work, we may shrink $\xi$, so that it satisfies the
  above condition. Finally we translate $S_3$ along the vector $Q_{H_0}(x_0 -
  x_k)$ into $S_4 := \Sphere(x_0,\tfrac 68 r) \cap (x_0 + H_0^{\perp})$ with the
  isotopy
  \begin{align*}
    F_3 : S_3 \times [0,1] &\to H_0^{\perp} \cap \Shell(\tfrac 58r, \tfrac 78 r) \,, \\
    F_3(z,t) &:=  z + t Q_{H_0}(x_0 - x_k) \,.
  \end{align*}
  We have $|Q_{H_0}(x_0 - x_k)| \le \xi < \frac 18 r$ and the last
  translation is performed inside $x_0 + H_0^{\perp}$, so it stays in $x_0 +
  \Cone(L\delta,H_0,\tfrac 58 r, \tfrac 78 r)$. This gives the second condition of
  Proposition~\ref{prop:cone-limit}.
\end{proof}

\begin{thm}
  \label{thm:adm-fine}
  If $\Sigma \subseteq \Rn$ is $(\delta,m)$-admissible and additionally
  $\E_p(\Sigma) \le E < \infty$ for some $p > m(m+2)$, then $\Sigma$ is also
  $m$-fine with constants
  \begin{displaymath}
    A_{\Sigma} = (1-\delta^2)^{m/2} \omega_m \,, \quad
    R_{\Sigma} = \min \{ \Cr{uar-rad}, \Cl[R]{adm-fine-rad}(E,m,p,\delta) \}
    \quad and \quad
    M_{\Sigma} = 5 \,.
  \end{displaymath}
\end{thm}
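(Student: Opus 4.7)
The proof breaks into two parts matching the two conditions of Definition~\ref{def:fine}.

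\textbf{Ahlfors regularity.} This is immediate from Theorem~\ref{thm:uahlreg}: setting $A_{\Sigma} = (1-\delta^2)^{m/2}\omega_m$ and requiring $R_{\Sigma} \le \Cr{uar-rad}$ guarantees $\HM^m(\Sigma \cap \Ball(x,\rho)) \ge A_{\Sigma} \rho^m$ for every $x \in \Sigma$ and every $\rho \le R_{\Sigma}$.

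\textbf{Gap control.} I need $\ntheta(x,r) \le 5\nbeta(x,r)$ for every $x \in \Sigma$ and every $r \le R_{\Sigma}$. Fix such $x$ and $r$, let $H_0 \in G(n,m)$ achieve $\nbeta(x,r) =: \beta$, so $\Sigma \cap \CBall(x,r) \subseteq x + H_0 + \Ball_{r\beta}$. I use $H_0$ as the test plane in Definition~\ref{def:theta-fun} and estimate both sides of the Hausdorff distance separately. The first side is immediate: every $y \in \Sigma \cap \CBall(x,r)$ lies at distance at most $r\beta$ from $(x+H_0) \cap \CBall(x,r)$ via orthogonal projection. For the opposite side, apply Proposition~\ref{prop:big-proj-all-points} at scale $\rho := r/\sqrt{1-\delta^2}$ --- which requires $R_{\Sigma} \le \sqrt{1-\delta^2}\,\Cr{uar-rad}$, a condition I absorb into the final choice of $\Cr{adm-fine-rad}$ --- to produce an auxiliary $m$-plane $H_1 \in G(n,m)$ with the full-disk coverage
\begin{displaymath}
  (x+H_1) \cap \Ball(x,r) \subseteq \pi_{x+H_1}(\Sigma \cap \Ball(x,\rho)) \,.
\end{displaymath}
Given $w \in (x+H_0) \cap \CBall(x,r)$, I project to $w' := x + \pi_{H_1}(w-x) \in (x+H_1) \cap \CBall(x,r)$ (with $|w-w'| \le r\,\dgras(H_0,H_1)$ by Fact~\ref{fact:ang-dist}), pull $w'$ back to some $y \in \Sigma \cap \Ball(x,\rho)$ via the big projection, and bound $|y-w'| = |Q_{H_1}(y-x)|$ by combining the $\nbeta$-estimate at scale $\rho$ with the closeness of $H_0$ and $H_1$.

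\textbf{Main obstacle --- the angular estimate.} Everything hinges on proving $\dgras(H_0,H_1) \le C(\delta,m)\,\beta$. To this end, I extract an approximate basis from the big projection: choose an orthonormal basis $(e'_1,\dots,e'_m)$ of $H_1$ and let $y_i \in \Sigma \cap \Ball(x,\rho)$ satisfy $\pi_{x+H_1}(y_i) = x + s\,e'_i$ with $s := \rho\sqrt{1-\delta^2}/2$. Writing $y_i - x = s\,e'_i + v_i$ for some $v_i \in H_1^{\perp}$, the $\nbeta$-estimate at scale $\rho$ (also finite by Corollary~\ref{cor:beta-est}) controls $|Q_{H_0}(y_i-x)| \lesssim \rho\beta$, but the presence of $v_i$ prevents a direct estimate of $|Q_{H_0}(e'_i)|$. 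I resolve this by a short self-improvement argument: the proximity of $y_i-x$ to $H_0$ yields a crude bound $|v_i| \lesssim \rho\,\dgras(H_0,H_1) + \rho\beta$, and plugging this into Proposition~\ref{prop:dist-ang} (or into Proposition~\ref{prop:red-ang} applied to a $\red$-basis constructed from the $y_i-x$) produces an inequality of the form $\dgras(H_0,H_1) \le c(\delta,m)\bigl(\beta + \dgras(H_0,H_1)\bigr)$, solvable for $\dgras(H_0,H_1) \lesssim \beta$ as soon as $\beta$ is small enough. Smallness of $\beta$ is guaranteed by choosing $\Cr{adm-fine-rad} = \Cr{adm-fine-rad}(E,m,p,\delta)$ so that Corollary~\ref{cor:beta-est} pushes $\beta$ below the threshold required by this fixed-point argument at every scale $r \le \Cr{adm-fine-rad}$. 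Collecting the two sides of the Hausdorff estimate with this control yields $\HD(\Sigma \cap \CBall(x,r),(x+H_0) \cap \CBall(x,r)) \le 5r\beta$, and hence $\ntheta(x,r) \le 5\nbeta(x,r)$ --- the factor $5$ arising as the trivial side ($r\beta$) plus the big-projection-based side, which after absorbing all geometric constants contributes at most $4r\beta$.
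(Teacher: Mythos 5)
Your Ahlfors-regularity part is exactly the paper's (Theorem~\ref{thm:uahlreg} / Corollary~\ref{cor:ind-const}), but the gap-control part has a genuine gap, and it sits precisely at the quantitative heart of the theorem: the conclusion $\ntheta(x,r)\le 5\,\nbeta(x,r)$ requires every point of the approximating plane to be within a distance proportional to $r\,\nbeta(x,r)$ of $\Sigma$, whereas all the constants your argument can produce are of order $\delta$ or of order $E^{1/\kappa}r^{\tau}$, not of order $\nbeta(x,r)$. Concretely: Proposition~\ref{prop:big-proj-all-points} applied at scale $\rho=r/\sqrt{1-\delta^2}$ only tells you that the fiber over $w'$ meets $\Sigma\cap\Ball(x,\rho)$; the preimage $y$ may lie in the annulus $\Ball(x,\rho)\setminus\CBall(x,r)$, where $\nbeta(x,r)$ (taken with respect to $H_0$) gives no information, and its fiber coordinate is a priori only bounded by $\sqrt{\rho^2-|w'-x|^2}\sim\delta\rho$. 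Falling back on the $\nbeta$-estimate at scale $\rho$, as you propose, replaces $\nbeta(x,r)$ by $\nbeta(x,\rho)$ (with respect to yet another plane), and via Corollary~\ref{cor:beta-est} this yields at best $\ntheta(x,r)\lesssim E^{1/\kappa}r^{\tau}$ --- an estimate that does \emph{not} imply $\ntheta\le M_\Sigma\nbeta$, since $\nbeta(x,r)$ can be far smaller than $E^{1/\kappa}r^{\tau}$. The same defect kills the ``self-improvement'' for the angular estimate: with $y_i-x=se_i'+v_i$ you only get $|v_i|\le r\beta+r\,\dgras(H_0,H_1)$ with coefficient $1$ in front of $\dgras(H_0,H_1)$, so Proposition~\ref{prop:dist-ang} gives $\dgras(H_0,H_1)\le \Cr{dist-ang}(c\beta+2\dgras(H_0,H_1))$ with $\Cr{dist-ang}\ge 2m$, and the $\dgras$-term cannot be absorbed; nothing in the construction of the big-projection plane forces it to lie within an angle $O(\beta)$ of the best plane --- the empty cone only pins it down to within an angle of order $\delta$. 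Running the big projection at scale $r$ instead, to keep preimages in $\CBall(x,r)$, does not help either: the covered disc then has radius only $\sqrt{1-\delta^2}\,r$, leaving an uncovered margin of fixed width $(1-\sqrt{1-\delta^2})r$, again not $O(\beta r)$.

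The paper avoids all of this by redoing the topological argument at the scale of $\beta$ itself rather than quoting the big projection. Via Proposition~\ref{prop:cone-limit} one gets, at \emph{every} point $x$ (by a limiting argument from $\Sigma^*$), an empty conical cap $\Cone(L\delta,H,\frac 58 r,\frac 78 r)$ and a linked sphere perpendicular to $H$; the smallness of $\beta$ (this is where $\Cr{adm-fine-rad}$ and Corollary~\ref{cor:beta-est} enter, only as a threshold condition \eqref{cond:R-Sigma}) lets one apply Proposition~\ref{prop:two-cones} to see that $H^{\perp}$ sits inside the intersection of the two relevant cones, so by Corollary~\ref{cor:sph-in-cone} the linked sphere can be rotated into $P^{\perp}$ ($P$ the best approximating plane) and shrunk to radius $\beta r$. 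Translating this tiny linked sphere over the disc $(x+P)\cap\CBall(x,\sqrt{1-\beta^2}\,r)$ and invoking Lemma~\ref{lem:intpoint} places a point of $\Sigma$ within $\beta r$ of every point of that disc, and the leftover boundary margin has width $(1-\sqrt{1-\beta^2})r=O(\beta^2 r)$, which is how the final bound $\HD\le 5\,\nbeta(x,r)\,r$ and the constant $M_\Sigma=5$ actually arise. If you want to salvage your scheme, you would need some replacement for this step that produces $\Sigma$-points at distance $O(\beta r)$ from \emph{every} plane point; the projection-surjectivity statement alone cannot do that.
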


\begin{proof}
  To prevent confusion let us make the following distinction. In the proof we
  refer to constants from the definition of $(\delta,m)$-admissible sets by
  $A_{\Sigma}'$ and $R_{\Sigma}'$. The constants from the definition of $m$-fine
  sets we shall denote by $A_{\Sigma}$, $R_{\Sigma}$ and $M_{\Sigma}$.

  Corollary~\ref{cor:ind-const} states that $A_{\Sigma}' = (1-\delta^2)^{m/2}
  \omega_m$ and $R_{\Sigma}' = \Cr{uar-rad}$, so these constants depend only on
  $E$, $m$, $p$ and $\delta$. Therefore we may set $A_{\Sigma} = A_{\Sigma}'$
  and then all we need to show is that there exist numbers $R_{\Sigma} \le
  R_{\Sigma}'$ and $M_{\Sigma}$ such that for $r \le R_{\Sigma}$ and for all $x
  \in \Sigma$
  \begin{displaymath}
    \ntheta(x,r) \le M_{\Sigma} \nbeta(x,r) \,.
  \end{displaymath}
  From Corollary~\ref{cor:beta-est} we know that $\nbeta(x,r) \le
  \Cr{beta-est}E^{1/\kappa} r^{\tau}$, so it converges to $0$ when $r \to 0$ uniformly
  with respect to $x \in \Sigma$. Fix a point $x_0 \in \Sigma$ and a radius $r
  \le \Cr{uar-rad}$. Choose some $m$-plane $P \in G(n,m)$ such that
  \begin{displaymath}
    \forall y \in \Sigma \cap \Ball(x_0,r)\ \ 
    |Q_P(y - x_0)| \le \nbeta(x,r) \,.
  \end{displaymath}
  Fix a number $L$ such that $\sqrt{\frac{2-\delta}{\delta}} < L <
  \frac{1}{\delta}$ and set
  \begin{displaymath}
    \beta := 2\nbeta(x_0,r)
    \qquad \text{and} \qquad
    \gamma := \sqrt{1 - (L \delta)^2} \in (0,1) \,.
  \end{displaymath}
  Let $H$ be the $m$-plane for the point $x_0$ given by
  Proposition~\ref{prop:cone-limit}, so that
  \begin{displaymath}
    \Cone(L\delta, H, \tfrac 58 r,\tfrac 78 r) \cap \Sigma = \emptyset \,.
  \end{displaymath}
  Let $z \in \Sigma \cap \Ball(x_0,r)$ be any point in the intersection $\Sigma
  \cap \Ball(y, L\delta \frac 78 r) \cap (y + H^{\perp})$, where $y$ is any
  point such that $(y - x_0) \in H$ and $|y - x_0| = \frac 78 r \gamma$. Such
  point $z$ exists since the sphere $\Sphere(y,L \delta \frac 78 r) \cap (y +
  H^{\perp})$ is linked with $\Sigma$ (cf. Lemma~\ref{lem:intpoint}).

  Note that $\frac 78 r \gamma \le |z - x_0| \le \frac 78 r$, so
  \begin{displaymath}
    \frac{|Q_P(z - x_0)|}{|z - x_0|} 
    \le \frac{\beta r}{\tfrac 78 r \gamma}
    = \frac{8 \beta}{7 \gamma} \,,
  \end{displaymath}
  hence
  \begin{displaymath}
    (z - x_0) \in \Cone \left(
      \big( 1 - \tfrac{(8\beta)^2}{(7\gamma)^2} \big)^{\frac 12}, P^{\perp}
    \right) \cap \Cone(\gamma,H^{\perp}) \,.
  \end{displaymath}
  To apply Proposition~\ref{prop:two-cones} we need to ensure the condition
  \begin{align}
    \label{cond:R-Sigma}
    \sqrt{1 - \gamma^2} + \tfrac{8\beta}{7\gamma} 
    &\le (1 - \tfrac{8\beta}{7\gamma})\sqrt{1 - \left(\tfrac{8\beta}{7\gamma}\right)^2} 
    \iff \\
    \iff
    \beta 
    &\le \tfrac 78 \gamma \left(
      (1 - \tfrac{8\beta}{7\gamma})\sqrt{1 - \left(\tfrac{8\beta}{7\gamma}\right)^2} - \sqrt{1 - \gamma^2}
    \right) \,. \notag
  \end{align}
  Substituting $\Psi := \frac{8\beta}{7\gamma}$ in \eqref{cond:R-Sigma} and
  recalling that $\gamma = \sqrt{1 - (L\delta)^2}$ we obtain the following
  inequality
  \begin{equation}
    \label{eq:Psi}
    \Psi \le (1 - \Psi)\sqrt{1 - \Psi^2} - L\delta \,.
  \end{equation}
  Note that if $\Psi \to 0$ then the right-hand side converges to $1 - L\delta >
  0$. Let $\Psi_0$ be the smallest, positive root of the equation $\Psi = (1 -
  \Psi)\sqrt{1 - \Psi^2} - L\delta$. Then any $\Psi \in (0,\Psi_0)$ satisfies
  \eqref{eq:Psi}. Recall that $\tfrac 12 \beta = \nbeta(x,r) \le
  \Cr{beta-est} E^{1/\kappa} r^{\tau}$, so to ensure condition
  \eqref{cond:R-Sigma} it suffices to impose the following constraint
  \begin{equation}
    \label{def:adm-fine-rad}
    r \le \Cr{adm-fine-rad}(E,m,p,\delta) := \Bigg( \frac{7 \gamma \Psi_0}{16 \Cr{beta-est} } \Bigg)^{1/\tau} E^{-1/\lambda} \,.
  \end{equation}
  Now, for such $r$ we can use Proposition~\ref{prop:two-cones} to obtain
  \begin{displaymath}
    H^{\perp} \subseteq \Cone(L\delta, H) \cap \Cone(\tfrac{8\beta}{7\gamma}, P) \,.
  \end{displaymath}

  \begin{figure}[!htb]
    \centering
    \includegraphics{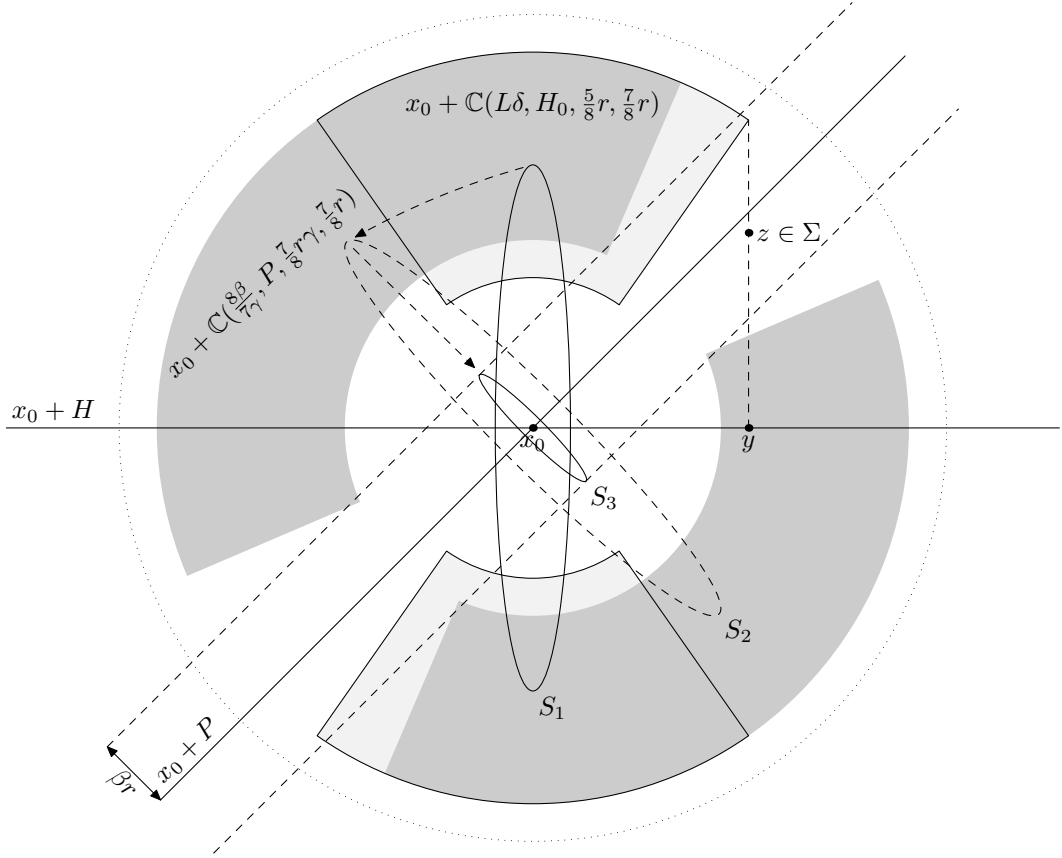}
    \caption{If $\beta$ is small enough, then the cone
      $\Cone(\tfrac{8\beta}{7\gamma}, P)$ contains $H^{\perp}$ and we can
      continuously transform $S_1$ into $S_3$ inside the conical cap
      $\Cone(\tfrac{8\beta}{7\gamma}, P, \frac 78r\gamma, \frac 78r)$.}
    \label{F:adm-fine}
  \end{figure}
  Set $S_1 := \Sphere(x_0,\frac{7}{16}r(\gamma+1)) \cap (x_0 + H^{\perp})$. This
  sphere is contained in the conical cap $\Cone(\tfrac{8\beta}{7\gamma}, P,
  \frac 78r\gamma, \frac 78r)$ (see Figure~\ref{F:adm-fine}). Using
  Corollary~\ref{cor:sph-in-cone} we rotate $S_1$ into $S_2 :=
  \Sphere(x_0,\frac{7}{16}r(\gamma+1)) \cap (x_0 + P^{\perp})$ inside
  $\Cone(\tfrac{8\beta}{7\gamma}, P, \frac 78r\gamma, \frac 78r)$. Note that for
  $x \in \Sigma$ such that $|x - x_0| > \frac 78r\gamma$ we have
  \begin{displaymath}
    \frac{Q_P(x - x_0)}{|x - x_0|} 
    < \frac{\beta r}{\frac 78r\gamma} = \frac{8\beta}{7\gamma}\,,
  \end{displaymath}
  hence the conical cap $\Cone(\tfrac{8\beta}{7\gamma}, P, \frac 78r\gamma, \frac
  78r)$ does not intersect $\Sigma$ and the resulting sphere $S_2$ is still
  linked with $\Sigma$. Next we decrease the radius of $S_2$ to the value $\beta
  r$ obtaining another sphere $S_3 := \Sphere(x_0,\beta r) \cap (x_0 +
  P^{\perp})$ which is also linked with $\Sigma$.

  We can translate $S_3$ along any vector $v \in P$ with $|v| \le \sqrt{1 -
    \beta^2}r$ without changing the linking number. This way we see that for any
  point $w \in (x_0 + P) \cap \CBall(x_0,\sqrt{1 - \beta^2}r)$ there
  exists a point $z \in \Sigma$ such that $|z-w| \le \beta r$.

  For any other point $w \in (x_0 + P)$ with $\sqrt{1 - \beta^2}r \le |w - x_0|
  \le r$ we set 
  \begin{displaymath}
    \tilde{w} := w - (w - x_0)|w - x_0|^{-1} (1 - \sqrt{1 - \beta^2})r \,,
  \end{displaymath}
  so that $|\tilde{w} - x_0| \le \sqrt{1 - \beta^2}r$. Then we find $z \in
  \Sigma$ such that $|\tilde{w} - z| \le \beta r$ and we obtain the estimate
  \begin{align*}
    |z - w| &\le |z - \tilde{w}| + |\tilde{w} - w|
    \le \beta r + (1 - \sqrt{1 - \beta^2}) r \\
    &= r \left( \beta + \frac{\beta^2}{1 + \sqrt{1 - \beta^2}} \right)
    \le 2 \beta r = 4 \nbeta(x,r) r \,.
  \end{align*}
  This implies that $\HD(\Sigma \cap \CBall(x_0,r), (x_0 + P) \cap
  \CBall(x_0,r)) \le 5 \nbeta(x_0,r)$. Therefore the infimum over all $H \in
  G(n,m)$ must be even smaller, so $\ntheta(x_0,r) \le 5 \nbeta(x_0,r)$ for any
  $r \le R_{\Sigma}$ and we can safely set $M_{\Sigma} := 5$.
\end{proof}



\mysection{Existence and oscillation of tangent planes}
\label{sec:tangent-planes}

In this paragraph we prove that boundedness of the $p$-energy $\E_p(\Sigma) \le
E$ implies $C^{1,\tau}$ regularity for some $\tau \in (0,1)$. First we show how
to use the result (Proposition~\ref{prop:dkt-reg}) obtained by David, Kenig and
Toro~\cite{MR1808649} which immediately gives $C^{1,\tau}$ regularity. Then,
independently of~\cite{MR1808649} we prove a bit stronger result
(Theorem~\ref{thm:C1tau}). We adjust the technique presented in~\cite{MR1808649}
to our needs. We also carefully keep track of all the emerging constants and
their dependences to be able to bound the H{\"o}lder norm and the size of the
maps in terms of $E$ and independently of $\Sigma$.

\begin{prop}
  \label{prop:C1theta}
  Let $\Sigma \in \F(m)$ be such that $\E_p(\Sigma) \le E < \infty$. Then
  $\Sigma$ is a closed $C^{1,\tau}$-submanifold of $\Rn$.
\end{prop}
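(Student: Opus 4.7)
The plan is to reduce directly to Proposition~\ref{prop:dkt-reg} of David--Kenig--Toro, verifying its two hypotheses using the tools already assembled in the paper. Closedness is trivial, since $\Sigma \in \F(m)$ is compact by Definition~\ref{def:fine}.

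First I would establish the H\"older decay of the P. Jones $\beta$-numbers. Since $\Sigma \in \F(m)$ satisfies the Ahlfors lower bound~\eqref{eq:fine:reg} with constants $A_{\Sigma}$ and $R_{\Sigma}$, and since $\E_p(\Sigma) \le E < \infty$ with $p > m(m+2)$, Corollary~\ref{cor:beta-est} applies and yields
\begin{displaymath}
  \nbeta(x,r) \le \Cr{beta-est}\, E^{1/\kappa}\, r^{\tau}
  \quad \text{for all } x \in \Sigma,\ r \in (0,R_{\Sigma}),
\end{displaymath}
with $\tau$ the exponent from~\eqref{def:tau}. By Proposition~\ref{prop:theta-theta}, $\beta_m(x,r) \le \nbeta(x,r)$, so the same power-law bound holds for $\beta_m$. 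Taking $C_K$ to be the resulting constant (times a power of $R_{\Sigma}^{-1}$ to cover the range up to $r=1$, say) we get the second hypothesis of Proposition~\ref{prop:dkt-reg} uniformly on $\Sigma$.

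Next I would verify Reifenberg flatness with vanishing constant. This is where the extra structure of $\F(m)$ pays off: Condition~\ref{fine:gaps} in Definition~\ref{def:fine} gives $\ntheta(x,r) \le M_{\Sigma}\nbeta(x,r)$ for $r \le R_{\Sigma}$, while Proposition~\ref{prop:theta-theta} provides $\theta_m(x,r) \le 3\ntheta(x,r)$. Composing these with the bound from the previous step yields
\begin{displaymath}
  \theta_m(x,r) \le 3 M_{\Sigma}\, \Cr{beta-est}\, E^{1/\kappa}\, r^{\tau},
\end{displaymath}
uniformly in $x \in \Sigma$, hence $\sup_{x\in K}\theta_m(x,r)\to 0$ as $r \to 0$ for every compact $K \subseteq \Sigma$. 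This is exactly Definition~\ref{def:rfvc}, so $\Sigma$ is Reifenberg-flat with vanishing constant.

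With both hypotheses verified, Proposition~\ref{prop:dkt-reg} concludes that $\Sigma$ is a $C^{1,\tau}$-submanifold of $\Rn$. There is no genuine obstacle here; the only subtle point is that Definition~\ref{def:fine} is essentially designed so that $\F(m)$-membership together with the curvature bound feeds directly into the David--Kenig--Toro criterion via the two comparisons $\beta_m \le \nbeta$ and $\theta_m \le 3\ntheta$ from Proposition~\ref{prop:theta-theta}. Note that this proof does not yet give control of the scale $R$ and the H\"older norm of the tangent map in terms of $E$ alone; that quantitative strengthening is the point of the subsequent Theorem~\ref{thm:C1tau}, which will require an independent, more careful argument.
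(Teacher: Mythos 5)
Your reduction of the regularity statement is exactly the paper's own route: Corollary~\ref{cor:beta-est} for the decay $\nbeta(x,r)\lesssim E^{1/\kappa}r^{\tau}$, condition~\ref{fine:gaps} of Definition~\ref{def:fine} to pass to $\ntheta$, Proposition~\ref{prop:theta-theta} to convert $\nbeta,\ntheta$ into the open-ball quantities $\beta_m,\theta_m$, and then Proposition~\ref{prop:dkt-reg} of David--Kenig--Toro. That part is correct, including the harmless extension of the $\beta$-bound to $r\le 1$ (for $r\ge R_\Sigma$ one can just use $\beta_m\le 1\le R_\Sigma^{-\tau}r^{\tau}$).

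The gap is in the word \emph{closed}. In this paper ``closed'' means closed as a manifold, i.e.\ $\partial\Sigma=\emptyset$, and that is not ``trivial from compactness'': a compact $C^{1}$ manifold with boundary (say the image of a closed half-disc) is a compact, hence topologically closed, subset of $\Rn$, so compactness of $\Sigma\in\F(m)$ alone rules out nothing. You need an argument that boundary points cannot occur, and the estimates you already have supply it (this is what the paper does): if $x\in\partial\Sigma$, then for small $r$ the set $\Sigma\cap\Ball(x,r)$ is close to a half-$m$-plane, so for \emph{any} full $m$-plane $H$ through $x$ the set $(x+H)\cap\CBall(x,r)$ contains points at distance comparable to $r$ from $\Sigma$, forcing $\ntheta(x,r)$ to stay bounded below by a fixed positive constant as $r\to 0$; this contradicts $\ntheta(x,r)\le M_\Sigma\Cr{beta-est}E^{1/\kappa}r^{\tau}\to 0$. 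With that one extra step your proof coincides with the paper's.
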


\begin{proof}
  From Corollary~\ref{cor:beta-est} we already have good estimates on the
  $\nbeta$-numbers of $\Sigma$. Namely, for any $r < R_{\Sigma}$ and all $x
  \in \Sigma$ we have
  \begin{displaymath}
    \nbeta(x,r) \le \Cr{beta-est} E^{\frac 1{\kappa}} r^{\tau} \,,
  \end{displaymath}
  where $\Cr{beta-est}$ depends only on $m$, $p$ and $A_{\Sigma}$ and $\tau >
  0$. Since $\Sigma \in \F(m)$ it satisfies the condition \ref{fine:gaps}, so
  for $r < R_{\Sigma}$ we have
  \begin{equation}
    \label{est:theta}
    \ntheta(x,r) \le \Cr{beta-est} M_{\Sigma} r^{\tau} \,,
  \end{equation}
  which converges to $0$ when $r \to 0$ uniformly for all $x \in \Sigma$.
  Proposition~\ref{prop:theta-theta} implies that $\theta_m(x,r)$ also converges
  uniformly to $0$ when $r \to 0$ and that $\beta_m(x,r) \lesssim r^{\tau}$ for
  each $x$ and $r < R_{\Sigma}$. Hence, $\Sigma$ is Reifenberg flat with
  vanishing constant and satisfies the assumptions of
  Proposition~\ref{prop:dkt-reg}. Therefore $\Sigma$ is a $C^{1,\tau}$ manifold.

  Assume that $\Sigma$ is not closed, so $\partial \Sigma \ne \emptyset$. Let
  $x \in \partial \Sigma$ be a boundary point. For $r$ small enough the set
  $\Sigma \cap \Ball(x,r)$ is close to some half-$m$-plane $H_+ \simeq
  \R^{m-1} \times \R_+$. Then one sees easily that $\ntheta(x,r) \ge 1$, but
  this contradicts estimate \eqref{est:theta}.
\end{proof}

The rest of this section is devoted to showing that $\Sigma \in \F(m)$ with
$p$-energy bounded by $E < \infty$ has an atlas of maps of a given size, which
depends only on $E$, $m$ and $p$ but not on $\Sigma$ itself. Moreover we show
that $\Sigma$ is locally a graph of a $C^{1,\tau}$ function with the H\"older
constant also depending only on the energy $E$, the dimension $m$ and the
exponent $p$. In a forthcoming project, we plan use these results to address the
following problem:
\begin{quote}
  In the class of sets $\Sigma \in \F(m)$, normalized so that $0 \in \Sigma$ and
  $\HM^m(\Sigma) \le 1$, with uniformly bounded $p$-energy $\E_p(\Sigma) \le E$
  for some $p > m(m+2)$ there can be only finite number of non-homeomorphic sets
  and the number of homeomorphism classes can be bounded in terms of $E$.
\end{quote}

For the sake of brevity we introduce the following notation
\begin{displaymath}
  \pi_x := \pi_{T_x\Sigma}
  \qquad \text{and} \qquad
  Q_x := Q_{T_x\Sigma} \,,
\end{displaymath}
where $x \in \Sigma$. The main result of this section is
\begin{thm}
  \label{thm:C1tau}
  Let $\Sigma \in \F(m)$ be an $m$-fine set such that $\E_p(\Sigma) \le E <
  \infty$ for some $p > m(m+2)$. Then $\Sigma$ is a smooth manifold of class
  $C^{1,\tau}$, where $\tau$ was defined in \S\ref{sec:bdd-ene-flat} by the
  formula
  \begin{displaymath}
    \tau = \tfrac{\lambda}{\kappa} = \tfrac{p - m(m+2)}{(m+1)(m(m+1)(m+2)+p)} \,.
  \end{displaymath}

  Moreover there exists a constant $\Cl{smooth-rad-const} =
  \Cr{smooth-rad-const}(m,p)$ such that if we set $\Cl[R]{smooth-rad} :=
  \Cr{smooth-rad-const} E^{-1/\lambda}$ then for each point $x \in \Sigma$ there
  exists a $C^{1,\tau}$ function
  \begin{displaymath}
    F_x : T_x\Sigma \cap \CBall_{\frac 12 \Cr{smooth-rad}}
    \to
    T_x\Sigma^{\perp} \cap \CBall_{\Cr{smooth-rad}} \,,
  \end{displaymath}
  such that
  \begin{displaymath}
    (\Sigma - x) \cap \{ y \in \CBall_{\Cr{smooth-rad}} : |\pi_x(y)| \le \tfrac 12 \Cr{smooth-rad}\}
    = F_x(T_x\Sigma \cap \CBall_{\frac 12 \Cr{smooth-rad}}) \,, 
  \end{displaymath}
  \begin{displaymath}
    F_x(0) = 0 \qquad \text{and} \qquad DF_x(0) = 0 \,.
  \end{displaymath}

  Furthermore there exists a constant $\Cl{holder-norm} = \Cr{holder-norm}(m,p)$
  such that for any two points $w_0,w_1 \in T_x\Sigma \cap \CBall_{\frac 12
    \Cr{smooth-rad}}$ we have
  \begin{displaymath}
    \| DF_x(w_1) - DF_x(w_0) \| \le \Cr{holder-norm} E^{1/\kappa} |w_1 - w_0|^{\tau} \,.
  \end{displaymath}
\end{thm}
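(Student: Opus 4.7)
The starting point is that Corollary~\ref{cor:beta-est} combined with condition II of $m$-fine sets gives a uniform H\"older decay
\[
\ntheta(x, r) \le M_\Sigma \Cr{beta-est} E^{1/\kappa} r^\tau
\qquad \text{for every } x \in \Sigma,\ r \le R_\Sigma,
\]
so in particular $\Sigma$ is Reifenberg-flat with vanishing constant.

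For the existence of tangent planes, for each $x \in \Sigma$ set $r_k := 2^{-k} R_0$ and pick $H_k \in \BAP(x, r_k)$. Using the Ahlfors lower bound together with the $\theta$-approximation at scale $r_{k+1}$, one selects $m+1$ points of $\Sigma$ in $\CBall(x, r_{k+1})$ whose differences form a $\red$-basis at scale $r_{k+1}$ with universal constants (obtained by perturbing a canonical well-spread frame inside $H_{k+1}$ to land on $\Sigma$). Each such point lies within $C_1 E^{1/\kappa} r_k^{1+\tau}$ of both $x + H_k$ and $x + H_{k+1}$, and two applications of Proposition~\ref{prop:red-ang} give $\dgras(H_k, H_{k+1}) \le C_2 E^{1/\kappa} r_k^\tau$. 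Summing the geometric series, the sequence $(H_k)$ converges in $G(n, m)$ to a plane I call $T_x \Sigma$, and $\dgras(H_k, T_x \Sigma) \le C_3 E^{1/\kappa} r_k^\tau$. The same argument applied at scale $r = 2|x-y|$ to $m+1$ well-spread points inside $\CBall(x, r) \cap \CBall(y, r)$ yields
\[
\dgras(T_x \Sigma, T_y \Sigma) \le C_4 E^{1/\kappa} |x - y|^\tau.
\]

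Next I would choose $\Cr{smooth-rad-const}$ so that $C_4 E^{1/\kappa} \Cr{smooth-rad}^\tau \le \tfrac14$ with $\Cr{smooth-rad} = \Cr{smooth-rad-const} E^{-1/\lambda}$. For $y, z \in \Sigma \cap \CBall(x, \Cr{smooth-rad})$, the tangent oscillation bound above forces every nearby tangent plane to lie within $\tfrac14$ of $T_x \Sigma$; partitioning the chord $[y, z]$ and approximating each sub-segment by a tangent vector of some $T_w \Sigma$ (using the $\ntheta$-bound to replace $\Sigma$ by its tangent planes) then gives $|Q_x(z-y)| \le \tfrac12 |\pi_x(z-y)|$. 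Hence $\pi_x$ is injective on $(\Sigma - x) \cap \CBall_{\Cr{smooth-rad}}$ with $|\pi_x(z - y)| \ge \tfrac{2}{\sqrt{5}} |z - y|$. For surjectivity onto $T_x \Sigma \cap \CBall_{\frac12 \Cr{smooth-rad}}$, given $w$ in the target I iteratively pick $y_k \in \Sigma \cap \CBall(x, \Cr{smooth-rad})$ from the $\theta$-approximation at scale $2^{-k} \Cr{smooth-rad}$ so that $\pi_{H_k}(y_k - x) \to w$; the sequence is Cauchy and its limit $y$ satisfies $\pi_x(y - x) = w$. Defining $F_x := Q_x \circ (\pi_x|_{\Sigma - x})^{-1}$, Remark~\ref{rem:orth-angle} identifies $DF_x(w)$ with the operator $L_{T_x \Sigma} - \opid$ built from $T_y \Sigma$ at $y = x + w + F_x(w)$, whose norm and increments are bounded by a constant depending only on $m$ times $\dgras(T_{y_0} \Sigma, T_{y_1} \Sigma)$. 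Plugging in the H\"older estimate from the previous step and the Lipschitz bound $|y_1 - y_0| \le \sqrt{5/4}\, |w_1 - w_0|$ yields
\[
\| DF_x(w_1) - DF_x(w_0) \| \le \Cr{holder-norm} E^{1/\kappa} |w_1 - w_0|^\tau,
\]
while $DF_x(0) = 0$ is automatic since $T_x \Sigma$ is the tangent plane at $x$.

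The hardest part will be making $\Cr{smooth-rad-const}$ and $\Cr{holder-norm}$ depend on $m, p$ only and not on the $\F(m)$-constants $A_\Sigma, M_\Sigma$ hidden inside $\Cr{beta-est}$. I would handle this by a bootstrap: once the argument above produces a $C^{1,\tau}$ graph at some $\Sigma$-dependent scale $R^*$, the graph representation itself yields the absolute Ahlfors lower bound $\HM^m(\Sigma \cap \Ball(x, r)) \ge \tfrac12 \omega_m r^m$ and the universal ratio $\ntheta(x, r) \le 2 \nbeta(x, r)$ at any scale below $R^*$. Rerunning the previous two steps with these improved constants absorbs the $\Sigma$-dependent quantities into the final H\"older constant as claimed.
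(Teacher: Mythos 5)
Your route for the core of the argument is essentially the paper's: dyadic choice of best approximating planes and a Cauchy-sequence definition of $T_x\Sigma$, the oscillation bound $\dgras(T_x\Sigma,T_y\Sigma)\lesssim E^{1/\kappa}|x-y|^{\tau}$, injectivity of $\pi_x$ at a small scale, the oblique-projection formula for $DF_x$, and the H\"older estimate, correspond to Lemmas~\ref{lem:bap-osc}--\ref{lem:DF-holder}. Two variations are harmless: you compare consecutive planes via an $(m+1)$-point $\red$-basis rather than the paper's single test point, and you prove surjectivity of $\pi_x$ by a Reifenberg-type iteration instead of the open-and-closed connectedness argument of Lemma~\ref{lem:domain}; both can be made rigorous. (Your ``chord partition'' justification of $|Q_x(z-y)|\le\tfrac12|\pi_x(z-y)|$ is vague as stated, but it can be replaced by combining the point-to-tangent-plane estimate of Corollary~\ref{cor:tan-point} at $y$ with the tangent oscillation bound, so I do not count it as a gap.)

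The genuine gap is the final bootstrap. Deriving the improved constants ``from the graph representation itself'' only yields the Ahlfors bound $\tfrac12\omega_m r^m$ and the ratio $\ntheta\le 2\nbeta$ for radii below the scale $R^{*}$ at which your first run produced the graphs, and $R^{*}$ still depends on the $\F(m)$-data $A_\Sigma$, $M_\Sigma$ and, crucially, $R_\Sigma$, for which the energy bound gives no lower bound at all. Rerunning your argument below $R^{*}$ therefore makes the H\"older constant universal, but the size of the graph patches remains $\Sigma$-dependent, so you do not obtain the claimed radius of the form $C(m,p)\,E^{-1/\lambda}$, nor the H\"older estimate on a ball of that uniform size --- and this uniformity is the actual content of the ``Moreover'' part of the theorem. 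The paper closes this by a different mechanism: once $\Sigma$ is known to be a compact, closed $C^{1}$ manifold, it is $(\tfrac14,m)$-admissible (Example~\ref{ex:mfld}, via linking numbers), and then Theorem~\ref{thm:uahlreg} with Corollary~\ref{cor:ind-const}, together with Theorem~\ref{thm:adm-fine}, convert the bound $\E_p(\Sigma)\le E$ into Ahlfors regularity and $\ntheta\le 5\,\nbeta$ at a scale depending only on $E$, $m$, $p$; only after this replacement do all constants become independent of $\Sigma$. Without an ingredient of this kind --- something that turns the energy bound into a lower bound on the scale --- your bootstrap cannot deliver the uniform radius.
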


To prove this theorem we fix a point $x \in \Sigma$ and for each radii $r > 0$
we choose an $m$-plane $P(x,r)$. Then we use the fact that $\ntheta(x,r) \le
M_{\Sigma} \nbeta(x,r) \le M_{\Sigma} \Cr{beta-est} E^{\frac 1{\kappa}}
r^{\tau}$ to show that $P(x,r)$ converge to the tangent plane $T_x\Sigma$, when
$r \to 0$. This also gives a bound on the oscillation of $T_x\Sigma$. Then we
derive Lemma~\ref{lem:1point-rad}, which says that at some small scale we cannot
have two distinct points $y$ and $z$ of $\Sigma$ such that the vector $v = (y -
z)$ is orthogonal to $T_x\Sigma$. Any such vector $v$ would be close to the
tangent plane $T_{z}\Sigma$ and this would violate the bound on the oscillation
of tangent planes proved earlier. From here, it follows that there exists a
small radius $\Cr{graph-rad}$ such that $\Sigma \cap \Ball(x,\Cr{graph-rad})$ is
a graph of some function $F_x$.

Next we define the differential $DF_x$ at a point $w \in T_x\Sigma \cap
\CBall(x,\Cr{graph-rad})$ using the inverse of the projection from $T_y\Sigma$
onto $T_x\Sigma$, where $y = F_x(w) + w$. This can be done since $y$ lies in
$\Sigma \cap \CBall(x,\Cr{graph-rad})$, so the ''angle''
$\dgras(T_x\Sigma,T_y\Sigma)$ is small and due to Remark~\ref{rem:orth-angle}
the projection $\pi_x$ gives a linear isomorphism between $T_x\Sigma$ and
$T_y\Sigma$. After that it is easy to see that the oscillation of $DF_x$ is
roughly the same as the oscillation of $T_x\Sigma$, so $DF_x$ is actually
H{\"o}lder continuous.

\mysubsection{The tangent planes}
Set
\begin{align}
  \label{def:beta-rad}
  \Cl[R]{beta-rad} = \Cr{beta-rad}(E,m,p,M_{\Sigma},A_{\Sigma},R_{\Sigma})
  &:=  \min \left\{
    (4 \Cr{beta-est} E^{1/\kappa} M_{\Sigma})^{-1/\tau},
    R_{\Sigma}
  \right\} \\
  &=  \min \left\{
    (4 \Cr{beta-est} M_{\Sigma})^{-1/\tau} E^{- 1/\lambda},
    R_{\Sigma}
  \right\} \notag
\end{align}
so that $\Cr{beta-est} E^{1/\kappa} \Cr{beta-rad}^{\tau} \le (4
M_{\Sigma})^{-1}$. Then for any $r \le \Cr{beta-rad}$ we have
\begin{displaymath}
  \ntheta(x,r)
  \le M_{\Sigma} \nbeta(x,r)
  \le M_{\Sigma} \Cr{beta-est} E^{1/\kappa} r^{\tau} 
  \le M_{\Sigma} \Cr{beta-est} E^{1/\kappa} \Cr{beta-rad}^{\tau} 
  \le \tfrac 14 \,.
\end{displaymath}

\begin{lem}
  \label{lem:bap-osc}
  Choose a point $x \in \Sigma$ and fix some $r_0 \le \Cr{beta-rad}$. Choose
  another point $y \in \Sigma \cap \CBall(x,\frac 12 r_0)$ and some $r_1 \in
  \left[ \frac 12 r_0, r_0 - |x-y| \right]$. Let $H_0 \in \BAP(x,r_0)$ and $H_1
  \in \BAP(y,r_1)$. Then
  \begin{displaymath}
    \dgras(H_0,H_1) \le \Cl{bap-osc} E^{1/\kappa} r_0^{\tau} \,,
  \end{displaymath}
  where $\Cr{bap-osc} = \Cr{bap-osc}(m,p,M_{\Sigma},A_{\Sigma})$.
\end{lem}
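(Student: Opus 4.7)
The plan is to test how $H_0$ sees an orthonormal basis of $H_1$ and then invoke Proposition~\ref{prop:dist-ang}. Without loss of generality translate so that $x=0$. Fix an orthonormal basis $(e_1,\ldots,e_m)$ of $H_1$ and set $w_i := y + \tfrac{r_1}{2} e_i$ for $i=1,\ldots,m$, so that $w_i \in (y+H_1) \cap \CBall(y,r_1)$. Since $H_1 \in \BAP(y,r_1)$, the definitions of $\HD$ and $\ntheta$ (Definitions~\ref{def:hdist}, \ref{def:theta-fun}, \ref{def:beta-plane}) provide points $z_i \in \Sigma \cap \CBall(y,r_1)$ satisfying $|z_i - w_i| \le \ntheta(y,r_1)\, r_1$.

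Next I observe that both $y$ and each $z_i$ lie in $\Sigma \cap \CBall(0,r_0)$. Indeed $|y| \le \tfrac{r_0}{2} \le r_0$, and $|z_i| \le |y| + r_1 \le r_0$ by the assumed range of $r_1$. Hence by $H_0 \in \BAP(0,r_0)$ we have $|Q_{H_0}(y)| \le \ntheta(0,r_0)\, r_0$ and $|Q_{H_0}(z_i)| \le \ntheta(0,r_0)\, r_0$. Using the triangle inequality in the decomposition $\tfrac{r_1}{2} e_i = (z_i - y) + (w_i - z_i)$, and the fact that $|Q_{H_0}| \le 1$,
\begin{displaymath}
  \tfrac{r_1}{2} |Q_{H_0}(e_i)|
  \le |Q_{H_0}(z_i)| + |Q_{H_0}(y)| + |w_i - z_i|
  \le 2 \ntheta(0,r_0)\, r_0 + \ntheta(y,r_1)\, r_1 \,.
\end{displaymath}
Since $r_1 \ge \tfrac{r_0}{2}$, this yields $|Q_{H_0}(e_i)| \le 8\,\ntheta(0,r_0) + 2\,\ntheta(y,r_1)$ for each $i$.

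Now I plug in the bounds coming from $m$-fineness and Corollary~\ref{cor:beta-est}: for $r \le \Cr{beta-rad} \le R_{\Sigma}$ one has $\ntheta(\cdot,r) \le M_{\Sigma}\,\nbeta(\cdot,r) \le M_{\Sigma}\Cr{beta-est} E^{1/\kappa} r^{\tau}$, and since $r_1 \le r_0$ both $\ntheta(0,r_0)$ and $\ntheta(y,r_1)$ are bounded by $M_{\Sigma}\Cr{beta-est} E^{1/\kappa} r_0^{\tau}$. Setting
\begin{displaymath}
  \vartheta := 10\, M_{\Sigma}\Cr{beta-est}\, E^{1/\kappa} r_0^{\tau} \,,
\end{displaymath}
we obtain $|Q_{H_0}(e_i)| \le \vartheta$ for every $i$. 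If $\vartheta < 1$, Proposition~\ref{prop:dist-ang} applies to the orthonormal basis $(e_1,\ldots,e_m)$ of $H_1$ and gives $\dgras(H_0,H_1) \le \Cr{dist-ang}\, \vartheta$, and I can set $\Cr{bap-osc} := 10\, \Cr{dist-ang}\, M_{\Sigma}\, \Cr{beta-est}$, which indeed depends only on $m$, $p$, $M_{\Sigma}$ and $A_{\Sigma}$ as required. If instead $\vartheta \ge 1$, then $\Cr{bap-osc}\, E^{1/\kappa} r_0^{\tau} \ge \Cr{dist-ang} \ge 2$ (enlarging $\Cr{bap-osc}$ if necessary) and the conclusion is trivial since $\dgras$ is bounded by $2$ on $G(n,m)$. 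The only delicate point is verifying that $H_1 \in \BAP(y,r_1)$ really delivers the approximating point $z_i$ for each $w_i$; this is the content of the remark following Definition~\ref{def:beta-plane} combined with the splitting of $\HD$ as a sum of one-sided suprema.
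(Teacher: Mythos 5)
Your proposal is correct and follows essentially the same route as the paper: pick test points on $y+H_1$ at a definite fraction of $r_1$ from $y$, use the $\ntheta$-control at $y$ (via $H_1\in\BAP(y,r_1)$) to find nearby points of $\Sigma$, use the closeness of $\Sigma\cap\CBall(x,r_0)$ to $x+H_0$ to bound $|Q_{H_0}(e_i)|$, and conclude with Proposition~\ref{prop:dist-ang}. The only differences are cosmetic — your test radius $\tfrac{r_1}{2}$ versus the paper's $r_1(1-M_{\Sigma}\beta_1)$, a slightly different constant ($10\,M_{\Sigma}$ versus $\tfrac 83(M_{\Sigma}+2)$), and your explicit handling of the case $\vartheta\ge 1$, which the paper glosses over.
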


\begin{figure}[!htb]
  \centering
  \includegraphics{ahlreg11.mps}
  \caption{The existence of $z \in \Sigma$ is guaranteed by the condition
    $\ntheta(x,r) \le M_{\Sigma} \nbeta(x,r)$. This allows us to estimate
    $\dgras(H_0,H_1)$.}
  \label{F:bap-osc}
\end{figure}

\begin{proof}
  Set $\beta_0 := \nbeta(x,r_0)$ and $\beta_1 := \nbeta(y,r_1)$. Let $v \in H_1$
  be any vector of length $|v| = r_1 (1-M_{\Sigma}\beta_1)$. Since
  $\ntheta(y,r_1) \le M_{\Sigma}\beta_1$, there exists a point $z \in \Sigma
  \cap \CBall(y+v, M_{\Sigma}\beta_1 r_1)$. Hence $|(y+v) - z| \le
  M_{\Sigma}\beta_1 r_1$ (see Figure~\ref{F:bap-osc}). Note that $\CBall(y+v,
  M_{\Sigma}\beta_1 r_1) \subseteq \CBall(y,r_1) \subseteq
  \CBall(x,r_0)$. Therefore $\dist(z,x + H_0) = |Q_{H_0}(z - x)| \le \beta_0
  r_0$ and we obtain the estimate
  \begin{align*}
    |Q_{H_0}(v)| &\le |Q_{H_0}((y-x)+v)| + |Q_{H_0}(y-x)| \\
    &\le |((y-x)+v) - (z-x)| + |Q_{H_0}(z-x)| + |Q_{H_0}(y-x)| \\
    &\le M_{\Sigma} \beta_1 r_1 + \beta_0 r_0 + \beta_0 r_0 \le (M_{\Sigma} +
    2) \Cr{beta-est} E^{1/\kappa} r_0^{1+\tau} \,.
  \end{align*}
  Since $v$ was chosen arbitrarily we get the following estimate for any
  unit vector $e \in H_1 \cap \Sphere$
  \begin{displaymath}
    |Q_{H_0}(e)| \le (M_{\Sigma} + 2) \Cr{beta-est} E^{1/\kappa} \frac{r_0^{1+\tau}}{r_1 (1-M_{\Sigma}\beta_1)} 
    \le (M_{\Sigma} + 2) \Cr{beta-est} E^{1/\kappa} \frac{4r_0^{1+\tau}}{3r_1} \,.
  \end{displaymath}
  Recall that $r_1 \ge \frac 12 r_0$, so we have
  \begin{displaymath}
    |Q_{H_0}(e)| \le \tfrac 83 (M_{\Sigma} + 2) \Cr{beta-est} E^{1/\kappa} r_0^{\tau} \,.
  \end{displaymath}
  Applying Proposition~\ref{prop:dist-ang} we get
  \begin{displaymath}
    \dgras(H_0,H_1) \le \tfrac 83 (M_{\Sigma} + 2) \Cr{dist-ang} \Cr{beta-est} E^{1/\kappa} r_0^{\tau} \,.
  \end{displaymath}
  Finally we set $\Cr{bap-osc} := \tfrac 83 (M_{\Sigma} + 2) \Cr{dist-ang} \Cr{beta-est}$.
\end{proof}

\begin{lem}
  \label{lem:tan-conv}
  Choose a point $x \in \Sigma$. For each $r \le \Cr{beta-rad}$ fix an $m$-plane
  $P(r) \in \BAP(x,r)$. There exists a limit
  \begin{displaymath}
    \lim_{r \to 0} P(r) =: T_x\Sigma \in G(n,m)
  \end{displaymath}
  and it does not depend on the choice of $P(r) \in \BAP(x,r)$.
\end{lem}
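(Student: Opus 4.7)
The plan is to apply Lemma~\ref{lem:bap-osc} with $y = x$ to produce a Cauchy family $\{P(r)\}_{r \le \Cr{beta-rad}}$ in the Grassmannian $(G(n,m),\dgras)$, then invoke completeness to extract the limit. First I would set $y = x$ in Lemma~\ref{lem:bap-osc}: then the admissible range for $r_1$ becomes $[\tfrac 12 r_0, r_0]$, so for any $r_0 \le \Cr{beta-rad}$ and any $r_1 \in [\tfrac 12 r_0, r_0]$, any choices $P(r_0) \in \BAP(x,r_0)$ and $P(r_1) \in \BAP(x,r_1)$ satisfy
\begin{equation*}
  \dgras(P(r_0),P(r_1)) \le \Cr{bap-osc} E^{1/\kappa} r_0^{\tau}\,.
\end{equation*}

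Next I would specialize to the dyadic sequence $r_k := 2^{-k} \Cr{beta-rad}$. Consecutive terms satisfy $\dgras(P(r_k),P(r_{k+1})) \le \Cr{bap-osc} E^{1/\kappa} 2^{-k\tau} \Cr{beta-rad}^{\tau}$, and this is summable over $k$, so $\{P(r_k)\}$ is $\dgras$-Cauchy. Since $G(n,m)$ is compact (and therefore complete) in the metric $\dgras$, the sequence converges to some $m$-plane $T_x\Sigma \in G(n,m)$.

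To see that $P(r) \to T_x\Sigma$ along all $r \to 0$ (not only along the dyadic subsequence), for each $r$ locate the unique $k$ with $r \in [r_{k+1}, r_k]$ and apply the displayed inequality with $r_0 = r_k$, $r_1 = r$; this yields $\dgras(P(r),P(r_k)) \le \Cr{bap-osc} E^{1/\kappa} r_k^{\tau}$, which tends to zero. A telescoping estimate from $P(r_k)$ to the limit then gives $\dgras(P(r), T_x\Sigma) \lesssim r^{\tau}$. Finally, independence of the limit from the choice of best approximating planes is immediate from the same lemma applied at $x=y$ and $r_0=r_1=r$: any two selections $P(r), P'(r) \in \BAP(x,r)$ satisfy $\dgras(P(r),P'(r)) \le \Cr{bap-osc} E^{1/\kappa} r^{\tau} \to 0$, so both selections have the same limit $T_x\Sigma$.

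I do not foresee a serious obstacle; the only subtle point is that Lemma~\ref{lem:bap-osc} is formulated only for $r_1 \ge \tfrac 12 r_0$, which is precisely what forces the dyadic scaling in step two and the interpolation in step three. As a byproduct the plan already gives the quantitative rate $\dgras(P(r),T_x\Sigma) \lesssim E^{1/\kappa} r^{\tau}$, which will be useful for the subsequent construction of the graph representation $F_x$ in Theorem~\ref{thm:C1tau}.
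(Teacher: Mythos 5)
Your proposal is correct and follows essentially the same route as the paper: both apply Lemma~\ref{lem:bap-osc} with $y=x$ along the dyadic radii $2^{-k}\Cr{beta-rad}$, sum the resulting geometric series to get the Cauchy condition, use compactness (completeness) of $(G(n,m),\dgras)$ for the limit, interpolate to arbitrary $r$ via $r_1=r\in[\tfrac12 r_k,r_k]$, and note the estimates hold for any selection in $\BAP(x,r)$, which gives uniqueness. The quantitative byproduct $\dgras(P(r),T_x\Sigma)\lesssim E^{1/\kappa}r^{\tau}$ you mention is exactly what the paper records afterwards as Corollary~\ref{cor:tan-dist}.
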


\begin{proof}
  Set $\rho_k := 2^{-k}\Cr{beta-rad}$ and for each $k$ choose $P_k \in
  \BAP(x,\rho_k)$.  Set $\beta_k := \nbeta(x,\rho_k)$. We will show that
  $\{ P(r) \}_{r < \Cr{beta-rad}}$ satisfies the Cauchy condition. Fix
  some $0 < s < t < \rho_0$ and find two natural numbers $k < l$ such that
  $\rho_{l+1} < s \le \rho_l$ and $\rho_{k+1} < t \le \rho_k$.

  Applying Lemma~\ref{lem:bap-osc} with $x = y$, $r_0 = \rho_j$ and $r_1 :=
  \frac 12 r_0 = \rho_{j+1}$ we obtain
  \begin{displaymath}
    \dgras(P_j,P_{j+1}) \le \Cr{bap-osc}E^{1/\kappa} \rho_j^{\tau} \,.
  \end{displaymath}
  Setting $r_0 := \rho_l$ and $r_1 := s$ or $r_0 := \rho_k$ and $r_1 := t$
  we also get
  \begin{align*}
    \dgras(P(s),P_l) &\le \Cr{bap-osc}E^{1/\kappa} \rho_l^{\tau} \,, \\
    \dgras(P(t),P_k) &\le \Cr{bap-osc}E^{1/\kappa} \rho_k^{\tau} \,.
  \end{align*}
  Using these estimates we can write
  \begin{align*}
    \dgras(P(r),P(s)) 
    &\le \dgras(P(r),P_k) + \sum_{j=k}^{l-1} \dgras(P_j,P_{j+1}) + \dgras(P_l,P(s)) \\
    &\le \Cr{bap-osc}E^{1/\kappa} \left( \rho_k^{\tau} + \sum_{j=k}^l \rho_j^{\tau} \right) 
    = \Cr{bap-osc}E^{1/\kappa} \rho_k^{\tau} \left(1 + \sum_{j=0}^{l-k} 2^{-j \tau} \right) \\
    &\le \Cr{bap-osc} E^{1/\kappa} \frac{2^{1+\tau}}{2^{\tau}-1} \rho_k^{\tau}
    =: \Cl{tan-dist} E^{1/\kappa} \rho_k^{\tau} \,,
  \end{align*}
  which shows that the Cauchy condition is satisfied, so $P(r)$ converges in
  $G(n,m)$ to some $m$-plane, which we refer to as the tangent plane
  $T_x\Sigma$. The above estimates are valid for any choice of $P(r) \in
  \BAP(x,r)$, so we have actually shown that $T_x\Sigma$ not only exists but is
  also uniquely determined.
\end{proof}

\begin{rem}
  Note that
  \begin{displaymath}
    \Cr{tan-dist} = \Cr{tan-dist}(m,p,M_{\Sigma},A_{\Sigma}) 
    = \Cr{bap-osc} \frac{2^{1+\tau}}{2^{\tau}-1} \,.
  \end{displaymath}
\end{rem}

\begin{cor}
  \label{cor:tan-dist}
  Choose a point $x \in \Sigma$. For any $r \le \Cr{beta-rad}$ and any $H \in
  \BAP(x,r)$ we have
  \begin{displaymath}
    \dgras(T_x\Sigma, H) \le \Cr{tan-dist} E^{1/\kappa} r^{\tau}
  \end{displaymath}
\end{cor}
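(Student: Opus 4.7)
The plan is to deduce the corollary directly from the telescoping Cauchy estimate already established inside the proof of Lemma~\ref{lem:tan-conv}. That estimate is not merely a Cauchy condition between close pairs $(P(s),P(t))$, but actually controls the distance from any fixed plane $P(r) \in \BAP(x,r)$ down to the limit $T_x\Sigma$, provided we are willing to start the telescope at the scale $r$ rather than at a dyadic scale.

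First, I would fix $x \in \Sigma$, a radius $r \le \Cr{beta-rad}$, and some $H \in \BAP(x,r)$. Declare $P(r) := H$, and for every smaller radius $r' < r$ pick any $P(r') \in \BAP(x,r')$. By the independence-of-choice statement at the end of Lemma~\ref{lem:tan-conv}, we still have $P(s) \to T_x\Sigma$ as $s \to 0$. Then I would run the very same telescoping argument as in the proof of Lemma~\ref{lem:tan-conv}: set $\rho_k := 2^{-k}\Cr{beta-rad}$, locate the index $k$ with $\rho_{k+1} < r \le \rho_k$ and, for any very small $s$, the index $l \ge k$ with $\rho_{l+1} < s \le \rho_l$. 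Applying Lemma~\ref{lem:bap-osc} to the consecutive pairs $(\rho_j,\rho_{j+1})$ for $k \le j < l$, as well as to the boundary pairs $(r,\rho_k)$ and $(s,\rho_l)$, and summing the geometric series $\sum_{j \ge 0} 2^{-j\tau}$ produces exactly the factor $2^{1+\tau}/(2^\tau - 1)$, so one obtains the uniform bound
\[
  \dgras(H, P(s)) \;\le\; \Cr{bap-osc}\, E^{1/\kappa} \cdot \frac{2^{1+\tau}}{2^\tau - 1}\, r^\tau \;=\; \Cr{tan-dist}\, E^{1/\kappa}\, r^\tau
\]
for every sufficiently small $s > 0$.

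Finally, I would let $s \to 0$. Since $\dgras$ is continuous on $G(n,m)$ and $P(s) \to T_x\Sigma$, passing to the limit yields $\dgras(T_x\Sigma, H) \le \Cr{tan-dist}\, E^{1/\kappa}\, r^\tau$, which is the claim. The only place requiring care is constant bookkeeping: when $r$ is not itself a dyadic scale, the short step from $r$ to the enclosing $\rho_k \le 2r$ contributes at most a factor $2^\tau$, but this is precisely what is absorbed into the geometric-series constant $2^{1+\tau}/(2^\tau - 1)$ used to define $\Cr{tan-dist}$. Thus no genuine obstacle arises; the corollary is essentially a repackaging of the core estimate already inside Lemma~\ref{lem:tan-conv}, and no new geometric input is needed beyond Lemma~\ref{lem:bap-osc}.
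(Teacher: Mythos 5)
Your argument is correct and is essentially the paper's own proof: the corollary is read off from the telescoping estimate inside the proof of Lemma~\ref{lem:tan-conv}, whose conclusion is explicitly independent of the choice of planes in $\BAP(x,r)$, by taking the plane at scale $r$ to be $H$ and letting $s \to 0$ together with continuity of $\dgras$. Your closing bookkeeping remark also checks out: the factor $2^{\tau}$ coming from the enclosing dyadic radius is indeed absorbed because $\tau < \tfrac{1}{m+1} \le \tfrac12$, hence $2^{\tau} \le \tfrac32$ and $2^{\tau}\bigl(2^{1+\tau}-1\bigr) \le 2^{1+\tau}$, which is exactly the slack available in the constant $2^{1+\tau}/(2^{\tau}-1)$ (alternatively, telescoping over the radii $2^{-j}r$ starting at $r$ itself avoids this issue entirely).
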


\begin{cor}
  \label{cor:tan-point}
  Choose a point $x \in \Sigma$. For any $y \in \Sigma \cap
  \CBall(x,\Cr{beta-rad})$ we have
  \begin{displaymath}
    \dist(y, x+T_x\Sigma) = |Q_x(y-x)| \le \Cl{tan-point} E^{1/\kappa} |y-x|^{1+\tau} \,,
  \end{displaymath}
  where $\Cr{tan-point} = \Cr{tan-point}(m,p,M_{\Sigma},A_{\Sigma})$. In particular
  \begin{displaymath}
    |Q_x(y-x)| 
    \le \Cr{tan-point} E^{1/\kappa} \Cr{beta-rad}^{\tau} |y-x| 
    \le \frac{\Cr{tan-point}}{4 \Cr{beta-est} M_{\Sigma}} |y-x| 
    =: \Cl{lip-const} |y-x| \,.
  \end{displaymath}
\end{cor}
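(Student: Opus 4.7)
The plan is to reduce the corollary to Corollary~\ref{cor:tan-dist} and the very definition of $\nbeta$ via the triangle inequality applied to the projections. Set $r := |y-x|$, which satisfies $r \le \Cr{beta-rad}$ by hypothesis, and choose any best-approximating plane $H \in \BAP(x,r)$. Since $y \in \Sigma \cap \CBall(x,r)$, the definition of $\nbeta(x,r)$ together with Corollary~\ref{cor:beta-est} immediately yields
\begin{displaymath}
  |Q_H(y-x)| \le \nbeta(x,r)\, r \le \Cr{beta-est} E^{1/\kappa} r^{1+\tau} \,.
\end{displaymath}

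Next I would compare $Q_H$ with $Q_x = Q_{T_x\Sigma}$ using the Grassmannian metric. By Corollary~\ref{cor:tan-dist} we have $\dgras(T_x\Sigma, H) \le \Cr{tan-dist} E^{1/\kappa} r^{\tau}$, and since $\|Q_x - Q_H\| = \dgras(T_x\Sigma, H)$ the triangle inequality gives
\begin{displaymath}
  |Q_x(y-x)|
  \le |Q_H(y-x)| + \|Q_x - Q_H\|\,|y-x|
  \le \bigl(\Cr{beta-est} + \Cr{tan-dist}\bigr) E^{1/\kappa} r^{1+\tau} \,.
\end{displaymath}
Setting $\Cr{tan-point} := \Cr{beta-est} + \Cr{tan-dist}$, which depends only on $m$, $p$, $M_{\Sigma}$ and $A_{\Sigma}$ via the constants already established, proves the main estimate.

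For the ``In particular'' clause, the Lipschitz-type bound follows by extracting one factor of $|y-x|$ and estimating the remaining $|y-x|^{\tau}$ by $\Cr{beta-rad}^{\tau}$. Substituting the definition \eqref{def:beta-rad} of $\Cr{beta-rad}$, namely that $\Cr{beta-est} E^{1/\kappa} \Cr{beta-rad}^{\tau} \le (4M_{\Sigma})^{-1}$, one obtains
\begin{displaymath}
  \Cr{tan-point} E^{1/\kappa} \Cr{beta-rad}^{\tau}
  \le \frac{\Cr{tan-point}}{4 \Cr{beta-est} M_{\Sigma}} \,,
\end{displaymath}
and we define $\Cr{lip-const}$ to be this quantity. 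There is no real obstacle here: all the substantive work was already done in Lemma~\ref{lem:bap-osc} and Lemma~\ref{lem:tan-conv}, which tie the oscillation of the approximating planes to the decay of $\nbeta$; the corollary is just the scale $r = |y-x|$ specialization of those facts combined with a single triangle inequality.
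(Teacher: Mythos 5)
Your proposal is correct and follows essentially the same route as the paper: both pick $H \in \BAP(x,|y-x|)$, bound $|Q_H(y-x)|$ by the $\nbeta$-estimate of Corollary~\ref{cor:beta-est}, control the tilt of $H$ against $T_x\Sigma$ via Corollary~\ref{cor:tan-dist}, and finish with one triangle inequality, arriving at the same constant $\Cr{tan-point} = \Cr{beta-est} + \Cr{tan-dist}$. The only cosmetic difference is that you use $\|Q_x - Q_H\| = \dgras(T_x\Sigma,H)$ directly, whereas the paper splits off $|Q_x(\pi_H(y-x))|$ and invokes Fact~\ref{fact:ang-dist}; these are the same estimate.
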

\begin{proof}
  Choose an $m$-plane $H \in \BAP(x,|y-x|)$. Then we have
  \begin{align*}
    |Q_x(y-x)| 
    &\le |Q_H(y-x)| + |Q_x (\pi_H(y-x))| \\
    &\le |y-x| \nbeta(x,|y-x|) + |y-x| \Cr{tan-dist}E^{1/\kappa} |y-x|^{\tau} \\
    &\le \Cr{tan-point} E^{1/\kappa} |y-x|^{1+\tau} \,, 
  \end{align*}
  where $\Cr{tan-point} := \Cr{tan-dist} + \Cr{beta-est}$. This also gives
  \begin{displaymath}
    \Cr{lip-const} = \Cr{lip-const}(m,p,M_{\Sigma}) 
    = \frac{\Cr{tan-dist} + \Cr{beta-est}}{4 \Cr{beta-est} M_{\Sigma}} 
    = \frac{\tfrac 83 (M_{\Sigma} + 2) \Cr{dist-ang} \frac{2^{1+\tau}}{2^{\tau}-1} + 1}{4 M_{\Sigma}} \,.
  \end{displaymath}
\end{proof}

\begin{lem}
  \label{lem:tan-osc}
  Choose any point $x \in \Sigma$. There exists a constant $\Cl{tan-osc} =
  \Cr{tan-osc}(m,p,M_{\Sigma},A_{\Sigma})$ such that for each $y \in \Sigma \cap
  \CBall(x,\tfrac 12 \Cr{beta-rad})$ we have
  \begin{displaymath}
    \dgras(T_x\Sigma, T_y\Sigma) \le \Cr{tan-osc} E^{1/\kappa} |x-y|^{\tau} \,.
  \end{displaymath}
\end{lem}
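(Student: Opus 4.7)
The plan is to use Lemma~\ref{lem:bap-osc} to compare best-approximating planes at $x$ and at $y$ at the scale $r_0 := 2|x-y|$, and then use Corollary~\ref{cor:tan-dist} twice to pass from these approximating planes to the true tangent planes, paying for all three comparisons by the triangle inequality for the metric $\dgras$.

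More precisely, since $|x-y| \le \tfrac12 \Cr{beta-rad}$, the choice $r_0 := 2|x-y|$ lies in $(0,\Cr{beta-rad}]$ and satisfies $y \in \Sigma \cap \CBall(x,\tfrac12 r_0)$. Taking $r_1 := \tfrac12 r_0 = |x-y|$, we have $r_1 \in [\tfrac12 r_0,\, r_0 - |x-y|]$, so the hypotheses of Lemma~\ref{lem:bap-osc} are met. Pick any $H_0 \in \BAP(x,r_0)$ and $H_1 \in \BAP(y,r_1)$. Then
\begin{displaymath}
  \dgras(H_0,H_1) \le \Cr{bap-osc} E^{1/\kappa} r_0^{\tau} = 2^{\tau}\Cr{bap-osc} E^{1/\kappa} |x-y|^{\tau}.
\end{displaymath}
Corollary~\ref{cor:tan-dist} applied at $x$ with radius $r_0$ and at $y$ with radius $r_1$ gives
\begin{displaymath}
  \dgras(T_x\Sigma,H_0) \le 2^{\tau}\Cr{tan-dist} E^{1/\kappa} |x-y|^{\tau},
  \qquad
  \dgras(T_y\Sigma,H_1) \le \Cr{tan-dist} E^{1/\kappa} |x-y|^{\tau}.
\end{displaymath}
The triangle inequality for $\dgras$ then yields
\begin{displaymath}
  \dgras(T_x\Sigma,T_y\Sigma)
  \le \bigl( 2^{\tau}\Cr{tan-dist} + 2^{\tau}\Cr{bap-osc} + \Cr{tan-dist} \bigr) E^{1/\kappa} |x-y|^{\tau},
\end{displaymath}
and we set $\Cr{tan-osc} := 2^{\tau}\Cr{tan-dist} + 2^{\tau}\Cr{bap-osc} + \Cr{tan-dist}$, which depends only on $m$, $p$, $M_{\Sigma}$, $A_{\Sigma}$ through $\Cr{tan-dist}$ and $\Cr{bap-osc}$.

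There is no real obstacle here: all the work has already been done in proving Lemma~\ref{lem:bap-osc} and Lemma~\ref{lem:tan-conv}. The only thing to verify carefully is the admissibility of the radius choice, namely that $r_0 = 2|x-y| \le \Cr{beta-rad}$ (which follows from the hypothesis $|x-y| \le \tfrac12 \Cr{beta-rad}$) and that $r_1 = |x-y|$ lies in the allowed interval $[\tfrac12 r_0, r_0 - |x-y|]$ — it lies exactly at both endpoints, which is enough. The factor $2^{\tau}$ in the final constant is the only trace of the fact that we evaluate the best-approximating planes at a scale comparable to, but slightly larger than, $|x-y|$.
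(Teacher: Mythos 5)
Your proposal is correct and follows essentially the same route as the paper's own proof: the same choice of scales $r_0 = 2|x-y|$, $r_1 = |x-y|$, the same application of Lemma~\ref{lem:bap-osc} and Corollary~\ref{cor:tan-dist}, and the same triangle inequality for $\dgras$. The only (harmless) difference is bookkeeping: you track the factor $2^{\tau}$ coming from $r_0^{\tau} = 2^{\tau}|x-y|^{\tau}$ explicitly, whereas the paper absorbs it silently into the definition of $\Cr{tan-osc} = \Cr{bap-osc} + 2\Cr{tan-dist}$.
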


\begin{proof}
  Let $y \in \Sigma \cap \CBall(x,\frac 12 \Cr{beta-rad})$. Set
  $r_0 := 2 |x-y|$ and $r_1 = |x-y|$. Choose any $H_0 \in \BAP(x,r_0)$ and
  any $H_1 \in \BAP(y,r_1)$. From Lemma~\ref{lem:bap-osc} we have
  \begin{displaymath}
    \dgras(H_0,H_1) \le \Cr{bap-osc}E^{1/\kappa} r_0^{\tau} \,.
  \end{displaymath}
  On the other hand Corollary~\ref{cor:tan-dist} says that
  \begin{displaymath}
    \dgras(T_x\Sigma, H_0) \le \Cr{tan-dist} E^{1/\kappa} r_0^{\tau}
    \qquad \text{and} \qquad
    \dgras(T_y\Sigma, H_1) \le \Cr{tan-dist} E^{1/\kappa} r_0^{\tau} \,.
  \end{displaymath}
  Putting these estimates together we obtain
  \begin{align*}
    \dgras(T_x\Sigma, T_y\Sigma) 
    &\le \dgras(T_x\Sigma,H_0) + \dgras(H_0,H_1) + \dgras(H_1,T_y\Sigma) \\
    &\le ( \Cr{bap-osc} + 2 \Cr{tan-dist} ) E^{1/\kappa} r_0^{\tau}
    = \Cr{tan-osc} E^{1/\kappa} |x-y|^{\tau} \,,
  \end{align*}
  where $\Cr{tan-osc} := \Cr{bap-osc} + 2 \Cr{tan-dist}$.
\end{proof}

\mysubsection{The parameterizing function $F_x$}

Combining Corollary~\ref{cor:tan-point} and Lemma~\ref{lem:tan-osc} one can see
that if we have two distinct points $y,z \in \Sigma$ such that $y-z \perp
T_x\Sigma$ and $|y-z| \lesssim |x-y|$ then the tangent plane $T_y\Sigma$ must
form a large angle with the plane $T_x\Sigma$. Such situation can only happen
far away from $x$ because of the bound on the oscillation of tangent
planes. Hence we have the following
\begin{lem}
  \label{lem:1point-rad}
  Choose any point $x \in \Sigma$. There exists a radius $\Cl[R]{graph-rad}
  > 0$ such that if $y,z \in \Sigma \cap \CBall(x,\frac 12
  \Cr{beta-rad})$ and $(y-z) \perp T_x\Sigma$, then necessarily
  $\max\{|x-y|,|x-z|\} > \Cr{graph-rad}$.
\end{lem}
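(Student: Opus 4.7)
My plan is to argue by contradiction and combine two quantitative estimates already proved earlier in this section: the oscillation of tangent planes (Lemma~\ref{lem:tan-osc}) and the pointwise tangential bound (Corollary~\ref{cor:tan-point}). Suppose for contradiction that there exist distinct $y, z \in \Sigma \cap \CBall(x,\Cr{graph-rad})$ with $(y-z) \perp T_x\Sigma$, where the radius $\Cr{graph-rad}$ is still to be fixed; I will require $\Cr{graph-rad} \le \tfrac 12 \Cr{beta-rad}$ so that the estimates at $y$ are applicable to $z$ as well. Set $v := z-y$; by hypothesis $v \in T_x\Sigma^{\perp}$ and $|v| \le 2\Cr{graph-rad} \le \Cr{beta-rad}$.

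The first step is to control the tangent plane at $y$. Since $|x-y| \le \Cr{graph-rad}$, Lemma~\ref{lem:tan-osc} gives
\[
  \dgras(T_x\Sigma, T_y\Sigma) \le \Cr{tan-osc} E^{1/\kappa} \Cr{graph-rad}^{\tau}.
\]
The second step is to note that $v$, viewed from $y$, is nearly tangential: applying Corollary~\ref{cor:tan-point} at $y$ with $z \in \Sigma \cap \CBall(y,\Cr{beta-rad})$ yields $|Q_y(v)| \le \Cr{tan-point} E^{1/\kappa} |v|^{1+\tau}$, hence $|\pi_y(v)| \ge |v|\bigl(1 - \Cr{tan-point} E^{1/\kappa} |v|^{\tau}\bigr)$.

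The third step is the angle-comparison punchline. Because $v \in T_x\Sigma^{\perp}$, Fact~\ref{fact:ang-dist} bounds the projection onto $T_y\Sigma$ by
\[
  |\pi_y(v)| \le |v| \, \dgras(T_x\Sigma, T_y\Sigma) \le |v| \Cr{tan-osc} E^{1/\kappa} \Cr{graph-rad}^{\tau}.
\]
Combining this upper bound with the lower bound from step two and dividing by $|v|>0$ gives
\[
  1 \;\le\; \Cr{tan-osc} E^{1/\kappa} \Cr{graph-rad}^{\tau} + \Cr{tan-point} E^{1/\kappa} (2\Cr{graph-rad})^{\tau}.
\]
It now suffices to choose $\Cr{graph-rad}$ small enough that the right-hand side is strictly less than $1$; this only requires $\Cr{graph-rad}^{\tau} < \bigl(\Cr{tan-osc} + 2^{\tau}\Cr{tan-point}\bigr)^{-1} E^{-1/\kappa}$, which produces the contradiction and simultaneously fixes a value of $\Cr{graph-rad}$ of the form $\mathrm{const}(m,p,M_{\Sigma},A_{\Sigma}) \cdot E^{-1/\lambda}$, as needed for Theorem~\ref{thm:C1tau}.

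I do not expect a deep obstacle here: both ingredients are already available, and the geometry is exactly what one expects, namely that a secant vector of $\Sigma$ at small scale is nearly tangential, so it cannot be perpendicular to a tangent plane at a nearby point without forcing those two tangent planes to rotate a definite amount. The only bookkeeping subtleties are (i) ensuring $\Cr{graph-rad} \le \tfrac 12 \Cr{beta-rad}$ so that Lemma~\ref{lem:tan-osc} and Corollary~\ref{cor:tan-point} can be invoked at $y$ with $z$ in their range, and (ii) keeping the dependence of $\Cr{graph-rad}$ on the energy bound $E$ in the explicit form $E^{-1/\lambda}$, so that the scale is controlled solely by $E$, $m$, $p$ (and the still-present dependence on $M_{\Sigma}$, $A_{\Sigma}$, which will be absorbed later via Corollary~\ref{cor:ind-const} and Theorem~\ref{thm:adm-fine}).
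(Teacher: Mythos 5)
Your proposal is correct and follows essentially the same route as the paper's proof: both arguments play the lower bound $|\pi_y(z-y)|\ge |z-y|\bigl(1-\Cr{tan-point}E^{1/\kappa}|z-y|^{\tau}\bigr)$ from Corollary~\ref{cor:tan-point} (applied at $y$, using $\pi_x(z-y)=0$) against the upper bound on $\dgras(T_x\Sigma,T_y\Sigma)$ from Lemma~\ref{lem:tan-osc}, and the constraints you impose ($\Cr{graph-rad}\le\tfrac12\Cr{beta-rad}$ and a smallness condition yielding $\Cr{graph-rad}\sim E^{-1/\lambda}$) match the paper's. The only cosmetic differences are that you argue by contradiction with the crude bound $|y-z|\le 2\Cr{graph-rad}$, whereas the paper derives an explicit lower bound on $\max\{|x-y|,|x-z|\}$ after first estimating $|y-z|\le 2\Cr{lip-const}|x-y|$ via Corollary~\ref{cor:tan-point} at $x$.
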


\begin{proof}
  Choose two points $y,z \in \Sigma \cap \CBall(x,\frac 12
  \Cr{beta-rad})$ such that $(z-y) \perp T_x\Sigma$. Without loss of
  generality we can assume that $|x-y| \ge |x-z|$. First we estimate the
  distance $|y-z|$ using Corollary~\ref{cor:tan-point}. We have
  \begin{align}
    \label{est:yz}
    |y-z| = |Q_x(y-z)|
    &\le |Q_x(y-x)| + |Q_x(x-z)| \\
    &\le \Cr{lip-const} |y-x| + \Cr{lip-const} |x-z| \le 2 \Cr{lip-const} |x-y| \notag \,.
  \end{align}
  Set $\tilde{\Cr{graph-rad}} := \frac{\Cr{beta-rad}}{4\Cr{lip-const}}$. If
  $|x-y| \le \tilde{\Cr{graph-rad}}$, then $\Cr{lip-const} |x-y| \le \tfrac 12
  \Cr{beta-rad}$. Hence $|y-z| \le \tfrac 12 \Cr{beta-rad}$ and we can use
  Corollary~\ref{cor:tan-point} once again to estimate the distance between
  $T_y\Sigma$ and~$z$.

  Using the definition of $\dgras$ we may write
  \begin{align}
    \label{est:TxTy-lower}
    \dgras(T_x\Sigma, T_y\Sigma)
    &\ge |z-y|^{-1} |\pi_x(z-y) - \pi_y (z-y)|
    = |z-y|^{-1} |\pi_y (z-y)| \\
    &\ge |z-y|^{-1} \left( |z-y| - |Q_y(z-y)| \right) \notag \\ 
    &\ge |z-y|^{-1} \left( |z-y| - \Cr{tan-point}E^{1/\kappa}|z-y|^{1+\tau} \right) \notag \\
    &= 1 - \Cr{tan-point}E^{1/\kappa}|z-y|^{\tau} \notag \,.
  \end{align}
  On the other hand Lemma~\ref{lem:tan-osc} gives us
  \begin{equation}
    \label{est:TxTy-upper}
    \dgras(T_x\Sigma, T_y\Sigma) \le \Cr{tan-osc} E^{1/\kappa} |x-y|^{\tau} \,.
  \end{equation}
  Putting these two estimates together we have
  \begin{displaymath}
    1 - \Cr{tan-point} E^{1/\kappa} |z-y|^{\tau} 
    \le \dgras(T_x\Sigma, T_y\Sigma) 
    \le \Cr{tan-osc} E^{1/\kappa} |x-y|^{\tau} \,.
  \end{displaymath}
  By \eqref{est:yz},
  \begin{displaymath}
    1 - \Cr{tan-point} E^{1/\kappa} (2\Cr{lip-const})^{\tau} |x-y|^{\tau} 
    \le \Cr{tan-osc}  E^{1/\kappa} |x-y|^{\tau} \,.
  \end{displaymath}
  Hence
  \begin{displaymath}
    |x-y| \ge E^{-1/\lambda} (\Cr{tan-osc} + \Cr{tan-point} (2\Cr{lip-const})^{\tau})^{-1/\tau} \,.
  \end{displaymath}
  We may set
  \begin{align}
    \label{def:graph-rad}
    \Cr{graph-rad} = \Cr{graph-rad}(E,m,p,M_{\Sigma},A_{\Sigma},R_{\Sigma})
    &:= \min \left\{
      \frac 12 E^{-1/\lambda} (\Cr{tan-osc} + \Cr{tan-point}(2\Cr{lip-const})^{\tau})^{-1/\tau},
      \tilde{\Cr{graph-rad}}
    \right\} \\
    &= \min \left\{
      \frac 12 E^{-1/\lambda} (\Cr{tan-osc} + \Cr{tan-point}(2\Cr{lip-const})^{\tau})^{-1/\tau},
      \frac{\Cr{beta-rad}}{4\Cr{lip-const}}
    \right\} \notag \,.
  \end{align}
\end{proof}

Let us define
\begin{equation}
  \label{def:smooth-rad}
  \Cr{smooth-rad} = \Cr{smooth-rad}(E,m,p,M_{\Sigma},A_{\Sigma},R_{\Sigma})
  := \frac 12 \min\{
  E^{-1/\lambda} (2 \Cr{tan-osc})^{-1/\tau},
  \Cr{graph-rad},
  \tfrac 12 \Cr{beta-rad}
  \} \,.
\end{equation}
This definition assures that for any $y,z \in \Sigma \cap
\CBall(x,\Cr{smooth-rad})$ we have
\begin{displaymath}
  \dgras(T_y\Sigma, T_z\Sigma) \le \tfrac 12 \,.
\end{displaymath}
Here, the radius $\Cr{smooth-rad}$ depends on $A_{\Sigma}$, $M_{\Sigma}$ and
$R_{\Sigma}$ but at the end of this section we shall prove that one can drop
these dependencies just by showing that $A_{\Sigma}$, $M_{\Sigma}$ and
$R_{\Sigma}$ can be expressed solely in terms if $E$, $m$ and $p$.

\begin{cor}
  \label{cor:graph}
  For each $x \in \Sigma$ and each $y \in \Sigma \cap
  \CBall(x,\Cr{smooth-rad})$ the point $y$ is the only point in the
  intersection $\Sigma \cap (y + T_x\Sigma^{\perp}) \cap
  \CBall(x,\Cr{smooth-rad})$. Therefore $(\Sigma - x) \cap
  \CBall_{\Cr{smooth-rad}}$ is a graph of the function
  \begin{align}
    \label{def:Fx}
    F_x : \tilde{\Dom}_x &\to T_x\Sigma^{\perp} \cap \CBall_{\Cr{smooth-rad}}
    \quad \text{defined by}\\
    F_x(w) + w &= (\Sigma - x) \cap (w+T_x\Sigma^{\perp}) \cap \CBall_{\Cr{smooth-rad}} \notag \,,
  \end{align}
  where $\tilde{\Dom}_x \subseteq T_x\Sigma$ is defined as
  \begin{displaymath}
    \tilde{\Dom}_x := \pi_x((\Sigma - x) \cap \CBall_{\Cr{smooth-rad}}) \,.
  \end{displaymath}
\end{cor}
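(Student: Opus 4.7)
The plan is to derive the uniqueness statement directly from Lemma~\ref{lem:1point-rad} and then read off the graph representation. The key observation is that the definition of $\Cr{smooth-rad}$ in~\eqref{def:smooth-rad} guarantees the two safety inclusions
\begin{displaymath}
  \Cr{smooth-rad} \le \tfrac 12 \Cr{graph-rad}
  \qquad \text{and} \qquad
  \Cr{smooth-rad} \le \tfrac 14 \Cr{beta-rad} \,,
\end{displaymath}
so every pair of points in $\Sigma \cap \CBall(x,\Cr{smooth-rad})$ fits inside the hypothesis ball $\CBall(x,\tfrac 12 \Cr{beta-rad})$ of Lemma~\ref{lem:1point-rad}.

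First I would establish uniqueness. Fix $y \in \Sigma \cap \CBall(x,\Cr{smooth-rad})$ and suppose $z \in \Sigma \cap (y + T_x\Sigma^\perp) \cap \CBall(x,\Cr{smooth-rad})$. Then $y - z \in T_x\Sigma^\perp$ and both $y$ and $z$ lie in $\CBall(x,\tfrac 12 \Cr{beta-rad})$; if $y \ne z$, Lemma~\ref{lem:1point-rad} forces $\max\{|x-y|, |x-z|\} > \Cr{graph-rad} \ge 2 \Cr{smooth-rad}$, contradicting $|x-y|, |x-z| \le \Cr{smooth-rad}$. Hence $y = z$.

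Next I would translate this into the graph representation. Uniqueness means that the restriction of $\pi_x$ to $(\Sigma - x) \cap \CBall_{\Cr{smooth-rad}}$ is injective: if two such vectors $\zeta_1, \zeta_2$ satisfy $\pi_x(\zeta_1) = \pi_x(\zeta_2)$, then $\zeta_1 - \zeta_2 \in T_x\Sigma^\perp$, and applying the uniqueness paragraph to $y := \zeta_1 + x$ and $z := \zeta_2 + x$ yields $\zeta_1 = \zeta_2$. Thus $\tilde{\Dom}_x := \pi_x((\Sigma - x) \cap \CBall_{\Cr{smooth-rad}})$ is in bijection with $(\Sigma - x) \cap \CBall_{\Cr{smooth-rad}}$, and I can define $F_x(w)$ to be the unique vector $Q_x(\zeta) \in T_x\Sigma^\perp$ such that $\zeta = w + Q_x(\zeta)$ lies in $(\Sigma - x) \cap \CBall_{\Cr{smooth-rad}}$; this is exactly the defining relation~\eqref{def:Fx}. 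Since $w \perp F_x(w)$, the Pythagorean identity gives $|F_x(w)|^2 + |w|^2 = |w + F_x(w)|^2 \le \Cr{smooth-rad}^2$, confirming that $F_x$ takes values in $T_x\Sigma^\perp \cap \CBall_{\Cr{smooth-rad}}$.

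There is no real obstacle here: the statement is essentially a repackaging of Lemma~\ref{lem:1point-rad}, and all the analytic work has already been done in choosing the constant $\Cr{smooth-rad}$ to dominate both $\tfrac 12 \Cr{graph-rad}$ and $\tfrac 14 \Cr{beta-rad}$. The only subtlety worth flagging is that the image domain $\tilde{\Dom}_x$ need not be a full ball in $T_x\Sigma$ -- later one will have to enlarge or shrink the relevant radii (this is why the final statement of Theorem~\ref{thm:C1tau} asks only for $|\pi_x(y)| \le \tfrac 12 \Cr{smooth-rad}$) to guarantee that the graph covers a genuine disk, but that refinement is not needed for the present corollary.
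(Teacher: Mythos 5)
Your proof is correct and is essentially the paper's own (implicit) argument: the corollary is stated without a separate proof precisely because it follows from Lemma~\ref{lem:1point-rad} combined with the choice of $\Cr{smooth-rad}$ in \eqref{def:smooth-rad}, which gives exactly the inequalities $\Cr{smooth-rad} \le \tfrac 12 \Cr{graph-rad}$ and $\Cr{smooth-rad} \le \tfrac 14 \Cr{beta-rad}$ that you use, and your translation of the uniqueness into injectivity of $\pi_x$ on $(\Sigma-x)\cap\CBall_{\Cr{smooth-rad}}$ is the intended reading of \eqref{def:Fx}. Only a wording slip in your closing remark: $\Cr{smooth-rad}$ is chosen to be \emph{dominated by} $\tfrac 12 \Cr{graph-rad}$ and $\tfrac 14 \Cr{beta-rad}$, not to dominate them --- the displayed inequalities at the start of your proposal are the correct ones.
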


\begin{lem}
  \label{lem:cont}
  For each $x \in \Sigma$ the function $F_x : \tilde{\Dom}_x \to T_x\Sigma^{\perp}$ is
  continuous.
\end{lem}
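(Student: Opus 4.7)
The plan is to prove continuity of $F_x$ by a straightforward compactness and uniqueness argument, leveraging the defining property of $F_x$ from Corollary~\ref{cor:graph}. Fix $x \in \Sigma$ and let $(w_n) \subseteq \tilde{\Dom}_x$ be any sequence with $w_n \to w \in \tilde{\Dom}_x$. Set $z_n := F_x(w_n) + w_n$, so that each point $z_n + x$ lies in the compact set $\Sigma \cap \CBall(x, \Cr{smooth-rad})$ (recall that $\Sigma$ is assumed compact in Definition~\ref{def:fine}, and $\CBall_{\Cr{smooth-rad}}$ is a closed ball in $\Rn$).

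By compactness, some subsequence satisfies $z_{n_k} \to z$ with $z + x \in \Sigma$ and $z \in \CBall_{\Cr{smooth-rad}}$. Continuity of the orthogonal projection $\pi_x$ gives $\pi_x(z) = \lim_k \pi_x(z_{n_k}) = \lim_k w_{n_k} = w$, so
\begin{displaymath}
  z \in (\Sigma - x) \cap (w + T_x\Sigma^{\perp}) \cap \CBall_{\Cr{smooth-rad}} \,.
\end{displaymath}
By Corollary~\ref{cor:graph}, this intersection consists of the single point $F_x(w) + w$, hence $z = F_x(w) + w$. Since every convergent subsequence of $(z_n)$ has the same limit, the entire sequence converges to $F_x(w) + w$, and consequently $F_x(w_n) = z_n - w_n \to F_x(w)$. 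This is precisely the continuity of $F_x$ at $w$.

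The only obstacle worth mentioning is making sure that the subsequential limit $z$ really remains in the closed ball $\CBall_{\Cr{smooth-rad}}$ so that Corollary~\ref{cor:graph} applies; this is automatic because the ball is closed. As a remark, one can upgrade the argument to genuine Lipschitz continuity: for two nearby points $y_i = F_x(w_i) + w_i + x \in \Sigma \cap \CBall(x, \Cr{smooth-rad})$ the estimate from Corollary~\ref{cor:tan-point} (applied at $y_1$, and using Lemma~\ref{lem:tan-osc} to replace $T_{y_1}\Sigma$ by $T_x\Sigma$) yields $|Q_x(y_1 - y_2)| \le c\, |y_1 - y_2|$ with $c < 1$ at the scale $\Cr{smooth-rad}$; combined with the orthogonal decomposition $|y_1 - y_2|^2 = |w_1 - w_2|^2 + |F_x(w_1) - F_x(w_2)|^2$ this absorbs to $|F_x(w_1) - F_x(w_2)| \lesssim |w_1 - w_2|$. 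However, for the present lemma the compactness argument above is sufficient and quicker.
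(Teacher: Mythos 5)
Your argument is correct and is essentially the paper's proof: the paper notes that $\pi_x$ restricted to $(\Sigma-x)\cap\CBall_{\Cr{smooth-rad}}$ is a continuous bijection from a compact set onto $\tilde{\Dom}_x$, hence a homeomorphism, so $F_x = Q_x\circ(\pi_x|_{\tilde\Sigma})^{-1}$ is continuous. Your sequential compactness-plus-uniqueness argument is just the unpacked version of that same fact (the side remark on Lipschitz continuity is not needed and its constant bookkeeping is not fully justified, but it does not affect the proof).
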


\begin{proof}
  Set $\tilde{\Sigma} := (\Sigma - x) \cap \CBall_{\Cr{smooth-rad}}$. Since
  $\tilde{\Sigma}$ is an intersection of two compact sets it is compact. By
  definition of $\tilde{\Sigma}$ and $\tilde{\Dom}_x$ we know that
  $\pi_x|_{\tilde{\Sigma}} : \tilde{\Sigma} \to \tilde{\Dom}_x$ is a
  bijection. It is also continuous because it is a restriction of a continuous
  function $\pi_x$. Therefore $\pi_x|_{\tilde{\Sigma}}$ is a homeomorphism and
  the inverse $f_x := (\pi_x|_{\tilde{\Sigma}})^{-1} : \tilde{\Dom}_x \to
  \tilde{\Sigma}$ is also continuous. Note that $F_x(w) = f_x(w) - w =
  Q_x(f_x(w))$ is a composition of continuous functions, hence it is
  continuous.
\end{proof}

Up to now we do not know much about the set $\tilde{\Dom}_x$. We know that $0
\in \tilde{\Dom}_x$, so it is not empty but it might happen that there are
only a few other points in $\tilde{\Dom}_x$. Now we will prove that
$\tilde{\Dom}_x$ contains the whole disc
$\CDisc_{\frac 12 \Cr{smooth-rad}} := \CBall_{\frac 12 \Cr{smooth-rad}} \cap T_x\Sigma$.

\begin{lem}
  \label{lem:domain}
  The set $\Dom_x := \tilde{\Dom}_x \cap \CBall_{\frac 12\Cr{smooth-rad}}$
  coincides with the closed disc $\CDisc_{\frac 12 \Cr{smooth-rad}} :=
  \CBall_{\frac 12\Cr{smooth-rad}} \cap T_x\Sigma$.
\end{lem}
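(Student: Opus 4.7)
The plan is to establish the non-trivial inclusion $\CDisc_{\frac 12 \Cr{smooth-rad}} \subseteq \Dom_x$ by a connectedness argument. Let $U := \{w \in T_x\Sigma : |w| < \tfrac 12 \Cr{smooth-rad}\}$ denote the open disc. I will show that $U \cap \tilde{\Dom}_x$ is both open and closed in $U$; since $0 \in U \cap \tilde{\Dom}_x$ and $U$ is connected, this forces $U \subseteq \tilde{\Dom}_x$. Compactness of $\tilde{\Dom}_x$ — as the continuous image under $\pi_x$ of the compact set $(\Sigma - x) \cap \CBall_{\Cr{smooth-rad}}$ — then allows passage to closures to conclude $\CDisc_{\frac 12 \Cr{smooth-rad}} = \overline{U} \subseteq \tilde{\Dom}_x$, which intersected with $\CBall_{\frac 12 \Cr{smooth-rad}}$ gives what is required.

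Closedness of $U \cap \tilde{\Dom}_x$ in $U$ will be a routine compactness argument: if $w_k \in U \cap \tilde{\Dom}_x$ converges to $w \in U$, then $y_k := w_k + F_x(w_k)$ lies in the compact set $(\Sigma - x) \cap \CBall_{\Cr{smooth-rad}}$, so a subsequence converges to a point $y$ with $\pi_x(y) = w$, placing $w$ in $\tilde{\Dom}_x$.

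The real work is openness. Fix $w_0 \in U \cap \tilde{\Dom}_x$ and set $y_0 := w_0 + F_x(w_0)$, $z_0 := y_0 + x \in \Sigma$. Since $\Cr{smooth-rad}$ in \eqref{def:smooth-rad} may be further shrunk without affecting the structure of the argument, I will tighten it so the effective Lipschitz factor $\Cr{tan-point} E^{1/\kappa} \Cr{smooth-rad}^\tau \le \tfrac 13$; Corollary~\ref{cor:tan-point} then yields $|y_0| \le \tfrac 32 |w_0| < \tfrac 34 \Cr{smooth-rad}$, leaving a definite margin below $\Cr{smooth-rad}$. By \eqref{def:smooth-rad} combined with Lemma~\ref{lem:tan-osc} we have $\dgras(T_x\Sigma, T_{z_0}\Sigma) \le \tfrac 12$, so Remark~\ref{rem:orth-angle} yields the linear isomorphism $L := (\pi_x|_{T_{z_0}\Sigma})^{-1}: T_x\Sigma \to T_{z_0}\Sigma$ with $\|L\| \le 2$. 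For $w \in T_x\Sigma$ close to $w_0$, I will construct $y \in \Sigma \cap \CBall(x, \Cr{smooth-rad})$ with $\pi_x(y - x) = w$ by a Picard iteration inside $T_{z_0}\Sigma$: start with $u_0 := L(w - w_0)$; given $u_k$ small, apply Corollary~\ref{cor:beta-est} together with the $m$-fine condition $\ntheta \le M_\Sigma \nbeta$ at scale $|u_k|$ around $z_0$ to produce $y_k \in \Sigma$ within $C E^{1/\kappa} |u_k|^{1+\tau}$ of $z_0 + u_k$ (here $H$ approximating $\Sigma$ is close to $T_{z_0}\Sigma$ by Corollary~\ref{cor:tan-dist}); then correct by $u_{k+1} := u_k + L(w - w_0 - \pi_x(y_k - z_0))$. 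Each correction satisfies $|u_{k+1} - u_k| \le 2 C E^{1/\kappa} |u_k|^{1+\tau}$, giving geometric convergence, and the limit $y$ satisfies $\pi_x(y - z_0) = w - w_0$, i.e., $\pi_x(y - x) = w$.

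The chief obstacle will be keeping the iteration in the valid regime throughout: each $|u_k|$ must stay below $\Cr{beta-rad}$ so Corollary~\ref{cor:beta-est} applies, and the accumulated displacement must satisfy $|y - x| \le \Cr{smooth-rad}$. Both are guaranteed by choosing $|w - w_0|$ sufficiently small (with a threshold depending only on $E$, $m$, $p$), exploiting the margin secured after the opening Lipschitz estimate and the $|u_k|^{\tau}$ factor in the contraction bound which amplifies the geometric decay.
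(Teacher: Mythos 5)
Your skeleton (show the trace of $\tilde{\Dom}_x$ in the disc is nonempty, closed and open, then use connectedness) is exactly the paper's, and your closedness/compactness step is fine. The gap is in the openness step: the Picard iteration as you set it up does not converge to a point with \emph{exact} projection $w$, which is what membership in $\tilde{\Dom}_x$ requires. At every stage you recenter at $z_0$ and apply the fine condition $\ntheta \le M_\Sigma\nbeta$ at the scale $|u_k|\approx|w-w_0|$, so the placement error of $y_k$ is of order $E^{1/\kappa}|u_k|^{1+\tau}\approx E^{1/\kappa}|w-w_0|^{1+\tau}$ at \emph{every} step; it never shrinks with $k$. Writing $\delta_k := y_k - z_0 - u_k$, one computes $e_{k+1} = \pi_x(\delta_k)-\pi_x(\delta_{k+1})$, so the residuals $e_k = w-w_0-\pi_x(y_k-z_0)$ stall at size $\sim E^{1/\kappa}|w-w_0|^{1+\tau}$ and do not tend to zero, and the increments $|u_{k+1}-u_k|\le 2|e_k|$ are of the same fixed size rather than geometrically decaying (the claimed ``$|u_k|^\tau$ factor amplifying geometric decay'' is vacuous because $|u_k|$ stays of order $|w-w_0|$, it does not decay). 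So the scheme proves only that $w$ is within $\sim|w-w_0|^{1+\tau}$ of $\tilde{\Dom}_x$, which does not give openness.

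Two repairs are available. Either turn your scheme into a genuine Newton-type iteration by recentering at the most recently found point $y_k\in\Sigma$ and applying Corollary~\ref{cor:beta-est} together with condition~II of Definition~\ref{def:fine} at the \emph{residual} scale $|e_k|$ (then the new error is $\lesssim E^{1/\kappa}|e_k|^{1+\tau}\ll|e_k|$, the increments are summable, $y_k$ converges in the compact set $\Sigma\cap\CBall(x,\Cr{smooth-rad})$, and the limit has projection exactly $w$), taking care that all points stay in $\CBall_{\Cr{smooth-rad}}$ and all scales stay below $\Cr{beta-rad}$. Or avoid exact attainment altogether, as the paper does: assume openness fails, take $u$ in the disc with $\rho:=\dist(u,\Dom_x)>0$, pick a nearest point $v\in\Dom_x$, set $y:=F_x(v)+v$, and use $\ntheta(y,\rho)\le\tfrac14$ together with $\dgras(T_x\Sigma,T_y\Sigma)\le\tfrac12$ to produce a point $c\in\Sigma-x$ inside $\CBall_{\Cr{smooth-rad}}$ whose projection $a=\pi_x(c)$ satisfies $|a-u|\le\tfrac34\rho<\rho$, contradicting the definition of $\rho$; a single step at the gap scale suffices, so no limit is needed. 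Your preliminary shrinking of $\Cr{smooth-rad}$ is harmless but also unnecessary once the argument is run at the gap scale.
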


\begin{proof}
  We will show that $\Dom_x$ is both closed and open in $\CDisc_{\frac 12
    \Cr{smooth-rad}}$. First note that $\tilde{\Dom}_x$ is the image of a
  compact set $(\Sigma - x) \cap \CBall_{\Cr{smooth-rad}}$ under a continuous
  mapping $\pi_x$, so it is compact, hence closed in $T_x\Sigma$. Therefore
  $\tilde{\Dom}_x \cap \CDisc_{\frac 12 \Cr{smooth-rad}}$ is closed in
  $\CDisc_{\frac 12 \Cr{smooth-rad}}$ but $\tilde{\Dom}_x \cap \CDisc_{\frac
    12 \Cr{smooth-rad}} = \Dom_x$.

  Now we need to prove that $\Dom_x$ is also open in $\CDisc_{\frac 12
    \Cr{smooth-rad}}$. We do that by contradiction. Assume that $\Dom_x$ is
  not open in $\CDisc_{\frac 12 \Cr{smooth-rad}}$. Then there exists a point
  $w \in \Dom_x$ such that for all $r > 0$ we have $\Ball(w,r) \cap \Dom_x \ne
  \Ball(w,r) \cap \CDisc_{\frac 12 \Cr{smooth-rad}}$. Hence for all $r > 0$
  there exists a point $u \in \Ball(w,r) \cap \CDisc_{\frac 12
    \Cr{smooth-rad}} \setminus \Dom_x$. Fix $r > 0$ so small that $\Ball(w,4r)
  \subseteq \Ball_{\Cr{smooth-rad}}$. We can always do that because $|w| \le
  \tfrac 12 \Cr{smooth-rad}$. Fix some $u \in \Ball(w,r) \cap \CDisc_{\frac 12
    \Cr{smooth-rad}} \setminus \Dom_x$.  There exists $\rho > 0$ such that
  $\Ball(u,\rho) \subseteq \Ball(w,2r) \subseteq \Ball_{\Cr{smooth-rad}}$ and
  $\Ball(u,\rho) \cap \Dom_x = \emptyset$ and $\CBall(u,\rho) \cap \Dom_x \ne
  \emptyset$. In other words we take $\rho$ to be the distance of $u$ from
  $\Dom_x$ (see Figure~\ref{F:domain-disc}).
  \begin{displaymath}
    \rho := \sup \{ s > 0 : \Ball(u,s) \cap \Dom_x = \emptyset \} \le r \,.
  \end{displaymath}
  Set $z := F_x(w) + w \in (\Sigma - x) \cap \Ball_{\Cr{smooth-rad}}$ and
  choose any $v \in \CBall(u,\rho) \cap \Dom_x$. Set $y := F_x(v) + v
  \in (\Sigma - x) \cap \Ball_{\Cr{smooth-rad}}$. Directly from the definition
  of $\tilde{\Dom}_x$ we obtain
  \begin{equation}
    \label{eq:empty-region}
    \forall \tilde{x} \in T_x\Sigma \cap \Ball(u,\rho)\ \ 
    (\Sigma - x) \cap (\tilde{x}+T_x\Sigma^{\perp}) \cap \Ball_{\Cr{smooth-rad}} = \emptyset \,.
  \end{equation}

  \begin{figure}[!htb]
    \centering
    \includegraphics{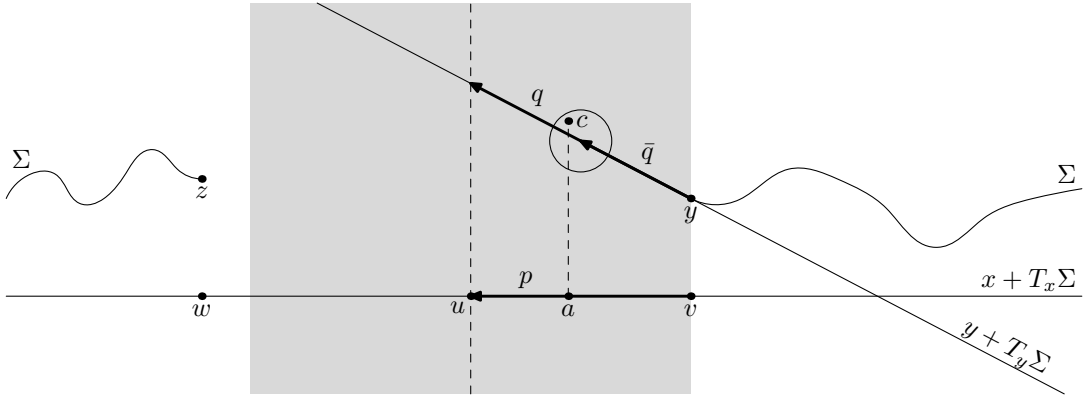}
    \caption{There can not be any points of $\Sigma$ in the grey area.}
    \label{F:domain-disc}
  \end{figure}
  Recalling the definition of $\Cr{smooth-rad}$ we see that
  \begin{equation}
    \label{est:domain-TxTy}
    \dgras(T_x\Sigma,T_y\Sigma) \le \tfrac 12\,,
  \end{equation}
  hence $\pi_x$ gives an isomorphism
  (cf. Remark~\ref{rem:orth-angle}) between $T_x\Sigma$ and $T_y\Sigma$. Set
  $p := u-v \in T_x\Sigma$. Note that $|p| = |u-v| = \rho$. Let $q \in
  T_y\Sigma$ be such that $\pi_x(q) = p$. Because of the angle
  estimate \eqref{est:domain-TxTy} we know that
  \begin{displaymath}
    \forall \bar{x} \in T_y\Sigma
    \quad
    \tfrac 12 |\bar{x}| \le |\pi_x(\bar{x})| \le |\bar{x}| \,.
  \end{displaymath}
  In particular $|p| \le |q| \le 2|p| = 2\rho$. Set $\bar{q} := \frac12 q$,
  so that $|\bar{q}| \le \rho$. Because $\rho \le \Cr{smooth-rad} \le
  \Cr{beta-rad}$ we know that $\ntheta(y,\rho) \le \frac 14$. Hence there
  exists a point $c \in (\Sigma - x) \cap \CBall(y+\bar{q},\frac 14
  \rho)$. Set $a := \pi_x(c)$. We estimate the distance between
  $a \in T_x\Sigma$ and $u \in T_x\Sigma$.
  \begin{align*}
    |a - u| = |\pi_x(c - u)|
    &\le |\pi_x(c - (y + \bar{q}))| + |\pi_x((y + \bar{q}) - u)| \\
    &\le |c - (y + \bar{q})| + |v + \pi_x(\bar{q}) - u| \\
    &\le \tfrac 14 \rho + |(v-u) + \tfrac 12 (u-v)| \le \tfrac 34 \rho < \rho \,.
  \end{align*}
  We have found a point $c \in (\Sigma - x) \cap (a + T_x\Sigma^{\perp}) \cap
  \Ball_{\Cr{smooth-rad}}$ with $|a-u| < \rho$ which contradicts condition
  \eqref{eq:empty-region}, so $\Dom_x$ must be open.
\end{proof}

\begin{cor}
  If $\Sigma$ is a manifold, it must be closed, i.e. $\partial \Sigma =
  \emptyset$.
\end{cor}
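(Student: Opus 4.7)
The plan is to proceed by contradiction: assume $x \in \partial \Sigma$ is a boundary point of the manifold $\Sigma$, and derive a contradiction with Lemma~\ref{lem:domain}.

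First I would fix such a hypothetical boundary point $x \in \partial \Sigma$. By Corollary~\ref{cor:graph} and Lemma~\ref{lem:domain}, there exists a radius $\Cr{smooth-rad} > 0$ such that the set
\begin{displaymath}
  (\Sigma - x) \cap \bigl\{ y \in \CBall_{\Cr{smooth-rad}} : |\pi_x(y)| \le \tfrac 12 \Cr{smooth-rad} \bigr\}
\end{displaymath}
is exactly the graph of the continuous function $F_x$ defined on the full closed disc $\CDisc_{\frac 12 \Cr{smooth-rad}} = \CBall_{\frac 12 \Cr{smooth-rad}} \cap T_x\Sigma$, with $F_x(0)=0$. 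In particular the map $w \mapsto x + w + F_x(w)$ is a continuous bijection from the compact disc $\CDisc_{\frac 12 \Cr{smooth-rad}}$ onto its image in $\Sigma$, and since $\pi_x$ is a continuous left inverse, this map is a homeomorphism onto a compact neighborhood $V$ of $x$ in $\Sigma$. Thus $x$ has a neighborhood in $\Sigma$ homeomorphic to the full $m$-dimensional open disc $\Disc_{\frac 12 \Cr{smooth-rad}}$ (via the restriction to the open sub-disc).

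Next I would derive the contradiction. If $\Sigma$ is an $m$-manifold (possibly with boundary) and $x \in \partial \Sigma$, then $x$ admits a neighborhood homeomorphic to $\R^{m-1} \times [0,\infty)$ with $x$ corresponding to the origin; no such neighborhood can simultaneously be homeomorphic to an open $m$-disc, since the local homology distinguishes interior and boundary points of a topological manifold with boundary. This contradicts the previous paragraph, so $\partial\Sigma = \emptyset$.

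I expect no real obstacle here: the entire force of the argument has already been absorbed into Lemma~\ref{lem:domain}, which turns the parametric domain from something potentially one-sided (as one would see at a true boundary point, where the $\theta$-numbers would not decay to zero, cf.\ the alternative sketch in the proof of Proposition~\ref{prop:C1theta}) into the full closed disc. The only mild subtlety is invoking invariance of domain / the local homology characterization of manifold boundary points to rule out the half-space model once a full $m$-disc chart is available, but this is a standard topological fact and requires no further input from the energy estimates.
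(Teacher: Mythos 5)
Your proposal is correct and follows the paper's intended route: the corollary is stated as an immediate consequence of Lemma~\ref{lem:domain}, i.e.\ every point of $\Sigma$ has a neighborhood that is a graph of $F_x$ over the \emph{full} disc $\CDisc_{\frac 12 \Cr{smooth-rad}} \subseteq T_x\Sigma$, which is exactly what you use. Your only addition is to spell out the standard topological step (local homology / invariance of domain) showing that a point with a full open-disc chart cannot be a boundary point of a manifold with boundary, which the paper leaves implicit (an alternative argument via $\ntheta(x,r)\ge 1$ at boundary points appears in the proof of Proposition~\ref{prop:C1theta}).
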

It follows from the way we defined $F_x$, that
\begin{cor}
  \label{cor:Fx-image}
  For each $w_1,w_2 \in \Dom_x$ the points $y := F_x(w_1) + w_1$ and $z :=
  F_x(w_2) + w_2$ lie on $\Sigma - x$ and satisfy $|y-z| \in
  \CBall_{\Cr{smooth-rad}}$, hence
  \begin{displaymath}
    \dgras(T_y\Sigma,T_z\Sigma) \le \tfrac 12 \,.
  \end{displaymath}
\end{cor}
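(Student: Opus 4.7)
The plan is to chase definitions and then apply the tangent plane oscillation estimate from Lemma~\ref{lem:tan-osc}. First I would observe that by the definition \eqref{def:Fx} of $F_x$ (or directly from Corollary~\ref{cor:graph}), the points $y = F_x(w_1) + w_1$ and $z = F_x(w_2) + w_2$ belong to $(\Sigma - x) \cap \CBall_{\Cr{smooth-rad}}$. Therefore $x + y$ and $x + z$ both lie in $\Sigma \cap \CBall(x,\Cr{smooth-rad})$, and since $\Cr{smooth-rad} \le \tfrac14 \Cr{beta-rad}$ by \eqref{def:smooth-rad}, these points are well within the radius $\tfrac12 \Cr{beta-rad}$ required by Lemma~\ref{lem:tan-osc}.

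Next, I would apply Lemma~\ref{lem:tan-osc} twice, to the pairs $(x, x+y)$ and $(x, x+z)$, to get
\begin{displaymath}
  \dgras(T_x\Sigma, T_{x+y}\Sigma) \le \Cr{tan-osc}\, E^{1/\kappa}\, |y|^{\tau} \le \Cr{tan-osc}\, E^{1/\kappa}\, \Cr{smooth-rad}^{\tau}
\end{displaymath}
and similarly for $x+z$. The key algebraic point is that $\tau = \lambda/\kappa$ by \eqref{def:tau}, so $E^{1/\kappa} \cdot E^{-\tau/\lambda} = 1$, and plugging in the bound $\Cr{smooth-rad} \le \tfrac 12 E^{-1/\lambda}(2\Cr{tan-osc})^{-1/\tau}$ from \eqref{def:smooth-rad} the $E$-dependence cancels and we obtain
\begin{displaymath}
  \Cr{tan-osc}\, E^{1/\kappa}\, \Cr{smooth-rad}^{\tau} \le \frac{1}{2^{\tau+1}} \le \frac14 .
\end{displaymath}

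Finally, noting that tangent planes are translation-invariant so $T_y(\Sigma - x) = T_{x+y}\Sigma$ and likewise for $z$, the triangle inequality for the Grassmannian metric $\dgras$ gives
\begin{displaymath}
  \dgras(T_y\Sigma, T_z\Sigma) \le \dgras(T_{x+y}\Sigma, T_x\Sigma) + \dgras(T_x\Sigma, T_{x+z}\Sigma) \le \tfrac14 + \tfrac14 = \tfrac12,
\end{displaymath}
which is the desired bound. There is no real obstacle here: all the work was done earlier in choosing the constant $\Cr{smooth-rad}$ precisely so that this conclusion drops out; the only care needed is to verify the exponent identity $\tau/\lambda = 1/\kappa$ and to check that $\Cr{smooth-rad}$ sits below $\tfrac12 \Cr{beta-rad}$ so that Lemma~\ref{lem:tan-osc} applies.
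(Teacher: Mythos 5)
Your first step (membership of $y,z$ in $(\Sigma-x)\cap\CBall_{\Cr{smooth-rad}}$ and applicability of Lemma~\ref{lem:tan-osc}) is fine, but the quantitative step is wrong: from \eqref{def:smooth-rad} you only get $\Cr{tan-osc}E^{1/\kappa}\Cr{smooth-rad}^{\tau}\le 2^{-(\tau+1)}$, and since $\tau\in(0,1)$ we have $2^{-(\tau+1)}>\tfrac14$, not $\le\tfrac14$ as you claim (the inequality $2^{-(\tau+1)}\le\tfrac14$ would require $\tau\ge 1$). Consequently your triangle inequality through $T_x\Sigma$ only yields $\dgras(T_y\Sigma,T_z\Sigma)\le 2\cdot 2^{-(\tau+1)}=2^{-\tau}$, which is strictly larger than $\tfrac12$. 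So the two-application route, as executed, does not prove the stated bound.

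The intended argument (this is exactly what the sentence following \eqref{def:smooth-rad} records, and why $\Cr{smooth-rad}$ was defined with the factor $\tfrac12$ in front of the minimum) uses a \emph{single} application of Lemma~\ref{lem:tan-osc}: the points $x+y$ and $x+z$ both lie in $\Sigma$, and $|y-z|\le 2\Cr{smooth-rad}\le \min\bigl\{E^{-1/\lambda}(2\Cr{tan-osc})^{-1/\tau},\tfrac12\Cr{beta-rad}\bigr\}$, so the lemma applied at the base point $x+y$ to the point $x+z$ gives $\dgras(T_y\Sigma,T_z\Sigma)\le \Cr{tan-osc}E^{1/\kappa}(2\Cr{smooth-rad})^{\tau}\le \Cr{tan-osc}E^{1/\kappa}\,E^{-\tau/\lambda}(2\Cr{tan-osc})^{-1}=\tfrac12$, using $\tau/\lambda=1/\kappa$ as you correctly noted. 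With this one-line replacement your proof is correct; as written, the factor-of-two loss from passing through $T_x\Sigma$ is a genuine gap.
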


\mysubsection{The derivative $DF_x$}
In the following lemma we will need estimates on the norms of projections
between $T_x\Sigma$ and $T_y\Sigma$. For $y \in (\Sigma - x) \cap
\Ball_{\Cr{smooth-rad}}$ we have $\dgras(T_x\Sigma, T_y\Sigma) \le \frac
12$, so from Remark~\ref{rem:orth-angle}, we know that
\begin{align*}
  \pi_x|_{T_y\Sigma} &: T_y\Sigma \to T_x\Sigma \\
  \text{and} \quad
  Q_x|_{T_y\Sigma^{\perp}} &: T_y\Sigma^{\perp} \to T_x\Sigma^{\perp}
\end{align*}
are isomorphisms. Set
\begin{align*}
  L_y &:= (\pi_x|_{T_y\Sigma})^{-1} : T_x\Sigma \to T_y\Sigma \\
  \text{and} \quad
  K_y &:= (Q_x|_{T_y\Sigma^{\perp}})^{-1} : T_x\Sigma^{\perp} \to T_y\Sigma^{\perp} \,.
\end{align*}
In other words $L_y$ is on oblique projection onto $T_y\Sigma$ along
$T_x\Sigma^{\perp}$ and $K_y$ is an oblique projection onto
$T_y\Sigma^{\perp}$ along $T_x\Sigma$. Using the fact that $\dgras(T_x\Sigma,
T_y\Sigma) \le \frac 12$ we obtain
\begin{align*}
  \forall y \in (\Sigma - x) \cap \Ball_{\Cr{smooth-rad}}\ \ 
  \forall v \in T_y\Sigma \ \  
  \tfrac 12 |v| &\le |\pi_x(v)| \le |v| \\
  \text{and} \quad
  \forall y \in (\Sigma - x) \cap \Ball_{\Cr{smooth-rad}}\ \ 
  \forall w \in T_y\Sigma^{\perp} \ \  
  \tfrac 12 |w| &\le |Q_x(w)| \le |w| \,.
\end{align*}
Hence (cf. Remark~\ref{rem:orth-angle})
\begin{align}
  \label{est:Q-norm}
  \forall y \in (\Sigma - x) \cap \Ball_{\Cr{smooth-rad}}\ \ \|K_y\| &\le 2 \\
  \label{est:pi-norm}
  \forall y \in (\Sigma - x) \cap \Ball_{\Cr{smooth-rad}}\ \ \|L_y\| &\le 2 \,.
\end{align}

Note that $L_y$ and $K_y$ are oblique projections and should be understood as
restrictions of mappings $\Rn \to \Rn$ to planes $T_x\Sigma$ and
$T_x\Sigma^{\perp}$ respectively. When we write $\|L_y\|$ and $\|K_y\|$ we
always mean the operator norms taken on $T_x\Sigma$ and $T_x\Sigma^{\perp}$
respectively, so $\|L_y\| = \sup \{ |L_y(u)| : u \in \Sphere \cap T_x\Sigma \}$
and $\|K_y\| = \sup \{ |K_y(u)| : u \in \Sphere \cap T_x\Sigma^{\perp} \}$. For
$z \in \Sigma$ we denote the inclusion mapping by
\begin{displaymath}
  J_z : T_z\Sigma \hookrightarrow \Rn \,.
\end{displaymath}

\begin{lem}
  \label{lem:smooth}
  For each $x \in \Sigma$ the function $F_x : \Dom_x \to T_x\Sigma^{\perp}$ is
  differentiable. Let $w \in \Dom_x \subseteq T_x\Sigma$ and set $y = F_x(w) +
  w$. The differential $DF_x$ at $w$ is then given by (see Figure~\ref{F:DF-def})
  \begin{equation}
    \label{def:DFx}
    DF_x(w) := Q_x \circ J_y \circ L_y = J_y \circ L_y - J_x \,,
  \end{equation}
  In particular this gives $DF_x(0) = 0$.
\end{lem}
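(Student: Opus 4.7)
The two right-hand sides in~\eqref{def:DFx} define the same linear map $T_x\Sigma \to T_x\Sigma^{\perp}$: the defining relation $\pi_x \circ L_y = \opid_{T_x\Sigma}$ gives $L_y(h) = h + Q_x(L_y(h))$, so $J_y L_y(h) - J_x(h) = Q_x L_y(h)$ for every $h \in T_x\Sigma$. Fix $w \in \Dom_x$, set $y := F_x(w)+w$, and for $w' \in \Dom_x$ set $y' := F_x(w')+w'$ and $v := y'-y$, so that $\pi_x(v) = w'-w$ and $Q_x(v) = F_x(w')-F_x(w)$. The plan is to establish the quantitative differentiability inequality
\[
  |F_x(w') - F_x(w) - DF_x(w)(w'-w)| \le C\, E^{1/\kappa}\, |w'-w|^{1+\tau}
\]
for all $w'$ sufficiently close to $w$; this not only yields the asserted formula for $DF_x(w)$, but already gives a H\"older-type bound that will be reused to control the oscillation of $DF_x$ in the next subsection.

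The central algebraic observation is that $v - L_y(w'-w) \in T_x\Sigma^{\perp}$, since both $\pi_x(v)$ and $\pi_x(L_y(w'-w))$ equal $w'-w$. Hence $Q_x$ acts as the identity on this vector, giving
\[
  v - L_y(w'-w) = \bigl(F_x(w')-F_x(w)\bigr) - DF_x(w)(w'-w).
\]
To estimate its size I decompose $v = v_{\parallel} + v_{\perp}$ orthogonally in $T_y\Sigma \oplus T_y\Sigma^{\perp}$. By the choice of $\Cr{smooth-rad}$ in~\eqref{def:smooth-rad}, both $x+y$ and $x+y'$ lie in $\Sigma \cap \CBall(x,\Cr{smooth-rad})$, so Corollary~\ref{cor:tan-point} applied at $x+y$ yields $|v_{\perp}| = |Q_y(v)| \le \Cr{tan-point} E^{1/\kappa} |v|^{1+\tau}$. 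Using $v_{\parallel} = L_y(\pi_x v_{\parallel}) = L_y(w'-w) - L_y(\pi_x v_{\perp})$ one rewrites
\[
  v - L_y(w'-w) = v_{\perp} - L_y(\pi_x v_{\perp}),
\]
whose norm is at most $(1+\|L_y\|)|v_{\perp}| \le 3\Cr{tan-point} E^{1/\kappa} |v|^{1+\tau}$ by~\eqref{est:pi-norm}.

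What remains is to upgrade $|v|^{1+\tau}$ to $|w'-w|^{1+\tau}$, and I expect this short self-improving Lipschitz step to be the only mildly delicate point. Starting from the Pythagorean identity $|v|^2 = |w'-w|^2 + |F_x(w')-F_x(w)|^2$, Fact~\ref{fact:ang-dist} combined with $\dgras(T_x\Sigma, T_y\Sigma) \le \tfrac 12$ (ensured by Corollary~\ref{cor:Fx-image} and~\eqref{def:smooth-rad}) gives $|Q_x(v_{\parallel})| \le \tfrac 12 |v_{\parallel}| \le \tfrac 12 |v|$, while the bound on $|v_{\perp}|$ above forces $|v_{\perp}| \le \tfrac 14 |v|$ as soon as $\Cr{tan-point} E^{1/\kappa} |v|^{\tau} \le \tfrac 14$. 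One then reads off $|F_x(w')-F_x(w)| \le \tfrac 34 |v|$ and hence $|v| \le \tfrac{4}{\sqrt{7}}|w'-w|$; continuity of $F_x$ (Lemma~\ref{lem:cont}) guarantees such smallness of $|v|$ for all $w'$ near $w$, completing the differentiability argument. Finally, $DF_x(0) = 0$ is immediate: by Corollary~\ref{cor:graph} applied at $y = x$ one has $F_x(0) = 0$, so $y = 0$ and $T_y\Sigma = T_x\Sigma$, whence $L_0 = \opid_{T_x\Sigma}$ and $DF_x(0) = Q_x \circ J_x$ vanishes on $T_x\Sigma$.
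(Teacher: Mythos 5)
Your argument is correct and essentially reproduces the paper's proof: you express the remainder $v - L_y(w'-w) \in T_x\Sigma^{\perp}$, bound it by $|Q_y(v)| \lesssim E^{1/\kappa}|v|^{1+\tau}$ using Corollary~\ref{cor:tan-point} at $y$ together with the norm bounds $\|L_y\|,\|K_y\|\le 2$, and then use continuity of $F_x$ to convert $|v|^{1+\tau}$ into $|w'-w|^{1+\tau}$. The only deviations are cosmetic: the paper recovers the remainder as $K_y(Q_y(u))$ (constant $2$) where you write it as $v_{\perp}-L_y(\pi_x v_{\perp})$ (constant $3$), and it controls $|v|$ by absorbing the error term rather than by your Pythagorean computation, both giving the same conclusion.
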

By an abuse of notation we shall identify $J_y \circ L_y$ with $L_y$, so that we
can write
\begin{displaymath}
  DF_x(w) = L_y - J_x \,.
\end{displaymath}

\begin{figure}[!htb]
  \centering
  \includegraphics{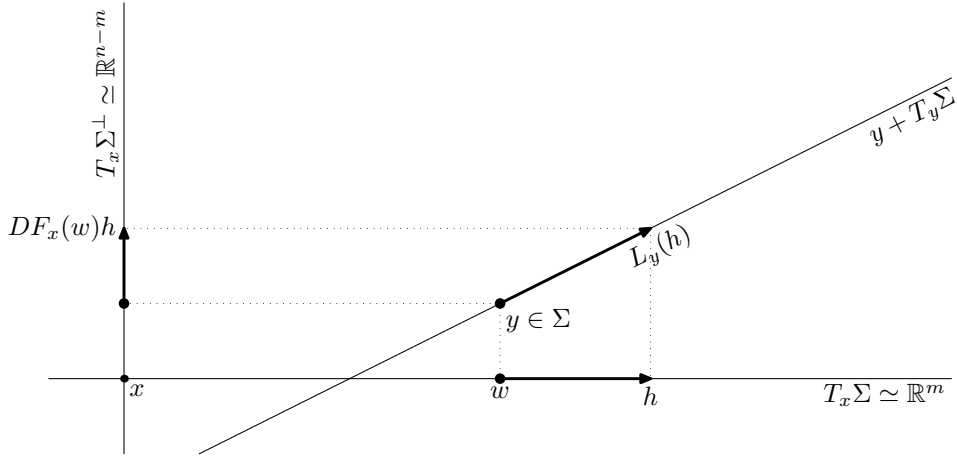}
  \caption{We define $DF_x(w)$ to be the composition of the oblique projection
    onto $T_y\Sigma$, where $y = F_x(w) + w$, with the orthogonal projection
    onto $T_x\Sigma^{\perp}$.}
  \label{F:DF-def}
\end{figure}

\begin{proof}
  Fix some $h \in \tilde{\Dom}_x \subseteq T_x\Sigma$ with $|h|$ small. We define
  \begin{align*}
    y &:= F_x(w) + w \in \Sigma - x \,, 
    \quad  z := F_x(w+h) + (w+h) \in \Sigma - x \\
    \text{and} \quad 
    u &:= F_x(w+h) - F_x(w) - DF_x(w)h = (z - y) - L_yh \in T_x\Sigma^{\perp} \,.
  \end{align*}
  We need to show that $|u|/|h| \to 0$ when $|h| \to 0$. Because $L_y h \in
  T_y\Sigma$, we have $Q_y(u) = Q_y(z-y)$, but $z$ lies
  on $\Sigma - x$, so we can estimate its distance from $T_y\Sigma$ using
  Corollary~\ref{cor:tan-point}.
  \begin{displaymath}
    \dist(z, y+T_y\Sigma) = |Q_y(z-y)| \le \Cr{tan-point} E^{1/\kappa} |z-y|^{1+\tau} \,.
  \end{displaymath}
  We know that $Q_y|_{T_x\Sigma}$ is an isomorphism and $K_y :
  T_y\Sigma^{\perp} \to T_x\Sigma^{\perp}$ is its inverse with $\|K_y\| \le
  2$, so we have the estimate
  \begin{displaymath}
    |u| = |K_y(Q_y(u))|
    = |K_y(Q_y(z-y))| \le \|K_y\| |Q_y(z-y)|
    \le 2 \Cr{tan-point} E^{1/\kappa} |z-y|^{1+\tau} \,.
  \end{displaymath}
  Now we only need to estimate $|z-y|$. Since $\|L_y\| \le 2$ we have
  \begin{displaymath}
    |z-y| = |h + L_y h + u| 
    \le (1 + \|L_y\|)|h| + |u| 
    \le 3|h| + 2 \Cr{tan-point} E^{1/\kappa} |z-y|^{1+\tau} \,,
  \end{displaymath}
  hence
  \begin{equation}
    \label{eq:zy-dist}
    |z-y| \le \frac{3}{1 - 2 \Cr{tan-point} E^{1/\kappa} |z-y|^{\tau}}|h| \,.
  \end{equation}
  Lemma~\ref{lem:cont} says that $F_x$ is continuous, so we can choose $\rho >
  0$ so small, that for each $h$ with $|h| \le \rho$ we have $|z-y|^{\tau} \le
  \frac 14 (2 \Cr{tan-point} E^{1/\kappa})^{-1}$. Then from~\eqref{eq:zy-dist} we obtain
  $|z-y| \le 4|h|$. With that estimate we can write
  \begin{displaymath}
    |h|^{-1} |F_x(w+h) - F_x(w) - DF_x(w)h| = \tfrac{|u|}{|h|}
    \le 2 \Cr{tan-point} E^{1/\kappa} (4|h|)^{\tau} \xrightarrow{h \to 0} 0 \,,
  \end{displaymath}
  so our definition of $DF_x(w)$ is correct.
\end{proof}

\begin{lem}
  \label{lem:DF-holder}
  For each $x \in \Sigma$ the differential $DF_x$ is H\"older continuous with
  H\"older exponent $\tau$ and H\"older norm bounded by some constant
  $\Cr{holder-norm} = \Cr{holder-norm}(m,p,A_{\Sigma},R_{\Sigma},M_{\Sigma})$,
  i.e.
  \begin{equation}
    \label{est:DF-holder}
    \forall x \in \Sigma \,
    \forall w_0,w_1 \in \Dom_x \quad
    \|DF_x(w_0) - DF_x(w_1)\| \le \Cr{holder-norm} E^{1/\kappa} |w_0 - w_1|^{\tau} \,.
  \end{equation}
\end{lem}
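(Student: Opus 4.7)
The plan is to reduce the H\"older bound on $DF_x$ to the H\"older bound on $y \mapsto T_y\Sigma$ already established in Lemma~\ref{lem:tan-osc}. By Lemma~\ref{lem:smooth} we have, up to the harmless inclusion $J_y$, the identity
\begin{displaymath}
  DF_x(w_0) - DF_x(w_1) = L_{y_0} - L_{y_1}\,,
  \qquad y_i := F_x(w_i) + w_i \in (\Sigma - x) \cap \CBall_{\Cr{smooth-rad}}\,,
\end{displaymath}
so everything comes down to estimating $\|L_{y_0} - L_{y_1}\|$ on $T_x\Sigma$ and then relating $|y_0 - y_1|$ to $|w_0 - w_1|$.

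First I would show that oblique projections onto close planes are close: for a unit $u \in T_x\Sigma$, put $v_0 := L_{y_0}(u) \in T_{y_0}\Sigma$. By Fact~\ref{fact:ang-dist} there exists $v_1 \in T_{y_1}\Sigma$ with $|v_0 - v_1| \le |v_0|\,\dgras(T_{y_0}\Sigma, T_{y_1}\Sigma) \le 2\,\dgras(T_{y_0}\Sigma, T_{y_1}\Sigma)$, using the bound $\|L_{y_0}\| \le 2$ from \eqref{est:pi-norm}. Since $\pi_x(v_1) = u - \pi_x(v_0 - v_1)$ and $L_{y_1}(\pi_x(v_1)) = v_1$, linearity of $L_{y_1}$ together with $\|L_{y_1}\| \le 2$ give
\begin{displaymath}
  |L_{y_0}(u) - L_{y_1}(u)|
  \le |v_0 - v_1| + \|L_{y_1}\|\,|v_0 - v_1|
  \le 6\,\dgras(T_{y_0}\Sigma, T_{y_1}\Sigma)\,,
\end{displaymath}
hence $\|L_{y_0} - L_{y_1}\| \le 6\,\dgras(T_{y_0}\Sigma, T_{y_1}\Sigma)$. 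Applying Lemma~\ref{lem:tan-osc} (recall that $y_i + x \in \Sigma \cap \CBall(x, \Cr{smooth-rad}) \subseteq \CBall(x, \tfrac12 \Cr{beta-rad})$) then yields
\begin{displaymath}
  \|L_{y_0} - L_{y_1}\| \le 6\,\Cr{tan-osc}\, E^{1/\kappa}\,|y_0 - y_1|^{\tau}\,.
\end{displaymath}

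It remains to bound $|y_0 - y_1|$ in terms of $|w_0 - w_1|$. By orthogonality of $T_x\Sigma$ and $T_x\Sigma^\perp$ we have $|y_0 - y_1|^2 = |w_0 - w_1|^2 + |F_x(w_0) - F_x(w_1)|^2 = |w_0 - w_1|^2 + |Q_x(y_0 - y_1)|^2$. To control the second term I would split
\begin{displaymath}
  Q_x(y_0 - y_1) = \big(Q_x - Q_{y_0}\big)(y_0 - y_1) + Q_{y_0}(y_0 - y_1)\,,
\end{displaymath}
estimate the first summand by $\dgras(T_x\Sigma, T_{y_0}\Sigma)\,|y_0 - y_1| \le \tfrac12 |y_0 - y_1|$ (built into the definition \eqref{def:smooth-rad} of $\Cr{smooth-rad}$), and the second summand by Corollary~\ref{cor:tan-point} applied at $y_0 + x$ to $y_1 + x$: it yields $|Q_{y_0}(y_0 - y_1)| \le \Cr{tan-point} E^{1/\kappa} |y_0 - y_1|^{1+\tau}$, which by the choice of $\Cr{smooth-rad}$ (possibly shrinking it once more by an absolute factor so that $\Cr{tan-point} E^{1/\kappa} (2\Cr{smooth-rad})^{\tau} \le \tfrac14$) is at most $\tfrac14 |y_0 - y_1|$. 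Absorbing, we get $|Q_x(y_0 - y_1)| \le \tfrac34 |y_0 - y_1|$, hence $|y_0 - y_1| \le \tfrac{4}{\sqrt{7}}|w_0 - w_1|$, and combining the three estimates gives \eqref{est:DF-holder} with $\Cr{holder-norm} := 6\,\Cr{tan-osc}\,(4/\sqrt{7})^{\tau}$.

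The only real obstacle is the last step, because we have to know a priori that $F_x$ is almost flat at the scale on which we want H\"older regularity; the point is that the angle bound $\dgras(T_x\Sigma, T_{y_0}\Sigma) \le \tfrac12$ baked into the definition of $\Cr{smooth-rad}$ is precisely what prevents $F_x$ from being steep, so no circular argument is needed. Everything else is a direct, quantitative use of Fact~\ref{fact:ang-dist}, Corollary~\ref{cor:tan-point}, and Lemma~\ref{lem:tan-osc}, with constants depending only on those of the $m$-fine structure, which, by the end of Theorem~\ref{thm:adm-fine} and Corollary~\ref{cor:ind-const}, can ultimately be absorbed into constants depending only on $m$ and $p$.
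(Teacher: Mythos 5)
Your argument is correct, and its first half runs parallel to the paper's proof: both reduce $\|DF_x(w_0)-DF_x(w_1)\|$ to $\|L_{y_0}-L_{y_1}\|$, bound this by a fixed multiple of $\dgras(T_{y_0}\Sigma,T_{y_1}\Sigma)$, and then invoke Lemma~\ref{lem:tan-osc}. (For that operator bound the paper observes that $L_{y_0}(h)-L_{y_1}(h)\in T_x\Sigma^{\perp}$ and applies $K_{y_1}\circ Q_{y_1}$ to the difference, getting the factor $4$; your route — project $L_{y_0}(u)$ orthogonally onto $T_{y_1}\Sigma$ and correct through $L_{y_1}$ — gives $6$, which is immaterial.) Where you genuinely diverge is the conversion of $|y_0-y_1|$ into $|w_0-w_1|$: the paper constructs auxiliary points $b$, $c$, $w_2$ near $y_0+T_{y_0}\Sigma$, runs a Pythagorean computation with the oblique projection $L_{y_0}$, and at the end must check the positivity of $2\Cr{tan-osc}-3\Cr{tan-point}$; you instead split $Q_x(y_0-y_1)=(Q_x-Q_{y_0})(y_0-y_1)+Q_{y_0}(y_0-y_1)$, absorb the first summand via the angle bound $\dgras(T_x\Sigma,T_{y_0}\Sigma)\le\tfrac 12$ already built into $\Cr{smooth-rad}$, and the second via Corollary~\ref{cor:tan-point}; this is shorter and yields the same Lipschitz-type comparison $|y_0-y_1|\le\tfrac{4}{\sqrt 7}|w_0-w_1|$. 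Two small points. First, drop the fallback of ``shrinking $\Cr{smooth-rad}$'': the domain $\Dom_x$ in the statement is defined with the paper's $\Cr{smooth-rad}$, so you should not change it — and you do not need to, since $\tau/\lambda=1/\kappa$ gives $E^{1/\kappa}(2\Cr{smooth-rad})^{\tau}\le(2\Cr{tan-osc})^{-1}$ by \eqref{def:smooth-rad}, while $\Cr{beta-est}\le\tfrac 12\Cr{bap-osc}$ implies $\Cr{tan-point}\le\tfrac 12\Cr{tan-osc}$, so that $\Cr{tan-point}E^{1/\kappa}|y_0-y_1|^{\tau}\le\tfrac 14$ automatically. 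Second, the hypothesis to verify when applying Lemma~\ref{lem:tan-osc} and Corollary~\ref{cor:tan-point} at the base point $y_0+x$ is $|y_1-y_0|\le\tfrac 12\Cr{beta-rad}$, not membership of the $y_i+x$ in $\CBall(x,\tfrac 12\Cr{beta-rad})$; it holds because $2\Cr{smooth-rad}\le\tfrac 12\Cr{beta-rad}$ by \eqref{def:smooth-rad}, so after this cosmetic correction the applications are legitimate and the proof goes through with $\Cr{holder-norm}=6\,\Cr{tan-osc}(4/\sqrt 7)^{\tau}$, which has the dependence required by the lemma.
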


\begin{proof}
  Choose two points $w_0,w_1 \in \Dom_x$. As in the previous proof we define
  \begin{align*}
    y &:= F_x(w_0) + w_0 \in (\Sigma - x) \cap \CBall_{\Cr{smooth-rad}} \\
    \text{and} \quad 
    z &:= F_x(w_1) + w_1 \in (\Sigma - x) \cap \CBall_{\Cr{smooth-rad}} \,.
  \end{align*}
  Note that
  \begin{displaymath}
    \| DF_x(w_1) - DF_x(w_0) \| = \| L_z - L_y \| \,.
  \end{displaymath}
  Choose some unit vector $h \in T_x\Sigma \cap \Sphere$. Let $u := L_y(h)$ and
  $v := L_z(h)$. Note that $(u-v) \in T_x\Sigma^{\perp}$. Since the points $y$
  and $z$ lie in $\CBall(x,\Cr{smooth-rad})$ we have
  $\dgras(T_x\Sigma,T_y\Sigma) \le \frac 12$ and $\dgras(T_x\Sigma,T_z\Sigma)
  \le \frac 12$ and $\dgras(T_y\Sigma,T_z\Sigma) \le \frac
  12$. Estimates~\eqref{est:pi-norm} and~\eqref{est:Q-norm} give us the
  following
  \begin{displaymath}
    \|L_y\| \le 2 \,, \quad \|K_y\| \le 2 \,,
    \quad
    \|L_z\| \le 2 \quad \text{and } \|K_z\| \le 2 \,.
  \end{displaymath}
  Hence $|u| \le 2 |h|$ and $|v| \le 2 |h|$ and we obtain
  \begin{align*}
    |u-v| &= |K_z(Q_z(u-v))| \\
    &\le 2 |Q_z(u-v)| 
    = 2 |Q_z(u)| \\
    &\le 2 |u| \dgras(T_z\Sigma,T_y\Sigma) 
    \le 4 |h| \dgras(T_z\Sigma,T_y\Sigma) \\
    &\le 4 \Cr{tan-osc} E^{1/\kappa} |z-y|^{\tau} \,.
  \end{align*}
  This gives
  \begin{displaymath}
    \| DF_x(w_1) - DF_x(w_0) \| 
    \le 4 \Cr{tan-osc} E^{1/\kappa} |z-y|^{\tau}
  \end{displaymath}
  We only need to express the distance $|z-y|$ in terms of $|w_1-w_0|$. Note
  that the point $z$ is close to the tangent plane $y + T_y\Sigma$. More
  precisely from Corollary~\ref{cor:tan-point}
  \begin{align}
    |Q_y(z-y)| &\le \Cr{tan-point} E^{1/\kappa} |z-y|^{1+\tau} \quad \text{which implies} \notag \\
    \label{est:zy-proj}
    |\pi_y(z-y)| &\ge |z-y| (1 - \Cr{tan-point} E^{1/\kappa} |z-y|^{\tau}) \,.
  \end{align}
  \begin{figure}[!htb]
    \centering
    \includegraphics{ahlreg10.mps}
    \caption{The length $|y-z|$ is comparable with $|w_0 - w_1|$ because $z$
      lies close to $T_y\Sigma$ and the angle $\dgras(T_x\Sigma,T_y\Sigma)$ is
      bounded by $\frac 12$.}
    \label{F:DF-holder}
  \end{figure}
  Let 
  \begin{align*}
    b &:= y + L_y(w_1 - w_0) \in (y + T_y\Sigma) \,, \\
    c &:= y + \pi_y(z-y) \in (y + T_y\Sigma) \\
    \text{and} \quad
    w_2 &:= w_1 + \pi_x(c-z) = w_0 + \pi_x(c-y) \in T_x\Sigma \,.
  \end{align*}
  The configuration of points $b$, $c$, $w_1$ and $w_2$ is presented on
  Figure~\ref{F:DF-holder}. Now we have
  \begin{align}
    w_2 - w_0 &= \pi_x(\pi_y(z-y)) \qquad \text{which implies} \notag\\
    \label{est:w2w0}
    2 |w_2 - w_0| &\ge |\pi_y(z-y)| \ge |z-y| (1 - \Cr{tan-point} E^{1/\kappa} |z-y|^{\tau}) \,.
  \end{align}
  Of course $|w_1 - w_0| \ge |w_2 - w_0| - |w_2 - w_1|$, so we only need to
  estimate $|w_2 - w_1| = |\pi_x(c-z)|$. Note that (see
  Figure~\ref{F:DF-holder})
  \begin{align}
    \label{eq:zb-decomp-Q}
    z-c &= (z-y)-(c-y) = Q_y(z-y) \\
    &= Q_y(z - b + b - y) = Q_y(z-b) \notag \\
    \label{eq:zb-decomp-pi}
    \text{and} \quad
    c-b &= (z-b)-(z-c) = \pi_y(z-b) \,.
  \end{align}
  Since $\pi_x(z-y) = \pi_x(b-y) = (w_1 - w_0)$, we have
  $\pi_x(z-b) = 0$, so $(z-b) \in T_x\Sigma^{\perp}$ and we can
  use~\eqref{est:pi-norm} and~\eqref{est:Q-norm} obtaining
  \begin{align*}
    |z - b| = |K_y(z - c)| \le 2 |z - c| \,.
  \end{align*}
  From~\eqref{eq:zb-decomp-Q} and~\eqref{eq:zb-decomp-pi} we know that $(z-b) =
  (z-c) + (c-b)$ and that $(z-c) \perp (c-b)$. Hence
  \begin{align}
    \label{est:w2w1}
    |w_2 - w_1|
    &\le |L_y(w_2 - w_1)| = |c-b| \\
    &= \sqrt{|z-b|^2 - |z-c|^2}
    \le \sqrt 3 |z-c| = \sqrt 3 |Q_y(z-y)| \notag \,.
  \end{align}
  Using~\eqref{est:zy-proj} and~\eqref{est:w2w0} and~\eqref{est:w2w1} we
  obtain
  \begin{align*}
    |w_1 - w_0| &\ge |w_2 - w_0| - |w_2 - w_1| \\
    &\ge \tfrac 12 |z-y| (1 - \Cr{tan-point} E^{1/\kappa} |z-y|^{\tau}) - \sqrt 3 |Q_y(z-y)| \\
    &\ge \tfrac 12 |z-y| (1 - \Cr{tan-point} E^{1/\kappa} |z-y|^{\tau} - \sqrt 3 \Cr{tan-point} E^{1/\kappa} |z-y|^{\tau}) \\
    &\ge \tfrac 12 |z-y| (1 - 3\Cr{tan-point} E^{1/\kappa} |z-y|^{\tau}) \,.
  \end{align*}
  Therefore
  \begin{align*}
    |z - y| &\le |w_1 - w_0| \tfrac{2}{1 - 3 \Cr{tan-point} E^{1/\kappa} |z-y|^{\tau}}
    \quad \text{and finally}\\
    \| DF_x(w_1) - DF_x(w_0) \| 
    &\le 4 \Cr{tan-osc} E^{1/\kappa} |z-y|^{\tau} \\
    &\le 4 \Cr{tan-osc} E^{1/\kappa} \left(
      \frac{2}{1 - 3\Cr{tan-point} E^{1/\kappa} |z-y|^{\tau}}
    \right)^{\tau} |w_1 - w_0|^{\tau} \,.
  \end{align*}
  Since $z,y \in \CBall_{\Cr{smooth-rad}}$ we have $|z-y| \le
  2\Cr{smooth-rad}$, so $|z-y| \le (2 \Cr{tan-osc} E^{1/\kappa})^{-1}$ and we can write
  \begin{displaymath}
    \| DF_x(w_1) - DF_x(w_0) \| 
    \le 4 \Cr{tan-osc} E^{1/\kappa} \left(
      \frac{4\Cr{tan-osc}}{2\Cr{tan-osc} - 3\Cr{tan-point}}
    \right)^{\tau} |w_1 - w_0|^{\tau}
    := \Cr{holder-norm} E^{1/\kappa} |w_1 - w_0|^{\tau} \,.
  \end{displaymath}
  We should still check whether $\Cr{holder-norm}$ is positive and this happens
  only if $2\Cr{tan-osc} - 3\Cr{tan-point} > 0$. Let us recall the definitions
  of all needed constants and calculate
  \begin{align*}
    2\Cr{tan-osc} - 3\Cr{tan-point} 
    &= 2 (\Cr{bap-osc} + 2 \Cr{tan-dist}) - 3 (\Cr{tan-dist} + \Cr{beta-est}) \\
    &= 2 (\Cr{bap-osc} + \Cr{tan-dist} - 3 \Cr{beta-est}) \\
    &= \tfrac{16}{3} (M_{\Sigma} + 2) \Cr{dist-ang} \Cr{beta-est}
      + \Cr{bap-osc} \frac{2^{1+\tau}}{2^{\tau}-1} 
      - 3 \Cr{beta-est} \\
    &= \tfrac{16}{3} (M_{\Sigma} + 2) \Cr{dist-ang} \Cr{beta-est}
      + \frac{2^{1+\tau}}{2^{\tau}-1} \tfrac 83 (M_{\Sigma} + 2) \Cr{dist-ang} \Cr{beta-est}
      - 3 \Cr{beta-est} \\
    &= \tfrac 13 \Cr{beta-est} \left(
      16 (M_{\Sigma} + 2) \Cr{dist-ang}
      + 8 \frac{2^{1+\tau}}{2^{\tau}-1} (M_{\Sigma} + 2) \Cr{dist-ang}
      - 9
    \right) \,.
  \end{align*}
  The constants $M_{\Sigma}$ and $\Cr{dist-ang}$ are positive and greater than
  $1$, so we certainly have $\Cr{holder-norm} > 0$. At this point
  $\Cr{holder-norm}$ depends on $A_{\Sigma}$ and $M_{\Sigma}$ but we shall see
  shortly that $A_{\Sigma}$ and $M_{\Sigma}$ can be expressed solely in terms of
  $E$, $m$ and $p$.
\end{proof}

\begin{proof}[Proof of Theorem~\ref{thm:C1tau}]
  We already proved that $\Sigma$ is a closed manifold of class $C^{1,\tau}$,
  where the size of maps ($\tfrac 12 \Cr{smooth-rad} E^{1/\kappa}$) and the
  bound for the H\"older norm of the differentials of the parameterizations
  ($\Cr{holder-norm} E^{1/\kappa}$) depend on $A_{\Sigma}$, $R_{\Sigma}$ and
  $M_{\Sigma}$. What is left to show is that we can drop the dependence on
  $A_{\Sigma}$, $R_{\Sigma}$ and $M_{\Sigma}$. We shall show that $\Sigma$ is
  actually an $m$-fine set with constants $R'_{\Sigma}$, $M'_{\Sigma}$ and
  $A'_{\Sigma}$ independent of $\Sigma$.
  
  Since $\Sigma$ is a compact, closed and smooth manifold it is
  $(\delta,m)$-admissible for any $\delta \in (0,1)$
  (cf. Example~\ref{ex:mfld}). Let us set $\delta = 1/4$. From
  Theorem~\ref{thm:uahlreg} and Corollary~\ref{cor:ind-const} we know that
  $\Sigma$ is $(\frac 14,m)$-admissible with constants $A_{\Sigma} =
  A_{\Sigma}(m) = \big(\frac{15}{16}\big)^{m/2} \omega_m$ and $R_{\Sigma} =
  \Cr{uar-rad}(E,m,p,\frac 14)$. Moreover, Theorem~\ref{thm:adm-fine} shows that
  for each $x \in \Sigma$ and each $\rho < \Cr{adm-fine-rad}(E,m,p,\frac 14)$ we
  have the estimate
  \begin{displaymath}
    \ntheta(x,\rho) \le 5 \nbeta(x,\rho) \,.
  \end{displaymath}
  Therefore we can safely set
  \begin{displaymath}
    M'_{\Sigma} = 5 \,,
    \quad
    A'_{\Sigma} = (\tfrac{\sqrt{15}}{4})^m \omega_m
    \quad and \quad
    R'_{\Sigma} = \min \big\{ \Cr{uar-rad}(E,m,p,\tfrac 14), \Cr{adm-fine-rad}(E,m,p,\tfrac 14) \big\} \,.
  \end{displaymath}
  Now the constant $A'_{\Sigma}$ depends only on $m$ and the constant
  $M'_{\Sigma}$ is absolute, so $\Cr{holder-norm}$ depends only on $m$ and
  $p$. Furthermore, recalling \eqref{def:uar-rad}, \eqref{def:adm-fine-rad},
  \eqref{def:beta-rad}, \eqref{def:graph-rad} and \eqref{def:smooth-rad} we have
  \begin{displaymath}
    \Cr{smooth-rad} = \Cr{smooth-rad}(E,m,p) = \Cr{smooth-rad-const} E^{-1/\lambda} \,,
  \end{displaymath}
  where
  \begin{multline*}
    \Cr{smooth-rad-const} = \Cr{smooth-rad-const}(m,p) := \frac 12
    \min \Bigg\{
      (2 \Cr{tan-osc})^{-1/\tau}, 
      \frac 12 (\Cr{tan-osc} + \Cr{tan-point}(2\Cr{lip-const})^{\tau})^{-1/\tau}, \\
      \frac{1}{4\Cr{lip-const}} \min \Big\{
        (4 \Cr{beta-est} M_{\Sigma})^{-1/\tau},
        \big(\Cr{uahlreg1} (\tfrac 14,m) \Cr{uahlreg2}^p(\tfrac 14,m)\big)^{1/\lambda},
        \Big( \frac{7 \sqrt 7 \Psi_0}{64 \Cr{beta-est} } \Big)^{1/\tau}
      \Big\}
    \Bigg\} \,.
  \end{multline*}
  Here $\delta = \frac 14$ so we can safely set $L = 3 \in (\sqrt 7, 4)$ and
  then in \eqref{def:adm-fine-rad} we may substitute $\gamma := \sqrt{1 -
    (L\delta)^2} = \frac{\sqrt 7}4$.
\end{proof}

\begin{rem}
  Note that the scale at which we can view $\Sigma$ as a graph of some
  $C^{1,\tau}$ function depends on the energy $\E_p(\Sigma)$. If the energy is
  big, then the radius $\Cr{smooth-rad}$ goes to zero. This behavior is exactly
  what we could expect. If the integral curvature is big, then our set $\Sigma$
  can bend really fast and it is a graph of some function only in very small
  scales.

  Similarly, if the exponent $p$ is close to $m(m+2)$, then $\lambda$ is close
  to zero and if additionally $\E_p(\Sigma) > 1$, then the scale
  $\Cr{smooth-rad}$ becomes very small. The exponent $p_0 = m(m+2)$ is critical
  just as in the Sobolev embedding theorem - for an open set $U \subseteq
  \R^{m(m+2)}$ we have $W^{2,p}(U) \subseteq C^{1,\alpha}(U)$ only for $p >
  m(m+2)$.
\end{rem}

If we follow the proof of Theorem~\ref{thm:C1tau}, we shall see that all we used
was the bound on the $\beta$-numbers of $\Sigma$. After establishing
Corollary~\ref{cor:beta-est} we did not use any properties of the $p$-energy
$\E_p(\Sigma)$. Tracing back the definitions of all the constants
$\Cr{holder-norm}$, $\Cr{bap-osc}$, $\Cr{tan-dist}$, $\Cr{tan-point}$,
$\Cr{lip-const}$ and $\Cr{tan-osc}$ we will see that they were defined only in
terms of $\Cr{beta-est}$ and some other constants which depend solely on
$M_{\Sigma}$, $A_{\Sigma}$, $m$ and $p$. Also, if we analyze
\eqref{def:beta-rad}, \eqref{def:graph-rad} and \eqref{def:smooth-rad} we shall
see, that all the radii $\Cr{smooth-rad}$ (as was defined in
\eqref{def:smooth-rad}), $\Cr{beta-rad}$ and $\Cr{graph-rad}$ were defined only
in terms of $\Cr{beta-est}$, $A_{\Sigma}$, $M_{\Sigma}$, $R_{\Sigma}$ and some
other constants depending only on $m$ and $p$. Hence, we obtain the following
\begin{cor}
  Let $\Sigma \in \F(m)$ be such that for each $x \in \Sigma$ and every $r \in
  (0,R_{\Sigma}]$ we have
  \begin{displaymath}
    \nbeta(x,r) \le L r^{\nu} \,,
  \end{displaymath}
  where $\nu \in (0,1)$ and $L > 0$ is some constant. Then $\Sigma$ is a
  closed manifold of class $C^{1,\nu}$. Moreover we can find a radius $R =
  R(L,m,p,A_{\Sigma},M_{\Sigma},R_{\Sigma},\nu)$ and a constant $K$ which
  depends only on $L$, $m$, $p$, $A_{\Sigma}$, $M_{\Sigma}$ and $\nu$ such
  that
  \begin{itemize}
  \item for each $x \in \Sigma$ the set $\Sigma \cap \CBall(x,R)$ is a graph of
    some $C^{1,\nu}$ function $F_x$
  \item and the H{\"o}lder norm of $DF_x$ is bounded above by $K$.
  \end{itemize}
\end{cor}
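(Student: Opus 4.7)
The plan is to observe that the proof of Theorem~\ref{thm:C1tau}, from Corollary~\ref{cor:beta-est} onward, never uses the bound $\E_p(\Sigma) \le E$ directly. The $p$-energy enters the argument only via Corollary~\ref{cor:beta-est}, which supplies the estimate $\nbeta(x,r) \le \Cr{beta-est} E^{1/\kappa} r^{\tau}$. Every subsequent lemma (namely Lemma~\ref{lem:bap-osc}, Lemma~\ref{lem:tan-conv}, Corollary~\ref{cor:tan-dist}, Corollary~\ref{cor:tan-point}, Lemma~\ref{lem:tan-osc}, Lemma~\ref{lem:1point-rad}, Corollary~\ref{cor:graph}, Lemma~\ref{lem:cont}, Lemma~\ref{lem:domain}, Lemma~\ref{lem:smooth}, and Lemma~\ref{lem:DF-holder}) treats this bound as a black box. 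Therefore, under our new hypothesis $\nbeta(x,r) \le L r^{\nu}$ on $(0,R_{\Sigma}]$, we can repeat the same sequence of lemmas verbatim, with the substitutions $\tau \leadsto \nu$ and $\Cr{beta-est} E^{1/\kappa} \leadsto L$ throughout.

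Concretely, I would first verify this substitution in Lemma~\ref{lem:bap-osc} and Lemma~\ref{lem:tan-conv} to obtain existence of the tangent plane $T_x\Sigma$ for every $x \in \Sigma$ together with the oscillation estimates $\dgras(T_x\Sigma,H) \le \Cr{tan-dist}' L r^{\nu}$ for $H \in \BAP(x,r)$ and $\dgras(T_x\Sigma,T_y\Sigma) \le \Cr{tan-osc}' L |x-y|^{\nu}$, where the new constants $\Cr{bap-osc}'$, $\Cr{tan-dist}'$, $\Cr{tan-osc}'$, $\Cr{tan-point}'$, $\Cr{lip-const}'$ depend only on $M_{\Sigma}$, $m$ and $\nu$ (replacing $\tau$ by $\nu$ wherever it appears in the geometric-series summation of Lemma~\ref{lem:tan-conv}). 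I would then redefine $\Cr{beta-rad}$ in terms of $L$ and $\nu$ so that $L \Cr{beta-rad}^{\nu} \le (4 M_{\Sigma})^{-1}$, which guarantees $\ntheta(x,r) \le \tfrac14$ on the relevant scale.

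Next, rerunning Lemma~\ref{lem:1point-rad} with the substituted estimates produces a graph radius $\Cr{graph-rad}$ depending on $L$, $\nu$, $M_{\Sigma}$, $A_{\Sigma}$, $R_{\Sigma}$, $m$; this in turn yields $\Cr{smooth-rad}$ and the graph representation through Corollary~\ref{cor:graph} and Lemma~\ref{lem:domain} (the proof of Lemma~\ref{lem:domain} uses only the Ahlfors regularity and the angle bound, both of which are available). Lemmas~\ref{lem:smooth} and~\ref{lem:DF-holder} give differentiability of $F_x$ and H\"older continuity of $DF_x$ with exponent $\nu$ and constant $\Cr{holder-norm}' L$, where $\Cr{holder-norm}'$ depends only on $L$, $m$, $\nu$ and $M_{\Sigma}$. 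Setting $R := \Cr{smooth-rad}$ and $K := \Cr{holder-norm}' L$ yields the claim.

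The only subtlety to keep in mind is that in the original Theorem~\ref{thm:C1tau} we dropped the dependence on $A_{\Sigma}$, $M_{\Sigma}$ and $R_{\Sigma}$ by invoking Example~\ref{ex:mfld} together with Theorem~\ref{thm:uahlreg} and Theorem~\ref{thm:adm-fine}, both of which required the $p$-energy hypothesis. In the present corollary we do not have this bootstrap, so $R$ and $K$ genuinely depend on the $\F(m)$-parameters $A_{\Sigma}$, $M_{\Sigma}$, $R_{\Sigma}$ (in addition to $L$, $\nu$, $m$, $p$), as stated. No step is really an obstacle: the argument is a bookkeeping exercise in chasing constants through the chain of lemmas.
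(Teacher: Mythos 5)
Your proposal is correct and follows essentially the same route as the paper: the paper's own justification is precisely the observation, made in the paragraph preceding the corollary, that after Corollary~\ref{cor:beta-est} only the $\beta$-number bound is used, so one traces the constants $\Cr{bap-osc}$, $\Cr{tan-dist}$, $\Cr{tan-point}$, $\Cr{lip-const}$, $\Cr{tan-osc}$, $\Cr{holder-norm}$ and the radii $\Cr{beta-rad}$, $\Cr{graph-rad}$, $\Cr{smooth-rad}$ through the chain of lemmas with $\Cr{beta-est}E^{1/\kappa}$ replaced by $L$ and $\tau$ by $\nu$. Your closing remark about retaining the dependence on $A_{\Sigma}$, $M_{\Sigma}$, $R_{\Sigma}$ (since the bootstrap via Theorems~\ref{thm:uahlreg} and~\ref{thm:adm-fine} is unavailable here) matches the statement as given in the paper.
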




\mysection{Improved H\"older regularity}
\label{sec:improved-holder}

In the previous paragraph we showed that $\Sigma$ is a closed manifold of class
$C^{1,\tau}$ but $\tau$ was not an optimal exponent. Now we shall prove that for
any $o \in \Sigma$ the map $F_o$ is of class $C^{1,\alpha}$ (see
Theorem~\ref{thm:improved}), where
\begin{displaymath}
  \alpha := 1 - \tfrac{m(m+2)}{p} \,.
\end{displaymath}
For this purpose we employ a technique developed by Strzelecki, Szuma{\'n}ska
and von der Mosel in~\cite{MR2668877}.

First we show that the oscillation of $DF_o$ is roughly the same as the
oscillation of tangent planes $T_o\Sigma$. Then we choose two points $x$ and $y$
with $|x-y| \simeq r$. After that we examine the set of tuples $(x_0, \ldots,
x_m, z)$ for which the curvature $\K$ is very big. Using finiteness of
$\E_p(\Sigma)$ we prove that this set of \emph{bad parameters} $(x_0, \ldots,
x_m, z)$ has to be small in the sense of measure. Using this knowledge we are
able to find ''good'' tuples, such that for each $i,j = 1,\ldots,m$ and $i \ne
j$
\begin{displaymath}
  \varangle(x_i - x_0, x_j - x_0) \simeq \frac{\pi}2
  \quad \text{and} \quad
  |x_i - x_0| \simeq \frac rN \,.
\end{displaymath}
Moreover $(x_0,\ldots,x_m)$ is such that there are many points $z$ for which
$\K(x_0,\ldots,x_m,z)$ is not too big. If $N$ is a large number and the points
$x_i$ are chosen near $x$, then the affine plane spanned by $(x_0,\ldots,x_m)$
is close to the tangent plane $T_x\Sigma$. Therefore it suffices to estimate the
angle between the planes $X := \aff\{x_0,\ldots,x_m\}$ and $Y :=
\aff\{y_0,\ldots,y_m\}$ where the points $x_i$ and $y_i$ form ''good'' tuples
and are chosen close to $x$ and $y$ respectively. Employing the fact that there
are many points $z$ such that $\K(x_0,\ldots,x_m,z)$ and $\K(y_0,\ldots,y_m,z)$
are simultaneously small, we can derive the estimate $\dgras(X,Y) \lesssim
|x-y|^{\alpha}$.

Fix a point $o \in \Sigma$ and let $\iota \in (0,\frac 14)$ be some small
number, which we shall fix later on. For brevity of the notation let us define
\begin{displaymath}
  \Disc_r := T_o\Sigma \cap \Ball_r \,.
\end{displaymath}

Set
\begin{equation}
  \label{def:eps-rad}
  \Cl[R]{eps-rad} = \Cr{eps-rad}(E,m,p,\iota)
  := E^{-1/\lambda} \min \Big\{ \tfrac 12 \Big( \frac{\iota}{\Cr{holder-norm}} \Big)^{1/\tau}, \tfrac 14 \Cr{smooth-rad-const} \Big\}\,,
\end{equation}
then for all $x,y \in \CDisc_{3\Cr{eps-rad}}$ we have
\begin{displaymath}
  \| DF_o(x) \| \le \iota
  \quad \text{and} \quad
  |F_o(x) - F_o(y)| \le \iota |x - y| \,.
\end{displaymath}
We specify the parameterization
\begin{align*}
  \varphi : \CDisc_{3\Cr{eps-rad}} &\to \Sigma \cap \CBall(o,4\Cr{eps-rad}) \\
  \varphi(x) &:= o + F_o(x) + x \,.
\end{align*}
The oscillation of $D\varphi$ on $S \subseteq \CDisc_{3\Cr{eps-rad}}$ is defined
as
\begin{displaymath}
  \Phi(r,S) := \sup \big\{
  \| D\varphi(x) - D\varphi(y) \| :
  x,y \in S,\, |x-y| \le r
  \big\} \,.
\end{displaymath}
For $x,y \in \CDisc_{3\Cr{eps-rad}}$ we also define 
\begin{displaymath}
  \Dxy(x,y) := \Disc_{|x-y|} + \tfrac{x+y}{2} \subseteq T_o\Sigma \,.
\end{displaymath}

Now we prove that the oscillation of $D\varphi$ is, up to a constant, the same
as oscillation of $T_{\varphi(x)}\Sigma$.
\begin{lem}
  \label{lem:tp-deriv}
  There exists a constant $\Cl{ang-deriv} = \Cr{ang-deriv}(m)$ such that for any
  $x,y \in \CDisc_{3\Cr{eps-rad}}$ we have
  \begin{align}
    \label{est:Dphi-ang}
    \| D\varphi(x) - D\varphi(y) \| 
    &\le 4 \dgras(T_{\varphi(x)}\Sigma, T_{\varphi(y)}\Sigma) \\
    \label{est:ang-Dphi}
    \text{and} \quad
    \dgras(T_{\varphi(x)}\Sigma, T_{\varphi(y)}\Sigma)
    &\le \Cr{ang-deriv} \| D\varphi(x) - D\varphi(y) \| \,.
  \end{align}
\end{lem}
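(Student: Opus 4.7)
The plan is to view \(\varphi\) as the graph of \(F_o\) over \(T_o\Sigma\), so that \(D\varphi(z) = I|_{T_o\Sigma} + DF_o(z)\) for every \(z \in \CDisc_{3\Cr{eps-rad}}\). This identifies the operator \(D\varphi(z) : T_o\Sigma \to \Rn\) with the oblique projection \(L_{\varphi(z)} : T_o\Sigma \to T_{\varphi(z)}\Sigma\) inverse to \(\pi_o|_{T_{\varphi(z)}\Sigma}\). In particular \(D\varphi(x) - D\varphi(y) = DF_o(x) - DF_o(y)\) takes values in \(T_o\Sigma^{\perp}\), and a direct computation using \(\|DF_o(z)\| \le \iota < \tfrac 14\) yields \(\|L_{\varphi(z)}\| \le \sqrt{1+\iota^2}\). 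Abbreviate \(U := T_{\varphi(x)}\Sigma\) and \(V := T_{\varphi(y)}\Sigma\).

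For \eqref{est:Dphi-ang}, fix a unit \(h \in T_o\Sigma\) and set \(u := D\varphi(x) h \in U\), \(w := D\varphi(y) h \in V\). Since \(\pi_o u = \pi_o w = h\), the difference \(u - w\) lies in \(T_o\Sigma^{\perp}\). Decomposing \(u = \pi_V(u) + Q_V(u)\) and noting that \(\pi_V(u) - w \in V\) has \(\pi_o\)-projection \(-\pi_o(Q_V(u))\), the identity \(L_V \circ \pi_o|_V = \opid\) yields
\[
u - w \;=\; Q_V(u) - L_V\bigl(\pi_o(Q_V(u))\bigr).
\]
Fact~\ref{fact:ang-dist} gives \(|Q_V(u)| \le \dgras(U,V)\,|u|\); combining with \(|u| \le \sqrt{1+\iota^2}\) and \(\|L_V\| \le \sqrt{1+\iota^2}\) produces \(|u - w| \le (1 + \sqrt{1+\iota^2})\sqrt{1+\iota^2}\,\dgras(U,V) < 4\,\dgras(U,V)\) for \(\iota < \tfrac 14\), and taking supremum over unit \(h\) proves \eqref{est:Dphi-ang}.

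For \eqref{est:ang-Dphi}, pick an orthonormal basis \(e_1,\ldots,e_m\) of \(T_o\Sigma\) and set \(u_i := D\varphi(x) e_i\), \(v_i := D\varphi(y) e_i\). Writing \(u_i = e_i + DF_o(x) e_i\) with \(e_i \perp DF_o(x) e_j\) for all \(i,j\), one reads off \(|u_i| \in [1,\sqrt{1+\iota^2}] \subseteq [1-\iota, 1+\iota]\) and \(|\langle u_i, u_j\rangle| \le \iota^2\) for \(i \ne j\), so \((u_1,\ldots,u_m)\) is a \(\red\)-basis of \(U\) with \(\rho = 1\) and \(\varepsilon = \delta = \iota\). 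Since \(|u_i - v_i| \le \|D\varphi(x) - D\varphi(y)\| =: \vartheta\), Proposition~\ref{prop:red-ang} (applied with \(\iota = \iota(m) < \tfrac 14\) chosen small enough that condition \eqref{cond:eps-del} holds) yields \(\dgras(U,V) \le \Cr{red-ang}(m,\iota,\iota)\,\vartheta\); the case \(\vartheta \ge 1\) is trivial since \(\dgras \le 2\) always. One then sets \(\Cr{ang-deriv}(m) := \Cr{red-ang}(m,\iota,\iota)\). The only delicate point is fixing \(\iota\) purely in terms of \(m\) so that \eqref{cond:eps-del} is satisfied, which is exactly what ensures that \(\Cr{ang-deriv}\) depends on \(m\) alone.
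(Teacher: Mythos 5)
Your proof is correct and takes essentially the same route as the paper: \eqref{est:Dphi-ang} via the identification $D\varphi(z)=L_{\varphi(z)}$ together with Fact~\ref{fact:ang-dist} and a norm bound on the oblique projection, and \eqref{est:ang-Dphi} by showing $(D\varphi(x)e_1,\ldots,D\varphi(x)e_m)$ is a $\red$-basis of $T_{\varphi(x)}\Sigma$ and invoking Proposition~\ref{prop:red-ang} with $\iota$ fixed in terms of $m$ so that \eqref{cond:eps-del} holds. The differences are only cosmetic: your explicit identity for $u-w$ replaces the paper's use of the inverse projection $K$, and your constants ($\varepsilon=\delta=\iota$, norm bound $\sqrt{1+\iota^2}$) are slightly sharper than the paper's ($3\iota$, norm bound $2$).
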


\begin{proof}
  To prove \eqref{est:Dphi-ang} we repeat the same argument as in the proof of
  Lemma~\ref{lem:DF-holder}. We set 
  \begin{align*}
    L_x &:= \left( \pi_o|_{T_{\varphi(x)}\Sigma} \right)^{-1} : T_o\Sigma \to T_{\varphi(x)}\Sigma &
    L_y &:= \left( \pi_o|_{T_{\varphi(y)}\Sigma} \right)^{-1} : T_o\Sigma \to T_{\varphi(y)}\Sigma \\
    K_x &:= \left( Q_o|_{T_{\varphi(x)}\Sigma^{\perp}} \right)^{-1} : T_o\Sigma^{\perp} \to T_{\varphi(x)}\Sigma^{\perp} &
    K_y &:= \left( Q_o|_{T_{\varphi(y)}\Sigma^{\perp}} \right)^{-1} : T_o\Sigma^{\perp} \to T_{\varphi(y)}\Sigma^{\perp} \,.
  \end{align*}
  For $z \in \Sigma$ we also write
  \begin{displaymath}
    J_z : T_z\Sigma \hookrightarrow \Rn \,.
  \end{displaymath}
  for the standard inclusion mapping.

  Since $\Cr{eps-rad} \le \Cr{smooth-rad}$, we know that the norms $\|L_x\|$,
  $\|L_y\|$, $\|K_x\|$ and $\|K_y\|$ are all less or equal to~$2$. We want to
  estimate (cf. \eqref{def:DFx})
  \begin{displaymath}
    \| D\varphi(x) - D\varphi(y) \| = \| DF_o(x) - DF_o(y) \| = \| J_x \circ L_x - J_y \circ L_y \| \,.
  \end{displaymath}
  By an abuse of notation we shall identify $J_z \circ L_z$ with $L_z$, so that
  we can write
  \begin{displaymath}
    \| D\varphi(x) - D\varphi(y) \| = \| L_x - L_y \| \,.
  \end{displaymath}

  Let $h \in \Sphere$ and set $u := J_x(L_x(h))$ and $v := J_y(L_y(h))$. Note that $u-v
  \in T_o\Sigma^{\perp}$ so we can write
  \begin{align*}
    |L_x(h) - L_y(h)| 
    &= |u-v| 
    = |K_x(Q_x(u-v))|
    \le 2 |Q_x(u-v)|
    = 2|Q_x(v)| \\
    &\le 2 |v| \dgras(T_{\varphi(x)}\Sigma,T_{\varphi(y)}\Sigma) 
    \le 4 \dgras(T_{\varphi(x)}\Sigma,T_{\varphi(y)}\Sigma) \,.
  \end{align*}

  The proof of \eqref{est:ang-Dphi} is based on
  Proposition~\ref{prop:red-ang}. Let $(e_1,\ldots,e_m)$ be some orthonormal
  basis of $T_o\Sigma$. For each $i := 1,\ldots,m$ set $u_i := D\varphi(x)(e_i)$
  and $v_i := D\varphi(y)(e_i)$. Then $(u_1,\ldots,u_m)$ is a basis of
  $T_{\varphi(x)}\Sigma$ and $(v_1,\ldots,v_m)$ is a basis of
  $T_{\varphi(y)}\Sigma$. Note that
  \begin{equation}
    \label{est:uv-len}
    1 - \iota \le |u_i| \le 1 + \iota\,.
  \end{equation}
  Recall that $D\varphi(x) = DF_o(x) + I$, so for $i \ne j$ we have
  \begin{align}
    \label{est:uv-product}
    |\langle u_i, u_j \rangle|
    &= |\langle DF_o(x)(e_i) + e_i, DF_o(x)(e_j) + e_j \rangle| \\
    &\le |\langle e_i, DF_o(x)(e_j) \rangle|
    + |\langle DF_o(x)(e_i), e_j \rangle|
    + |\langle DF_o(x)(e_i), DF_o(x)(e_j) \rangle| \notag \\
    &\le 2 \iota + \iota^2 < 3 \iota \notag \,.
  \end{align}
  Estimates \eqref{est:uv-len} and \eqref{est:uv-product} show that
  $(u_1,\ldots,u_m)$ is a $\red$-basis of $T_{\varphi(x)}\Sigma$ with constants
  \begin{displaymath}
    \rho = 1 \,,
    \quad
    \varepsilon = \iota
    \quad \text{and} \quad
    \delta =  3 \iota \,.
  \end{displaymath}
  Moreover
  \begin{displaymath}
    |u_i - v_i| = |D\varphi(x)(e_i) - D\varphi(y)(e_i)| 
    \le \| D\varphi(x) - D\varphi(y) \| \,,
  \end{displaymath}
  To apply Proposition~\ref{prop:red-ang} we still need to check that
  $|D\varphi(x)(e_i) - D\varphi(y)(e_i)| < 1$, which is true because $\iota \in
  (0,\frac 14)$, and we need to impose the following
  \begin{equation}
    \label{cond:iota1}
    \Cr{dist-ang} (\Cr{gs-eps} \iota + \Cr{gs-del} 3 \iota) < 1
    \quad \iff \quad
    \iota < \frac{1}{\Cr{dist-ang} (\Cr{gs-eps} + 3 \Cr{gs-del})} \,.
  \end{equation}
  Set $\iota_0 = \iota_0(m) := (2\Cr{dist-ang} (\Cr{gs-eps} + 3
  \Cr{gs-del}))^{-1}$. Choosing any $\iota \le \iota_0$ and applying
  Proposition~\ref{prop:red-ang} we obtain
  \begin{displaymath}
    \dgras(T_{\varphi(x)}\Sigma, T_{\varphi(y)}\Sigma)
    \le \Cr{ang-deriv} \| D\varphi(x) - D\varphi(y) \| \,,
  \end{displaymath}
  where $\Cr{ang-deriv} = \Cr{ang-deriv}(m) := \Cr{red-ang}(m,\iota_0(m),3\iota_0(m))$.
\end{proof}

\begin{cor}
  \label{cor:tp-deriv-osc}
  For any $x,y \in \CDisc_{3\Cr{eps-rad}}$
  \begin{displaymath}
    \dgras(T_{\varphi(x)}\Sigma, T_{\varphi(y)}\Sigma)
    \le
    \Cr{ang-deriv} \Phi(r,S) \,.
  \end{displaymath}
\end{cor}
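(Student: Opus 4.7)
The plan is to read this as an immediate combination of the second inequality in Lemma~\ref{lem:tp-deriv} with the definition of the oscillation functional $\Phi$. The implicit hypothesis is that $x,y \in S \subseteq \CDisc_{3\Cr{eps-rad}}$ with $|x-y| \le r$, since otherwise the right-hand side is not indexed by anything related to $x$ and $y$.

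First I would invoke estimate \eqref{est:ang-Dphi} of Lemma~\ref{lem:tp-deriv}, which is applicable precisely because $x,y \in \CDisc_{3\Cr{eps-rad}}$: this gives
\begin{displaymath}
  \dgras(T_{\varphi(x)}\Sigma, T_{\varphi(y)}\Sigma)
  \le \Cr{ang-deriv}\, \|D\varphi(x) - D\varphi(y)\| \,.
\end{displaymath}
Next, since $x,y \in S$ and $|x-y| \le r$, the pair $(x,y)$ is admissible in the supremum defining $\Phi(r,S)$, so directly from the definition
\begin{displaymath}
  \|D\varphi(x) - D\varphi(y)\| \le \Phi(r,S) \,.
\end{displaymath}
Chaining the two inequalities gives the conclusion.

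There is no real obstacle here; the corollary is a bookkeeping consequence packaging Lemma~\ref{lem:tp-deriv} into the form that will be convenient in \S\ref{sec:improved-holder}, where we will want to control the angle between tangent planes in terms of the oscillation of $D\varphi$ over sets like $\Dxy(x,y)$. The only thing worth stressing is that Lemma~\ref{lem:tp-deriv} required the smallness condition $\iota \le \iota_0(m)$ in its proof (to legitimately apply Proposition~\ref{prop:red-ang}), and that this same condition is silently inherited here through the constant $\Cr{ang-deriv}=\Cr{ang-deriv}(m)$; no further smallness assumption on $\iota$ is introduced at this stage.
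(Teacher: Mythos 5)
Your argument is exactly the intended one: the paper states this corollary without proof precisely because it is the immediate chaining of estimate \eqref{est:ang-Dphi} from Lemma~\ref{lem:tp-deriv} with the definition of $\Phi(r,S)$, under the tacit hypothesis $x,y\in S$, $|x-y|\le r$ that you correctly make explicit. Your remark about the smallness condition $\iota\le\iota_0(m)$ being silently inherited through $\Cr{ang-deriv}$ is also accurate and matches the paper's setup.
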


\mysubsection{The main theorem and the strategy of the proof}
Now we can prove the main result of this section
\begin{thm}
  \label{thm:improved}
  Let $\Sigma \in \F(m)$ be such that $\E_p(\Sigma) \le E < \infty$ for some $p
  > m(m+2)$. Then $\Sigma$ is a smooth, closed manifold of class $C^{1,\alpha}$,
  where $\alpha = 1 - \frac{m(m+2)}{p}$.
  
  Moreover there exists a radius $\Cl[R]{eps-rad2}$ and a constant
  $\Cl{alpha-hol-norm}$ which depend only on $E$, $m$ and $p$ such that for each
  $o \in \Sigma$
  \begin{itemize}
  \item $\Sigma \cap \Ball(o,\Cr{eps-rad2})$ is a graph of a $C^{1,\alpha}$
    function $F_o$ defined in \S\ref{sec:tangent-planes} by
    formula~\eqref{def:Fx}
  \item and the H\"older norm of $DF_o$ is bounded above by $\Cr{alpha-hol-norm}$.
  \end{itemize}
\end{thm}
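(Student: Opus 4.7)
The plan is to bootstrap from the $C^{1,\tau}$ regularity already secured in Theorem~\ref{thm:C1tau} to the optimal $C^{1,\alpha}$ regularity with $\alpha = 1 - m(m+2)/p$, following the strategy of Strzelecki, Szuma\'nska and von der Mosel~\cite[\S6.1]{MR2668877}. By Lemma~\ref{lem:tp-deriv} and Corollary~\ref{cor:tp-deriv-osc}, the H\"older modulus of $DF_o$ is equivalent (up to constants depending only on $m$) to the oscillation modulus of the tangent planes $T_x\Sigma$. Hence the whole task reduces to proving, for all $x,y \in \Sigma \cap \Ball(o,\Cr{eps-rad2})$,
\begin{equation*}
  \dgras(T_x\Sigma, T_y\Sigma) \le \Cr{alpha-hol-norm}(E,m,p)\,|x-y|^{\alpha}\,.
\end{equation*}

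To establish this inequality, I would fix $x,y \in \Sigma$, set $r := |x-y|$, pick a large integer $N$ to be optimized later, and choose two ``almost orthogonal'' $(m+1)$-tuples $(x_0,\ldots,x_m) \in \Sigma^{m+1}$ near $x$ and $(y_0,\ldots,y_m) \in \Sigma^{m+1}$ near $y$ with $|x_i - x_0|, |y_i - y_0| \simeq r/N$. These span affine $m$-planes $X = \aff\{x_0,\ldots,x_m\}$ and $Y = \aff\{y_0,\ldots,y_m\}$ which serve as ``secant'' approximations to $T_x\Sigma$ and $T_y\Sigma$. To select such good tuples I introduce two families of bad parameters: $\Sigma_0 \subseteq \Sigma^{m+1}$ collecting those tuples $(x_0,\ldots,x_m)$ whose displacement vectors from $x_0$ fail to form a $\rho\varepsilon\delta$-basis with the desired parameters at scale $r/N$; and $\Sigma_1(x_0,\ldots,x_m) \subseteq \Sigma$ collecting those $z$ at the relevant scale for which $\K(x_0,\ldots,x_m,z)$ exceeds a chosen threshold. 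Using the uniform Ahlfors regularity from Theorem~\ref{thm:uahlreg} and the $\beta$-number bound from Corollary~\ref{cor:beta-est}, I would show $\HM^{m(m+1)}(\Sigma_0)$ is much smaller than the measure of the full tuple set, and a Fubini argument against $\E_p(\Sigma) \le E$ would bound $\HM^m(\Sigma_1(x_0,\ldots,x_m))$ for most tuples. This yields simultaneous good tuples near $x$ and $y$ together with a large common subset of $z \in \Sigma$ (at scale $\simeq r$) for which both $\K(x_0,\ldots,x_m,z)$ and $\K(y_0,\ldots,y_m,z)$ are controlled.

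For each such $z$ the simplex $\simp(x_0,\ldots,x_m,z)$ has diameter $\simeq r$ and base of $m$-measure $\gtrsim (r/N)^m$ (by the voluminous simplex estimates of \S\ref{def:face-hmin}--\ref{rem:hmin-estimates}); the curvature bound $\K(x_0,\ldots,x_m,z) \le C$ then translates, via Definition~\ref{def:disc-curv}, into a quantitative estimate of $\dist(z, x_0 + X)$, and analogously for $Y$. Combining the two distance bounds for a dense enough set of $z$'s and applying Proposition~\ref{prop:dist-ang} produces
\begin{equation*}
  \dgras(X,Y) \le C_1(m,p) \, E^{1/p} \, N^{m+1} \, r^{\,1 - m(m+2)/p}\,,
\end{equation*}
where the power $\alpha = 1 - m(m+2)/p$ is forced by the scaling of $\E_p$ (the ratio of energy contribution $r^{-(m+2)p}$ against the $r^{m(m+2)}$ volume of the admissible tuple set).

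The final step is a PDE-style iteration. Since $x_i \in \Ball(x, r/N)$ and $T\Sigma$ is already known to be $\tau$-H\"older continuous with some $\tau > 0$ from Theorem~\ref{thm:C1tau}, the fundamental theorem of calculus (applied through the parameterization $\varphi$ of \S\ref{sec:tangent-planes}) gives
\begin{equation*}
  \dgras(T_x\Sigma, X) \le C_2(m) \, \Phi(r/N, \Disc_{r/N} + x)\,,
\end{equation*}
and similarly for $\dgras(T_y\Sigma, Y)$. Writing $\omega(r) := \sup \{\dgras(T_w\Sigma, T_{w'}\Sigma) : w,w' \in \Sigma, |w-w'| \le r\}$, the two estimates combine into a recursion of the form
\begin{equation*}
  \omega(r) \le K_1(E,m,p) \, N^{m+1} \, r^{\alpha} + K_2(m) \, \omega(r/N)\,.
\end{equation*}
Choosing $N$ large so that $K_2 N^{-\tau} < \tfrac{1}{2}$ (using the crude bound $\omega(r/N) \lesssim (r/N)^{\tau}$ from Lemma~\ref{lem:tan-osc} once, then iterating) yields $\omega(r) \lesssim r^{\alpha}$ with a constant depending only on $E$, $m$, $p$, as required. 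The main obstacle I expect is twofold: (a) coordinating the Fubini-based selection of the two tuples so that a single large set of $z \in \Sigma$ gives simultaneously small $\Sigma_1(x_0,\ldots,x_m)$ and $\Sigma_1(y_0,\ldots,y_m)$; and (b) tracking all constants carefully through the iteration so the final H\"older constant $\Cr{alpha-hol-norm}$ and scale $\Cr{eps-rad2}$ depend only on $E$, $m$, $p$. The two-set bad parameter scheme together with the Grassmannian estimates of \S\ref{sec:grass} are designed to resolve~(a), and the explicit form of the constants in Theorem~\ref{thm:C1tau} secures~(b).
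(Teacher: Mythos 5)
Your proposal follows essentially the same route as the paper: reduce to an oscillation estimate for the tangent planes (equivalently $D\varphi$), select almost orthogonal tuples at scale $|x-y|/N$ outside energy-defined sets of bad parameters via a Chebyshev/Fubini argument against $\E_p(\Sigma)\le E$, convert the curvature bound for many common points $z$ into distance bounds that give $\dgras(X,Y)\lesssim |x-y|^{\alpha}$ through the Grassmannian lemmas of \S\ref{sec:grass}, and close with the same iteration that uses the a priori $C^{1,\tau}$ bound and a choice of $N$ beating the constant. The only differences (organizing the bad tuples by orthogonality rather than by the size of the bad-$z$ set, and not naming the intermediate secant plane through the $z_i$) are cosmetic, so the argument matches the paper's proof.
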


We already know that $\Sigma$ is a smooth, closed manifold of class
$C^{1,\tau}$. Now we need to improve the exponent $\tau$ to the optimal value
$\alpha$. The strategy of the proof is as follows. We want to derive an
estimate of the form
\begin{equation}
  \label{est:Phi-intu}
  \Phi(r,\Disc_R) \le \tilde{C} \Phi(\tfrac rN,\Disc_{R+r}) + \hat{C} r^{\alpha} \,.
\end{equation}
Then upon iteration we shall obtain
\begin{displaymath}
  \Phi(r,\Disc_R) \le \tilde{C}^j \Phi(\tfrac r{N^j},\Disc_{3R}) 
  + \hat{C} \sum_{i=1}^j \tilde{C}^{i-1} \left( \frac{r}{N^{i-1}} \right)^{\alpha} \,,
\end{displaymath}
for each $j \in \N$. We know a priori that $\Phi(r,\Disc_{3R}) \le \bar{C}
r^{\tau}$, hence
\begin{displaymath}
  \Phi(r,\Disc_R) \le \tilde{C}^j \bar{C} \left(\frac r{N^j} \right)^{\tau} 
  + \hat{C} \sum_{i=1}^j \tilde{C}^{i-1} \left( \frac{r}{N^{i-1}} \right)^{\alpha} \,.
\end{displaymath}
We choose $N$ big enough to ensure $\tilde{C}/N^{\alpha} \le
\tilde{C}/N^{\tau} < 1$ and we pass to the limit $j \to \infty$ obtaining
\begin{displaymath}
  \Phi(r,\Disc_R) \le \hat{C} r^{\alpha}
  \sum_{i=1}^{\infty} \Big( \frac{\tilde{C}}{N^{\alpha}} \Big)^{i-1} 
  =: \breve{C} r^{\alpha} \,.
\end{displaymath}

To prove \eqref{est:Phi-intu} we define the sets of \emph{bad parameters}
$\Sigma_0 \subseteq \CDisc_{3\Cr{eps-rad}}^{m+1}$ and show that its measure
$\HM^{m(m+1)}(\Sigma_0)$ is small. Then we find points $x_0$, \ldots, $x_m$ and
points $y_0$, \ldots, $y_m$ outside of the set of bad parameters $\Sigma_0$,
such that
\begin{displaymath}
  |x_0 - y_0| \simeq r \,,
  \quad
  |x_i - x_0| \simeq \frac rN
  \quad \text{and} \quad
  |y_i - y_0| \simeq \frac rN \,.
\end{displaymath}
Moreover $(x_1 - x_0, \ldots, x_m - x_0)$ and $(y_1 - y_0, \ldots, y_m - y_0)$
shall form almost orthogonal bases of $T_o\Sigma$. Then we define the planes 
\begin{align*}
  X &:= \opspan \{ \varphi(x_1) - \varphi(x_0), \ldots, \varphi(x_m) - \varphi(x_0) \} \\
  \text{and} \quad
  Y &:= \opspan\{ \varphi(y_1) - \varphi(y_0), \ldots, \varphi(y_m) - \varphi(y_0) \}
\end{align*}
and prove that the ''angles'' $\dgras(X,T_{\varphi(x_0)}\Sigma)$ and
$\dgras(Y,T_{\varphi(y_0)}\Sigma)$ can be bounded above by the oscillation
$\Phi(\frac rN,\CDxy(x_0,y_0))$.

Then we estimate the ''angle'' $\dgras(X,Y)$. This is the most important
ingredient of the proof, which is responsible for the appearance of $r^{\alpha}$
in our estimates. It is the point where we need to use some properties of our
discrete curvature $\K$ and the bound on the $p$-energy resulting from the fact
that $x_i$ and $y_i$ do not belong to $\Sigma_0$. We employ the fact that there
are many points
\begin{displaymath}
  z \in \CDxy(x,y) \setminus (\Sigma_1(x_1,\ldots,x_m) \cup \Sigma_1(y_1,\ldots,y_m))
\end{displaymath}
satisfying
\begin{align}
  \label{est:K-intu}
  \K(\varphi(x_0),\ldots,\varphi(x_m),\varphi(z)) &\le C |x-y|^{\frac{-m(m+2)}{p}}
  \quad \text{and simultaneously} \\
  \K(\varphi(y_0),\ldots,\varphi(y_m),\varphi(z)) &\le C |x-y|^{\frac{-m(m+2)}{p}} \,. \notag
\end{align}
We choose another $(m+1)$ points $z_0,\ldots,z_m \in \CDxy(x,y) \setminus
(\Sigma_1(x_1,\ldots,x_m) \cup \Sigma_1(y_1,\ldots,y_m))$ forming an almost
orthogonal system and we set $Z := \opspan \{ \varphi(z_1) - \varphi(z_0),
\ldots, \varphi(z_m) - \varphi(z_0) \}$. From \eqref{est:K-intu} we get
estimates on the distances 
\begin{displaymath}
  \dist(\varphi(z_i),X) \lesssim |x-y|^{1 + \alpha}
  \quad \text{and} \quad
  \dist(\varphi(z_i),Y) \lesssim |x-y|^{1+\alpha} \,.
\end{displaymath}
Next we use Proposition~\ref{prop:red-ang} to obtain the bounds $\dgras(X,Z)
\lesssim |x-y|^{\alpha}$ and $\dgras(Y,Z) \lesssim |x-y|^{\alpha}$, which
finally gives \eqref{est:Phi-intu}.

\mysubsection{Proof of Theorem~\ref{thm:improved}}
  Choose two points $x,y \in \CDisc_{\Cr{eps-rad}}$ and two big natural numbers
  $k,N \ge 4$. Set
  \begin{displaymath}
    \Kphi(x_0,\ldots,x_{m+1}) := \K(\varphi(x_0),\ldots,\varphi(x_{m+1}))
  \end{displaymath}
  and let
  \begin{align*}
    E(x,y)
    &:= \int_{\varphi(\CDxy(x,y))^{m+2}} \K^p(p_0,\ldots,p_{m+1})\ d\HM^m_{p_0} \cdots d\HM^m_{p_{m+1}} \\
    &= \int_{\CDxy(x,y)^{m+2}} \Kphi^p(x_0,\ldots,x_{m+1})
    |J\varphi(x_0)| \cdots |J\varphi(x_{m+1})| \ dx_0 \cdots dx_{m+1} \,,
  \end{align*}
  where $|J\varphi(x)| = \sqrt{\det(D\varphi(x)^* D\varphi(x))}$.
  We define the sets of \emph{bad parameters}
  \begin{displaymath}
    \Sigma_0 := \left\{
      (x_0,\ldots,x_m) \in \CDxy(x,y)^{m+1} :
      \HM^m(\Sigma_1(x_0,\ldots,x_m)) 
      > \Omega_1  \left( \tfrac{|x-y|}{kN} \right)^m
    \right\}
  \end{displaymath}
  \begin{displaymath}
    \text{and} \quad 
    \Sigma_1(x_0,\ldots,x_m) := \left\{ 
      z \in \CDxy(x,y) :
      \Kphi^p(x_0,\ldots,x_m,z) 
      >  \Omega_2 E(x,y) \left( \tfrac{kN}{|x-y|} \right)^{m(m+2)}
    \right\} \,,
  \end{displaymath}
  where $\Omega_1 := \frac 12 \omega_m$ and $\Omega_2 := \frac{2}{\omega_{m}
    \omega_{m(m+1)}}$. Since $D\varphi(x) = I + DF_o(x)$ we have $|J\varphi(x)|
  \ge 1$. Hence
  \begin{align*}
    E(x,y) 
    &\ge \int_{\CDxy(x,y)^{m+2}} \Kphi^p(x_0,\ldots,x_m,z)\ dx_0 \cdots dx_m\ dz \\
    &\ge \int_{\Sigma_0} \int_{\Sigma_1(x_0,\ldots,x_m)} \Kphi^p(x_0,\ldots,x_m,z)\ dx_0 \cdots dx_m\ dz \\
    &\ge \HM^{m(m+1)}(\Sigma_0) 
    \tfrac 12 \omega_m \left( \tfrac{|x-y|}{kN} \right)^m
    \tfrac{2}{\omega_{m} \omega_{m(m+1)}} E(x,y) \left( \tfrac{kN}{|x-y|} \right)^{m(m+2)} \\
    &= \HM^{m(m+1)}(\Sigma_0) E(x,y) \omega_{m(m+1)}^{-1} \left( \tfrac{kN}{|x-y|} \right)^{m(m+1)} \,.
  \end{align*}
  From here we obtain the estimate
  \begin{displaymath}
    \HM^{m(m+1)}(\Sigma_0) \le \omega_{m(m+1)} \left( \tfrac{|x-y|}{kN} \right)^{m(m+1)}
  \end{displaymath}

  \begin{rem}\ 
    \label{rem:close-good-params}
    \begin{itemize}
    \item For any tuple $(\tilde{x}_0,\ldots,\tilde{x}_m) \in \CDxy(x,y)^{m+1}$
      such that for each $j = 0,\ldots,m$
      \begin{displaymath}
        \left| \tilde{x}_j - \tfrac 12 (x+y) \right| \le \big(1 - \tfrac 1{kN}\big) |x-y|
      \end{displaymath}
      there exists another tuple of points $(x_0,\ldots,x_m) \in
      \CDxy(x,y)^{m+1} \setminus \Sigma_0$ such that
      \begin{displaymath}
        |x_i - \tilde{x}_i| \le \frac{|x-y|}{kN}
      \end{displaymath}
      for each $i = 0,\ldots,m$.

    \item For any tuple $(x_0,\ldots,x_m) \in \CDxy(x,y)^{m+1} \setminus
      \Sigma_0$ and any tuple $(y_0,\ldots,y_m) \in \CDxy(x,y)^{m+1} \setminus
      \Sigma_0$ and any point $\tilde{z} \in \CDxy(x,y)$
      such that
      \begin{displaymath}
        \left| \tilde{z} - \tfrac 12 (x+y) \right| \le \big(1 - \tfrac 1{kN}\big) |x-y|
      \end{displaymath}
      there exists a point $z \in \CDxy(x,y) \setminus (\Sigma_1(x_0,\ldots,x_m)
      \cup \Sigma_1(y_0,\ldots,y_m))$ such that
      \begin{displaymath}
        |z - \tilde{z}| \le \frac{|x-y|}{kN} \,.
      \end{displaymath}
    \end{itemize}
  \end{rem}

  Fix an orthonormal basis $(e_1,\ldots,e_m)$ of $T_o\Sigma$. For $i =
  1,\ldots,m$ we set
  \begin{align*}
    \tilde{x}_0 &:= x \,,& \tilde{x}_i &:= \tilde{x}_0 + \tfrac{|x-y|}{N}e_i \,,&
    \tilde{y}_0 &:= y &\text{and}&& \tilde{y}_i &:= \tilde{y}_0 + \tfrac{|x-y|}{N}e_i \,.
  \end{align*}
  Remark~\ref{rem:close-good-params} allows us to find
  \begin{displaymath}
    (x_0,\ldots,x_m) \in \CDxy(x,y)^{m+1} \setminus \Sigma_0
    \qquad \text{and} \qquad
    (y_0,\ldots,y_m) \in \CDxy(x,y)^{m+1} \setminus \Sigma_0 \,,
  \end{displaymath}
  such that for each $i = 0, \ldots, m$
  \begin{displaymath}
    |x_i - \tilde{x}_i| \le \frac{|x-y|}{kN}
    \qquad \text{and} \qquad
    |y_i - \tilde{y}_i| \le \frac{|x-y|}{kN} \,,
  \end{displaymath}
  We set
  \begin{align*}
    X &:= \opspan\{ \varphi(x_1) - \varphi(x_0), \ldots, \varphi(x_m) - \varphi(x_0) \} \\
    \text{and} \quad
    Y &:= \opspan\{ \varphi(y_1) - \varphi(y_0), \ldots, \varphi(y_m) - \varphi(y_0) \} \,.
  \end{align*}
  Now we have
  \begin{align}
    \| D\varphi(x) - D\varphi(y) \|
    &\le
    \| D\varphi(x) - D\varphi(x_0) \| + \| D\varphi(x_0) - D\varphi(y_0) \| + \| D\varphi(y_0) - D\varphi(y) \| \notag \\
    \label{est:DphixDphiy}
    &\le
    2\Phi\left(\tfrac{|x-y|}{kN},\CDxy(x,y)\right) + \Cr{ang-deriv} \dgras(T_{\varphi(x_0)}\Sigma,T_{\varphi(y_0)}\Sigma) \,.
  \end{align}
  Using the triangle inequality we may further write
  \begin{equation}
    \label{est:Tx0Ty0}
    \dgras(T_{\varphi(x_0)}\Sigma,T_{\varphi(y_0)}\Sigma) 
    \le \dgras(T_{\varphi(x_0)}\Sigma,X) + \dgras(X,Y) + \dgras(Y,T_{\varphi(y_0)}\Sigma) \,.
  \end{equation}
  
  \mysubsubsection*{Estimates for $\dgras(T_{\varphi(x_0)}\Sigma,X)$ and
    $\dgras(Y,T_{\varphi(y_0)}\Sigma)$}

  The first and the last term on the right-hand side of \eqref{est:Tx0Ty0} can
  be estimated as follows. For each $i=1,\ldots,m$ from the fundamental theorem
  of calculus we have
  \begin{align}
    v_i &:= \varphi(x_i) - \varphi(x_0) = \int_0^1 \tfrac{d}{dt} \left(
      \varphi(x_0 + t(x_i - x_0))
    \right)\ dt \notag \\
    &= \int_0^1 \left( D\varphi(x_0 + t(x_i - x_0)) - D\varphi(x_0) \right) (x_i - x_0) \ dt
    + D\varphi(x_0) (x_i - x_0) \notag \\
    \label{eq:viwisi}
    &=: \sigma_i + w_i \,.
  \end{align}

  From now on let us assume that $\iota$ and $k$ satisfy
  \begin{equation}
    \label{cond:iota-k}
    \iota + \frac 1k \le \Cl{iota-k} = \Cr{iota-k}(m)
    := \frac{1}{2\Cr{dist-ang} (2\Cr{gs-eps} + 24\Cr{gs-del})} \,,
  \end{equation}
  so that we can safely use Proposition~\ref{prop:red-ang} later on.

  Set $u_i := x_i - x_0$. Since $(u_1,\ldots,u_m)$ is a basis of $T_o\Sigma$ and
  $w_i = D\varphi(x_0)u_i$, the tuple $(w_1,\ldots,w_m)$ is a basis of
  $T_{\varphi(x_0)}\Sigma$. Furthermore
  \begin{align}
    \left( 1 - \tfrac 2k \right) \tfrac{|x-y|}{N}
    &\le |u_i| 
    \le \left( 1 + \tfrac 2k \right) \tfrac{|x-y|}{N} \,, \notag \\
    \intertext{hence}
    \label{est:wi-len}
    (1 - 2 \Cr{iota-k}) \tfrac{|x-y|}{N}
    \le \left( 1 - \tfrac 2k \right) \tfrac{|x-y|}{N}
    &\le |w_i| 
    \le (1 + \iota) \left( 1 + \tfrac 2k \right) \tfrac{|x-y|}{N}
    \le (1 + 2 \Cr{iota-k}) \tfrac{|x-y|}{N} \,.
  \end{align}
  Set $\tilde{u}_i := \tilde{x}_i - \tilde{x}_j$. We have $|\tilde{u}_i| = \tfrac
  1N |x-y|$ and $|u_i - \tilde{u}_i| \le \tfrac{2}{kN} |x-y|$, so we obtain
  \begin{align*}
    |\langle u_i, u_j \rangle|
    &\le |\langle u_i - \tilde{u}_i, u_j - \tilde{u}_j \rangle|
    + |\langle \tilde{u}_i, u_j - \tilde{u}_j \rangle|
    + |\langle u_i - \tilde{u}_i, \tilde{u}_j \rangle|
    + |\langle \tilde{u}_i, \tilde{u}_j \rangle| \\
    &\le \left(\tfrac{|x-y|}{N}\right)^2 \left(
      \tfrac{4}{k^2} 
      + 2 \tfrac 2k (1 + \tfrac 2k) 
    \right)
    = \left(\tfrac{|x-y|}{N}\right)^2 \left( \tfrac 4k + \tfrac{12}{k^2} \right) \,.
  \end{align*}
  Consequently
  \begin{align}
    |\langle w_i, w_j \rangle|
    &= |\langle D\varphi(x_0)u_i, D\varphi(x_0)u_j \rangle|
    = |\langle DF_o(x_0)u_i + u_i, DF_o(x_0)u_j + u_j \rangle| \notag \\
    &\le |\langle DF_o(x_0)u_i, DF_o(x_0)u_j \rangle|
    + |\langle u_i, DF_o(x_0)u_j \rangle|
    + |\langle DF_o(x_0)u_i, u_j \rangle|
    + |\langle u_i, u_j \rangle| \notag \\
    &\le \iota^2 |u_i||u_j| + 2 \iota |u_i||u_j| + |\langle u_i, u_j \rangle| \notag \\
    \label{est:wiwj}
    &\le \left(\tfrac{|x-y|}{N}\right)^2 \left(
      (1 + \tfrac 4k + \tfrac 4{k^2}) (\iota^2 + 2 \iota) + \tfrac 4k + \tfrac{12}{k^2}
    \right)
    \le 16 \Cr{iota-k} \left(\tfrac{|x-y|}{N}\right)^2 \,.
  \end{align}
  Estimates \eqref{est:wi-len} and \eqref{est:wiwj} show that $(w_1,\ldots,w_j)$
  is a $\red$-basis of $T_{\varphi(x_0)}\Sigma$ with
  \begin{align*}
    \rho_X &= \tfrac 1N |x-y| \,, \\
    \varepsilon_X &= \varepsilon_X(m) := 2 \Cr{iota-k} \\
    \text{and} \quad
    \delta_X &= \delta_X(m) := 16 \Cr{iota-k} \,.
  \end{align*}
  Moreover we have
  \begin{align*}
    |v_i - w_i| = |\sigma_i| 
    &\le \Phi(|x_i-x_0|, \CDxy(x,y)) |x_i-x_0| \\
    &\le \Phi\left((1 + \tfrac 2k)\tfrac{|x-y|}{N}, \CDxy(x,y)\right) (1 + \tfrac 2k) \tfrac{|x-y|}{N} \,.
  \end{align*}
  To apply Proposition~\ref{prop:red-ang} we need to ensure that $|v_i - w_i| <
  1$. Recalling the definition of $\Cr{eps-rad}$ one sees that $\Cr{eps-rad} <
  \frac 12$, so $|x-y| < 1$ and we have
  \begin{align*}
    \Phi \left( (1 + \tfrac 2k)\tfrac{|x-y|}{N}, \CDxy(x,y) \right) (1 + \tfrac 2k)
    &\le 2 \Phi \left(2\tfrac{|x-y|}{N}, \CDxy(x,y) \right) \\
    &\le 2 \Cr{holder-norm} E^{1/\kappa} (\tfrac 2N)^{\tau} |x-y|^{\tau} 
    < 2 (\tfrac 2N)^{\tau} \Cr{holder-norm} E^{1/\kappa} \,.
  \end{align*}
  Hence, it suffices to impose the following condition on $N$
  \begin{equation}
    \label{cond:N1}
    2 (\tfrac 2N)^{\tau} \Cr{holder-norm}  E^{1/\kappa} \le 1
    \quad \iff \quad
    N \ge 2 (4 \Cr{holder-norm} E^{1/\kappa})^{\frac 1{\tau}} \,,
  \end{equation}
  to reach the estimate
  \begin{equation}
    \label{est:TxX-ang}
    \dgras(T_{\varphi(x_0)}\Sigma,X) 
    \le \Cr{red-ang}(m,\varepsilon_X,\delta_X) (1 + \tfrac 2k) \Phi\left((1 + \tfrac 2k) \tfrac{|x-y|}{N}, \CDxy(x,y)\right) \,.
  \end{equation}
  Replacing $x_i$ by $y_i$ and repeating the same arguments we also obtain
  \begin{equation}
    \label{est:TyY-ang}
    \dgras(T_{\varphi(y_0)}\Sigma,Y) 
    \le \Cr{red-ang}(m,\varepsilon_X,\delta_X) (1 + \tfrac 2k) \Phi\left((1 + \tfrac 2k) \tfrac{|x-y|}{N}, \CDxy(x,y)\right) \,.
  \end{equation}

  \mysubsubsection*{Estimates for $\dgras(X,Y)$}

  Let
  \begin{displaymath}
    G := \CDxy(x,y) \setminus
    \left(
      \Sigma_1(x_0,\ldots,x_m) \cup \Sigma_1(y_0,\ldots,y_m)
    \right) \,.
  \end{displaymath}
  From Remark~\ref{rem:close-good-params} we know that for each point $\tilde{z}
  \in \CDxy(x,y)$ with $|z - \frac 12 (x+y)| \le (1 - \frac 1{kN}) |x-y|$ we can
  find a point $z \in G$ satisfying $|z - \tilde{z}| \le \tfrac{|x-y|}{kN}$. For
  each $i = 1,\ldots,m$ we set
  \begin{displaymath}
    \tilde{z}_0 = y_0
    \qquad \text{and} \qquad
    \tilde{z}_i := \tilde{z}_0 + \tfrac{|x-y|}{4} e_i
  \end{displaymath}
  and we find points $z_0 \in G$, \ldots, $z_m \in G$ such that $|z_i -
  \tilde{z}_i| \le \tfrac{|x-y|}{kN}$.
  Set 
  \begin{align*}
    a_i &:= \varphi(z_i) - \varphi(z_0) \,,& \tilde{a}_i &:= z_i - z_0 \,,\\
    b_i &:= \varphi(\tilde{z}_i) - \varphi(\tilde{z}_0) \,,& \tilde{b}_i &:= \tilde{z}_i - \tilde{z}_0 = \tfrac{|x-y|}{4} e_i \,,
  \end{align*}
  \begin{displaymath}
    Z := \opspan\{a_1, \ldots, a_m\} \,.
  \end{displaymath}
  Using the upper bound on the Lipschitz constant of $\varphi$ and the fact that
  $N \ge 4$ we obtain
  \begin{equation}
    \label{est:ai-len}
    (1 - 2 \Cr{iota-k}) \tfrac{|x-y|}{4} 
    \le (1 - \tfrac 2k) \tfrac{|x-y|}{4} 
    \le |a_i|
    \le (1 + \iota) (1 + \tfrac 2k) \tfrac{|x-y|}{4}
    \le (1 + 2 \Cr{iota-k}) \tfrac{|x-y|}{4}
  \end{equation}
  Note that
  \begin{align*}
    |b_i| &\le (1 + \iota) \tfrac{|x-y|}{4} \,,\\
    |a_i - b_i| &\le 2 (1 + \iota) \tfrac{|x-y|}{kN} \le \tfrac 2k (1 + \iota) \tfrac{|x-y|}{4} \,,\\
    |b_i - \tilde{b}_i| &= |F_o(\tilde{z}_i) - F_o(\tilde{z}_0)| \le \iota \tfrac{|x-y|}{4} \\
    \text{and} \quad 
    |\langle b_i, b_j \rangle| 
    &\le |\langle b_i - \tilde{b}_i, b_j - \tilde{b}_j \rangle| 
    + |\langle b_i, b_j - \tilde{b}_j \rangle| 
    + |\langle b_i - \tilde{b}_i, b_j \rangle| \\
    &\le \left(\tfrac{|x-y|}{4}\right)^2 \left(
      \iota^2 + 2 \iota (1 + \iota)
    \right) \,.
  \end{align*}
  It follows
  \begin{align}
    |\langle a_i, a_j \rangle|
    &\le |\langle a_i - b_i, a_j - b_j \rangle|
    + |\langle a_i, a_j - b_j \rangle|
    + |\langle a_i - b_i, a_j \rangle|
    + |\langle b_i, b_j \rangle| \notag \\
    &\le \left(\tfrac{|x-y|}{4}\right)^2 \left(
      \tfrac 4{k^2} (1 + \iota)^2
      + \tfrac 4k (1 + \iota)^2 (1 + \tfrac 2k)
      + \iota^2 + 2 \iota (1 + \iota)
    \right) \notag \\
    \label{est:aiaj}
    &\le  24 \Cr{iota-k} \left(\tfrac{|x-y|}{4}\right)^2 \,.
  \end{align}
  Estimates \eqref{est:ai-len} and \eqref{est:aiaj} show that $(a_1,\ldots,a_m)$
  is a $\red$-basis of $Z$ with
  \begin{align*}
    \rho_Z &= \tfrac 14 |x-y| \,,\\
    \varepsilon_Z &= \varepsilon_Z(m) := 2 \Cr{iota-k} \\ 
    \text{and} \quad
    \delta_Z &= \delta_Z(m) := 24\Cr{iota-k} \,.
  \end{align*}
  Now we only need to estimate the distances $\dist(a_i, X) = |Q_X (a_i)|$ and
  $\dist(a_i, Y) = |Q_Y (a_i)|$. Set $T :=
  (\varphi(x_0),\ldots,\varphi(x_m),\varphi(z_i))$ and $T_0 :=
  (\varphi(x_0),\ldots,\varphi(x_m))$. We know that $z_i \in G$, so for each
  $i=0,\ldots,m$ we have
  \begin{equation}
    \label{est:K}
    \K(T) = \frac{\HM^{m+1}(\simp T)}{(\diam T)^{m+2}}
    \le \left( \frac{2 E(x,y)}{\omega_{m} \omega_{m(m+1)}} \right)^{\frac 1p}
    \left(\frac{kN}{|x-y|}\right)^{\frac{m(m+2)}{p}} \,.
  \end{equation}
  The measure $\HM^{m+1}(\simp T)$ can be expressed by
  \begin{displaymath}
    \HM^{m+1}(\simp T) = \tfrac 1{m+1} \HM^m(\simp T_0) \dist(\varphi(z_i), \varphi(x_0) + X) \,.
  \end{displaymath}
  Using the above formula and \eqref{est:K} we obtain the estimate
  \begin{equation}
    \label{est:zi-X-dist}
    \dist(\varphi(z_i,) \varphi(x_0) + X) 
    \le \left( \tfrac{2 E(x,y)}{\omega_{m} \omega_{m(m+1)}} \right)^{\frac 1p}
    \tfrac{(m+1)(\diam T)^{m+2}}{\HM^m(\simp T_0)}
    \left(\tfrac{kN}{|x-y|}\right)^{\frac{m(m+2)}{p}} \,.
  \end{equation}
  Set $T_1 = (\tilde{x}_0,\ldots,\tilde{x}_m)$ and $T_2 = (x_0,\ldots,x_m)$.
  Note that 
  \begin{align*}
    T_1 &\subseteq \CBall(\tilde{x}_0, \tfrac{|x-y|}{N}) \,, \\
    \HM^{m-1}(\face_m(T_1)) &= \left( ((m-1)!)^{- \frac 1{m-1}} \tfrac{|x-y|}{N}\right)^{m-1} \\
    \text{and} \quad
    \height_m(T_1) &= \tfrac{|x-y|}{N} \,,
  \end{align*}
  hence $T_1 \in \Reg_{m-1}\big( (\tfrac 1{(m-1)!}) ^{\frac 1{m-1}},
  \frac{|x-y|}{N}\big)$. We also have $\|T_1 - T_2\| \le \frac 1k
  \frac{|x-y|}{N}$, so if we impose
  \begin{equation}
    \label{cond:k-reg}
    \tfrac 1k \le \varsigma_{m-1}\big( (\tfrac 1{(m-1)!})^{\frac 1{m-1}} \big) \,,
  \end{equation}
  then Proposition~\ref{prop:perturb} gives us $T_2 \in \Reg_{m-1}\big(\frac 12
  (\tfrac 1{(m-1)!})^{\frac 1{m-1}}, \frac 32 \frac{|x-y|}{N}\big)$. Therefore
  \begin{align}
    \label{est:T-base-meas}
    \HM^m(\simp T_0)
    &\ge \HM^m(\pi_o(\simp T_0))
    = \HM^m(\simp T_2) \\
    &\ge \tfrac 1m  \left(
      \tfrac 34 (\tfrac 1{(m-1)!})^{\frac 1{m-1}} \frac{|x-y|}{N} 
    \right)^{m}
    := \Cl{mN}(m,N) |x-y|^m \notag \,.
  \end{align}
  Of course we also have
  \begin{equation}
    \label{est:T-diam}
    \diam(T) \le (1 + \iota) \diam \{ x_0, \ldots, x_m, z_i \} 
    \le (1 + \iota) 2 |x-y| 
    \le 4 |x-y| \,.
  \end{equation}
  Combining \eqref{est:T-base-meas} and \eqref{est:T-diam} with
  \eqref{est:zi-X-dist} we get
  \begin{align}
    \dist(\varphi(z_i), \varphi(x_0) + X) 
    &\le \left( \tfrac{2 E(x,y) (kN)^{m(m+2)}}{\omega_{m} \omega_{m(m+1)}} \right)^{\frac 1p}
    \tfrac{(m+1) 4^{m+2}}{\Cr{mN}(m,N)}
    |x-y|^{2-\frac{m(m+2)}{p}} \notag \\
    \label{est:zi-X-dist-final}
    &\le \tfrac 12 \Cl{mpkN} E(x,y)^{\frac 1p} |x-y|^{\alpha} \tfrac 14 |x-y| \,,
  \end{align}
  where
  \begin{displaymath}
    \Cr{mpkN} = \Cr{mpkN}(m,p,k,N) :=
    8 \frac{2^{1/p} (m+1) 4^{m+2}}{(\omega_{m} \omega_{m(m+1)})^{1/p} \Cr{mN}(m,N)}
    (kN)^{\frac{m(m+2)}{p}} \,.
  \end{displaymath}
  Using \eqref{est:zi-X-dist-final} we can write
  \begin{align*}
    |Q_X(a_i)| &\le \dist(\varphi(z_i), \varphi(x_0) + X) + \dist(\varphi(z_0), \varphi(x_0) + X) \\
    &\le \Cr{mpkN} E(x,y)^{\frac 1p} |x-y|^{\alpha} \tfrac 14 |x-y| \,.
  \end{align*}
  Note that we can do exactly the same for $Y$ and obtain
  \begin{align*}
    |Q_Y(a_i)| &\le \dist(\varphi(z_i), \varphi(y_0) + Y) + \dist(\varphi(z_0), \varphi(y_0) + Y) \\
    &\le \Cr{mpkN} E(x,y)^{\frac 1p} |x-y|^{\alpha} \tfrac 14 |x-y| \,.
  \end{align*}
  To apply Proposition~\ref{prop:red-ang} we still need to ensure that
  \begin{displaymath}
    \Cr{mpkN} E(x,y)^{\frac 1p} |x-y|^{\alpha} < 1 \,.
  \end{displaymath}
  Of course $E(x,y) \le E$, so a sufficient condition is
  \begin{displaymath}
    |x-y| < (\Cr{mpkN}^p E)^{\frac{-1}{p - m(m+2)}} = (\Cr{mpkN}^p E)^{-1/\lambda}\,.
  \end{displaymath}
  Let us set 
  \begin{equation}
    \label{def:eps-rad2}
    \Cr{eps-rad2} = \Cr{eps-rad2}(E,m,p,k,N)
    := \min \left\{
      \Cr{eps-rad}, \tfrac 12 (\Cr{mpkN}^p E)^{-1/\lambda}
    \right\} \,.
  \end{equation}

  Now we can use Proposition~\ref{prop:red-ang} reaching the estimates
  \begin{align}
    \label{est:XZ-ang}
    \dgras(X,Z) &\le \Cr{red-ang}(m,\varepsilon_Z,\delta_Z) \Cr{mpkN} E(x,y)^{\frac 1p} |x-y|^{\alpha} \\
    \text{and} \quad
    \label{est:ZY-ang}
    \dgras(Z,Y) &\le \Cr{red-ang}(m,\varepsilon_Z,\delta_Z) \Cr{mpkN} E(x,y)^{\frac 1p} |x-y|^{\alpha} \,.
  \end{align}

  \mysubsubsection*{The iteration}
  Putting the inequalities \eqref{est:DphixDphiy}, \eqref{est:Tx0Ty0},
  \eqref{est:TxX-ang}, \eqref{est:TyY-ang}, \eqref{est:XZ-ang} and
  \eqref{est:ZY-ang} together we acquire
  \begin{align}
    \label{est:Dphi-osc}
    \| D\varphi(x) - D\varphi(y) \|
    &\le 2\Phi\left(\tfrac{|x-y|}{kN},\CDxy(x,y)\right) \\
    &\phantom{=}+ 2 \Cr{ang-deriv} \Cr{red-ang}(m,\varepsilon_X,\delta_X) (1 + \tfrac 2k)
    \Phi\left((1 + \tfrac 2k) \tfrac{|x-y|}{N}, \CDxy(x,y)\right) \notag \\
    &\phantom{=}+ 2 \Cr{ang-deriv} \Cr{red-ang}(m,\varepsilon_Z,\delta_Z) \Cr{mpkN} E(x,y)^{\frac 1p} |x-y|^{\alpha} \notag \\
    &\le \Cl{Dphi-osc1} \Phi\left(\tfrac{2|x-y|}{N}, \CDxy(x,y)\right) 
    + \Cl{Dphi-osc2} E(x,y)^{\frac 1p} |x-y|^{\alpha} \notag \,,
  \end{align}
  where
  \begin{align*}
    \Cr{Dphi-osc1} &= \Cr{Dphi-osc1}(m) 
    := 2 + 4 \Cr{ang-deriv}(m) \Cr{red-ang}(m,\varepsilon_X,\delta_X) \\
    \text{and} \quad
    \Cr{Dphi-osc2} &= \Cr{Dphi-osc2}(m,p,k,N) 
    := 2 \Cr{ang-deriv}(m) \Cr{red-ang}(m,\varepsilon_Z,\delta_Z) \Cr{mpkN}(m,p,k,N) \,.
  \end{align*}
  We define
  \begin{displaymath}
    M_p(a,\rho) := \left(
      \int_{[\varphi(\CDisc(a,\rho))]^{m+2}} \K^p\ d\mu
    \right)^{\frac 1p}
  \end{displaymath}
  Fix some $a \in \CDisc_{\Cr{eps-rad}}$ and a radius $R \in (0,\Cr{eps-rad}]$.
  Taking the supremum on both sides of \eqref{est:Dphi-osc} over all $x,y \in
  \CDisc(a,R)$ satisfying $|x-y| \le r \le R$ we attain the estimate
  \begin{displaymath}
    \Phi(r, \CDisc(a,R))
    \le \Cr{Dphi-osc1} \Phi \left( \tfrac 2N r, \CDisc(a,R+r) \right)
    + \Cr{Dphi-osc2} M_p(a,R+r) r^{\alpha} \,.
  \end{displaymath}
  Choose any $j \in \N$. Iterating the above inequality $j$ times we get
  \begin{displaymath}
    \Phi(r, \CDisc(a,R))
    \le \Cr{Dphi-osc1}^j \Phi \left( (\tfrac 2N)^j r, \CDisc(R + r_j) \right) 
    + \Cr{Dphi-osc2} M_p(a,R + r_j) r^{\alpha} 
    \sum_{l=0}^{j-1} \left( \frac{\Cr{Dphi-osc1}}{N^{\alpha}} \right)^l \,,
  \end{displaymath}
  where $r_j := r \sum_{l=0}^{j-1} N^{-l} \le 2r$. Recall that we know a priori
  that $\varphi$ is a $C^{1,\tau}$ function, so we can estimate the first term on
  the right-hand side by
  \begin{displaymath}
    \Phi \big( \big(\tfrac 2N\big)^j r, \CDisc(a,R + r_j) \big)
    \le \Cr{holder-norm} E^{1/\kappa} \big(\tfrac 2N\big)^{j \tau} r^{\tau} \,.
  \end{displaymath}
  This gives
  \begin{displaymath}
    \Phi(r, \CDisc(a,R))
    \le \Cr{holder-norm} E^{1/\kappa} r^{\tau} \left( \frac{2^{\tau}\Cr{Dphi-osc1}}{N^{\tau}} \right)^j 
    + \Cr{Dphi-osc2} M_p(a,3R) r^{\alpha} \sum_{l=0}^{j-1} \left( \frac{\Cr{Dphi-osc1}}{N^{\alpha}} \right)^l 
  \end{displaymath}
  for each $j \in \N$. To ensure that the first term disappears and that the
  second term converges when $j \to \infty$ we need to know the following
  \begin{equation}
    \label{cond:N2}
    \frac{2^{\tau}\Cr{Dphi-osc1}}{N^{\tau}} < 1
    \quad \text{and} \quad
    \frac{\Cr{Dphi-osc1}}{N^{\alpha}} < 1 \,.
  \end{equation}
  Note that $\Cr{Dphi-osc1}$ depends only on $m$ and does not depend on
  $N$. Hence, we can find big enough $N = N(m,p)$ to ensure both conditions
  \eqref{cond:N1} and \eqref{cond:N2}. Passing with $j$ to the limit $j \to
  \infty$ we obtain the bound
  \begin{displaymath}
    \Phi(r, \CDisc(a,R))
    \le \Cr{Dphi-osc2} M_p(a,3R) \sum_{l=0}^{\infty}
    \left( \tfrac{\Cr{Dphi-osc1}}{N^{\alpha}} \right)^l
    r^{\alpha}
    = \Cr{Dphi-osc2} M_p(a,3R) \frac{N^{\alpha}}{N^{\alpha} - \Cr{Dphi-osc1}} r^{\alpha} \,.
  \end{displaymath}
  Setting
  \begin{displaymath}
    \Cr{alpha-hol-norm} := \Cr{Dphi-osc2} E^{1/p} \frac{N^{\alpha}}{N^{\alpha} - \Cr{Dphi-osc1}} \,,
  \end{displaymath}
  we reach the conclusion
  \begin{displaymath}
    \forall a \in \CDisc_{\Cr{eps-rad2}}\ 
    \forall r \le \Cr{eps-rad2}
    \quad
    \Phi(r, \CDisc(a,\Cr{eps-rad2})) \le \Cr{alpha-hol-norm} r^{\alpha} \,,
  \end{displaymath}
  hence for any $x,y \in \CDisc_{\Cr{eps-rad2}}$, taking $a = \frac{x+y}2$ and
  $R = |x-y|$ we get
  \begin{displaymath}
    \|D\varphi(x) - D\varphi(y)\| \le \Cr{alpha-hol-norm} |x-y|^{\alpha} \,.
  \end{displaymath}
  
  Note that $\iota$ and $k$ satisfying \eqref{cond:iota1}, \eqref{cond:iota-k}
  and \eqref{cond:k-reg} can be chosen depending only on $m$. Hence,
  $\Cr{eps-rad}$ depends only on $E$, $m$ and $p$. Next we can choose $N$
  satisfying \eqref{cond:N1} and \eqref{cond:N2} depending only on $m$ and $p$,
  hence there exists a constant $C = C(m,p)$ such that the H{\"o}lder norm of
  $D\varphi$ is bounded by
  \begin{displaymath}
    \Cr{alpha-hol-norm} = C(m,p) E^{1/p} \,.
  \end{displaymath}
  Finally recalling \eqref{def:eps-rad2} we see that the radius $\Cr{eps-rad2}$
  of the domain of $\varphi$ can be expressed as
  \begin{displaymath}
    \Cr{eps-rad2} = C'(m,p) E^{-1/\lambda} \,,
  \end{displaymath}
  for some constant $C'(m,p)$.
  \hfill$\square$

  \begin{rem}
    Note that we actually proved a bit stronger theorem. Namely, we proved that
    there exists a constant $C = C(m,p)$ such that for each $x,y \in
    \CDisc_{\Cr{eps-rad2}}$ we have
    \begin{displaymath}
      \|D\varphi(x) - D\varphi(y)\| \le C M_p\big( \tfrac{x+y}{2},3|x-y| \big) |x-y|^{\alpha} \,.
    \end{displaymath}
  \end{rem}



\section*{Acknowledgements}
This research has been partially supported by the Polish Ministry of Science
grant no. N~N201~397737 (years 2009-2012). The author was partially supported by
the Polish Ministry of Science grant no. N~N201~611140 (years 2011-2012) and
partially by EU FP6 Marie Curie RTN programme CODY.

\bibliography{menger}{}
\bibliographystyle{hplain}

\end{document}